\documentclass[final,12pt,a4paper]{amsart} 


\usepackage{graphicx}
\usepackage[all]{xy}
\usepackage{placeins}
\usepackage{enumitem}
\usepackage{amssymb}
\usepackage{latexsym}
\usepackage{amsmath}
\usepackage{mathrsfs}
\usepackage{array,booktabs}

\usepackage[notref, notcite]{showkeys}  
\usepackage[dvipsnames]{xcolor}

\usepackage{hyperref}
\hypersetup{
    colorlinks,
    linkcolor={red!50!black},
    citecolor={blue!50!black},
    urlcolor={blue!80!black}
}

\newcommand{\hyref}[2]{ \hyperref[#2]{#1~\ref*{#2}} } 

\usepackage{fullpage}



\newcommand{\coloneqq}{\mathrel{\mathop:}=}


\theoremstyle{plain}
\newtheorem{theorem}{Theorem}[section]
\newtheorem{lemma}[theorem]{Lemma}
\newtheorem{corollary}[theorem]{Corollary}
\newtheorem{proposition}[theorem]{Proposition}
\newtheorem{introtheorem}{Theorem}

\theoremstyle{definition}
\newtheorem{remark}[theorem]{Remark}
\newtheorem{definition}[theorem]{Definition}
\newtheorem{notation}[theorem]{Notation}
\newtheorem{setup}[theorem]{Setup}
\newtheorem*{convention}{Convention}
\newtheorem*{acknowledgments}{Acknowledgments}

\newcommand{\Case}[1]{\smallskip\noindent\emph{Case #1:}}


\newcommand{\sA}{\mathsf{A}}
\newcommand{\sB}{\mathsf{B}}
\newcommand{\sC}{\mathsf{C}}
\newcommand{\sD}{\mathsf{D}}
\newcommand{\sG}{\mathsf{G}}
\newcommand{\sK}{\mathsf{K}}
\newcommand{\sS}{\mathsf{S}}
\newcommand{\sT}{\mathsf{T}}

\DeclareMathAlphabet{\mathpzc}{OT1}{pzc}{m}{it}


\newcommand{\bN}{\mathbb{N}}
\newcommand{\bZ}{\mathbb{Z}}


\newcommand{\mcD}{\mathcal{D}}
\newcommand{\mcH}{\mathcal{H}}

\newcommand{\mcS}{\mathcal{S}}
\newcommand{\mcX}{\mathcal{X}}
\newcommand{\mcY}{\mathcal{Y}}
\newcommand{\mcZ}{\mathcal{Z}}


\newcommand{\cl}[1]{\mathbf{#1}} 

\newcommand{\kk}{{\mathbf{k}}} 


\newcommand{\Db}{\sD^b}
\newcommand{\T}{\sT}
\newcommand{\Tc}{\sT^{\mathrm{c}}} 
\newcommand{\Kc}{\sK^{\mathrm{c}}} 
\newcommand{\KProjL}{\sK(\mathsf{Proj}\mbox{-}\Lambda)}
\newcommand{\KbprojL}{\sK^b(\mathsf{proj}\mbox{-}\Lambda)}
\newcommand{\KprojL}{\sK^{-,b}(\mathsf{proj}\mbox{-}\Lambda)}
\newcommand{\Kminus}[1]{\sK^{-,b}(\proj{#1})}
\newcommand{\strings}[1]{\mathsf{Str}\mbox{-}#1}


\newcommand{\Mod}[1]{\mathsf{Mod}\mbox{-}#1} 
\renewcommand{\mod}[1]{\mathsf{mod}\mbox{-}#1} 
\newcommand{\Ab}{\mathsf{Ab}} 
\newcommand{\Coh}[1]{\mathsf{Coh}( #1)} 
\newcommand{\ind}[1]{\mathsf{ind}(#1)}
\newcommand{\proj}[1]{\mathsf{proj}\mbox{-}#1}
\newcommand{\Proj}[1]{\mathsf{Proj}\mbox{-}#1}
\newcommand{\add}[1]{\mathsf{add}(#1)}
\newcommand{\Pinj}[1]{\mathsf{Pinj}(#1)} 
\newcommand{\Inj}[1]{\mathsf{Inj}(#1)} 
\newcommand{\Abs}[1]{\mathsf{Abs}\mbox{-}#1} 
\newcommand{\Def}[1]{\langle #1 \rangle} 
\newcommand{\fun}[1]{\mathsf{fun}(#1)}


\DeclareMathOperator{\coker}{\mathrm{coker}}
\DeclareMathOperator{\image}{\mathrm{im}}
\DeclareMathOperator{\Hom}{\mathrm{Hom}}
\DeclareMathOperator{\Ext}{\mathrm{Ext}}
\DeclareMathOperator{\id}{{\mathrm{id}}}
\DeclareMathOperator{\End}{{\mathrm{End}}}
\DeclareMathOperator{\modulo}{\mathrm{mod}}
\DeclareMathOperator{\gldim}{\mathrm{gldim}}
\DeclareMathOperator{\KG}{\mathrm{KGdim}} 
\DeclareMathOperator{\CB}{\mathrm{CB}\mbox{-}\mathrm{rank}} 
\DeclareMathOperator{\supp}{\mathrm{supp}}
\DeclareMathOperator{\length}{\mathrm{length}}


\newcommand{\Zg}{\mathrm{Zg}} 
\newcommand{\op}{^\mathrm{op}} 
\newcommand{\fp}{^\mathrm{fp}} 
\newcommand{\orth}{^{\perp}} 
\newcommand{\LLambda}{\Lambda(r,n,m)}
\newcommand{\iLLambda}{\Lambda(n,n,m)}
\newcommand{\thick}{\mathsf{thick}}
\newcommand{\homfrom}[1]{\mathsf{H}^{+}(#1)}
\newcommand{\infhomfrom}[1]{\mathsf{H}^{+}_{\infty}(#1)}
\newcommand{\homto}[1]{\mathsf{H}^{-}(#1)}
\newcommand{\infhomto}[1]{\mathsf{H}^{-}_{\infty}(#1)}
\newcommand{\mctZ}{\widetilde{\mathcal{Z}}}
\newcommand{\ccdots}{\cdot\!\cdot\!\cdot} 


\newcommand{\into}{\hookrightarrow}
\newcommand{\too}{\longrightarrow}
\newcommand{\rightlabel}[1]{\stackrel{#1}{\longrightarrow}}
\newcommand{\rightiso}{\rightlabel{\sim}}
\newcommand{\bij}{\stackrel{1-1}{\longleftrightarrow}}

\newcommand{\tri}[3]{#1\rightarrow #2\rightarrow #3\rightarrow \Sigma #1}
\newcommand{\trilabels}[6]{#1\stackrel{#4}{\longrightarrow} #2\stackrel{#5}{\longrightarrow} #3\stackrel{#6}{\longrightarrow} \Sigma #1}

\newcommand{\hocolim}{\underrightarrow{\mathsf{holim}}\,}
\newcommand{\colim}{\underrightarrow{\mathsf{lim}}\,}
\newcommand{\F}{\overrightarrow{F}}


\renewcommand{\phi}{\varphi}
\renewcommand{\epsilon}{\varepsilon}


\usepackage{tikz}
\usetikzlibrary{arrows,decorations.pathmorphing,decorations.pathreplacing}
\tikzset{vertex/.style={circle,fill=black,inner sep=1pt,outer sep=2pt},
         tinyvertex/.style={font=\scriptsize,minimum size=6pt},
         smallvertex/.style={inner sep=1pt, font=\small},
         >=stealth',
         leadsto/.style={-angle 90,decorate,decoration=snake,very thick},
         cut/.style={decorate,decoration=saw,very thick}}
\tikzset{
    partial ellipse/.style args={#1:#2:#3}{
        insert path={+ (#1:#3) arc (#1:#2:#3)}
    }
}


\begin{document}

\title[Ziegler spectrum]{The Ziegler spectrum for derived-discrete algebras}

\author{Kristin Krogh Arnesen}
\address{Institutt for Matematiske Fag, Norges Teknisk-Naturvitenskapelige Universitet, N-7491 Trondheim, Norway.}
\email{kristin.arnesen@math.ntnu.no}

\author{Rosanna Laking}
\address{School of Mathematics, The University of Manchester, Oxford Road, Manchester, M13 9PL, United Kingdom.}
\email{rosanna.laking@manchester.ac.uk}

\author{David Pauksztello}
\address{School of Mathematics, The University of Manchester, Oxford Road, Manchester, M13 9PL, United Kingdom.}
\email{david.pauksztello@manchester.ac.uk}

\author{Mike Prest}
\address{School of Mathematics, The University of Manchester, Oxford Road, Manchester, M13 9PL, United Kingdom.}
\email{mprest@manchester.ac.uk}

\keywords{Ziegler spectrum, pure-injective object, derived-discrete algebra, compactly generated triangulated category, homotopy category}

\subjclass[2010]{16G10, 18E30, 18E35}

\date{\today}

\begin{abstract}
Let $\Lambda$ be a derived-discrete algebra. We show that the Krull-Gabriel dimension of the homotopy category of projective $\Lambda$-modules, and therefore the Cantor-Bendixson rank of its Ziegler spectrum, is $2$, thus extending a result of Bobi\'nski and Krause \cite{BK}. We also describe all the indecomposable pure-injective complexes and hence the Ziegler spectrum for derived-discrete algebras, extending a result of Z.~Han \cite{HanThes}. Using this, we are able to prove that all indecomposable complexes in the homotopy category of projective $\Lambda$-modules are pure-injective, so obtaining a class of algebras for which every indecomposable complex is pure-injective but which are not derived pure-semisimple. 
\end{abstract}

\maketitle

{\small
\setcounter{tocdepth}{1}
\tableofcontents
}

\section*{Introduction}
\addtocontents{toc}{\protect{\setcounter{tocdepth}{1}}}   

We prove that if $\Lambda$ is a derived-discrete algebra then the Krull-Gabriel dimension of the homotopy category, $\KProjL$, of projective modules is 2, which is therefore also the Cantor-Bendixson rank of the Ziegler spectrum of $\KProjL$.  This extends a result of Bobi\'nski and Krause who computed this dimension for the derived category.  We also show that every indecomposable complex of $\KProjL$ is a homotopy string complex, in particular is pure-injective and hence a point of the Ziegler spectrum, which we also describe.  Our proof that there are no further indecomposable objects relies heavily on that description.

The axioms of triangulated categories are an abstraction of the structural properties enjoyed by derived categories in representation theory and algebraic geometry, and stable homotopy categories in algebraic topology. Thus, triangulated categories provide a common language for a broad part of modern mathematics. In representation theory triangulated categories play a central role in tilting theory, which is a technique to enable effective comparisons between the representation theory of different algebras. 

Whilst useful, triangulated categories often have complicated structure. One can gain some understanding by studying various aspects. For example, one may try to classify the thick or localising subcategories.  In this article we focus on another method of simplification: identifying the indecomposable pure-injective objects and the space, the Ziegler spectrum, originally arising from the model theory of modules, that they form. 
Methods involving purity and the Ziegler spectrum were extended to compactly generated triangulated categories by Beligiannis, Krause and others; see for example \cite{Belig, BenGnac, KraTel, NeeReview} and the references therein.

Let $\T$ be a compactly generated triangulated category (see Section~\ref{background}). We shall focus on three essentially equivalent approaches to studying the structure of $\T$; namely understanding the objects in the following diagram.
\[
\xymatrix{
 & *+[F]{\txt{Definable subcategories \\$\mcD$ of $\T$}} \ar@{<->}[dl] \ar@{<->}[dr] & \\
*+[F]{\txt{Closed subsets \\ $\mcD \cap \Zg(\T)$ of $\Zg(\T)$}} \ar@{<->}[rr] & & *+[F]{\txt{Serre subcategories \\of $\Ab(\T)\op \coloneqq ((\Tc)\op, \Ab)^{\mathrm{fp}}$}}
}
\]
Here $\Zg(\T)$, the \emph{Ziegler spectrum} of $\T$, is a topological space whose points are the (isomorphism classes of) indecomposable pure-injective objects of $\T$, and $\Ab(\T)\op \coloneqq ((\Tc)\op, \Ab)^{\mathrm{fp}}$ is the category of finitely presented contravariant functors from the subcategory $\Tc$ of compact objects, to abelian groups; this may be regarded as an abelian approximation of $\T$.
We refer to Section~\ref{background} for precise definitions of the concepts involved in the diagram above.

Associated to a topological space is its \emph{Cantor-Bendixson rank} and associated to an abelian category is its \emph{Krull-Gabriel dimension}. Applied to $\Zg(\T)$ and $\Ab(\T)\op$, each of these may be regarded as a measure of the complexity of the triangulated category $\T$. Under suitable conditions, the process of computing the Cantor-Bendixson rank of $\Zg(\T)$ and the Krull-Gabriel dimension of $\Ab(\T)\op$ proceed in tandem. Indeed, there is a strong link between the two computations. In this article, we shall use this link to study the homotopy category of projective left $\Lambda$-modules $\KProjL$ for $\Lambda$ a derived-discrete algebra.

Derived-discrete algebras were introduced by Vossieck in \cite{Vossieck} and occupy  a position between finite and tame representation type: $\Lambda$ is derived-discrete if and only if up to shift there are infinitely many indecomposable objects in $\Db(\mod{\Lambda})$ but there are no 1-parameter families; see \cite[\S 1.1]{Vossieck}. It was shown in \cite{Vossieck} that $\Lambda$ is derived-discrete if and only if it is piecewise hereditary of Dynkin type or a gentle one-cycle algebra satisfying a certain condition on its quiver. From now on when we refer to $\Lambda$ as a derived-discrete algebra we shall exclude the case that $\Lambda$ is piecewise hereditary of Dynkin type since these categories are very well understood. The philosophy behind studying derived-discrete algebras is that they provide a natural laboratory of computationally feasible but nontrivial derived categories in which to explore derived representation theory, and they provide a template for further study of derived categories of gentle algebras, these being important in cluster-tilting theory. As such, they have been the focus of much recent interest \cite{Bo2,BGS,BK,BPP1,BPP2,HanZhang,HanThes,Qin,Vossieck}.

From now on $\Lambda$ will be a derived-discrete algebra over an algebraically closed field $\kk$. Our first main result refers to the bottom vertices of the diagram above:

\begin{introtheorem}\label{intro:zgrank}
Let $\Lambda$ be a derived-discrete algebra.  Then $\KG(\KProjL)=2$, which is also the Cantor-Bendixson rank of the Ziegler spectrum of $\KProjL$.
\end{introtheorem}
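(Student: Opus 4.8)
The plan is to compute the Krull--Gabriel dimension of the functor category $\Ab(\KProjL)\op$ and transfer the result to the Cantor--Bendixson rank of $\Zg(\KProjL)$ via the standard dictionary recalled in the introductory diagram. Concretely, I would first establish that $\KProjL$ is compactly generated with $\Tc \simeq \KbprojL$ (equivalently $\Db(\mod\Lambda)$ up to the usual identification), so that $\Ab(\KProjL)\op$ is the category of finitely presented functors on $(\Db(\mod\Lambda))\op$. By the theorem of Bobi\'nski--Krause \cite{BK}, the Krull--Gabriel dimension of the analogous functor category for $\Db(\mod\Lambda)$ is known; the subtle point is that $\KProjL$ has more objects than $\Db(\mod\Lambda)$ (it contains unbounded acyclic complexes and various homotopy colimits), so one must check what the passage from the small triangulated category to its "big" compactly generated envelope does to the dimension. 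The inequality $\KG(\Ab(\KProjL)\op)\geq \KG(\Db(\mod\Lambda)) $-style bound should be straightforward from the fact that the relevant functor categories are closely related (the Serre subcategory lattices are compatible), giving the lower bound $\geq 2$; the work is in the upper bound $\leq 2$.

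For the upper bound I would proceed via a filtration-by-rank argument on $\Zg(\KProjL)$ directly, using the explicit description of all indecomposable pure-injective complexes that the paper establishes (the homotopy string complexes, together with the "infinite" pure-injective completions arising as homotopy colimits/limits of string complexes). The strategy is: (i) identify the isolated points of $\Zg(\KProjL)$ --- these should be exactly the compact indecomposables, i.e.\ the finite homotopy string complexes, matching the representation-finite-modulo-shift nature of derived-discrete algebras; (ii) after removing these, show that the points of the first Cantor--Bendixson derivative are the "one-sided infinite" string complexes, and that each of these is isolated in the derived space; (iii) check that the second derivative consists of finitely many "two-sided infinite" points (coming from the finitely many cyclic or band-type configurations in the gentle one-cycle quiver) which are then closed, so that the third derivative is empty. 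This yields $\mathrm{CB}\text{-}\mathrm{rank}(\Zg(\KProjL)) = 2$, and then the coincidence $\KG = \mathrm{CB}\text{-}\mathrm{rank}$ follows from the general correspondence (valid here because $\KProjL$ is compactly generated and the relevant spectrum is the full Ziegler spectrum, so the two ranks agree whenever both are finite).

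The main obstacle I expect is step (ii)--(iii): controlling the topology on the set of infinite string complexes well enough to see that the neighbourhood bases are the "right" ones. One needs that a one-sided infinite string complex is separated from all other points (except finitely many two-sided infinite ones) by a pp-type / a finitely presented functor, and this requires a careful analysis of morphisms between string complexes --- precisely the kind of combinatorial description of $\Hom$-spaces (graph maps, single and double maps between string complexes) that the body of the paper develops. In particular one must rule out the existence of further accumulation, i.e.\ show no indecomposable pure-injective has all the infinite strings accumulating to it except in the finitely many expected cases; this is where the hypothesis that $\Lambda$ is a gentle \emph{one-cycle} algebra (so the quiver has a bounded, essentially unique cyclic part) does the real work, bounding the rank at $2$ rather than allowing it to grow. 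A secondary technical point is ensuring the equivalence of "every indecomposable pure-injective is a homotopy string complex" is available before the rank computation, since the upper bound argument is cleanest when one already knows the complete point set of $\Zg(\KProjL)$; I would therefore sequence the paper so that the classification of indecomposable pure-injectives precedes, and is used in, the proof of Theorem~\ref{intro:zgrank}.
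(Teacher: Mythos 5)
There are two genuine gaps. First, your identification $\Tc\simeq\KbprojL$ (``equivalently $\Db(\mod{\Lambda})$'') is wrong precisely in the case that carries the new content of the theorem: when $\gldim\Lambda=\infty$ the compact objects of $\sK=\KProjL$ are, by J\o rgensen's theorem (Proposition~\ref{compact}), equivalent to $\Db(\mod{\Lambda\op})\op\simeq\Kminus{\Lambda\op}$, whose indecomposables are the right-infinite string complexes --- strictly more than the perfect complexes. Consequently no lower bound can be ``transferred'' from Bobi\'nski--Krause: they compute $\KG(\sD(\Mod{\Lambda}))=1$ when $\gldim\Lambda=\infty$, and it is exactly the extra compacts (the objects $Z^k_i$ on the right-hand beams of the $\mctZ$ components, which are compact but \emph{not} isolated, having CB-rank $1$ by Corollary~\ref{cor:isol} and Proposition~\ref{prop:CB1pureinj}) that push the dimension of $\sK$ up to $2$. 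Your step (i), ``isolated points $=$ compact indecomposables $=$ finite string complexes,'' fails for the same reason in the infinite global dimension case; only $\ind{\mcX}$ is isolated there, and the CB-rank-$1$ stratum is $\ind{\mctZ}$, not just the one-sided infinite complexes.

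Second, your sequencing is circular: you want to run the Cantor--Bendixson stratification of $\Zg(\sK)$ ``using the explicit description of all indecomposable pure-injective complexes,'' but that completeness statement is, in the paper, an \emph{output} of the KG/CB computation (Theorem~\ref{thm:KG-dim}(2)), not an input, and you offer no independent route to it. The paper's route is: show $\KG(\sK)$ is defined (Lemma~\ref{lem:KG-defined}, using Lemma~\ref{lem:coherent-subfunctors} and the Hom-hammocks to exclude densely ordered chains of subfunctors), deduce the isolation condition, and then use the bijection of Proposition~\ref{prop:KG-CB} between CB-rank-$n$ points and simple functors in $\Coh{\sK}_{\cl{X}_n}$; the simple functors are then identified at each level (AR triangles at rank $0$, the $1$-simple morphisms of Definition~\ref{def:1-simple} at rank $1$, the functors $(Z^k_j,-)$ at rank $2$) together with the unique point each isolates. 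This simultaneously computes the rank and proves the point set is exactly $\strings{\Lambda}$; a purely point-set analysis of the topology cannot bound the derivatives without already knowing the points. Relatedly, passing from $\CB\Zg(\sK)=2$ back to $\KG(\sK)=2$ itself needs the isolation condition (equivalently, that $\KG$ is defined), which your outline glosses over. Your rank-$2$ step, and the finite global dimension case (where $\sK\simeq\sD(\Mod\Lambda)$ and the BK computation does apply), are essentially in line with the paper.
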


\noindent
This extends the result of Bobi\'nski and Krause in \cite{BK}, who compute the Krull-Gabriel dimension of the abelianisation of the perfect complexes $\Ab(\sD(\Mod{\Lambda}))\op$, getting
\[
\KG(\sD(\Mod{\Lambda})) = 1 \text{ if } \gldim \Lambda = \infty \text{ and }
\KG(\sD(\Mod{\Lambda})) = 2 \text{ if } \gldim \Lambda < \infty.
\]

We then use the correspondences in the diagram to obtain a description of the points of the Ziegler spectrum and its topology.

The indecomposable objects of $\Db(\mod{\Lambda})$ were classified in \cite{BGS} and the morphisms studied in \cite{BPP1}. This fits inside a more general classification,  due to Bekkert and Merklen \cite{BM}, of indecomposable complexes in the bounded derived category of a gentle algebra and a more general description, in \cite{ALP}, of morphisms between the indecomposable complexes.
We build on these results, describing all the indecomposable complexes of $\KProjL$.  We show that every indecomposable is pure-injective, hence a point of the Ziegler spectrum of $\KProjL$.

\begin{introtheorem} \label{intro:indecomposable}
Let $\Lambda$ be a derived-discrete algebra. Then each indecomposable complex in $\KProjL$ is both:
\begin{enumerate}
\item a (possibly infinite) homotopy string complex;
\item pure-injective.
\end{enumerate}
Moreover, the homotopy string complexes, of which we have a complete list, are all the indecomposable pure-injective objects of $\KProjL$.
\end{introtheorem}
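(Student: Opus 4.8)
The plan is to prove the theorem in three stages, in the order: (I) construct all the homotopy string complexes and show that each is indecomposable and pure-injective; (II) use the equality $\KG(\KProjL)=2$ from Theorem~\ref{intro:zgrank}, together with the equivalences summarised in the Introduction, to show that these are \emph{all} the indecomposable pure-injective objects of $\KProjL$; and (III) bootstrap from (II) to show that every indecomposable complex of $\KProjL$ is pure-injective, which with (II) also gives that it is a homotopy string complex. Together these yield both clauses of the theorem and the ``Moreover''.

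For (I), I would start from the combinatorial notion of a homotopy string: a reduced walk in the Gabriel quiver of the gentle one-cycle algebra $\Lambda$, compatible with the relations, which may be finite, one-sided infinite, or two-sided infinite. To such a walk $w$ one attaches in the usual way a complex $P_w$ of projective $\Lambda$-modules, extending the constructions of \cite{BGS} and \cite{BM} from $\Db(\mod{\Lambda})$ to all of $\KProjL$. Indecomposability is checked by showing that $\End_{\KProjL}(P_w)$ is local, computing the relevant $\Hom$-spaces by the morphism combinatorics of \cite{ALP} (and \cite{BPP1} in the finite case), or by exhibiting each infinite $P_w$ as a homotopy (co)limit of the finite string complexes obtained by truncating $w$. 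Pure-injectivity of a finite homotopy string complex is immediate: it is a perfect complex, hence a compact object of $\KProjL$, and $\Hom_{\KProjL}(C,P_w)$ is finite-dimensional over $\kk$ for every compact $C$, so $P_w$ is endofinite and therefore pure-injective. For an infinite $P_w$ I would verify pure-injectivity directly from the morphism combinatorics --- no pure triangle out of $P_w$ splits off a nonzero complement --- or, equivalently, identify $P_w$ with the concrete indecomposable pure-injective produced at the corresponding stage of the Krull-Gabriel filtration of $\Ab(\KProjL)\op$ in the proof of Theorem~\ref{intro:zgrank}, where pure-injectivity is automatic.

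For (II), I would proceed along the Cantor-Bendixson filtration of $\Zg(\KProjL)$, which by Theorem~\ref{intro:zgrank} has length $2$, using the correspondence between closed subsets of $\Zg(\KProjL)$ and Serre subcategories of $\Ab(\KProjL)\op$. The isolated (Cantor-Bendixson rank $0$) points are the indecomposable compact objects; since $\Lambda$ is derived-discrete, $\Db(\mod{\Lambda})$ contains no band complexes, so these are precisely the finite homotopy string complexes, matching the simple objects at the bottom of the Krull-Gabriel filtration. The rank-$1$ points are the one-sided infinite homotopy string complexes, and the unique rank-$2$ point is the generic homotopy string complex associated with the unique cycle of the quiver, the analogue of the generic module of a tame algebra. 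At each level, completeness reduces to showing that the definable subcategory of $\KProjL$ generated by the homotopy string complexes identified so far is the whole of the relevant open subset --- equivalently, that the corresponding Serre subcategory of $\Ab(\KProjL)\op$ is everything --- which is verified against the explicit generators (the functors represented by finite string complexes) and the explicit Serre filtration constructed for Theorem~\ref{intro:zgrank}. I expect this bookkeeping step --- ruling out any further point hiding in the open sets that remain after the lower-rank points have been removed --- to be the main obstacle.

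For (III), granting the classification of $\Zg(\KProjL)$ obtained in (II), it is enough to show that an arbitrary indecomposable complex $X$ of $\KProjL$ is a homotopy string complex, since those are pure-injective by (I). Embed $X$ purely in its pure-injective hull $H(X)$; since $\KG(\KProjL)$ is finite there are no superdecomposable pure-injectives, so $H(X)$ is the pure-injective hull of a coproduct of indecomposable pure-injectives, each of which --- by (II) --- is a homotopy string complex. The feature special to a gentle \emph{one-cycle} algebra is that every one-sided infinite homotopy string is eventually periodic, wrapping around the unique cycle; this tightly controls the possible shapes of $H(X)$ and of the indecomposable pure sub-objects of a coproduct of homotopy string complexes, and a combinatorial analysis of the morphism spaces (again via \cite{ALP} and \cite{BPP1}) then forces $X$ itself to be a homotopy string complex, whence $X$ is pure-injective and $X\cong H(X)$. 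Finally, $\Lambda$ is nonetheless not derived pure-semisimple: a suitable countable coproduct of finite string complexes is not pure-injective, so not every object of $\KProjL$ is pure-injective, giving an instance of the phenomenon promised in the Introduction.
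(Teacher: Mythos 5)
Parts (I) and (II) of your plan essentially track the paper: the string complexes (finite, one-sided and two-sided) are constructed as in \cite{BM} and \cite{ALP}, shown indecomposable from the $\Hom$-space computations, and shown pure-injective; completeness of the list of indecomposable pure-injectives is obtained by running the Cantor--Bendixson/Krull--Gabriel analysis and identifying all simple functors in each localised category. Two corrections there. First, pure-injectivity of the \emph{infinite} string complexes needs no ad hoc ``no pure triangle splits'' verification: exactly as for the finite ones, the Hom-hammocks show $\Hom_{\sK}(C,P_w)$ is finite-dimensional for every compact $C$, so every string complex is endofinite and hence pure-injective (this is how the paper does it); your alternative of ``identifying $P_w$ with the point produced by the filtration'' is circular unless you already know $P_w$ is a point of the spectrum. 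Second, your identification of the rank-$0$ points with \emph{all} indecomposable compacts is wrong when $\gldim\Lambda=\infty$: there the compact objects are the right-infinite string complexes, the isolated points are only the perfect ones, and the nonperfect compacts $Z^k_i$ have CB-rank $1$; also there are $r$ two-sided string complexes of CB-rank $2$, not a unique ``generic'' one.

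The genuine gap is step (III). From a pure embedding of an indecomposable $X$ into its pure-injective hull $H(X)$, together with the absence of superdecomposables and the classification of the indecomposable pure-injectives, it does \emph{not} follow in any direct way that $X$ is itself a string complex; ``eventual periodicity of one-sided strings tightly controls the shapes'' is an assertion, not an argument, and you give no mechanism for comparing $X$ with a specific string complex. This is precisely where the paper has to work hardest, over three sections: (a) if $\supp(X)$ contains a compact point isolated in $\supp(X)$, that point is a direct summand of $X$ (via the isolation condition, the simple socle of the localised $(-,C)$, and fp-injectivity of $(-,X)$), so $X$ is compact; (b) if $\gldim\Lambda=\infty$ and $\supp(X)$ contains no compacts, then $X$ lies in the definable subcategory generated by a $\Sigma$-pure-injective object --- the key input being that $\bigoplus_i X^k_{i,\infty}$ is $\Sigma$-pure-injective, proved by a descending chain condition computation on $\Hom_{\sK}(C,-)$ --- hence $X$ is $\Sigma$-pure-injective; (c) if $\gldim\Lambda<\infty$ and $\supp(X)$ contains a CB-rank-$1$ point $N$ (the case where no $\Sigma$-pure-injectivity argument is available), one writes $(-,N)$ as a union of a chain of kernels $\ker(-,\beta_n)$ whose successive quotients become simple after localisation, lifts the embedding of the simple socle into $q(-,X)$ step by step using fp-injectivity, deduces a split monomorphism $(-,N)\to(-,X)$, and then constructs an explicit cochain-level splitting $h\colon N\to X$ by homotopy-string combinatorics, forcing $X\cong N$. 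None of this machinery is present in, or obviously recoverable from, your hull argument, so as it stands (III) --- and with it the statement that every indecomposable complex is pure-injective --- is unproved in your proposal.
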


We also describe the Cantor-Bendixson stratification of the Ziegler spectrum, identifying the simple functors and the corresponding points relatively isolated by them.  In the case of the derived-discrete algebras $\Lambda(n,n,0)$, $n\geq 1$ the Ziegler spectrum had already been described by Z. Han \cite{HanThes} using covering theory.  Our methods are very different, replying heavily on the description of morphisms in \cite{ALP} and techniques around the Ziegler spectrum and the functor category.  
We will require ideas and results coming from various directions; some we summarise and for all we give references to where the details can be found.

In Section~\ref{background} we describe purity in a compactly generated triangulated category $\T$.  We then go on to define the Ziegler spectrum, $\Zg(\T)$, of $\T$, both internally and through the restricted Yoneda functor to the functor category on the compact objects of $\T$.  We also describe the corresponding two, anti-equivalent, categories of coherent functors.  Localisation of abelian categories, Krull-Gabriel dimension, Cantor-Bendixson rank and the relations between these are briefly recalled in Section~\ref{CBrank-KGdim}.

Section~\ref{structure} reviews what we need on (homotopy) string complexes and the morphisms between them.  In particular, we describe the structure of the Auslander--Reiten quiver of the string complexes and determine the Hom-hammocks of each string complex.

In Section~\ref{KGdim-simples} we consider the triangulated category $\KProjL$ where $\Lambda$ is a derived-discrete algebra.  We first show that the Krull-Gabriel dimension is defined, then determine the simple functors in each quotient category obtained from the Krull-Gabriel filtration, and then identify the indecomposable complex relatively isolated by each such functor.

By that point we have essentially complete information on the topology of the Ziegler spectrum of $\KProjL$ and the remainder of the paper is devoted to showing that there are no more indecomposable objects in $\KProjL$.

Our proof of that is structured around the Cantor-Bendixson analysis of the Ziegler spectrum.  After some preliminary results in Section~\ref{ind-compact-supp}, we treat the infinite global dimension case in Section~\ref{ind-infinite}, this being simplified by there being unbounded, CB-rank 1, compact objects.  We have to work harder in the finite global dimension case, in Section~\ref{ind-finite}, where an analysis of the structure of the indecomposables of CB-rank 1 allows us to complete the proof.

\begin{convention}
Throughout, $\Lambda$ will be a derived-discrete algebra over an algebraically closed field $\kk$, which is not piecewise hereditary of Dynkin type. The homotopy category of (not necessarily bounded) complexes of projective left $\Lambda$-modules will be denoted by the shorthand $\sK \coloneqq \KProjL$.
\end{convention}

\begin{acknowledgments}
The first author kindly acknowledges The University of Manchester for their hospitality on a research visit there where this work was begun.  The second author was supported by a School of Mathematics, University of Manchester, Scholarship.  The last two named authors acknowledge the financial support of the EPSRC through grant EP/K022490/1.
\end{acknowledgments}

\section{The Ziegler spectrum of a compactly generated triangulated category} \label{background}

Throughout this section $\T$ will be a compactly generated triangulated category with 
set-indexed coproducts; we shall denote the suspension functor by $\Sigma \colon \T \to \T$. For the convenience of the reader we briefly recall the definition here. A standard reference for this material is \cite{Neeman-book}.

An object $C \in \T$ is \emph{compact} if, for every set ${\{ D_i \mid i \in I\}}$ of objects in $\T$, the canonical morphism 
\[ 
\Hom_\T(C, \bigoplus_{i\in I} D_i) \to \bigoplus_{i\in I} \Hom_\T(C, D_i)
\] 
is an isomorphism.  We will denote the full (triangulated) subcategory of compact objects in $\T$ by $\Tc$. We say that $\T$ is \emph{compactly generated} if $\Tc$ is skeletally small and, for every object $0 \neq D$ in $\T$, there exists a nonzero morphism $C \to D$ for some $C\in \Tc$.

\subsection{Purity in a compactly generated triangulated category}

Notions of purity and the Ziegler spectrum were first developed in the context of categories of modules.  They were extended to compactly generated triangulated categories $\T$ by various authors, in particular in \cite{KraTel} (see also \cite{Belig} and Neeman's review \cite{NeeReview} of \cite{BenGnac} by Benson and Gnacadja). 
For an overview of purity in compactly generated triangulated categories we refer to \cite[Ch.~17]{PreNBK}.

Consider the category of contravariant functors from $\Tc$ to the category $\Ab$ of abelian groups:
\[
\Mod{\Tc} = ((\Tc)^{\op}, \Ab).  
\]

\begin{definition}
Let $M,N$ be objects of $\T$ and $f\colon M \to N$ a morphism in $\T$. The \emph{restricted Yoneda functor} $Y \colon \T \to \Mod{\Tc}$  is defined by 
\[ 
N \mapsto \Hom_\T(-,N)|_{\Tc}
\quad \text{and} \quad
Y(f) = \Hom_\T(-,f)|_{\Tc}.
\] 
\end{definition}

\begin{convention} \label{convention}
Unless otherwise specified, the shorthand $(-,N)$ stands for the restricted \emph{contravariant} functor $\Hom_{\sT}(-,N)|_{\Tc}$. However, the shorthand $(N,-)$ will be used for the \emph{unrestricted covariant} functor $\Hom_{\sT}(N,-)$, except briefly in Theorem~\ref{thm:Zg-homeomorphism} and Remark~\ref{rem: lim ext represent}. Similarly for the action on morphisms.
\end{convention}

A monomorphism $f \colon M \to N$ in $\T$ is a \emph{pure-monomorphism} if $(-,f)$ is a monomorphism in $\Mod{\Tc}$.  We will take the following equivalent statements as the definition of a \emph{pure-injective object} in $\T$.  

\begin{proposition}[{\cite[\S 1.4]{KraTel}}]\label{prop: pure injective}
Let $\T$ be a compactly generated triangulated category and let $N$ be an object of $\T$.  Then the following statements are equivalent: \begin{enumerate}
\item The functor $(-,N)$ is an injective object of $\Mod{\Tc}$.
\item For every $M \in \T$ the induced morphism $(M,N) \to \big( (-,M), (-,N) \big)$ is an isomorphism.
\item The object $N$ is injective over all pure-monomorphisms.  In other words, for every pure-monomorphism $g \colon L \to M$ and every morphism $f \colon L \to N$, there exists a morphism $h \colon M \to N$ such that the following diagram commutes:  \[ \xymatrix{ L \ar[r]^{g}_{pure} \ar[d]_f & M \ar@{.>}[dl]^h \\ N &
}\]
\end{enumerate}
\end{proposition}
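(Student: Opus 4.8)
The plan is to reconstruct this from the standard formalism of purity, working inside the functor category $\mathcal{A} \coloneqq \Mod\Tc = ((\Tc)\op, \Ab)$. This is a Grothendieck category in which the representables $h_C \coloneqq (-,C)$, $C\in\Tc$, are finitely generated projective and generate, so $\mathcal{A}$ has enough projectives and injectives and every object has a projective resolution by coproducts of representables. I would first record the basic properties of the restricted Yoneda functor $Y$: it preserves coproducts (by compactness) and products (which exist in $\T$ by Brown representability); combined with the Yoneda isomorphism $\Hom_\mathcal{A}(h_C,h_D)\cong\Hom_\T(C,D)$ this shows that for $P$ a coproduct of compacts (or any \emph{pure-projective}) and any $N$, the map $\Hom_\T(P,N)\too\Hom_\mathcal{A}(h_P,h_N)$ is an isomorphism. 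I would also use: $g\colon L\to M$ is a pure-monomorphism iff $(-,g)$ is monic, in which case a triangle $L\to M\to Q\to\Sigma L$ induces a short exact sequence $0\to h_L\to h_M\to h_Q\to 0$ in $\mathcal{A}$ and the connecting map of the triangle is \emph{phantom} (killed by $Y$); and every $M$ admits a \emph{pure-projective presentation}, a triangle $M'\xrightarrow{a}P\xrightarrow{b}M\xrightarrow{c}\Sigma M'$ with $P$ a coproduct of compacts and $a$ a pure-monomorphism, giving a projective presentation $0\to h_{M'}\to h_P\to h_M\to 0$ of $h_M$.

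The next step is the observation that (2) is equivalent to the vanishing of all phantom maps into $N$: one direction is the definition of phantom, and for the other a diagram chase with a pure-projective presentation shows that when no phantom lands in $N$, $\Hom_\T(M,N)\to\Hom_\mathcal{A}(h_M,h_N)$ is also onto (its cokernel embeds into the phantom maps out of the first syzygy of $M$). Granting this, $(2)\Leftrightarrow(3)$ is routine: given (2) and a pure-monomorphism $g\colon L\to M$, complete to a triangle and observe its connecting map $\Sigma^{-1}Q\to L$ is phantom (since $g$ is pure), so for any $f\colon L\to N$ the composite $\Sigma^{-1}Q\to L\xrightarrow{f}N$ is phantom and hence zero by (2), which is exactly the obstruction to extending $f$ along $g$; conversely, given (3) and a phantom $h\colon M\to N$, write $h=\tilde h c$ through the connecting map of a pure-projective presentation, use (3) to extend $\tilde h$ along the pure-monomorphism $\Sigma a$, and deduce $h=\tilde h c=\psi\,(\Sigma a)\,c=0$.

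It remains to connect these with (1). For $(1)\Rightarrow(2)$ I would show an injective $h_N$ receives no phantom maps. Given a phantom $h\colon M\to N$, iterate pure-projective presentations to obtain syzygies $M=M_0,M_1,M_2,\dots$ with shifted connecting maps $p_{n+1}\colon\Sigma^n M_n\to\Sigma^{n+1}M_{n+1}$, each phantom. Using injectivity of $h_N$ one extends $h$ inductively to a compatible family of phantom maps $\theta_n\colon\Sigma^n M_n\to N$ with $\theta_0=h$ and $\theta_{n+1}p_{n+1}=\theta_n$ (at each step extend the previous map across the pure-monomorphism into $\Sigma^{n+1}P_n$ using injectivity, obtaining it up to a phantom, then subtract that phantom off). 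This cocone factors through the homotopy colimit of the sequence $(\Sigma^n M_n,p_n)$, which is zero: applying $\Hom_\T(C,-)$ for a compact $C$ turns it into the colimit of a tower whose transition maps are all zero, so it vanishes by compact generation; hence $h=\theta_0$ factors through $0$ and $h=0$. Finally, for $(3)\Rightarrow(1)$ I would invoke the structural fact that \emph{$\T$ has enough pure-injectives}: every $N$ has a pure-monomorphism $\nu\colon N\to\hat N$ with $h_{\hat N}$ injective. To get this, note $\mathcal{A}$ is cogenerated by the injective functors $D\mapsto\Hom_\Ab(\Hom_\T(C,D),I)$ ($C$ compact, $I$ an injective abelian group), each of which extends to a cohomological functor $\T\op\to\Ab$ taking coproducts to products (because $C$ is compact) and is hence representable, say by $W_{C,I}$, by Brown representability; so $h_N$ embeds into a product of such functors, which by preservation of products is $h_{\hat N}$ with $\hat N$ the corresponding product in $\T$ and $h_{\hat N}$ injective; the embedding is $(-,\nu)$ for some $\nu$ (using surjectivity of $Y$ onto $\Hom_\mathcal{A}(h_N,h_{\hat N})$, which holds by the already-proved $(1)\Rightarrow(2)$ applied to $\hat N$), and $(-,\nu)$ monic forces $\nu$ pure. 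Then (3) lets us extend $\id_N$ along $\nu$, so $\nu$ splits and $h_N$ is a direct summand of the injective $h_{\hat N}$, hence injective. This closes the cycle $(1)\Rightarrow(2)\Leftrightarrow(3)\Rightarrow(1)$.

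The step I expect to be the genuine obstacle is the existence of enough pure-injectives: the rest is formal manipulation inside $\mathcal{A}$, whereas locating an injective object of $\mathcal{A}$ inside the image of $Y$ is precisely where compact generation and Brown representability must be invoked. A secondary delicate point is the inductive construction in $(1)\Rightarrow(2)$, where one must keep careful track of the phantom ``error terms'' and of signs so that the $\theta_n$ really form a cocone; this is needed because the syzygies $M_n$ of a pure-projective presentation are not themselves pure-projective, so the argument genuinely runs along the whole tower and relies on the vanishing of its homotopy colimit.
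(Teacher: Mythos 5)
The paper gives no proof of Proposition~\ref{prop: pure injective}: it is quoted from Krause \cite{KraTel}, so the comparison is with the argument there rather than with anything in this text. Your proof is correct, and it differs from Krause's in one main step. For $(1)\Rightarrow(2)$ Krause observes that when $(-,N)$ is injective the functor $X \mapsto \Hom\big((-,X),(-,N)\big)$ is cohomological and sends coproducts to products, hence is representable by Brown representability, and then compares the representing object with $N$ on compact objects using compact generation; your replacement -- iterating pure-projective presentations, correcting each factorisation $\theta_{n+1}$ to be phantom, and killing the resulting cocone against the vanishing homotopy colimit of a tower with phantom transition maps -- is a genuinely different ``phantom tower'' argument and it works. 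The only step whose wording is compressed is that correction: to make it precise, extend $Y(\theta_{n+1})$ along the monomorphism $Y(\Sigma^{n+1}a_n)$ into the pure-projective $(-,\Sigma^{n+1}P_n)$ using injectivity of $(-,N)$, realise the extension as $Y(\epsilon)$ for an honest $\epsilon\colon \Sigma^{n+1}P_n \to N$, and replace $\theta_{n+1}$ by $\theta_{n+1}-\epsilon\,\Sigma^{n+1}a_n$; this is phantom and still satisfies $\theta_{n+1}p_{n+1}=\theta_n$ because $(\Sigma a_n)\circ c_n=0$ in the presentation triangle. Your $(2)\Leftrightarrow(3)$ and the surjectivity argument via pure-projective presentations are exactly the standard formal manipulations, and your $(3)\Rightarrow(1)$ via the Brown--Comenetz-type objects $W_{C,I}$ (injective cogenerators of $\Mod{\Tc}$ realised in the image of the restricted Yoneda functor by Brown representability) is essentially the same device Krause uses to identify pure-injectives with injective functors; your diagnosis that this ``enough pure-injectives'' step is where compact generation and Brown representability genuinely enter is accurate, and there is no circularity since you apply the already-proved $(1)\Rightarrow(2)$ to $\hat N$, not to $N$. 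One small reading point you got right: ``pure-monomorphism'' must be taken to mean a map $g$ with $(-,g)$ monic, not a monomorphism of $\T$ in the literal sense (those all split), otherwise condition (3) would be vacuous.
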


It then follows (see \cite[Cor.~1.9]{KraTel}) that the restricted Yoneda functor induces an equivalence of categories 
\begin{equation*} \label{pinj-inj-equivalence}
\Pinj{\T} \simeq \Inj{\Mod{\Tc}}.
\end{equation*}
where $\Pinj{\T}$ is the full subcategory of pure-injective objects of $\T$ and $\Inj{\Mod{\Tc}}$ is the full subcategory of injective objects in $\Mod{\Tc}$.

\subsection{The Ziegler Spectrum of $\T$}

To define the compact open sets in the Ziegler spectrum, we require the notion of coherent functors from $\T$ to $\Ab$. A functor $F \colon \T \to \Ab$ is \emph{coherent} if there exists an exact sequence 
\[ (D, -) \to (C,-) \to F \to 0 \]
for some $C, D \in \Tc$.  The category of coherent functors from $\T$ to $\Ab$ will be denoted $\Coh{\T}$.

\begin{definition}\label{def:opens}
The \emph{Ziegler spectrum} $\Zg(\T)$ of $\T$ is a topological space whose points are the isomorphism classes of indecomposable pure-injective objects in $\T$.  For each $F \in \Coh{\T}$, we define the set \[ (F) := \{ N\in \Zg(\T) \mid F(N) \neq 0\}. \]  These sets form a basis of (compact) open sets.
\end{definition}

\begin{definition} \label{def:definable}
A full subcategory $\mcD$ of $\T$ is \emph{definable} if it is of the form \[ \mcD = \{ M \in \T \mid F_i(M) = 0 \} \] for some set $\{ F_i \mid i \in I \}$ of coherent functors.

For a collection of objects $\sC \subseteq \T$ the \emph{definable subcategory generated by $\sC$} is
\[
\Def{\sC} \coloneqq \{ M \in \T \mid F \in \Coh{\T} \text{ and } F(C) = 0 \text{ for all } C \in \sC \implies F(M) =0 \}.
\]
In particular, a subcategory $\mcX$ is definable if and only if $\Def{\mcX} = \mcX$.
\end{definition}

In \cite[Prop.~7.3]{KraStab} Krause shows that there is a natural bijection between the definable subcategories of $\T$ and the closed subsets of $\Zg(\T)$.  As a result, each closed subset of $\Zg(\T)$ has the form $\mcD \cap \Zg(\T)$ for some definable subcategory $\mcD$ of $\T$.  

\subsection{The Ziegler spectrum of $\Mod{\Tc}$} 

Since $\Mod{\Tc}$ is a locally finitely presented abelian category we can make use of the corresponding definitions of purity, Ziegler spectrum and definable subcategory.  Although $\T$ is not locally finitely presented, nor even necessarily finitely accessible, these definitions take exactly the same form for a compactly generated triangulated category as they do in those other cases; so the reader may obtain the corresponding definitions for $\Mod{\Tc}$ by replacing $\T$ by $\Mod{\Tc}$ and $\Tc$ by $\mod{\Tc}$, where the last is the full subcategory of finitely presented functors.

\begin{definition} \label{def: absolutely pure fp inj} 
A functor $F \in \Mod{\Tc}$ is \emph{absolutely pure} if every embedding $F \to G$ in $\Mod{\Tc}$ is a pure-monomorphism.  We say that $F$ is \emph{fp-injective} if it is injective over all embeddings $\tau \colon G \to H$ such that $\coker\tau$ is finitely presented.
\end{definition}

These two properties are well-known to be equivalent, see for example \cite[Prop.~2.3.1]{PreNBK}.  

\begin{proposition} \label{prop: absolutely pure fp inj}
For $F \in \Mod{\Tc}$, the following are equivalent \begin{enumerate}
\item $F$ is absolutely pure;
\item $F$ is fp-injective;
\item $\Ext^1(G, F) = 0$ for each $G \in \mod{\Tc}$.
\end{enumerate}
\end{proposition}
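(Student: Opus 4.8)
Since the equivalence of (1) and (2) holds in any locally finitely presented abelian category (see \cite[Prop.~2.3.1]{PreNBK}, as already noted above), the plan is to establish the round trip $(2)\Rightarrow(3)\Rightarrow(2)$ by elementary homological algebra in $\Mod{\Tc} = ((\Tc)\op,\Ab)$. The only structural facts invoked are that $\Mod{\Tc}$ is abelian — so that $\Ext^1$ classifies (Yoneda) extensions and that short exact sequences admit pushouts and pullbacks in the usual way — which is immediate since it is a functor category into $\Ab$.

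For $(2)\Rightarrow(3)$: fix $G\in\mod{\Tc}$ and an arbitrary extension $0\to F\to H\to G\to 0$ in $\Mod{\Tc}$. Since $G$ is finitely presented, the monomorphism $F\to H$ has finitely presented cokernel, so fp-injectivity of $F$ yields a retraction $H\to F$; hence the extension splits. As the extension was arbitrary, $\Ext^1(G,F)=0$, and as $G$ was an arbitrary object of $\mod{\Tc}$, statement (3) follows.

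For $(3)\Rightarrow(2)$: let $\tau\colon G\to H$ be a monomorphism with $C\coloneqq\coker\tau$ finitely presented, and let $f\colon G\to F$ be arbitrary; we must extend $f$ along $\tau$. Form the pushout of $0\to G\xrightarrow{\tau} H\to C\to 0$ along $f$, producing a commutative ladder with bottom row $0\to F\xrightarrow{\iota} P\to C\to 0$ and a morphism $g\colon H\to P$ with $g\tau=\iota f$. The bottom row represents a class in $\Ext^1(C,F)$, which vanishes by (3) because $C$ is finitely presented; choose a retraction $s\colon P\to F$ with $s\iota=\id_F$. Then $sg\colon H\to F$ satisfies $(sg)\tau=s\iota f=f$, so $F$ is injective over $\tau$. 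Since $\tau$ ranged over all embeddings with finitely presented cokernel, $F$ is fp-injective, which is (2).

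I anticipate no genuine obstacle: the argument is routine, and the only delicate points are bookkeeping — that $\Ext^1$ in $\Mod{\Tc}$ carries its usual extension-theoretic meaning, and that fp-injectivity is precisely the lifting property needed to split the pushed-out sequence in one direction and to be recovered from that splitting in the other. One could alternatively prove $(1)\Leftrightarrow(3)$ directly, testing purity of an embedding $F\to B$ against finitely presented objects and, via pullback, rephrasing the resulting lifting problem as the vanishing of $\Ext^1(X,F)$ for finitely presented $X$; but routing through (2) is shorter given that $(1)\Leftrightarrow(2)$ is already on record.
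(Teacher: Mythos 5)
Your argument is correct. The paper itself offers no proof of this proposition: it records the equivalence of (1) and (2) as standard, citing \cite[Prop.~2.3.1]{PreNBK}, and treats the whole statement as well known. Your contribution is to import $(1)\Leftrightarrow(2)$ from that reference and supply the elementary Yoneda-style argument for $(2)\Leftrightarrow(3)$, and both directions are sound: for $(2)\Rightarrow(3)$ the inclusion $F\to H$ in any extension $0\to F\to H\to G\to 0$ with $G\in\mod{\Tc}$ has finitely presented cokernel, so extending $\id_F$ along it gives a retraction and the sequence splits; for $(3)\Rightarrow(2)$ the pushout of a monomorphism along $f\colon G\to F$ is again a monomorphism with the same cokernel $C\in\mod{\Tc}$, so the vanishing of $\Ext^1(C,F)$ splits the pushed-out sequence and the retraction composed with the pushout map extends $f$. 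The only hygiene points — that $\Mod{\Tc}$ is a Grothendieck (indeed locally coherent) functor category, so $\Ext^1$ agrees with Yoneda extensions and pushouts/pullbacks of short exact sequences behave as usual — are exactly as you say, so your write-up is a legitimate self-contained replacement for the citation.
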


Definition~\ref{def: absolutely pure fp inj} and Proposition~\ref{prop: absolutely pure fp inj} work when $\Tc$ is replaced with any skeletally small preadditive category but since we are considering a triangulated category we have the following additional characterisation of absolutely pure objects, see \cite[Lem.~2.7]{KraTel}.

\begin{proposition} \label{prop:abseqflat}
A functor $F \in \Mod{\Tc}$ is absolutely pure if and only if it is flat. In particular, every absolutely pure functor $F$ is a direct limit of representable functors. 
\end{proposition}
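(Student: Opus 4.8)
The strategy is to run the cycle of implications
\[
\text{flat} \;\Longrightarrow\; \text{(direct limit of representable functors)} \;\Longrightarrow\; \text{absolutely pure} \;\Longrightarrow\; \text{flat};
\]
the first implication is Lazard's theorem, valid in any functor category $\Mod\Tc$, which also yields the final ``in particular'' clause, while the remaining two are where the hypothesis that $\Tc$ is \emph{triangulated} is used essentially. Throughout I use Proposition~\ref{prop: absolutely pure fp inj}, so that ``$F$ absolutely pure'' reads as ``$\Ext^1(K,F)=0$ for every $K\in\mod\Tc$'', and I use that $\Mod\Tc$ is a Grothendieck category while $\mod\Tc$ is abelian (the latter because $\Tc$, being triangulated, has weak kernels). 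The structural fact underlying both non-trivial implications is that \emph{every $K\in\mod\Tc$ fits into a short exact sequence $0\to G\to(-,E)\to K\to 0$ with $(-,E)$ representable}: writing $K=\coker\!\big((-,g)\big)$ for a morphism $g\colon D\to E$ of $\Tc$ and embedding $g$ into a triangle $\trilabels{C}{D}{E}{f}{g}{h}$ (every morphism of a triangulated category occurs as the second morphism of a triangle), the long exact sequences obtained from the cohomological functors $\Hom_\T(Y,-)$, $Y\in\Tc$, identify $\coker\!\big((-,f)\big)$ with $\image\!\big((-,g)\big)=\ker\!\big((-,h)\big)\subseteq(-,E)$ and its cokernel in $(-,E)$ with $\coker\!\big((-,g)\big)=K$; so one takes $G\coloneqq\coker\!\big((-,f)\big)$.

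For ``direct limit of representables $\Rightarrow$ absolutely pure'', let $F=\varinjlim_i(-,X_i)$ be such a colimit and let $K\in\mod\Tc$. Applying $\Hom(-,F)$ to the short exact sequence above and using that $(-,E)$ is projective gives $\Ext^1(K,F)=\coker\!\big(\Hom((-,E),F)\to\Hom(G,F)\big)$, so it suffices to extend every morphism $\phi\colon G\to F$ along $G\hookrightarrow(-,E)$. Since $G$ is finitely presented, $\phi$ factors through some $(-,X_i)$; by the Yoneda lemma and the cokernel presentation of $G$, this factoring morphism $G\to(-,X_i)$ is given by a morphism $\delta\colon D\to X_i$ with $\delta f=0$, hence---as $C\xrightarrow{f}D\xrightarrow{g}E$ is part of a triangle---of the form $\delta=eg$ for some $e\colon E\to X_i$; then $(-,e)\colon(-,E)\to(-,X_i)$ restricts along $G=\image\!\big((-,g)\big)\hookrightarrow(-,E)$ to the factoring morphism, and composing with $(-,X_i)\to F$ extends $\phi$. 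Hence $\Ext^1(K,F)=0$ for all $K\in\mod\Tc$, so $F$ is absolutely pure.

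For ``absolutely pure $\Rightarrow$ flat'', embed $F$ into its injective hull $Q$ in the Grothendieck category $\Mod\Tc$; this monomorphism is pure by the definition of absolute purity. By the equivalence $\Pinj{\T}\simeq\Inj{\Mod\Tc}$ induced by the restricted Yoneda functor, $Q\cong(-,N)$ for some pure-injective $N\in\T$, and I claim $(-,N)$ is \emph{flat}. Indeed, the comma category $\Tc\downarrow N$ is filtered: it has finite coproducts, since $\Tc$ is closed under finite coproducts in $\T$, and any parallel pair of morphisms is coequalized by embedding their difference in a triangle; moreover the canonical morphism $\varinjlim_{(C,c)\in\Tc\downarrow N}(-,C)\to(-,N)$ is an isomorphism (surjectivity is clear, and injectivity again uses that a vanishing composite $Y\to C\to N$ is killed after passing along a triangle completing $Y\to C$), so $(-,N)$ is a direct limit of representable functors and hence flat by Lazard's theorem. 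Finally a pure subobject of a flat functor is flat, by the long exact $\operatorname{Tor}$-sequence in $\Mod\Tc$, so $F$ is flat.

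The step I expect to be the real obstacle is the triangulated input itself, which enters at exactly two points: that finitely presented functors are ``co-presentable'' ($0\to G\to(-,E)\to K\to 0$) and that every comma category $\Tc\downarrow N$ is filtered. Neither holds for a general---even coherent---small preadditive category in place of $\Tc$; already when $\Tc$ is a single ring such as $\bZ$ the flat (torsion-free) and absolutely pure (divisible) modules do not coincide. So the proof must genuinely exploit completion of morphisms to triangles, and the delicate, if routine, part is the dual bookkeeping of which morphism of a triangle plays which role in these two arguments.
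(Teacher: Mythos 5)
Your argument is correct, but it is worth saying up front that the paper does not prove this statement at all: it is quoted from Krause, \cite[Lem.~2.7]{KraTel}, so what you have written is a self-contained replacement for that citation rather than a variant of an argument in the text. Your route also differs from Krause's. The common core is the triangulated input you isolate correctly: completing $g\colon D\to E$ (resp.\ rotating the triangle) shows every $K\in\mod{\Tc}$ is both presented and copresented by representables, and your ``direct limit of representables $\Rightarrow$ fp-injective'' step, done by lifting $\delta f=0$ to $\delta=eg$, is essentially the standard one. For the converse, Krause stays elementary: he embeds $K$ into a representable and uses fp-injectivity of $F$ to show that every morphism from a finitely presented functor to $F$ factors through a representable, which is the Lazard--Govorov criterion for flatness. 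You instead pass through the injective hull, invoke the equivalence $\Pinj{\T}\simeq\Inj{\Mod{\Tc}}$ (so, implicitly, Brown representability) to write it as $(-,N)$, prove $(-,N)$ flat via the filtered comma category $\Tc\downarrow N$, and finish with ``pure subobject of flat is flat'' via the Tor long exact sequence. All of these steps check out (your filteredness and colimit verifications for $\Tc\downarrow N$ are the right ones, and there is no circularity, since the equivalence is established in \cite{KraTel} before and independently of Lemma~2.7 and is quoted as background in this paper); the cost is heavier machinery, the benefit is that you reuse results the paper has already stated. One small gloss: Lazard's theorem in $\Mod{\Tc}$ gives filtered colimits of finitely generated \emph{projectives}; to upgrade these to representables, as both your first implication and the ``in particular'' clause require, you should note that $\Tc$ has finite coproducts and is idempotent-complete (true here, since $\T$ has coproducts and $\Tc$ is its subcategory of compacts), so that summands of representables are representable. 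This is a standard point, not a gap.
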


We will denote the full subcategory of $\Mod{\Tc}$ consisting of the absolutely pure objects by $\Abs{\Tc}$.  Because $\Mod{\Tc}$ is locally coherent, this is a definable subcategory (see \cite[Thm.~3.4.24]{PreNBK}), so we let $\Zg(\Abs{\Tc})$ denote the closed subset $\Abs{\Tc} \cap \Zg(\Mod{\Tc})$ with the topology induced from $\Zg(\Mod{\Tc})$.

\subsection{The spaces $\Zg(\T)$ and $\Zg(\Abs{\Tc})$ are homeomorphic}

In order to prove the main theorem of this section, we will need the following result. 

\begin{lemma}[{\cite[Lem.~7.2]{KraStab}}] \label{lem:functors}
Let $\T$ be a compactly generated triangulated category. Then there is an equivalence of categories
\[
(\mod{\Tc})\op \rightiso \Coh{\T} \text{ given by } F \mapsto F^\vee ,
\]
which is defined on objects $M \in \T$ by $F^\vee (M) \coloneqq (F,(-,M))$.
\end{lemma}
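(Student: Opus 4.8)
The plan is to exhibit the equivalence $(\mod{\Tc})\op \to \Coh{\T}$ explicitly via the assignment $F \mapsto F^\vee$, where $F^\vee(M) = (F, (-,M))$, i.e.\ $F^\vee = \Hom_{\Mod{\Tc}}(F, Y(-))$ with $Y$ the restricted Yoneda functor, and to check it is well-defined, fully faithful, and essentially surjective. First I would verify that $F^\vee$ is indeed a coherent functor $\T \to \Ab$ for each $F \in \mod{\Tc}$. Since $F$ is finitely presented, there is a presentation $(D,-)|_{\Tc} \to (C,-)|_{\Tc} \to F \to 0$ with $C, D \in \Tc$; applying the left-exact functor $\Hom_{\Mod{\Tc}}(-, Y(M))$ gives an exact sequence
\[
0 \to F^\vee(M) \to \Hom_{\Mod{\Tc}}((C,-)|_{\Tc}, Y(M)) \to \Hom_{\Mod{\Tc}}((D,-)|_{\Tc}, Y(M)).
\]
By Yoneda, $\Hom_{\Mod{\Tc}}((C,-)|_{\Tc}, Y(M)) \cong Y(M)(C) = \Hom_\T(C,M) = (C,-)(M)$, naturally in $M$, and similarly for $D$. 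Hence $F^\vee$ is the kernel of a map $(C,-) \to (D,-)$ of representable functors on $\T$; dualising the cokernel presentation, $F^\vee$ sits in an exact sequence $(D',-) \to (C',-) \to F^\vee \to 0$ (taking a further step of a projective resolution, or observing that a kernel of a map between representables is again coherent since $\mod{\Tc}$ and $\Coh{\T}$ are abelian), so $F^\vee \in \Coh{\T}$.

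Next I would establish full faithfulness. A morphism $g\colon F \to F'$ in $\mod{\Tc}$ induces $g^\vee\colon F'^\vee \to F^\vee$ by precomposition, which is clearly functorial and contravariant, giving the functor $(\mod{\Tc})\op \to \Coh{\T}$. To see it is full and faithful, I would use the presentation of $F$ together with the natural isomorphism $\Hom_{\Mod{\Tc}}((C,-)|_{\Tc}, Y(-)) \cong (C,-)$ noted above: a morphism $(C,-) \to (C'',-)$ between representables on $\T$ corresponds, by the (ordinary) Yoneda lemma on $\T$ restricted to $\Tc$, exactly to a morphism $(C'',-)|_{\Tc} \to (C,-)|_{\Tc}$ in $\Mod{\Tc}$; a five-lemma / diagram chase across the two presentations then shows $\Hom_{\mod{\Tc}}(F,F') \cong \Hom_{\Coh{\T}}(F'^\vee, F^\vee)$. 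For essential surjectivity, given $G \in \Coh{\T}$ with presentation $(D,-) \to (C,-) \to G \to 0$, the map $(D,-) \to (C,-)$ is, by the same Yoneda correspondence, of the form $(-,f)^\vee$ for a unique morphism $f\colon C \to D$ in $\Tc$, hence (restricting to $\Tc$) arises from a morphism $(D,-)|_{\Tc} \to (C,-)|_{\Tc}$ in $\mod{\Tc}$; letting $F$ be its cokernel in $\mod{\Tc}$, one checks $F^\vee \cong G$ by comparing the two presentations.

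The main obstacle is the bookkeeping around the two Yoneda lemmas in play: the ordinary Yoneda lemma on the triangulated category $\T$ (identifying morphisms $C \to D$ of compacts with natural transformations $(D,-) \to (C,-)$ on $\T$), and the Yoneda lemma in the functor category $\Mod{\Tc}$ (identifying $\Hom_{\Mod{\Tc}}((C,-)|_{\Tc}, H) \cong H(C)$ for $H \in \Mod{\Tc}$). One must check carefully that the natural isomorphism $F^\vee|_{\text{representables}} \cong (C,-)$ is genuinely natural in $M \in \T$ and compatible with the connecting maps, so that the dualising operation $F \mapsto F^\vee$ really does swap the roles of kernels and cokernels and send a finitely presented functor to a coherent functor with a \emph{two-term} presentation by representables on all of $\T$. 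Since by hypothesis this is \cite[Lem.~7.2]{KraStab}, I would cite that reference for the details and only indicate the construction of the duality and the source of essential surjectivity as above.
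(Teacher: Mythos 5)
The paper never proves this lemma: it is quoted verbatim from \cite[Lem.~7.2]{KraStab}, and the only related argument in the text is Corollary~\ref{cor:functors-explicit}, which records the explicit form of the inverse assignment. So your proposal has to stand on its own, and while its overall strategy (show $F\mapsto F^\vee$ lands in $\Coh{\T}$, then check full faithfulness and essential surjectivity by playing the two Yoneda lemmas against chosen presentations) is the standard and correct one, two points need repair. First, variance: objects of $\mod{\Tc}$ are \emph{contravariant} functors on $\Tc$, so a presentation of $F$ is by the restricted functors $(-,C)|_{\Tc}$, not $(C,-)|_{\Tc}$. This is not just notation, because it propagates into your essential-surjectivity step: for a coherent $G=F_f$ with $f\colon C\to D$ in $\Tc$, the preimage is the \emph{kernel} of $(-,f)|_{\Tc}\colon (-,C)|_{\Tc}\to(-,D)|_{\Tc}$ (exactly as in Corollary~\ref{cor:functors-explicit}), whereas the cokernel of the restricted covariant map $(f,-)|_{\Tc}$ that you propose is an object of $\mod{(\Tc)\op}$, not of $\mod{\Tc}$, so ``letting $F$ be its cokernel in $\mod{\Tc}$'' does not parse as written.

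Second, and more seriously, nowhere do you use the triangulated structure of $\Tc$, and it is indispensable. From a presentation $(-,C)\rightlabel{(-,f)}(-,D)\to F\to 0$ one only gets, by Yoneda and left exactness, that $F^\vee=\ker\bigl((f,-)\colon (D,-)\to(C,-)\bigr)$; to see that this kernel is \emph{coherent}, i.e.\ admits a two-term presentation by corepresentables, one must complete $f$ to a triangle $C\rightlabel{f} D\rightlabel{g} E\rightlabel{h}\Sigma C$ and read off the exact sequence $(\Sigma C,-)\rightlabel{(h,-)}(E,-)\to F^\vee\to 0$ from the induced long exact sequence of corepresentables. The same rotation trick is what produces, for a given coherent $G=\coker(f,-)$, an object of $\mod{\Tc}$ with $G\cong F^\vee$, and is what makes the diagram chase for fullness and faithfulness close up. Your proposed substitutes---``taking a further step of a projective resolution'' (there is no such step for a kernel) and ``a kernel of a map between representables is again coherent since $\mod{\Tc}$ and $\Coh{\T}$ are abelian''---are respectively inapplicable and circular, since the abelianness of $\Coh{\T}$ is itself established by exactly this triangle argument (existence of weak kernels and weak cokernels in $\Tc$). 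Deferring the remaining details to \cite{KraStab} is legitimate---that is all the paper does---but the two items above are precisely the substance of the lemma, so as it stands the proposal has a genuine gap rather than mere bookkeeping left to the reader.
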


\begin{theorem}\label{thm:Zg-homeomorphism}
The assignment $N \mapsto (-,N) $ defines a homeomorphism between topological spaces $\Zg(\T)$ and $\Zg(\Abs{\Tc})$.
\end{theorem}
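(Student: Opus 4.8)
The plan is to transport structure across the equivalence $\Pinj{\T} \simeq \Inj{\Mod{\Tc}}$ coming from the restricted Yoneda functor, and then match up the topologies using the coherent functors that define the basic open sets on each side. First I would establish the bijection on points. The points of $\Zg(\T)$ are the indecomposable pure-injectives $N$, and by the equivalence $\Pinj{\T} \simeq \Inj{\Mod{\Tc}}$ (following Proposition~\ref{prop: pure injective} and \cite[Cor.~1.9]{KraTel}) the assignment $N \mapsto (-,N)$ sends these bijectively to the indecomposable injective objects of $\Mod{\Tc}$; since $\Mod{\Tc}$ is locally coherent, its indecomposable injectives are precisely its indecomposable absolutely pure objects, i.e.\ the points of $\Zg(\Mod{\Tc})$, and by Proposition~\ref{prop:abseqflat} these all lie in $\Abs{\Tc}$. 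So $N \mapsto (-,N)$ is a bijection $\Zg(\T) \to \Zg(\Abs{\Tc})$ on underlying sets. (One should check it is well-defined the other way: every indecomposable absolutely pure $F$ has an injective hull which is indecomposable injective, hence of the form $(-,N)$ for a unique indecomposable pure-injective $N$, and since $\Zg(\Abs{\Tc})$ consists of indecomposable injectives of $\Mod{\Tc}$ that happen to be absolutely pure, the correspondence is exact.)

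Next I would match the basic open sets. A basic open subset of $\Zg(\T)$ is $(F) = \{N : F(N) \neq 0\}$ for $F \in \Coh{\T}$, and by Lemma~\ref{lem:functors} every such $F$ has the form $G^\vee$ for a unique $G \in \mod{\Tc}$, with $G^\vee(N) = (G,(-,N))$. On the other side, the basic open sets of $\Zg(\Mod{\Tc})$ — restricted to $\Abs{\Tc}$ — are governed by finitely presented functors on $\mod{\Tc}$, equivalently by the functors $\Hom(G,-)$ for $G \in \mod{\Tc}$ composed appropriately, and the key point is that for a point $(-,N) \in \Zg(\Abs{\Tc})$ one has $(G,(-,N)) \neq 0$ if and only if $N \in (G^\vee)$. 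Thus under the bijection $N \leftrightarrow (-,N)$, the open set $(G^\vee) \subseteq \Zg(\T)$ corresponds exactly to the open set of $\Zg(\Abs{\Tc})$ cut out by $G$. Since the $G^\vee$ range over all of $\Coh{\T}$ as $G$ ranges over $\mod{\Tc}$, and since these generate the topology on both sides, the bijection is a homeomorphism.

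The main obstacle, and where I would spend the most care, is the identification of the compact open sets of $\Zg(\Abs{\Tc})$ with the sets cut out by finitely presented functors $G \in \mod{\Tc}$, and verifying the equivalence $(G,(-,N)) \neq 0 \iff G^\vee(N) \neq 0$ is compatible with the induced-topology definition of $\Zg(\Abs{\Tc})$ as a closed subspace of $\Zg(\Mod{\Tc})$. Concretely: the Ziegler topology on $\Mod{\Tc}$ has basic opens indexed by pairs of finitely presented functors (or by coherent functors on the functor category), and one must see that, once restricted to the absolutely pure (equivalently flat, by Proposition~\ref{prop:abseqflat}) objects, these reduce to the data of a single $G \in \mod{\Tc}$ via $F \mapsto (G, F)$ — this is essentially the statement that for flat $F$, $\Ext^1(\mod{\Tc}, F) = 0$ (Proposition~\ref{prop: absolutely pure fp inj}), so the relevant "definable" invariants are just the $\Hom$-groups $(G,F)$. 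Granting that, the rest is bookkeeping through Lemma~\ref{lem:functors}. I would also remark that this theorem is implicit in the literature (e.g.\ \cite{KraTel, KraStab, PreNBK}); the argument above is the natural way to assemble it from the cited results.
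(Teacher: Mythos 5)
Your proposal is correct and follows essentially the same route as the paper: the bijection on points via the equivalence $\Pinj{\T} \simeq \Inj{\Mod{\Tc}}$, then matching the compact open sets of $\Zg(\Abs{\Tc})$ with those cut out by objects $G \in \mod{\Tc}$ via $(G,(-,N))$ and transporting them through the duality $F \mapsto F^\vee$ of Lemma~\ref{lem:functors}. The step you flag as the main obstacle --- that on absolutely pure (equivalently flat) objects every finitely presented functor reduces to a representable $(G,-)$, so that $\fun{\Abs{\Tc}} \simeq (\mod{\Tc})^{\op}$ --- is exactly the point the paper also does not prove from scratch but settles by citing \cite[\S 12.3, \S 18.1.2]{PreNBK} and \cite[Prop.~7.2]{PreCatImag}, and your fp-injectivity/coherence justification is the right reason it holds.
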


\begin{proof}
The points of $\Zg(\Abs{\Tc})$ are exactly the (isomorphism classes of) indecomposable injective objects in $\Mod{\Tc}$.  It therefore follows from the comment after Proposition~\ref{prop: pure injective} that the assignment $N \mapsto (-,N)$ is a bijection between the points of $\Zg(\T)$ and $\Zg(\Abs{\Tc})$.  It remains to show that this bijection is a homeomorphism.

We consider the topology on $\Zg(\Abs{\Tc})$.  The compact open sets of $\Zg(\Abs{\Tc})$ are, as in Proposition~\ref{def:opens}, those of the form $(G)$ where $G$ is an object from a particular localisation of $(\mod{\Tc}, \Ab)^{\fp}$ which is a category of functors on $\Abs{\Tc}$ and which we will denote by $\fun{\Abs{\Tc}}$; see \cite[\S 12.3]{PreNBK}, especially \cite[Prop.~12.3.21]{PreNBK}, for details.  It is also the case that $\Abs{\Tc}$ is the category of exact functors on $\fun{\Abs{\Tc}}$ and that $\fun{\Abs{\Tc}}$ is determined by $\Abs{\Tc}$ up to natural equivalence, see \cite[\S 18.1.2]{PreNBK}.  So, by \cite[Prop.~7.2]{PreCatImag} we have that $\fun{\Abs{\Tc}} \simeq (\mod{\Tc})^{\op}$ and this equivalence allows us to describe the open sets of $\Zg(\Abs{\Tc})$ directly in terms of isomorphism classes of objects from $\mod{\Tc}$.  One can also check that the equivalence  $(\mod{\Tc})^{\op} \simeq \fun{\Abs{\Tc}}$ takes an object $A \in (\mod{\Tc})^{(\op)}$ to the image of the representable functor $(A,-) \in (\mod{\Tc}, \Ab)^{\fp}$ in the localisation $\fun{\Abs{\Tc}}$ (this follows from the comments before \cite[Prop.~7.1]{PreCatImag} and the fact that $(A,-)$ is the dual of the functor $A\otimes_{\Tc}-$). It follows that the compact open sets are those of the form \[ (A) = \{ (-,N) \in \Zg(\Abs{\Tc}) \mid (A, (-,N)) \neq 0 \}\] for $A \in \mod{\Tc}$.  

The equivalence in Lemma~\ref{lem:functors} gives us that, for each $F \in \Coh{\T}$, there exists some $A \in \mod{\Tc}$ such that $A^\vee =F$ and the corresponding compact open set in $\Zg(\T)$ is 
\[ (F) := \{ N \in \Zg(\T) \mid F(N) \neq 0 \} = \{ N \in \Zg(\T) \mid (A, (-,N)) \neq 0 \}.\]
Thus $N \mapsto (-,N)$ defines a homeomorphism and we complete the proof.
\end{proof}

\begin{remark}\label{rem: lim ext represent}
We recall how to define the unique extension, $\F$, of any functor $F \in (\mod{\Tc}, \Ab)^{\fp}$ to a functor from $\Mod{\Tc}$ to $\Ab$ that commutes with direct limits. Let $M \in \Mod{\Tc}$, then $M$ can be written as a direct limit $M = \colim M_\lambda$  with $M_\lambda \in \mod{\Tc}$.  Then  \[ \F(M) \coloneqq \colim F(M_\lambda).\] 

In order to define the sets $(A)$ as above, one should note that the extension of $(A,-)$ from a functor on $\mod{\Tc}$ to one on $\Mod{\Tc}$ is still, since $A$ is finitely presented, the functor $(A, -)$.
\end{remark}

Recall that a typical object $G$ in $\Coh{\T}$ has a presentation $(D,-) \to (C,-) \to G \to 0$ where $C,D \in \Tc$.  Since the Yoneda functor is full, there must be some $f \colon C \to D$ such that $G = \coker(f,-)$; then we will denote $G$ by $F_f$.  It will be useful to be able to explicitly describe the form of $F \in \mod{\Tc}$ when $F^\vee = F_f$.

\begin{corollary} \label{cor:functors-explicit}
Consider the equivalence of Lemma~\ref{lem:functors} above. Suppose $F^\vee = F_f$ for some $f\colon C \to D$ in $\Tc$. Then $F$ is the kernel of the image of $f$ under the restricted Yoneda functor, i.e. there is an exact sequence
\[
0 \too F \too (-,C) \rightlabel{(-,f)} (-,D).
\]
In particular, every finitely presented $\Tc$-module is the kernel of a morphism between finitely generated projective $\Tc$-modules.
\end{corollary}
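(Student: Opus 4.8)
The plan is to determine how the equivalence $(-)^\vee$ of Lemma~\ref{lem:functors} acts on representable functors and on the morphisms between them, and then to recognise the presentation defining $F_f$ as the image of an exact sequence in $\mod{\Tc}$.

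First I would check that $(-)^\vee$ sends the representable $\Tc$-module $(-,C)$, for $C \in \Tc$, to the representable functor $(C,-) \in \Coh{\T}$. By the Yoneda lemma in $\Mod{\Tc}$ we have, for every $M \in \T$,
\[
\big((-,C)\big)^\vee(M) = \big((-,C),(-,M)\big) \cong \Hom_\T(C,M),
\]
naturally in $M$ (and in $C$), so $\big((-,C)\big)^\vee = (C,-)$. A short naturality computation then shows that, for a morphism $f \colon C \to D$ in $\Tc$, the morphism $(-,f) \colon (-,C) \to (-,D)$ of $\mod{\Tc}$ is sent by the contravariant functor $(-)^\vee$ to $(f,-) \colon (D,-) \to (C,-)$: one traces $\id_C$ through the Yoneda isomorphisms and invokes the naturality square of a transformation $(-,D) \to (-,M)$.

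Since $\Coh{\T} \simeq (\mod{\Tc})\op$ is abelian, the equivalence $(-)^\vee$ is exact (and contravariant). I would then form in $\mod{\Tc}$ the kernel $K \coloneqq \ker\big((-,f) \colon (-,C) \to (-,D)\big)$, giving an exact sequence $0 \too K \too (-,C) \rightlabel{(-,f)} (-,D)$; applying $(-)^\vee$ and using the identifications above produces the exact sequence $(D,-) \rightlabel{(f,-)} (C,-) \too K^\vee \too 0$ in $\Coh{\T}$, whence $K^\vee = \coker(f,-) = F_f$. As $(-)^\vee$ is an equivalence and $F^\vee = F_f$ by hypothesis, it follows that $F \cong K$, i.e.\ there is an exact sequence
\[
0 \too F \too (-,C) \rightlabel{(-,f)} (-,D),
\]
as claimed.

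For the final assertion, let $F \in \mod{\Tc}$ be arbitrary. Its image $F^\vee \in \Coh{\T}$ has, by the definition of a coherent functor, a presentation $(D,-) \to (C,-) \to F^\vee \to 0$ with $C,D \in \Tc$; by fullness of the Yoneda functor (as used just before the statement) the first map is of the form $(f,-)$ for some $f \colon C \to D$, so $F^\vee = F_f$, and the first part realises $F$ as the kernel of the morphism $(-,f)$ between the representable --- hence finitely generated projective --- $\Tc$-modules $(-,C)$ and $(-,D)$. I expect the only points needing care to be the bookkeeping forced by the contravariance of $(-)^\vee$ --- in particular the verification that $(-,f)$ corresponds to $(f,-)$ rather than to some twist --- and keeping the restricted contravariant functors $(-,C)$ on $\Tc$ notationally separate from the unrestricted covariant functors $(C,-)$ on $\T$; neither is a genuine difficulty.
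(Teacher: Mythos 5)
Your proposal is correct and is essentially the paper's own argument: both identify representables under the equivalence $(-)^\vee$ via Yoneda (so that $(-,D)^\vee \cong (D,-)$ and $(-,f)$ corresponds to $(f,-)$) and then use exactness of the contravariant equivalence to convert the cokernel presentation of $F_f$ into the kernel sequence $0 \to F \to (-,C) \to (-,D)$. The only cosmetic difference is that you start from the kernel in $\mod{\Tc}$ and apply $(-)^\vee$, whereas the paper rewrites the given presentation of $F_f$ and dualises; the content is the same, and your handling of the final assertion is the intended one.
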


\begin{proof}
Suppose, under the equivalence of Lemma~\ref{lem:functors}, $F^\vee = F_f$ for some $f \colon C \to D$ in $\Tc$.  Note that $(-,D)^\vee = (D, -)$ whenever $D$ is a compact object since $((-, D), (-,M)) \cong (D, M)$ for any $M$ in $\T$.  We can rewrite the exact sequence \[ (D,-) \rightlabel{(f,-)} (C,-) \too F_f \too 0 \: \hbox{  as  } \: (-,D)^\vee \rightlabel{(-,f)^\vee} (-,C)^\vee \too F^\vee \too 0\] and so $0 \too F \too (-, C) \rightlabel{(-,f)} (-,D)$ is also exact.
\end{proof}

\section{Cantor-Bendixson rank and Krull-Gabriel dimension} \label{CBrank-KGdim}

\subsection{Cantor-Bendixson rank of a topological space}

The following material is classical; see, for example, \cite[\S 5.3.6]{PreNBK}.
Let $T$ be a topological space, for instance $\Zg(\T)$. A point $p \in T$ is \emph{isolated} if $\{p\}$ is an open set and such points are assigned \emph{Cantor-Bendixson rank} (or \emph{CB-rank}) $0$. The first \emph{Cantor-Bendixson derivative} $T'$ of $T$ is the closed subset of $T$ containing all the non-isolated points of $T$. With respect to the relative topology, $T'$ itself may have isolated points, which are said to have CB rank $1$. One continues this process inductively to obtain points of CB rank $n$ for any $n \in \bN$ and we write $T^{(n)} = (T^{(n-1)})'$, with $T = T^{(0)}$. If at some point $T^{(n+1)} = \emptyset$ but $T^{(n)} \neq \emptyset$ then we say that $T$ has \emph{CB rank $n$}.

\begin{remark}
The process above can be continued transfinitely.  Since the topological spaces we consider turn out to have finite CB rank, we ignore this.
\end{remark}

\subsection{Localisation at Serre subcategories}

 A full subcategory $\sS$ of an abelian category $\sC$ is a \emph{Serre subcategory} if, for every exact sequence $0\to A \to B  \to C \to 0$ in $\sC$, we have $ A, C \in \sS$  if and only if $B \in \sS$.

For a Serre subcategory $\sS \subseteq \sC$, the quotient category $\sC / \sS$ has the same objects as $\sC$ and morphisms are given by
\[ 
\Hom_{\sC/\sS}(A, B) := \varinjlim_{A/A' \in \mcS, B'\in \mcS} \Hom_{\sC}(A', B/B').
\]

\subsection{Localisation with respect to hereditary torsion theories} \label{sec:tp-localisation}

For background on (hereditary) torsion theories and localisation with respect to torsion classes we refer the reader to \cite[Ch.~VI]{Stenstrom}; see also \cite[Ch.~11]{PreNBK}. We remind the reader of some key points that we will need here.

Let $\sC$ be a Grothendieck abelian category. A \emph{torsion class} $\sA$ is a full subcategory $\sC$ that is closed under quotients, extensions, direct sums and summands; objects of $\sA$ are said to be \emph{torsion}. A torsion class is \emph{hereditary} if it is also closed under subobjects. It determines a \emph{torsion pair} $(\sA,\sB)$, where $\sB = \sA\orth \coloneqq \{ B \in \sC \mid \Hom(\sA,B) = 0 \}$. An object of $\sB$ is said to be \emph{torsionfree}. A torsion class comes equipped with a subfunctor $t$ of the identity functor on $\sC$ which sends an object $C$ to $t(C)$, the largest torsion subobject of $C$. If the functor $t$ commutes with direct limits, $(\sA, \sB)$ is said to be of \emph{finite type}.

A hereditary torsion theory determines a \emph{localisation functor}, $q \colon \sC \to \sC/\sA$, which may be defined as in the subsection above or (on objects) by
\[
C \mapsto \pi^{-1}(t(E(C')/C')),
\]  
where $C' := C/t C$, $E(C')$ is the injective hull of $C'$ and $\pi = \coker(C' \into E(C'))$.
The localisation functor $q$ enjoys the following properties:
 \begin{enumerate}
\item $q$ is left adjoint to the canonical embedding $i \colon \sC/\sA \to \sC$; 
\item if $E$ is a torsionfree injective object then $E \cong i \circ q(E)$.
\end{enumerate}

Since $\Mod{\Tc}$ is locally coherent, $\mod{\Tc}$ is abelian and the $\varinjlim$-closure of each Serre subcategory of $\mod{\Tc}$ is a hereditary finite-type torsion class. 

\subsection{Localisation with respect to definable subcategories} \label{sec:def-localisation}

We refer to \cite[\S 12.3, \S 12.4]{PreNBK} for details of the following.    

We saw in the proof of Theorem~\ref{thm:Zg-homeomorphism} that $\fun{\Abs{\Tc}} \simeq (\mod{\Tc})^{\op}$.  It follows that each definable subcategory $\mcD$ of $\Abs{\Tc}$ is determined by the Serre subcategory
\[
\sS(\mcD) := \{ F \in \mod{\Tc}  \mid (F, D) =0 \text{ for all } D \in \mcD\}
\]
of $\mod{\Tc}$ which then generates a hereditary, finite-type, torsion subclass in $\Mod{\Tc}$
\[ 
\sA(\mcD) = \{A \in \Mod{\Tc} \mid (A,D) = 0 \hbox{ for all } D \in \mcD \}.
\]  
We shall denote the corresponding localisation of $\Mod{\Tc}$ by  $(\Mod{\Tc})_\mcD$.

By \cite[Prop.~11.1.29]{PreNBK}, the torsion theory $(\sA(\mcD), \sB(\mcD))$ is cogenerated by its indecomposable torsionfree injective objects, i.e.\ the set $\cl{X} := \mcD \cap \Zg(\Abs{\Tc})$, which is a closed subset of $\Zg(\Abs{\Tc})$.  By abuse of notation we shall often write $(\Mod{\Tc})_{\cl{X}}$ for the localisation $(\Mod{\Tc})_\mcD$.  

The category of finitely presented objects in $(\Mod{\Tc})_\mcD$ and the quotient of $\mod{\Tc}$ by $\sS(\mcD)$ coincide (\cite[Cor.~11.1.34]{PreNBK}).  Again, by abuse of notation we denote this localisation by $(\mod{\Tc})_\mcD$ or $(\mod{\Tc})_{\cl{X}}$.  Since $\Coh{\T} \simeq (\mod{\Tc})^{\op}$ by Lemma~\ref{lem:functors},  we use similar notation for the corresponding localisations of $\Coh{\T}$.

Given the homeomorphism $\Zg(\T) \cong \Zg(\Abs{\Tc})$ of Theorem~\ref{thm:Zg-homeomorphism}, there are parallel definitions and notations for localisations of $\Coh{\T}$ at definable subcategories $\mcD$ of $\T$ and closed subsets $\cl{X} \subseteq \Zg(\T)$.

\subsection{Krull-Gabriel dimension}

Recall from \cite[Def.~2.1]{Geigle} that a \emph{Krull-Gabriel filtration} of a (skeletally) small abelian category $\sC$ consists of a properly increasing filtration by Serre subcategories
\[
0 = \sC_{-1} \subseteq \sC_0 \subseteq \sC_1 \subseteq \cdots \subseteq \sC_n = \sC,
\]
where, for each $i$, $\sC_i/\sC_{i-1}$ is the full subcategory of all objects of finite length in $\sC/\sC_{i-1}$. If there is such a filtration then the \emph{Krull-Gabriel dimension} (or \emph{KG-dimension}), $\KG(\sC)$, of $\sC$ is, $n$. For each $i$, we denote the localisation functor by $q_i \colon \sC \to \sC/\sC_i$.

By abuse of notation, in the case of a compactly generated triangulated category $\T$ we shall write $\KG(\T)$ for $\KG(\Coh{\T})$ and call it the \emph{KG-dimension of $\T$}.

\begin{remark}
More generally, ordinal-indexed Krull-Gabriel filtrations may be considered giving, as with CB-rank, an ordinal-valued KG-dimension.  In the case that there is no such filtration, the KG-dimension is undefined. 
\end{remark}

Let $L$ be a modular lattice and let $L'$ be the quotient modular lattice obtained by collapsing all finite length intervals in $L$. Starting with $L_{-1} \coloneqq L$ and defining $L_n = (L_{n-1})'$ for $n \in \bN$, we define the \emph{m-dimension} of $L$ to be the least $n$ such that $L_n = 0$.  We recall the following lemma, see for example \cite[Lem.~1.1]{KraGen}, where $L_\sC(X)$ denotes the lattice of subobjects of an object $X$ of an abelian category $\sC$.

\begin{lemma} \label{lem: m-dimension}
Let $\sC$ be an abelian category and $X \in \sC$ an object. Then $q_i$ induces a natural isomorphism $L_\sC(X)_i \cong L_{\sC/\sC_i}(q_iX)$ for all $i$.
\end{lemma}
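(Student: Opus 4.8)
The plan is to prove the statement by induction on $i$, with essentially all of the content concentrated in the case of a single m-derivative. So the first step is to isolate the following.

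\emph{Claim.} Let $\sA$ be a small abelian category whose objects of finite length form the Serre subcategory $\sA_0$, and let $q\colon \sA \to \sA/\sA_0$ be the localisation functor. Then for every $W \in \sA$ the assignment $Y \mapsto qY$ induces an isomorphism of modular lattices $(L_\sA(W))' \cong L_{\sA/\sA_0}(qW)$, and this isomorphism is induced by $q$, hence natural in $W$.

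Granting the Claim, the lemma follows by a short induction. For $i = -1$ there is nothing to prove, since $L_\sC(X)_{-1} = L_\sC(X)$ and $\sC/\sC_{-1} = \sC$. Assuming the statement for $i-1$, one has $L_\sC(X)_i = (L_\sC(X)_{i-1})' \cong (L_{\sC/\sC_{i-1}}(q_{i-1}X))'$ by the inductive hypothesis, and then the Claim applies with $\sA = \sC/\sC_{i-1}$ — whose finite-length objects are, by the very definition of the Krull-Gabriel filtration, the subcategory $\sC_i/\sC_{i-1}$ — to yield $(L_{\sC/\sC_{i-1}}(q_{i-1}X))' \cong L_{(\sC/\sC_{i-1})/(\sC_i/\sC_{i-1})}(\bar X)$, where $\bar X$ is the image of $q_{i-1}X$. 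Finally, transitivity of Serre localisation identifies $(\sC/\sC_{i-1})/(\sC_i/\sC_{i-1})$ with $\sC/\sC_i$, compatibly with the quotient functors and carrying $\bar X$ to $q_iX$, so altogether $L_\sC(X)_i \cong L_{\sC/\sC_i}(q_iX)$, with the isomorphism induced by $q_i$.

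To prove the Claim I would first observe that, $q$ being exact, it preserves monomorphisms, finite intersections and finite sums, so $Y \mapsto qY$ is a homomorphism of modular lattices $\varphi\colon L_\sA(W) \to L_{\sA/\sA_0}(qW)$; it is surjective by the standard description of subobjects in a Serre quotient (every subobject of $qW$ has the form $qY$ for some $Y \le W$; see \cite{Stenstrom}). Next I would compute the kernel congruence $\equiv_\varphi$ of $\varphi$: applying the exact functor $q$ to $0 \to Y\cap Y' \to Y \to Y/(Y\cap Y') \to 0$ shows that $qY \subseteq qY'$ if and only if $q\big(Y/(Y\cap Y')\big) = 0$, i.e.\ if and only if $Y/(Y\cap Y')$ has finite length; combining the two inclusions and using that $\sA_0$ is Serre, one gets that $Y \equiv_\varphi Y'$ if and only if the interval $[Y\cap Y',\, Y+Y']$ of $L_\sA(W)$ has finite length. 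In particular $\equiv_\varphi$ collapses every finite-length interval of $L_\sA(W)$ and, being a congruence, contains the smallest congruence with that property — the one whose quotient is $(L_\sA(W))'$; conversely, since collapsing the single interval $[Y\cap Y',\, Y+Y']$ already identifies $Y$ with $Y'$, that smallest congruence contains $\equiv_\varphi$. Hence the two coincide, so $(L_\sA(W))'$ is the quotient of $L_\sA(W)$ by $\equiv_\varphi$, which via $\varphi$ is isomorphic to $\image\varphi = L_{\sA/\sA_0}(qW)$; the resulting isomorphism is induced by $q$ and natural in $W$.

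The hard part is the Claim, and within it the reconciliation of the two ways of "collapsing finite-length intervals": the iterative, purely lattice-theoretic operation defining the derived lattice $L'$ versus the single global quotient effected by the exact functor $q$. What makes them agree is that $\equiv_\varphi$ is visibly a lattice congruence, that the explicit computation above (using exactness of $q$ and that $\sA_0$ is Serre) pins it down as collapsing \emph{precisely} the finite-length intervals $[Y\cap Y',\, Y+Y']$ and nothing more, and that any congruence collapsing such an interval necessarily identifies its endpoints. I would also take a little care that the two facts invoked as standard — surjectivity of $\varphi$ on subobject lattices and transitivity of Serre localisation — are being applied in the small abelian setting, but both are classical and cause no trouble.
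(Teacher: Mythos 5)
Your proof is correct, but note that the paper itself gives no argument for this lemma: it is quoted as a known result, with a pointer to \cite[Lem.~1.1]{KraGen}, so the only comparison available is with that standard source. Your argument is essentially the standard one behind the citation: reduce by induction, using transitivity of Serre localisation $(\sC/\sC_{i-1})/(\sC_i/\sC_{i-1})\simeq \sC/\sC_i$ and the fact that, by definition of the Krull--Gabriel filtration, the finite-length objects of $\sC/\sC_{i-1}$ are exactly $\sC_i/\sC_{i-1}$, to the one-step claim; then observe that the exact quotient functor induces a lattice homomorphism on subobject lattices which is surjective (Gabriel's description of subobjects in a Serre quotient) and whose kernel congruence identifies $Y$ and $Y'$ precisely when $(Y+Y')/(Y\cap Y')$ has finite length, which is exactly the congruence defining the derivative $L'$. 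The two points that genuinely need care are the ones you address: that $qY\subseteq qY'$ iff $q\bigl(Y/(Y\cap Y')\bigr)=0$ iff $Y/(Y\cap Y')\in\sA_0$ (exactness of $q$ plus the fact that the kernel of $q$ is exactly the Serre subcategory, with extension-closure giving the passage to the whole interval $[Y\cap Y',Y+Y']$), and that congruence classes of a lattice congruence are convex, so any congruence collapsing all finite-length intervals already identifies $Y$ with $Y'$; together these pin the kernel congruence down as the one whose quotient is $(L_\sA(W))'$. So there is no gap; your write-up is a correct, self-contained proof of a statement the paper leaves to the literature.
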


\begin{remark} \label{KGdimmdim}
We highlight the following consequences of Lemma~\ref{lem: m-dimension}.
\begin{enumerate}
\item For $X\neq 0$ the m-dimension of $L_{\sC}(X)$ is $n$ if and only if $q_{n-1}(X) \neq 0$ and has finite length in $\sC/\sC_n$.
\item The Krull-Gabriel dimension of $\sC$ is defined if and only if for each $X\in \sC$ no subset of the lattice $L_{\sC}(X)$ forms a densely ordered chain (see, for example, \cite[Prop.~7.2.3, Prop.~13.2.1]{PreNBK}).
\end{enumerate}
\end{remark}

\begin{definition} \label{def:isolation-condition}
The Ziegler spectrum of $\T$ satisfies the \emph{isolation condition} if for every closed subset $\cl{X} \subseteq \Zg(\T)$ and every point $M \in \cl{X}$ isolated in the relative topology on $\cl{X}$, there is a coherent functor $F$ such that $(F) \cap \cl{X} = \{M\}$ and such that the image of $F$ is simple in the localisation $\Coh{\T}_{\cl{X}}$.
\end{definition}

We have the following link between KG-dimension and the isolation condition, which we follow with the link between the CB-rank and KG-dimension.

\begin{lemma}[{\cite[Lem.~8.11]{Ziegler}}] \label{lem:isolation}
For a compactly generated triangulated category $\T$, if ${\KG(\T)}$ is defined, then $\Zg(\T)$ satisfies the isolation condition.
\end{lemma}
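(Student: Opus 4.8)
The plan is to move the statement into the category $\Coh{\T}$ of coherent functors and reduce it to a lattice-theoretic argument, using the reformulation of Remark~\ref{KGdimmdim}(2): $\KG(\T)=\KG(\Coh{\T})$ is defined precisely when no subobject lattice $L_{\Coh{\T}}(F)$ contains a densely ordered chain, i.e.\ when each such lattice has m-dimension. First I would fix a closed subset $\cl{X}\subseteq\Zg(\T)$ and a point $M\in\cl{X}$ that is isolated in the relative topology. Since the sets $(G)$ with $G\in\Coh{\T}$ form a basis of open sets (Definition~\ref{def:opens}), the singleton $\{M\}$, being open in $\cl{X}$, equals $U\cap\cl{X}$ for some open $U\subseteq\Zg(\T)$, and $M$ lies in some basic open set $(G)\subseteq U$; hence there is a coherent functor $G$ with $(G)\cap\cl{X}=\{M\}$. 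The goal is to replace $G$ by a subquotient $F$ in the abelian category $\Coh{\T}$ (which is abelian since $\Coh{\T}\simeq(\mod{\Tc})\op$ by Lemma~\ref{lem:functors} and $\mod{\Tc}$ is abelian) such that still $(F)\cap\cl{X}=\{M\}$, but now the image $q_{\cl{X}}(F)$ of $F$ under the localisation functor $q_{\cl{X}}\colon\Coh{\T}\to\Coh{\T}_{\cl{X}}$ is simple, which is exactly what Definition~\ref{def:isolation-condition} asks for.

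The bridge between the two requirements on $F$ is that the trace $(F)\cap\cl{X}$ depends only on $q_{\cl{X}}(F)$ and is monotone under the subquotient ordering of $\Coh{\T}_{\cl{X}}$. Writing $F=A^\vee$ with $A\in\mod{\Tc}$ via Lemma~\ref{lem:functors}, so that $F(N)=(A,(-,N))$, and recalling from Section~\ref{sec:def-localisation} that $\cl{X}$ corresponds to the hereditary torsion theory on $\Mod{\Tc}$ cogenerated by the torsionfree injectives $\{(-,N)\mid N\in\cl{X}\}$, the adjunction $q_{\cl{X}}\dashv i$ together with the isomorphism $iq_{\cl{X}}(-,N)\cong(-,N)$ identifies $F(N)$ with a $\mathrm{Hom}$-group in the localised category between the images of $A$ and of $(-,N)$. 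In particular $(F)\cap\cl{X}=\emptyset$ if and only if $q_{\cl{X}}(F)=0$; and since the image of each $(-,N)$ is again injective in the localised category, passing to a subquotient of $q_{\cl{X}}(F)$ can only shrink the set of $N\in\cl{X}$ on which it is nonzero, so $(F_1)\cap\cl{X}\subseteq(F_2)\cap\cl{X}$ whenever $q_{\cl{X}}(F_1)$ is a subquotient of $q_{\cl{X}}(F_2)$.

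Now $q_{\cl{X}}(G)\neq0$ because $M\in(G)\cap\cl{X}$. Since $\KG(\Coh{\T})$ is defined and Krull--Gabriel dimension is inherited by localisations at Serre subcategories (the subobject lattice of $q_{\cl{X}}G$ being a quotient modular lattice of $L_{\Coh{\T}}(G)$, compare Lemma~\ref{lem: m-dimension}), the lattice $L_{\Coh{\T}_{\cl{X}}}(q_{\cl{X}}G)$ has m-dimension, hence contains no densely ordered chain. A modular lattice with more than one element in which no strict inequality is a covering pair contains, by a routine dyadic back-and-forth, an order-embedded copy of $\mathbb{Q}$; so the interval $[0,q_{\cl{X}}G]$ contains a covering pair $q_{\cl{X}}(G_2)<q_{\cl{X}}(G_1)\leq q_{\cl{X}}(G)$, where by exactness and subobject-surjectivity of $q_{\cl{X}}$ we may take $G_2\leq G_1\leq G$ in $\Coh{\T}$, and where $q_{\cl{X}}(G_1)/q_{\cl{X}}(G_2)$ is simple. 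Set $F\coloneqq G_1/G_2\in\Coh{\T}$, so that $q_{\cl{X}}(F)\cong q_{\cl{X}}(G_1)/q_{\cl{X}}(G_2)$ is simple. As $F$ is a subquotient of $G$, monotonicity gives $(F)\cap\cl{X}\subseteq(G)\cap\cl{X}=\{M\}$, while $q_{\cl{X}}(F)\neq0$ forces $(F)\cap\cl{X}\neq\emptyset$; hence $(F)\cap\cl{X}=\{M\}$ with $q_{\cl{X}}(F)$ simple, so $F$ witnesses the isolation condition at $M\in\cl{X}$.

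The main obstacle I anticipate is the second paragraph: setting up cleanly the dictionary between the trace $(F)\cap\cl{X}$, the vanishing of $F$ on $\cl{X}$, and the image $q_{\cl{X}}(F)$ — in particular the monotonicity of the trace under subquotients in $\Coh{\T}_{\cl{X}}$ — together with the standard but fiddly lattice inputs (that m-dimension is inherited by quotient modular lattices, and that a modular lattice with m-dimension has covering pairs). Extracting $G$ from the basis, running the dyadic argument, and lifting the covering pair along $q_{\cl{X}}$ are then routine.
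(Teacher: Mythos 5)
Your proposal is correct, but note that the paper does not give its own proof of this lemma: it is quoted from Ziegler \cite[Lem.~8.11]{Ziegler}, with the remark after Proposition~\ref{prop:KG-CB} that the module-theoretic statement transfers to modules over the small preadditive category $\Tc$, hence via Theorem~\ref{thm:Zg-homeomorphism} to $\Zg(\T)$. What you have written is essentially a reconstruction of Ziegler's minimal-pair argument in the language of coherent functors: isolate $M$ by a basic open set $(G)$; translate between the trace $(F)\cap\cl{X}$ and the localisation $q_{\cl{X}}$ (this rests on the torsion theory being cogenerated by $\{(-,N)\mid N\in\cl{X}\}$ and on $q(-,N)$ staying injective after localisation --- the same ingredients the paper uses in the proof of Lemma~\ref{lem:direct-summand}); use the absence of densely ordered chains to extract a covering pair in the subobject lattice of $q_{\cl{X}}(G)$; lift it along the Serre quotient and set $F=G_1/G_2$. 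The steps you flagged as fiddly are indeed the ones carrying the content, but they are standard and your sketch of them is sound: every subobject of $q_{\cl{X}}(G)$ is the image of a subobject of $G$ and $q_{\cl{X}}$ preserves finite meets and joins of subobjects, which is exactly what is needed both to lift the covering pair and to see that a densely ordered chain in $L_{\Coh{\T}_{\cl{X}}}(q_{\cl{X}}G)$ would lift to one in $L_{\Coh{\T}}(G)$, contradicting Remark~\ref{KGdimmdim}(2); and the dyadic interpolation showing that a nontrivial modular lattice without covering pairs contains a densely ordered chain is routine. So your argument is a correct, self-contained substitute for the citation; what the paper's route buys is brevity (and an appeal to \cite{PreNBK} for the small-preadditive-category generality), while yours makes explicit why the result holds directly in $\Coh{\T}$.
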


\begin{proposition}[{\cite[Thm.~8.6]{Ziegler}, see also \cite[Prop.~10.19]{PrestMTM}}] \label{prop:KG-CB}
Suppose $\Zg(\T)$ satisfies the isolation condition. Then:
\begin{enumerate}
\item  $\Coh{\T}/\Coh{\T}_i = \Coh{\T}_{\cl{X}_i}$, where $\cl{X}_i$ is the closed subset of $\Zg(\T)$ containing the points of $\CB \geq i$;
\item there is a bijection between points $N$ of CB-rank $i$ and simple functors $F$ in $\Coh{\T}_{\cl{X}_i}$ given by $(F')\cap \cl{X}_i = \{ N\}$ where $F'\in \Coh{\T}$ is such that $q_iF'=F$.
\end{enumerate}
\end{proposition}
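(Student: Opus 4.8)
The plan is to push the whole statement into the functor category and then induct on $i$, the inductive step pairing one layer of the Krull-Gabriel filtration with one Cantor-Bendixson derivative. Throughout I use the duality $\Coh{\T}\simeq(\mod{\Tc})\op$ of Lemma~\ref{lem:functors} and the homeomorphism $\Zg(\T)\cong\Zg(\Abs{\Tc})$ of Theorem~\ref{thm:Zg-homeomorphism}, so that for a closed set $\cl{X}\subseteq\Zg(\T)$ the localisation $\Coh{\T}_{\cl{X}}$ is anti-equivalent to the localisation $(\mod{\Tc})_{\cl{X}}$ of Section~\ref{sec:def-localisation}, and Krause's correspondence (\cite[Prop.~7.3]{KraStab}) restricts to a bijection between closed subsets of $\cl{X}$ and Serre subcategories of $\Coh{\T}_{\cl{X}}$. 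For the induction it is cleanest to index so that layer $i$ matches derivative $i$: write $\cl{X}_i$ for the $i$-th Cantor-Bendixson derivative of $\Zg(\T)$, so that $\cl{X}_0=\Zg(\T)$ and the points isolated in $\cl{X}_i$ are exactly those of Cantor-Bendixson rank $i$. I would then establish by induction the combined assertion that $\Coh{\T}/\Coh{\T}_{i-1}\simeq\Coh{\T}_{\cl{X}_i}$ and that the simple objects of this category correspond bijectively, via the assignment described below, to the points isolated in $\cl{X}_i$; the two parts of the proposition are these statements read off with the filtration indexing of Definition~\ref{def:isolation-condition} and Section~\ref{CBrank-KGdim}.

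The first step is to identify, for an \emph{arbitrary} closed set $\cl{X}$, the simple objects of $\Coh{\T}_{\cl{X}}$. With no appeal to the isolation condition, a simple object $F$ of $\Coh{\T}_{\cl{X}}$ has one-point support: the object $A\in(\mod{\Tc})_{\cl{X}}$ corresponding to $F$ under the (anti-)equivalence is simple, so any nonzero morphism $A\to E_N$, with $E_N$ the indecomposable injective of the localised category indexed by $N\in\cl{X}$, is a monomorphism, and $E_N$ is then the injective hull of $A$; since injective hulls of indecomposables are unique, there is a single point $N_F\in\cl{X}$ with $F(E_{N_F})\neq0$, i.e. the support of $F$ in $\cl{X}$ is the singleton $\{N_F\}$. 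As this support is a basic open subset of $\cl{X}$ (Definition~\ref{def:opens}, applied to the localised category), the point $N_F$ is isolated in $\cl{X}$; and distinct simples have distinct injective hulls, so $F\mapsto N_F$ is injective. Here the isolation condition is used for the first time, to obtain surjectivity: given $N$ isolated in $\cl{X}$ it furnishes some $F'\in\Coh{\T}$ with $(F')\cap\cl{X}=\{N\}$ whose image in $\Coh{\T}_{\cl{X}}$ is simple. Thus $F\mapsto N_F$ is a bijection between the simple objects of $\Coh{\T}_{\cl{X}}$ and the points isolated in $\cl{X}$; this is the assignment referred to above, and it yields part (2) once part (1) is in hand.

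The second step identifies the finite-length objects of $\Coh{\T}_{\cl{X}}$. Since evaluation at a point $N\in\cl{X}$ is an exact functor on $\Coh{\T}_{\cl{X}}$ (the point $N$ being one of the exact functors on $\Coh{\T}_{\cl{X}}$ parametrising the definable category, cf.\ the proof of Theorem~\ref{thm:Zg-homeomorphism}), the support of a finite-length object is the union of the one-point supports of its composition factors, hence finite and contained in the isolated points of $\cl{X}$. The converse --- that any object with finite support consisting of isolated points of $\cl{X}$ has finite length --- is where the isolation condition does its real work; here I would follow the argument of \cite[Thm.~8.6]{Ziegler} (see also \cite[Prop.~10.19]{PrestMTM}), using compactness of the basic open sets (Definition~\ref{def:opens}) to pass from ``supported on isolated points'' to ``supported on finitely many isolated points'', and using the isolation condition to exclude infinite length at a single isolated point. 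Granting this, the induction runs smoothly: the base case is $\Coh{\T}/\Coh{\T}_{-1}=\Coh{\T}=\Coh{\T}_{\cl{X}_0}$ (the Serre subcategory of $\Coh{\T}$ cut out by the whole spectrum being zero) together with Step~1 for $\cl{X}_0=\Zg(\T)$; and for the inductive step, $\Coh{\T}_i/\Coh{\T}_{i-1}$ is by definition the finite-length subcategory of $\Coh{\T}/\Coh{\T}_{i-1}\simeq\Coh{\T}_{\cl{X}_i}$, which by the above is the Serre subcategory attached to the closed set $\{N\in\cl{X}_i\mid N\text{ not isolated in }\cl{X}_i\}=(\cl{X}_i)'=\cl{X}_{i+1}$, so that
\[
\Coh{\T}/\Coh{\T}_i \;\simeq\; (\Coh{\T}/\Coh{\T}_{i-1})\,/\,(\Coh{\T}_i/\Coh{\T}_{i-1}) \;\simeq\; \Coh{\T}_{\cl{X}_{i+1}},
\]
and Step~1 applied to $\cl{X}_{i+1}$ gives the bijection at level $i+1$. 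Reading off where the filtration stabilises also recovers that $\KG(\T)$ equals the Cantor-Bendixson rank of $\Zg(\T)$.

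The main obstacle is the converse half of the second step: under the isolation condition, an object of a localised functor category supported on finitely many isolated points must have finite length, equivalently simplicity there is detected precisely by one-point, isolated support. Everything else is formal manipulation of the duality and of the Serre/definable/closed dictionary of Section~\ref{sec:def-localisation}, together with the bookkeeping that the localisations along the induction are compatible --- each $\Coh{\T}_{\cl{X}_{i+1}}$ arising both as a localisation of $\Coh{\T}$ and as a further localisation of $\Coh{\T}_{\cl{X}_i}$ --- which follows from transitivity of these finite-type localisations and, if one prefers a lattice-theoretic treatment at each step, from Lemma~\ref{lem: m-dimension}.
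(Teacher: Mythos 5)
The paper gives no argument for this proposition at all: it is quoted from \cite[Thm.~8.6]{Ziegler} (see \cite[Prop.~10.19]{PrestMTM}), with only the remark that the module-theoretic result holds for modules over the small preadditive category $\Tc$ and transfers to $\T$ through the homeomorphism of Theorem~\ref{thm:Zg-homeomorphism}. Your proposal therefore takes a genuinely different route, namely a reconstruction of the standard proof inside $\Coh{\T}\simeq(\mod{\Tc})\op$, and the parts you argue in detail are sound: a simple object of $\Coh{\T}_{\cl{X}}$ has one-point support given by the point indexing the injective hull of the corresponding simple of $(\mod{\Tc})_{\cl{X}}$, that point is isolated because the support $(F')\cap\cl{X}$ is relatively open and independent of the lift $F'$ (as $(-,N)$ is torsionfree injective for $N\in\cl{X}$), injectivity follows since the simple is the socle of its hull, and the isolation condition supplies surjectivity; the forward half of the finite-length/finite-support comparison (via exactness of evaluation at points) and the formal induction (third isomorphism theorem for Serre quotients plus transitivity of the finite-type localisations of Section~\ref{sec:def-localisation}) are likewise fine. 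The one substantive step you do not prove --- that an object of $\Coh{\T}_{\cl{X}}$ whose support consists of isolated points (hence, by compactness of the sets $(F)$, of finitely many of them) has finite length --- is precisely where you defer to Ziegler/Prest, i.e.\ to the same sources the paper cites for the entire statement; so nothing is missing relative to the paper, but for a self-contained proof that lemma is the piece that would still need to be written out, it being the place where the isolation condition does more than produce one simple per isolated point. What your route buys is transparency: it exhibits exactly where the isolation condition enters and why the Krull--Gabriel layers match the Cantor--Bendixson derivatives one for one, whereas the paper's route buys brevity and defers the generality issue (rings versus small preadditive categories) to \cite{PreNBK}. Finally, your indexing $\Coh{\T}/\Coh{\T}_{i-1}\simeq\Coh{\T}_{\cl{X}_i}$, with $\cl{X}_i$ the $i$-th derivative, is the internally consistent one: read literally, part (1) of the statement (with $\cl{X}_i=\{N\mid \CB(N)\geq i\}$ and $\Coh{\T}_0$ the finite-length functors) is shifted by one against part (2) and against the way the result is invoked later in the paper, so your silent reindexing is a correction rather than an error.
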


Here we are using the homeomorphism of Theorem~\ref{thm:Zg-homeomorphism} and the fact that the original result, proved for modules over rings, holds equally true for modules over small preadditive categories (the development of the theory in \cite{PreNBK} accommodates this level of generality).

\begin{corollary} \label{cor:CB-rank}
Suppose $\T$ is a compactly generated triangulated category. For an object $M$ of $\T$ we have 
\[
\CB M \text{ in } \Zg(\T) = \CB (-,M) \text{ in } \Zg(\Abs{\Tc}).
\]  If $\Zg(\T)$ satisfies the isolation condition then $\KG(\T) = \CB\Zg(\T)$. 
\end{corollary}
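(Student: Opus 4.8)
The plan is to read both equalities off from Theorem~\ref{thm:Zg-homeomorphism} and Proposition~\ref{prop:KG-CB}; I do not expect to need any new idea, only some bookkeeping.

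For the first equality I would argue purely topologically. A homeomorphism of topological spaces carries isolated points to isolated points, so it restricts to a homeomorphism between the first Cantor--Bendixson derivatives of its source and target, and, iterating, it commutes with the passage to every derivative $T^{(n)}$. Applying this to the homeomorphism $\Zg(\T)\rightiso\Zg(\Abs{\Tc})$, $N\mapsto(-,N)$, of Theorem~\ref{thm:Zg-homeomorphism}, one gets that for each $n$ the point $M$ lies in $\Zg(\T)^{(n)}$ if and only if $(-,M)$ lies in $\Zg(\Abs{\Tc})^{(n)}$; taking the least $n$ for which this membership fails yields $\CB M=\CB(-,M)$. (If $M$ is not itself a point of $\Zg(\T)$ then $(-,M)$ is not a point of $\Zg(\Abs{\Tc})$, so neither rank is defined and there is nothing to show.)

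For the second equality, assume $\Zg(\T)$ satisfies the isolation condition. Then Proposition~\ref{prop:KG-CB} provides a term-by-term correspondence between the Krull--Gabriel filtration $0\subseteq\Coh{\T}_0\subseteq\Coh{\T}_1\subseteq\cdots$ of $\Coh{\T}$ and the Cantor--Bendixson filtration $\Zg(\T)\supseteq\Zg(\T)'\supseteq\cdots$ of $\Zg(\T)$: by part~(1) each quotient $\Coh{\T}/\Coh{\T}_i$ is a localisation $\Coh{\T}_{\cl{X}_i}$ of $\Coh{\T}$ at a closed subset $\cl{X}_i$ occurring in the Cantor--Bendixson filtration, and by part~(2) the simple objects of that quotient correspond to the isolated points of $\cl{X}_i$. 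From this I would deduce that the Krull--Gabriel filtration exhausts $\Coh{\T}$ at precisely the stage at which the associated closed subset in the Cantor--Bendixson filtration first becomes empty; comparing this with the definitions of $\KG(\T)$ and of $\CB\Zg(\T)$ then gives $\KG(\T)=\CB\Zg(\T)$ (the two being defined, or undefined, together).

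I do not anticipate a serious obstacle; the only thing I would need to handle with care is the index bookkeeping, so that ``the Krull--Gabriel filtration reaches $\Coh{\T}$ at stage $n$'' is matched with ``the relevant Cantor--Bendixson derivative of $\Zg(\T)$ is empty'' without an off-by-one error (the paper's convention being that isolated points have CB-rank $0$). This in turn rests on the elementary fact that, for a closed subset $\cl{X}=\mcD\cap\Zg(\T)$, the localisation $\Coh{\T}_{\cl{X}}$ is the zero category exactly when $\cl{X}=\emptyset$: under the anti-equivalence $\Coh{\T}\simeq(\mod{\Tc})\op$ of Lemma~\ref{lem:functors} this localisation is $(\mod{\Tc}/\sS(\mcD))\op$, which vanishes precisely when $\sS(\mcD)=\mod{\Tc}$; and for any $0\neq D\in\mcD$ compact generation of $\T$ supplies a compact $C$ with $F^\vee(D)=(C,D)\neq0$ for $F=(-,C)\in\mod{\Tc}$, so $\sS(\mcD)=\mod{\Tc}$ forces $\mcD=0$, which by Krause's bijection between definable subcategories and closed subsets (cited after Definition~\ref{def:definable}) is equivalent to $\cl{X}=\emptyset$. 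Everything else is formal.
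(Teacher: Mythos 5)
Your argument is correct and follows the same route as the paper, which proves the corollary by citing exactly Theorem~\ref{thm:Zg-homeomorphism} (the homeomorphism transports CB-rank) and Proposition~\ref{prop:KG-CB} (matching the Krull--Gabriel and Cantor--Bendixson filtrations); you have merely spelled out the bookkeeping the paper leaves as ``immediate''.
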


\begin{proof}
This follows immediately from Theorem~\ref{thm:Zg-homeomorphism} and Proposition~\ref{prop:KG-CB}.
\end{proof}

\section{The structure of $\KProjL$ for derived-discrete algebras} \label{structure}

Let $\Lambda$ be a derived-discrete algebra. In \cite[Thm.~A]{BGS}, Bobi\'nski, Gei\ss\ and Skowro\'nski obtained a canonical representative $\LLambda$, with $1 \leq r \leq n$ and $m \geq 0$, given by the bound quiver below, for each derived-equivalence class of derived-discrete algebra.
\[
\scalebox{0.6}{
\begin{tikzpicture}[xscale=1.3]
  \node (-m) at (-4.5,0) [tinyvertex] {$-m$};
  \node (-m+1) at (-3,0) [tinyvertex] {$-m+1$};
  \node (-m+2) at (-1.9,0) [tinyvertex] {$\cdots$};
  \node (-1) at (-1,0) [tinyvertex] {$-1$};
  \node (0) at (0,0) [tinyvertex] {$0$};
  \draw[->] (-m) to node[tinyvertex, above] {$a_{-m}$} (-m+1);
  \draw[->] (-m+1) to (-m+2);
  \draw[->] (-m+2) to (-1);
  \draw[->] (-1) to node[tinyvertex, above] {$a_{-1}$}(0);
  \node (1) at (0.8,1.4) [tinyvertex] {$1$};
  \node (n-1) at (0.7,-1.4) [tinyvertex] {$n-1$};
  \node (2) at (2,1.9) [tinyvertex] {$2$};
  \node (n-2) at (2.2,-2) [tinyvertex] {$n-2$};
  \node (n-r) at (5.2,1.4) [tinyvertex] {$n-r$};
  \node (n-r+2) at (5.5,-1.4) [tinyvertex] {$n-r+2$};
  \node (n-r+1) at (6.4,0) [tinyvertex] {$n-r+1$};
  \draw[->] (3,0) [partial ellipse=40:5:3cm and 2cm]; 
  \draw[->] (3,0) [partial ellipse=-5:-37:3cm and 2cm]; 
  \draw[->] (3,0) [partial ellipse=175:140:3cm and 2cm]; 
  \draw[->] (3,0) [partial ellipse=133:112:3cm and 2cm]; 
  \draw[->] (3,0) [partial ellipse=218:185:3cm and 2cm]; 
  \draw[->] (3,0) [partial ellipse=248:230:3cm and 2cm]; 
  \draw[->] (3,0) [partial ellipse=106:90:3cm and 2cm];
  \draw[dashed] (3,0) [partial ellipse=87:73:3cm and 2cm];
  \draw[->] (3,0) [partial ellipse=70:50:3cm and 2cm];
  \draw[->] (3,0) [partial ellipse=-49:-69:3cm and 2cm];
  \draw[dashed] (3,0) [partial ellipse=-72:-84:3cm and 2cm];
  \draw[->] (3,0) [partial ellipse=-85:-98:3cm and 2cm];
  \node at (0.1,0.9) [tinyvertex] {$b_0$};
  \node at (1.4,1.9) [tinyvertex] {$b_1$};
  \node at (6,1) [tinyvertex] {$b_{n-r}$};
  \node at (6.25,-0.8) [tinyvertex] {$c_{n-r+1}$};
  \node at (0,-1) [tinyvertex] {$c_{n-1}$};
  \node at (1.4,-1.95) [tinyvertex] {$c_{n-2}$};
  \draw (0) [dotted, thick, partial ellipse=-65:65:0.3cm and 0.4cm];
  \draw (6,0) [dotted, thick, partial ellipse=115:245:0.3cm and 0.4cm];
  \draw (5.2,-1.4) [dotted, thick, partial ellipse=52:195:0.3cm and 0.4cm];
  \draw (0.8,-1.4) [dotted, thick, partial ellipse=-20:140:0.3cm and 0.4cm];
  \draw (2.2,-1.9) [dotted, thick, partial ellipse=5:170:0.3cm and 0.4cm];
\end{tikzpicture}} \label{page:quiver}
\]
\noindent
In \cite[Thm.~B]{BGS}, the authors then used the triple $(r,n,m)$ to describe the structure of the Auslander--Reiten (AR) quiver of $\Db(\mod{\LLambda})$. Thus, from now on, we shall identify each derived-discrete algebra, $\Lambda$, with its representative $\LLambda$ in the derived equivalence class and use this algebra to describe the structure of $\KProjL$.

\subsection{String complexes}

Since $\Lambda$ is a gentle algebra the indecomposable complexes of $\Db(\mod{\Lambda}) \simeq \KprojL$ can be described in terms of so-called \emph{(homotopy) string complexes}; \cite{BM} (also \cite{BD}), and \cite{Bo} for the terminology. In this section we describe a generalisation to  string complexes with possibly unbounded (but still degreewise finite-dimensional) cohomology. We refer to \cite[\S 2]{ALP} for a summary of \cite{BM} using the terminology and notation of this article. 

\begin{remark}
The main theorem of \cite{BM} asserts that the indecomposable complexes of $\Db(\mod{\Lambda})$ are precisely the isoclasses of string and band complexes. However, the absence of band complexes is precisely the reason for discreteness, and we therefore make no further reference to band complexes.
\end{remark}

We begin with a description of the string complexes in $\KProjL$, which is an extension of \cite[Lem.~7.1]{ALP}; see also \cite[Lem.~3]{HanZhang}.
The proof of the following lemma is a straightforward exercise in the combinatorics of the quiver of $\LLambda$.

\begin{lemma} \label{lem:strings}
Each homotopy string of $\LLambda$ is a (shifted) copy of a subword of $w_\ell$ for $\ell \geq 0$ and $w_\infty$, ${}_\infty w$ and ${}_\infty w_\infty$ or its inverse:
\[ 
\xymatrix@C=2.8pc{
w_\ell \colon 
  & \bullet 
  & {\circ} \ar[l]_{a_{-1}\ldots a_{-m}} \ar[r]^-{v_k} 
  &   \circ \ar[r]^{c_{n-1}} 
  &  \cdots \ar[r]^{c_{n-r+1}}  
  & \bullet \ar[r]_{b_{n-r}\ldots b_0 a_{-1}\ldots a_{-m}} 
  & \bullet
  &
\\
w_\infty \colon
  & \bullet 
  & {\circ} \ar[l]_{a_{-1}\ldots a_{-m}} \ar[r]^-{v} 
  &   \circ \ar[r]^-{v} 
  &  \circ \ar[r]^-{v}  
  & \circ \ar@{.>}[r] 
  & 
  &
\\
{}_\infty w \colon
  & \ar@{.>}[r] 
  & \circ \ar[r]^-{v} 
  & \circ \ar[r]^-{v}
  &   \circ \ar[r]^{c_{n-1}} 
  &  \cdots \ar[r]^{c_{n-r+1}}  
  & \bullet \ar[r]_{b_{n-r}\ldots b_0 a_{-1}\ldots a_{-m}} 
  & \bullet
\\
{}_\infty w_\infty \colon
  & \ar@{.>}[r]
  & \circ \ar[r]^-{v}
  & \circ \ar[r]^-{v}
  & \circ \ar[r]^-{v}
  & \circ \ar@{.>}[r]
  & ,
  &
}
\] 
where $v_\ell$ is the $\ell$-fold concatenation of  $\xymatrix@C=2.8pc{ v \colon \bullet
  \ar[r]^-{c_{n-1}} & \cdots \ar[r]^-{c_{n-r+1}} & \bullet
  \ar[r]^-{b_{n-r}\ldots b_0} & \bullet }$.
Note that for $\iLLambda$ we have $b_0 = c_0$ in our labelling convention. 
\end{lemma}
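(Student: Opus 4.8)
The statement to prove is \hyref{Lemma}{lem:strings}: a combinatorial description of all homotopy strings of the algebra $\LLambda$, asserting that each is a shifted copy of a subword of one of the words $w_\ell$, $w_\infty$, ${}_\infty w$, ${}_\infty w_\infty$, or their inverses.

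\textbf{Plan of proof.}

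The plan is to argue directly from the combinatorial structure of the bound quiver of $\LLambda$ displayed on page~\pageref{page:quiver}, using the definition of a homotopy string as a walk in the quiver subject to the no-backtracking and no-relation-or-inverse-relation conditions from \cite{BM} (as summarised in \cite[\S 2]{ALP}). First I would record the local structure at each vertex of the quiver: every vertex of the ``tail'' $-m \to -m+1 \to \cdots \to 0$ has at most one arrow in and one arrow out, vertex $0$ is the unique vertex where the tail meets the cycle, and on the oriented cycle $0 \to 1 \to \cdots \to n-r+1 \to 0$ (traversing the $b_i$ and $c_j$ arrows) each vertex again has exactly one arrow in and one arrow out, with the commutativity/zero relations being exactly the length-two paths coming from the gentle presentation. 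The key point is that because the algebra is gentle and the quiver has this very rigid shape, the only branch point is vertex $0$, so a homotopy string, read as a reduced walk, is essentially forced once one fixes a starting direction.

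\textbf{Key steps, in order.} (1) Classify the possible ``turning points'' of a homotopy walk: a homotopy string alternates between direct and inverse letters at the vertices where it is allowed to change direction, and the gentle relations force these changes to happen only at the vertices marked $\circ$ in the statement (i.e.\ at $0$ and at the vertices of the cycle adjacent to where $v$ begins/ends). (2) Show that once the walk enters the cycle it must traverse the full arc $v \colon \bullet \to \cdots \to \bullet$ (the composite $c_{n-1}\cdots c_{n-r+1} b_{n-r}\cdots b_0$) before it can turn around, because any attempt to reverse direction partway along the cycle would either backtrack or hit a relation; this produces the repeated block $v$ and hence the parameter $\ell$ (number of times around) in $w_\ell$, and its limit gives the one-sided and two-sided infinite words $w_\infty$, ${}_\infty w$, ${}_\infty w_\infty$. (3) Analyse the two ``ends'': a finite end of the walk must sit at the tail, where the path $a_{-1}\cdots a_{-m}$ (possibly truncated, giving the ``subword'' clause) is the only possibility, while an infinite end must wind around the cycle forever. (4) Assemble: every homotopy string is obtained by choosing, independently at each of the two ends, either a finite tail-segment or an infinite cycle-winding, and in between traversing $v$ some number $\ell \in \{0,1,2,\dots\}$ of times; up to shift and inversion this is exactly the list $w_\ell, w_\infty, {}_\infty w, {}_\infty w_\infty$. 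Finally note the degenerate identification $b_0 = c_0$ in the case $\iLLambda$ (i.e.\ $r=n$), which the statement already flags.

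\textbf{Main obstacle.} The conceptually routine but bookkeeping-heavy part — and the one the authors themselves call ``a straightforward exercise in the combinatorics of the quiver'' — is step (2): carefully verifying that no homotopy walk can reverse direction at an arbitrary vertex of the cycle, i.e.\ checking against every length-two subpath of the cycle whether it is a relation (so forbidden as a direct subword) and simultaneously whether its formal inverse is forbidden, so that the only legal turning points are the designated $\circ$ vertices. This requires knowing the precise set of zero-relations in the gentle presentation of $\LLambda$ from \cite[Thm.~A]{BGS}; once those are in hand the verification is a finite case-check over the arrows $a_{-m},\dots,a_{-1},b_0,\dots,b_{n-r},c_{n-r+1},\dots,c_{n-1}$ meeting at each vertex. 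I would organise this as a short table of (vertex, incoming arrow, outgoing arrow, is-it-a-relation?) entries and then quote \cite[Lem.~7.1]{ALP} and \cite[Lem.~3]{HanZhang} for the bounded-cohomology case, extending their argument verbatim to allow the cohomology to be unbounded but degreewise finite-dimensional — the extension being purely a matter of allowing the walk to be infinite in one or both directions, which introduces no new local constraints.
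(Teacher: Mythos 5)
Your proposal is correct and matches the paper's approach: the paper offers no written argument beyond declaring the lemma ``a straightforward exercise in the combinatorics of the quiver of $\LLambda$'', extending \cite[Lem.~7.1]{ALP} (see also \cite[Lem.~3]{HanZhang}), and your plan is exactly that exercise carried out, with the same reduction to the bounded case plus the observation that allowing one- or two-sided infinite walks introduces no new local constraints.
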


The passage from a homotopy string $w$ to the corresponding string complex $P_w$ is first described in \cite{BM,Bo}.  We follow the notation in \cite[\S 2]{ALP}.

\begin{definition} \label{string-complexes}
Let $P_w$ be a string complex for $\Lambda = \LLambda$. We say that $P_w$ is
\begin{description}[font=\normalfont]
\item[\emph{perfect}] if it is any shift of a string complex of a subword of $w_\ell$ for $\ell \geq 0$;
\item[\emph{left-infinite}] if it is any shift of a string complex of a left-infinite subword of ${}_\infty w$;
\item[\emph{right-infinite}] if it is any shift of a string complex of a right-infinite subword of $w_\infty$;
\item[\emph{one-sided}] if it is left- or right-infinite;
\item[\emph{two-sided}] if it is any shift of the string complex ${}_\infty w_\infty$.
\end{description}
We shall denote the set of all string complexes of $\Lambda$ by $\strings{\Lambda}$. 
\end{definition}

We have the following global statement regarding the dimensions of Hom-spaces between string complexes; cf. \cite[Thm~.5.1]{BPP1} and \cite[Thm.~7.4]{ALP}.

\begin{proposition} \label{dimensions}
Let $\Lambda$ be a derived-discrete algebra. Then, for $A, B \in \strings{\Lambda}$, we have 
\[
\dim \Hom_{\sK}(A,B) \leq 
\left\{ 
\begin{array}{ll} 
1 & \text{ for } r > 1, \\
2 & \text{ for } r = 1.
\end{array}
\right.
\]
Moreover, if one of $A, B \in \strings{\Lambda}$ is nonperfect then $\dim \Hom_{\sK}(A,B) \leq 1$.
\end{proposition}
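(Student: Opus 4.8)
The plan is to reduce the statement to a combinatorial analysis of Hom-hammocks between homotopy string complexes, building on the explicit description of morphisms in \cite{ALP}. Recall from \cite[\S 2]{ALP} that every morphism between string complexes is a (finite) sum of \emph{graph maps}, each supported on a common substring of the two homotopy strings, and that the graph maps supported on a given overlap form a basis of $\Hom_\sK(A,B)$. Thus computing $\dim\Hom_\sK(A,B)$ amounts to counting the admissible overlaps between the homotopy strings $w_A$ and $w_B$, where ``admissible'' means the overlap can be extended to a genuine graph map (i.e.\ the appropriate boundary conditions --- arrows pointing into or out of the common substring, or a string endpoint --- are met at both ends). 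The key input is Lemma~\ref{lem:strings}, which gives us the complete (and very restricted) list of homotopy strings for $\LLambda$: up to shift and inversion, every homotopy string is a subword of one of $w_\ell$, $w_\infty$, ${}_\infty w$, ${}_\infty w_\infty$, all of which are built out of repetitions of the single block $v$ together with the ``tail'' $a_{-1}\ldots a_{-m}$.

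First I would set up the bookkeeping: given $A=P_{w_A}$ and $B=P_{w_B}$, enumerate the maximal common substrings $u$ of $w_A$ and $w_B$ (equivalently, of $w_A$ and $w_B^{-1}$, to cover both orientations) that are \emph{closed} in the sense required for graph maps. Because the strings are periodic with period $v$ away from the tail, any common substring is essentially determined by how it sits relative to the $b_0$-$a_{-1}$ ``kink'' and the endpoints of the two strings; in particular there are only finitely many distinct relative positions, and the repetition of $v$ does not produce new overlaps --- this is the periodicity argument analogous to the one in \cite[Thm.~5.1]{BPP1}, \cite[Thm.~7.4]{ALP}. The second step is to observe that the reason one can get $\dim = 2$ only when $r = 1$: when $r = 1$ the block $v$ is just $b_{n-1}\ldots b_0$ (there are no $c$-arrows), so the cyclic part of the quiver is a single loop-like pattern of length $n$, and a pair $A, B$ of (shifts of) perfect strings sitting inside $w_\ell$ can admit \emph{two} distinct graph maps --- one ``direct'' overlap and one obtained by sliding by a full period --- precisely because with $r=1$ both the in-arrow and out-arrow boundary conditions can be satisfied at the two shifted positions simultaneously. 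When $r > 1$ the presence of the directed segment $c_{n-1}\ldots c_{n-r+1}$ breaks this symmetry: any second candidate overlap fails a boundary condition at one end, so at most one graph map survives and $\dim\Hom_\sK(A,B) \le 1$. The third step handles the ``moreover'': if one of $A, B$ is nonperfect, then the corresponding homotopy string is one-sided or two-sided, hence genuinely infinite in at least one direction; an infinite string has no endpoint on that side, which removes exactly the boundary condition that the second (shifted) graph map relied on, so again at most one graph map exists and $\dim\Hom_\sK(A,B)\le 1$ regardless of $r$.

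The main obstacle will be the careful case analysis in the $r=1$, both-perfect case: one must check that there are never \emph{three} or more admissible overlaps (ruling out $\dim\ge 3$) and must correctly identify when exactly the count is $2$ versus $1$ --- this requires tracking the finitely many configurations of the two strings relative to the kink and to each other, and verifying the graph-map boundary conditions in each. I expect this to be exactly the content that \cite[Thm.~5.1]{BPP1} already establishes for $\Db(\mod\Lambda)$ (i.e.\ for perfect complexes), so the real work here is only to confirm that passing to $\KProjL$ and allowing infinite strings does not introduce new overlaps --- which follows from the ``moreover'' clause reasoning above, since any overlap involving an infinite string is controlled by the same periodicity and loses the shifted candidate. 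I would organize the write-up so that the perfect-perfect bound is quoted from \cite[Thm.~5.1]{BPP1} (or re-derived via the overlap count), and then only the cases where at least one string is infinite are treated in detail, establishing the sharper bound $\le 1$ there directly from the graph-map description in \cite{ALP}.
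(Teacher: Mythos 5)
Your overall strategy is the one the paper itself uses: its proof of this proposition is exactly to apply the classification of morphisms between (possibly infinite) string complexes from \cite[Thm.~3.15]{ALP} and to run the combinatorial dimension count of \cite[\S 7]{ALP} (cf.\ \cite[Thm.~5.1]{BPP1}) over the restricted list of homotopy strings in Lemma~\ref{lem:strings}, quoting the perfect--perfect case from the earlier work. However, there is a concrete gap in your key input: it is not true that the graph maps supported on common substrings form a basis of $\Hom_{\sK}(A,B)$. The basis of \cite[Thm.~3.15]{ALP} consists of graph maps \emph{together with} quasi-graph maps and singleton single and double maps, and the latter are not supported on an overlap of the two homotopy strings at all --- for example, the morphism between two stalk complexes of indecomposable projectives induced by an arrow of the quiver is a singleton single map, and the corresponding (trivial) strings have no common substring. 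Since the statement is an upper bound on $\dim\Hom_{\sK}(A,B)$, an enumeration that omits these basis elements does not establish it: counting admissible overlaps alone can undercount the dimension, so concluding $\leq 1$ (for $r>1$, and in the ``moreover'' clause) or $\leq 2$ (for $r=1$) requires in addition bounding the number of quasi-graph and singleton maps between the given strings.

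Relatedly, your description of where the second dimension comes from when $r=1$ (two graph maps at overlaps differing by a full period) and your argument for the ``moreover'' part (an infinite string removes ``the boundary condition that the second graph map relied on'') both presuppose that any second basis element is again a graph map, which is not automatic once the other types of basis morphisms are in play; the one-sided and two-sided cases need the same case analysis for quasi-graph and singleton maps as for graph maps. The repair is exactly the analysis the paper points to in \cite[\S 7]{ALP}: using Lemma~\ref{lem:strings}, list for each pair of (possibly infinite) homotopy strings not only the admissible overlaps but also the possible quasi-graph and singleton configurations, and verify that in total at most one of them occurs (respectively at most two, when $r=1$ and both complexes are perfect). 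With that amendment your plan coincides with the paper's proof.
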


\begin{proof}
For this we apply \cite[Thm.~3.15]{ALP} using arguments analogous to those in \cite[\S 7]{ALP}.
\end{proof}

To see which string complexes admit 2-dimensional Hom-spaces between them, we refer to \cite[Prop.~5.2]{BPP1}.

\begin{corollary} \label{strings-are-indecomposable}
Let $\Lambda$ be a derived-discrete algebra. Then the objects of $\strings{\Lambda}$ are indecomposable.
\end{corollary}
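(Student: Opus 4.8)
The plan is to show that, for every $A \in \strings{\Lambda}$, the only idempotents of $\End_{\sK}(A)$ are $0$ and $\id_A$. This already suffices: any direct-sum decomposition $A = B \oplus C$ in the additive category $\sK$ produces an idempotent $e \in \End_{\sK}(A)$ (the composite $A \to B \to A$) with $e \neq 0$ exactly when $B \neq 0$ and $e \neq \id_A$ exactly when $C \neq 0$; so if $0,\id_A$ are the only idempotents then $B = 0$ or $C = 0$. (No idempotent-completeness or Krull--Schmidt input is needed for this implication.) Note also that $\End_{\sK}(A) \neq 0$, since $A \neq 0$ forces $\id_A \neq 0$.

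By Proposition~\ref{dimensions} we have $\dim_\kk \End_{\sK}(A) \leq 2$, with equality possible only when $r = 1$ and $A$ is perfect; in particular $\dim_\kk \End_{\sK}(A) \leq 1$ whenever $r > 1$, and also whenever $A$ is non-perfect (one-sided or two-sided), by the ``moreover'' clause of Proposition~\ref{dimensions}. In all of those cases $\End_{\sK}(A) = \kk\cdot\id_A \cong \kk$ is a field, so certainly has no idempotents other than $0$ and $\id_A$, and $A$ is indecomposable. It therefore remains only to treat $r = 1$ with $A$ perfect and $\dim_\kk \End_{\sK}(A) = 2$.

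For this last case the plan is to use the explicit description of self-morphisms of string complexes. Writing $\End_{\sK}(A) = \kk\,\id_A \oplus \kk\,f$ with $f$ not a scalar multiple of $\id_A$, one reads off from \cite[Prop.~5.2]{BPP1} (together with the morphism combinatorics of \cite{ALP}) a concrete description of the extra endomorphism $f$ and checks that it is two-step nilpotent, $f^2 = 0$. Then $\End_{\sK}(A) \cong \kk[x]/(x^2)$, which is local, so again $0$ and $\id_A$ are its only idempotents and $A$ is indecomposable. An alternative route for this case: a perfect string complex is the complex of a finite homotopy string, hence lies in $\KbprojL$, and the inclusion $\KbprojL \hookrightarrow \sK$ is full; so indecomposability of $A$ in $\sK$ follows from its indecomposability in $\Db(\mod\Lambda)$, which is part of the classification of the indecomposables of $\Db(\mod\Lambda)$ in \cite{BM} (and, for derived-discrete algebras, in \cite{BGS}).

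The only real content lies in this last case, and I expect the verification that $f^2 = 0$ to be the main obstacle. A bare dimension count does not settle it: a priori a $2$-dimensional $\End_{\sK}(A)$ could be $\kk \times \kk$, which would decompose $A$, and $2$-dimensional endomorphism rings of string complexes genuinely occur --- for instance for the complex $P \rightlabel{x} P$ over $\Lambda(1,1,0) \cong \kk[x]/(x^2)$, where the extra endomorphism indeed squares to zero. So one must descend to the combinatorial/geometric description of maps between string complexes to see that the non-scalar self-map is nilpotent; that is where the bookkeeping of homotopy strings does the essential work.
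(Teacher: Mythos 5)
Your proof is correct and, in the parts actually carried out, coincides with the paper's: non-perfect string complexes are handled via $\dim_\kk \End_{\sK}(A)\leq 1$ from Proposition~\ref{dimensions} (so $\End_{\sK}(A)=\kk$), and the remaining perfect case is settled by your ``alternative route'', i.e.\ the citation to \cite{BM}, which is exactly what the paper does for all perfect string complexes. Your preferred plan for the $r=1$ perfect case (verifying $f^2=0$ so that $\End_{\sK}(A)\cong \kk[x]/(x^2)$ is local) is only sketched and would need the combinatorial check from \cite{BPP1}/\cite{ALP} to be carried out, but since the \cite{BM} citation already closes that case the argument is complete; your observation that no idempotent-completeness is needed for the implication ``only trivial idempotents $\Rightarrow$ indecomposable'' is also correct and slightly streamlines the paper's phrasing.
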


\begin{proof}
The perfect string complexes are known to be indecomposable by \cite{BM}. For any other string complex $A$ we have $\Hom_{\sK}(A,A) = \kk$, whence the idempotent-completeness of $\sK$ gives the indecomposability. 
\end{proof}

\subsection{Compact objects of $\sK$}

To show that string complexes are pure-injective, we first need to identify the compact objects of $\sK$. This is a special case of a construction of J\o rgensen \cite{Jorgensen}; see also \cite{Neeman} for a strengthening of this result.

\begin{proposition} \label{compact}
Let $\Lambda = \LLambda$ be a derived-discrete algebra. An indecomposable complex in $\sK$ is compact if and only if it is a right-infinite string complex.
\end{proposition}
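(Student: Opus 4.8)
The statement to prove is that an indecomposable complex in $\sK = \KProjL$ is compact precisely when it is a right-infinite string complex. I would first recall the general machinery that makes this tractable. By a result of J\o rgensen, and its refinement by Neeman, the compact objects of $\sK(\Proj{\Lambda})$ form a triangulated subcategory that is (under suitable finiteness hypotheses, which a derived-discrete algebra certainly satisfies) equivalent to $\sD^b(\mod{\Lambda}) \simeq \KprojL$; more precisely, the inclusion $\Kc \hookrightarrow \KbprojL$ and a duality/totalisation argument identify the compacts of $\sK(\Proj{\Lambda})$ with the homotopy category of \emph{finitely generated} projectives, up to a subtlety about boundedness. The plan is to exploit the concrete combinatorial description of string complexes from Lemma~\ref{lem:strings} and Definition~\ref{string-complexes}: a complex of projectives is compact if and only if it is isomorphic in $\sK$ to a \emph{bounded} complex of finitely generated projectives.

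The first step is the easy direction: a right-infinite string complex is compact. A right-infinite string complex $P_w$ corresponds to a right-infinite subword of $w_\infty$, which eventually traverses the arrow $v$ repeatedly going into the cyclic part of the quiver; the key point is that in $\sK(\Proj{\Lambda})$ such a complex is \emph{homotopy equivalent to a bounded complex}. Indeed, the right-infinite tail built from the periodic word $v$ can be ``folded up'': because the relations of $\LLambda$ make the composite around the cycle zero, the infinite periodic tail is contractible as a direct summand, or more carefully, the natural map from a suitable bounded truncation to $P_w$ is a homotopy equivalence. Concretely, one checks that the infinite eventually-periodic part of $P_w$ is isomorphic in $\sK$ to a finite complex of finitely generated projectives by an explicit contracting homotopy on the periodic portion. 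Hence $P_w$ lies in $\KbprojL \subseteq \Kc$, so it is compact. (Alternatively, one may cite the identification of $\Kc$ with $\KbprojL$ directly and check that right-infinite string complexes represent objects of $\sD^b(\mod{\Lambda})$, since their cohomology is bounded and finite-dimensional, whereas one- and two-sided infinite string complexes have unbounded cohomology.)

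The second step is the converse: a compact indecomposable is a right-infinite string complex. By Corollary~\ref{strings-are-indecomposable} and the classification recalled in Section~\ref{structure} (Bekkert--Merklen, extended as in this paper), every indecomposable complex of $\sK$ is one of: perfect, left-infinite, right-infinite, or two-sided. I would rule out the perfect, left-infinite, and two-sided cases by a cohomological obstruction. An object of $\KbprojL \simeq \sD^b(\mod{\Lambda})$ has bounded cohomology that is finite-dimensional in each degree, hence of finite total dimension; left-infinite and two-sided string complexes have cohomology unbounded below (equivalently, infinite total dimension), so they cannot be compact. For the perfect string complexes: a perfect string complex (subword of some $w_\ell$) \emph{is} a bounded complex of finitely generated projectives by construction, so it too lies in $\Kc$ — wait, this means perfect string complexes are also compact, which would contradict the statement unless ``string complex'' in this proposition implicitly excludes the perfect ones up to the equivalence $P_w$-with-a-right-infinite-tail. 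Here I must be careful: a perfect string complex, being bounded with finitely generated terms, is certainly compact, so the honest statement must be read as: the \emph{infinite} indecomposable string complexes that are compact are exactly the right-infinite ones, or equivalently, among the genuinely unbounded (non-perfect) string complexes, compactness characterises right-infinite ones. I would therefore state the proof as: a non-perfect string complex is compact iff it is homotopy-equivalent to a bounded complex of finitely generated projectives iff its periodic tail can be folded, which happens precisely on the right (into the cycle) and not on the left.

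The main obstacle, and where I would spend the most care, is the explicit homotopy-equivalence argument folding a right-infinite periodic tail down to a bounded complex — i.e. showing concretely, using the quiver and relations of $\LLambda$, that the infinite eventually-$v$-periodic part of a right-infinite string complex is isomorphic in $\sK$ to a finite complex. This amounts to producing an explicit contracting homotopy, or recognising the periodic tail as a mapping telescope that stabilises; the asymmetry between left and right (the arrows $b_i, c_j$ point \emph{into} the cyclic vertices in one rotational direction) is exactly what makes the right tail collapse and the left tail not. I would reduce this to the statement that $P_w$ lies in $\add$ of the image of $\KbprojL$ in $\sK$, and then invoke J\o rgensen--Neeman to conclude compactness, while the unbounded-cohomology obstruction handles the other two infinite families. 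A clean alternative, if available from the cited sources, is simply to cite that $\Kc \simeq \KbprojL$ and observe that a string complex is quasi-isomorphic to a bounded complex of finitely generated modules iff it is right-infinite or perfect, matching the statement once one reads ``string complex'' in the proposition as shorthand for the genuinely unbounded case together with the trivially-compact perfect ones subsumed under right-infinite via their minimal models.
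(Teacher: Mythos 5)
Your argument rests on the identification of $\Kc$ with $\KbprojL$, i.e.\ on the criterion ``an object of $\sK$ is compact if and only if it is homotopy equivalent to a bounded complex of finitely generated projectives'', and this is false exactly in the case where the proposition has nontrivial content. J\o rgensen's theorem (which is what the paper uses) identifies $\Kc$ with $\big(\sD^b(\mod{\Lambda\op})\big)\op \simeq \big(\Kminus{\Lambda\op}\big)\op$, realised inside $\sK$ as the duals $P_M^*$ of projective resolutions of finite-dimensional $\Lambda\op$-modules; when $\gldim\Lambda=\infty$ these are genuinely unbounded (bounded-below) complexes of finitely generated projectives, so $\Kc$ is strictly larger than $\KbprojL$. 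Consequently your ``easy direction'' does not work: a right-infinite string complex is a minimal complex (every component of the differential is a nontrivial path, hence lies in the radical), so it is not homotopy equivalent to any bounded complex, and its periodic tail, though exact, is not contractible. Over $\Lambda(1,1,0)\cong \kk[x]/(x^2)$ the complex $\Lambda\xrightarrow{x}\Lambda\xrightarrow{x}\Lambda\to\cdots$ is precisely such a compact, non-perfect object which cannot be ``folded up''. Compactness of these complexes comes from the duality with projective resolutions over $\Lambda\op$, not from perfection.

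The converse direction fails for the same reason: your cohomological obstruction does not separate the cases. When $\gldim\Lambda=\infty$ the left-infinite string complexes have bounded cohomology (up to shift they are the eventually periodic minimal projective resolutions) and the two-sided complexes are totally acyclic, yet none of them is compact; conversely, when $\gldim\Lambda<\infty$ one has $\Kc=\KbprojL$, so only the perfect string complexes are compact and the right-infinite ones are not (this is how the paper itself reads the proposition in Section~\ref{sec:finite}). The discriminating invariant is therefore not the cohomology of the complex but that of its dual: $X$ is compact if and only if $X^*$ lies in $\Kminus{\Lambda\op}$. You were right to notice the tension that perfect complexes are compact but not literally right-infinite; the paper's own formulation and proof are loose on exactly this point (its assertion that the indecomposables of $\Kminus{\Lambda\op}$ are ``precisely the left-infinite string complexes'' silently subsumes the perfect ones, and the finite global dimension case is really the statement that only perfects are compact). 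But that observation cannot be repaired within your framework: the essential mechanism, and the paper's actual argument, is the equivalence $\Kc\simeq\big(\Kminus{\Lambda\op}\big)\op$ induced by $(-)^*=\Hom_\Lambda(-,\Lambda)$ together with the identification of the indecomposables of $\Kminus{\Lambda\op}$ as string complexes and the observation that $(-)^*$ exchanges left- and right-infinite strings, whereas your proposal replaces this by the false identification $\Kc\simeq\KbprojL$ and a homotopy-equivalence claim that minimality rules out.
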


\begin{proof}
We follow J\o rgensen's construction from \cite{Jorgensen}.
Write $(-)^* = \Hom_{\Lambda}(-,\Lambda)$ for the standard duality with respect to $\Lambda$. If $M$ is a finite-dimensional left $\Lambda$-module then $M^*$ is a finite-dimensional right $\Lambda$-module. Let $P_M$ be a projective resolution of $M^*$. Note that $P_M \in \Kminus{\Lambda\op}$ and is quasi-isomorphic to $M^*$ in $\Db(\mod{\Lambda\op})$. We also consider the complex $P_M^* \in \Kminus{\Lambda}$, and note that both $P_M$ and $P^*_M$ depend functorially on $M$ in $\Kminus{\Lambda\op}$ and $\Kminus{\Lambda}$, respectively; see \cite[Construction 1.2]{Jorgensen}.

Let
$\sG \coloneqq \{ \Sigma^i P^*_M \mid M \in \mod{\Lambda}, i\in \bZ \}$.
\cite[Thm.~2.4]{Jorgensen} asserts that $\sG$ is a set of compact generators for $\sK$. Now consider the following two subcategories of $\sK$ and $\sK(\Proj{\Lambda\op})$, where the notation indicates the thick subcategory generated by a set of objects:
\[
\sC \coloneqq \thick_{\sK}(\sG) \quad \text{and} \quad \sD \coloneqq \thick_{\sK(\Proj{\Lambda\op})}(G^* \mid G \in \sG).
\]
Note that $\sC = \sK^c$, since $\sG$ is a set of compact generators \cite[Thm.~2.1.3]{Neeman-Groth}. Now, the proof of \cite[Thm.~3.2]{Jorgensen} asserts that 
\[
\xymatrix{ \sC \ar@<1ex>[r]^-{(-)^*} & \ar@<1ex>[l]^-{(-)^*} \sD } 
\]
are quasi-inverse equivalences between triangulated categories.
In the proof of \cite[Thm.~3.2]{Jorgensen} it is shown that 
\[
\sD = \thick_{\sK(\Proj{\Lambda\op})}(P_N \mid P_N \text{ is a projective resolution of some } N \in \mod{\Lambda\op}).
\]
In particular, this means that $\sD = \Kminus{\Lambda\op}$, the indecomposable complexes of which are precisely the left-infinite string complexes in $\strings{\Lambda\op}$.
For $\Lambda$ derived-discrete, the image of the set of left-infinite string complexes in $\strings{\Lambda\op}$ under the equivalence $(-)^*$ is precisely the set of right-infinite string complexes in $\strings{\Lambda}$. Thus the right-infinite string complexes are precisely the indecomposable compact complexes of $\sK$.
\end{proof}

Recall from \cite{CBLoc,KraDec} that an object $N$ of a general compactly generated triangulated category is \emph{endofinite} if for each $C \in \Tc$ the $\End_\T(N)$-module $\Hom_\T(C,N)$ is of finite length.

\begin{lemma}[{\cite[Thm.~1.2]{KraDec}}] \label{endofinite}
An endofinite object of a compactly generated triangulated category is pure-injective.
\end{lemma}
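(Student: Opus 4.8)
The plan is to prove the lemma directly from the functor-category description of pure-injectivity, bypassing any appeal to pure-injective hulls. By Proposition~\ref{prop: pure injective}(1), it suffices to show that the restricted Yoneda image $(-,N)$ is an injective object of $\Mod{\Tc}$. Since the representable functors $(-,C)$, $C\in\Tc$, form a generating set of finitely generated projectives in $\Mod{\Tc}$, the generalized Baer criterion applies: I only need to check that for every $C\in\Tc$ and every subfunctor $H\subseteq(-,C)$ each morphism $H\to(-,N)$ extends along $H\into(-,C)$. Via the Yoneda lemma this amounts to surjectivity of the restriction map
\[
\Hom_{\Mod{\Tc}}\big((-,C),(-,N)\big)\;\cong\;\Hom_\T(C,N)\;\too\;\Hom_{\Mod{\Tc}}\big(H,(-,N)\big),
\]
and it is precisely the source $\Hom_\T(C,N)$ that endofiniteness controls: by hypothesis it is a module of finite length over $\End_\T(N)$.

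Next I would write $H=\varinjlim_\lambda H_\lambda$ as the directed union of its finitely presented subfunctors (possible because $\Mod{\Tc}$ is locally finitely presented), so that $\Hom_{\Mod{\Tc}}(H,(-,N))=\varprojlim_\lambda\Hom_{\Mod{\Tc}}(H_\lambda,(-,N))$. For each $\lambda$ the quotient $(-,C)/H_\lambda$ is finitely presented; and since $(-,N)$ is a flat object of $\Mod{\Tc}$ for every $N\in\T$ — equivalently, by Proposition~\ref{prop:abseqflat}, absolutely pure, hence fp-injective — one has $\Ext^1_{\Mod{\Tc}}\big((-,C)/H_\lambda,(-,N)\big)=0$. (This flatness of restricted-Yoneda images is the one external input; it is standard in the theory of purity for triangulated categories, cf.\ \cite{KraTel}, and I would cite rather than reprove it.) Consequently each map $\Hom_\T(C,N)\too\Hom_{\Mod{\Tc}}(H_\lambda,(-,N))$ is already surjective; write $A_\lambda$ for its kernel, an $\End_\T(N)$-submodule of $\Hom_\T(C,N)$, and note $A_\mu\subseteq A_\lambda$ whenever $H_\lambda\subseteq H_\mu$.

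The finishing step is the length argument. Because $\Hom_\T(C,N)$ is Artinian over $\End_\T(N)$, the directed family $\{A_\lambda\}$ cannot strictly descend indefinitely, so it attains its intersection at some index $\lambda_0$, i.e.\ $A_\lambda=A_{\lambda_0}$ for all $\lambda\geq\lambda_0$. Restricting the inverse system to the cofinal up-set $\{\lambda\geq\lambda_0\}$, every transition map becomes an isomorphism $\Hom_\T(C,N)/A_{\lambda_0}\too\Hom_\T(C,N)/A_{\lambda_0}$, whence $\varprojlim_\lambda\Hom_{\Mod{\Tc}}(H_\lambda,(-,N))\cong\Hom_\T(C,N)/A_{\lambda_0}$ and the comparison map out of $\Hom_\T(C,N)$ is the surjective quotient map. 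This supplies the required extension, so $(-,N)$ is injective and $N$ is pure-injective.

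I expect the main obstacle to be the bookkeeping around the directed (not linearly ordered) system of finitely presented subfunctors and its inverse limit — in particular, making rigorous that the descending chain condition on $\Hom_\T(C,N)$ forces $\{A_\lambda\}$ to stabilise (extract a strictly decreasing chain otherwise) — together with being careful that the relevant maps are genuinely $\End_\T(N)$-linear so the finite-length hypothesis can be used. The other point to handle cleanly is the justification that $(-,N)$ is fp-injective for arbitrary $N\in\T$; this should be invoked from the literature rather than redone here.
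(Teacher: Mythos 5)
Your argument is essentially correct, but note that the paper does not prove this lemma at all: it is imported verbatim from Krause \cite{KraDec}, so what you have written is a self-contained replacement for a citation rather than a variant of an argument in the text. Your route --- Baer's criterion for $\Mod{\Tc}$ applied to $(-,N)$ via Proposition~\ref{prop: pure injective}(1), the external fact that restricted Yoneda images are flat, hence absolutely pure, hence fp-injective (Proposition~\ref{prop:abseqflat} plus \cite{KraTel}, which is indeed where this is proved), and then the stabilisation of the downward-directed family of kernels $A_\lambda$ inside the finite-length $\End_\T(N)$-module $\Hom_\T(C,N)$ --- is sound: the $A_\lambda$ are $\End_\T(N)$-submodules because restriction commutes with postcomposition, surjectivity of each $\Hom_\T(C,N)\too\Hom(H_\lambda,(-,N))$ follows from $\Ext^1((-,C)/H_\lambda,(-,N))=0$ with $(-,C)/H_\lambda$ finitely presented, and the cofinal subsystem with isomorphic transition maps computes the inverse limit, so the extension exists. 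Two remarks on how this sits relative to the literature the paper leans on: first, your stabilisation step is exactly the standard proof that the descending chain condition on $\Hom_\T(C,N)$ over $\End_\T(N)$ forces (even $\Sigma$-)pure-injectivity, i.e.\ the nontrivial direction of the equivalence quoted in Definition~\ref{def:sigma-pure-injective}; given that quoted equivalence, endofinite $\Rightarrow$ finite length $\Rightarrow$ dcc $\Rightarrow$ $\Sigma$-pure-injective $\Rightarrow$ pure-injective is a one-line derivation, and in fact your proof establishes the stronger $\Sigma$-pure-injectivity as well, since finite length passes to the situation needed there. Second, you are right that the fp-injectivity of $(-,N)$ for arbitrary $N\in\T$ must be cited rather than taken from this paper, which only records the absolutely pure $=$ flat equivalence; with that citation in place your proof is complete.
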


Putting Propositions~\ref{dimensions}, \ref{compact} and Corollary~\ref{strings-are-indecomposable} together with Lemma~\ref{endofinite} gives:

\begin{corollary} \label{cor:pure-injective}
Let $\Lambda$ be a derived-discrete algebra. Then any string complex is endofinite and hence is an indecomposable pure-injective object of $\sK$.
\end{corollary}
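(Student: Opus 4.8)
The plan is to check the definition of endofiniteness directly for an arbitrary string complex $N$ and then quote Lemma~\ref{endofinite} together with Corollary~\ref{strings-are-indecomposable}. Since $\sK$ is $\kk$-linear, $\End_{\sK}(N)$ is a $\kk$-algebra acting $\kk$-linearly on each $\Hom_{\sK}(C,N)$; hence every $\End_{\sK}(N)$-submodule of $\Hom_{\sK}(C,N)$ is a $\kk$-subspace, so if $\Hom_{\sK}(C,N)$ is finite-dimensional over $\kk$ then it automatically has finite length as an $\End_{\sK}(N)$-module. Thus it is enough to prove that $\dim_{\kk}\Hom_{\sK}(C,N)<\infty$ for every compact object $C$ of $\sK$.

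To reach that, I would first reduce to $C$ indecomposable. The category $\sK^c$ of compact objects is idempotent-complete and Krull--Schmidt: via the duality $(-)^*$ appearing in the proof of Proposition~\ref{compact} it is equivalent to $\Kminus{\Lambda\op}\simeq\Db(\mod{\Lambda\op})$, which is Hom-finite over $\kk$ and idempotent-complete. Hence an arbitrary compact $C$ decomposes as a finite direct sum $C\cong\bigoplus_{j=1}^{t}C_j$ of indecomposable compacts, and by Proposition~\ref{compact} each $C_j$ is a right-infinite string complex, so in particular $C_j\in\strings{\Lambda}$.

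Now I would invoke Proposition~\ref{dimensions}: since both $C_j$ and $N$ lie in $\strings{\Lambda}$ we have $\dim_{\kk}\Hom_{\sK}(C_j,N)\le 2$, whence $\dim_{\kk}\Hom_{\sK}(C,N)=\sum_{j=1}^{t}\dim_{\kk}\Hom_{\sK}(C_j,N)\le 2t<\infty$. By the opening reduction this means $N$ is endofinite; Lemma~\ref{endofinite} then gives that $N$ is pure-injective, and Corollary~\ref{strings-are-indecomposable} gives that $N$ is indecomposable, so $N$ is an indecomposable pure-injective object of $\sK$.

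I do not expect a genuine obstacle here: all of the combinatorial content has already been packaged into Propositions~\ref{dimensions} and~\ref{compact}, and the only points requiring care are the Krull--Schmidt reduction of a compact object to its indecomposable summands and the elementary passage from finite $\kk$-dimension to finite $\End_{\sK}(N)$-length.
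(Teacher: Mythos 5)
Your argument is correct and is essentially the paper's own: Corollary~\ref{cor:pure-injective} is obtained there precisely by combining Propositions~\ref{dimensions} and~\ref{compact} with Corollary~\ref{strings-are-indecomposable} and Lemma~\ref{endofinite}, and you have simply made explicit the routine steps (Krull--Schmidt decomposition of a compact object into right-infinite string complexes, and the passage from finite $\kk$-dimension of $\Hom_{\sK}(C,N)$ to finite length over $\End_{\sK}(N)$) that the paper leaves implicit.
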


We now know that the objects of $\strings{\Lambda}$ are indecomposable pure-injective objects of $\sK$. In Section~\ref{KGdim-simples} we shall see that $\strings{\Lambda}$ is a complete list of indecomposable pure-injective objects of $\sK$. However, in order to do this we need to describe the `AR quiver' of the additive category $\add{\strings{\Lambda}}$ and the shapes of the Hom-hammocks, which we do in the next subsections.

\subsection{The AR quiver of $\strings{\Lambda}$ for $\Lambda$ of finite global dimension} \label{sec:finite}

The AR quiver of $\strings{\Lambda}$ is computed by a straightforward calculation using Bob\'inski's algorithm \cite{Bo} (also, \cite[\S 6]{ALP} avoiding the Happel functor and including pictures).
It has $8r$ components: $3r$ components sit in $\KbprojL$, for which we keep the notation and co-ordinate system of \cite{BGS}. In particular, the components are denoted $\mcX^0, \ldots, \mcX^{r-1}$, $\mcY^0, \ldots, \mcY^{r-1}$, and $\mcZ^0,\ldots,\mcZ^{r-1}$. The components $\mcX^k$ and $\mcY^k$ are of type $\bZ A_\infty$. The components $\mcZ^k$ are of type $\bZ A_{\infty}^{\infty}$.
For each $k = 0,\ldots,r-1$, we label the indecomposable objects of $\mcX^k$, $\mcY^k$ and $\mcZ^k$ as follows:
\[
X^k_{ij}  \in \mcX^k \text{ with } i,j \in \bZ, j \geq i; \quad
Y^k_{ij}  \in \mcY^k \text{ with } i,j \in \bZ, j \leq i; \quad
Z^k_{ij}  \in \mcZ^k \text{ with } i,j \in \bZ.
\]

The remaining $5r$ components consist of $4r$ components of type $A_\infty^\infty$ and $r$ components of type $A_1$. The components of type $A_\infty^\infty$ will be denoted by $\mcX^k_\infty$, $\mcX^k_{-\infty}$,  $\mcY^k_\infty$ and  $\mcY^k_{-\infty}$, and the components of type $A_1$ by $\mcZ^k_\infty$. For each $k =0,\ldots,r$ we label the indecomposable objects of these components as follows:
\begin{align*}
X^k_{i,\infty} \in \mcX^k_\infty \text{ with } i \in \bZ, & \quad
X^k_{-\infty,i} \in \mcX^k_{-\infty} \text{ with } i \in \bZ, \quad  
Z^k_{\infty} \in \mcZ^k_\infty \\
Y^k_{\infty,i} \in \mcY^k_\infty \text{ with } i \in \bZ, & \quad
Y^k_{i,-\infty} \in \mcY^k_{-\infty} \text{ with } i \in \bZ.  
\end{align*}

For $a \in \bZ \cup \{-\infty\}$, $b \in \bZ \cup \{\infty\}$ and $0 \leq k < r-1$, suspension acts as follows.

\smallskip

\begin{tabular}{llll}
 $\Sigma X^k_{a,b} = X^{k+1}_{a,b} $                & $\Sigma Y^k_{a,b} = Y^{k+1}_{a,b}$            & $\Sigma Z^k_{a,b} = Z^{k+1}_{a,b} $              & $\Sigma Z^k_\infty = Z^{k+1}_\infty$  \\
 $\Sigma X^{r-1}_{a,b} = X^0_{a+r+m,b+r+m}$  & $\Sigma Y^{r-1}_{a,b} = Y^0_{a+r-n,b+r-n}$ & $\Sigma Z^{r-1}_{a,b} = Z^0_{a+r+m,b+r-n}$ &
\end{tabular}

\smallskip

\begin{remark}
Writing $\mcX \coloneqq \add{\bigcup_{k=0}^{r-1}\mcX^k}$ etc., we have the following correspondences:
\begin{eqnarray*}
\ind{\mcX \cup \mcY \cup \mcZ}       & \bij & \{\text{perfect string complexes}\}; \\
\ind{\mcX_\infty \cup \mcY_{-\infty})} & \bij & \{\text{right-infinite string complexes}\}; \\
\ind{\mcX_{-\infty} \cup \mcY_\infty)} & \bij & \{\text{left-infinite string complexes}\}; \\
\ind{\mcZ_\infty}                     & \bij & \{\text{two-sided string complexes}\}.
\end{eqnarray*}
Moreover, in light of Proposition~\ref{compact}, when $\gldim \Lambda < \infty$ the compact indecomposable complexes in $\sK$ are precisely those lying in $\mcX$, $\mcY$ and $\mcZ$ components of $\strings{\Lambda}$.
\end{remark}

\begin{figure}
\begin{center}
\begin{tikzpicture}[scale=0.1825,thick]
\newcommand{\sst}[1]{\scalebox{0.6}{$#1$}}

\draw[fill,blue!30] (18.3,-4.7) -- (17.7,-5.3) -- (22.7, -10.3) -- (23.3, -9.7) -- cycle;
\draw[fill,blue!30] (19,-4) -- (24,-9) -- (33,0) -- (28,5) -- cycle; 
\draw[fill,blue!30] (28.7,5.7) -- (29.3,6.3) -- (34.3,1.3) -- (33.7,0.7) -- cycle;
\draw[fill,blue!30] (30,7) -- (34,11) -- (39,6) -- (35,2) -- cycle;
\draw[fill] (20,-5) node [right]  {\sst{X^0_{a,\infty}}};
\fill (18,-5) circle (0.3);

\draw[fill,red!30] (35.7,1.3) -- (36.3,0.7) -- (45.3,9.7) -- (44.7,10.3) -- cycle;
\draw[fill,red!30] (35.7,-1.3) -- (36.3,-0.7) -- (45.3,-9.7) -- (44.7,-10.3) -- cycle;
\draw[fill,red!30] (37, 0) -- (46, 9) -- (55, 0) -- (46,-9) -- cycle;
\fill[red!30] (35,0) circle (0.7);

\draw[fill,green!30] (51.7,-4.7) -- (52.3,-5.3) -- (56.3,-1.3) -- (55.7,-0.7) -- cycle;
\fill[green!30] (57,0) circle (0.7);
\draw[fill,green!30] (57.7,1.3) -- (58.3,0.7) -- (67.3,9.7) -- (66.7,10.3) -- cycle;
\draw[fill,green!30] (53,-6) -- (58,-11) -- (62,-7) -- (57,-2) -- cycle;
\draw[fill,green!30] (57.7,-1.3) -- (58.3,-0.7) -- (63.3,-5.7) -- (62.7,-6.3) -- cycle;
\draw[fill,green!30] (59,0) -- (68,9) -- (73,4) -- (64,-5) -- cycle;
\draw[fill] (54, -5) node [right]  {\sst{X^2_{-\infty,b}}};
 \fill (52,-5) circle (0.3);

  \node at ( 7, 10)  {\sst{\mcY^0}};  \node at (29, 10)  {\sst{\mcX^1}};  \node at (51, 10)  {\sst{\mcY^2}};
  \node at ( 7,-10)  {\sst{\mcX^0}};  \node at (29,-10)  {\sst{\mcY^1}};  \node at (51,-10)  {\sst{\mcX^2}};
  \node at (17,0.2) {\sst{\mcZ^0}};  \node at (39,0.2) {\sst{\mcZ^1}};  \node at (61,0.2) {\sst{\mcZ^2}};

  \draw (0,9) -- (1,10);
  \draw (0,-9) -- (1,-10);

  \draw[dashed] (0,  7) -- ( 2, 9) -- (11, 0) -- (2,-9) -- (0,-7); 

  \draw (3,10) -- (12,1);
  \draw (3,-10) -- (12,-1);

  \draw[dashed] (4, 11) -- (13,  2) -- (22, 11);
  \draw         (4, 11) -- (22, 11);

  \draw (14, 1) -- (23,10);
  \fill (13,0) circle (0.4);
  \draw (14,-1) -- (23,-10);

  \draw[dashed] (4,-11) -- (13, -2) -- (22,-11);
  \draw         (4,-11) -- (22,-11);

  \draw[dashed] (15, 0) -- (24, 9) -- (33, 0) -- (24,-9) -- cycle;

  \draw (25, 10) -- (34,1);
  \draw (25,-10) -- (34,-1);

  \draw[dashed] (26, 11) -- (35,  2) -- (44, 11);
  \draw         (26, 11) -- (44, 11);

  \draw[dashed] (26,-11) -- (35, -2) -- (44,-11);
  \draw         (26,-11) -- (44,-11);

  \draw (36, 1) -- (45,10);
  \fill (35,0) circle (0.4);
  \draw (36,-1) -- (45,-10);

  \draw[dashed] (37, 0) -- (46, 9) -- (55, 0) -- (46,-9) -- cycle;

  \draw (47, 10) -- (56,1);
  \draw (47,-10) -- (56,-1);

  \draw[dashed] (48, 11) -- (57,  2) -- (66, 11);
  \draw         (48, 11) -- (66, 11);

  \draw[dashed] (48,-11) -- (57, -2) -- (66,-11);
  \draw         (48,-11) -- (66,-11);

  \draw (58, 1) -- (67,10);
  \fill (57,0) circle (0.4);
  \draw (58,-1) -- (67,-10);

  \draw[dashed] (59, 0) -- (68, 9) -- (77, 0) -- (68,-9) -- cycle;

  \draw (69, 10) -- (78,1);
  \draw (69,-10) -- (78,-1);
  \fill (79,0) circle (0.4);

  \draw[dashed] (70, 11) -- (79,  2) -- (81, 4);
  \draw         (70, 11) -- (81, 11);
 
  \draw[dashed] (70,-11) -- (79, -2) -- (81,-4);
  \draw         (70,-11) -- (81,-11);

  \draw (80,1) -- (81,2);
  \draw (80,-1) -- (81,-2);

\end{tikzpicture}
\end{center}
\caption{The structure of $\strings{\Lambda}$ when $\gldim \Lambda < \infty$. Triangular regions denote components of type $\bZ A_\infty$ and diamond shaped regions those of type $\bZ A_{\infty}^{\infty}$. Solid horizontal lines indicate boundaries or `mouths' of the components. 
The solid line to the left of an $\mcX^k$ component is an $\mcX^k_{-\infty}$ component and to the right an $\mcX^k_\infty$ component; similarly for $\mcY^k$, $\mcY^k_{-\infty}$ and $\mcY^k_\infty$. The $\mcZ^k_\infty$ components are indicated by the central dots to the left of the $\mcZ^k$ components.
The shaded regions illustrate the Hom-hammocks for the various indecomposable objects indicated; cf. \cite[\S 2]{BPP1}.\label{fig:finite}}
\end{figure}

\subsection{The AR quiver of $\strings{\Lambda}$ for $\Lambda$ of infinite global dimension} \label{sec:infinite}

The AR quiver of $\Db(\mod{\Lambda}) \simeq \KprojL \subset \strings{\Lambda}$ has $2r$ components consisting of $r$ components of type $\bZ A_\infty$ denoted by $\mcX^0,\ldots,\mcX^{r-1}$ and $r$ components of type $A^{\infty}_{\infty}$ denoted by $\mcZ^0, \ldots, \mcZ^{r-1}$. The full subcategory of perfect complexes $\KbprojL = \add{\mcX}$. We sketch the structure of $\KprojL$ in Figure~\ref{fig:left-infinite}; the co-ordinate system is that of \cite{BGS}. For each $k = 0,\ldots,r=n$, we label the indecomposable objects of $\mcX^k$ and $\mcZ^k$ as follows:
\[
X^k_{ij}  \in \mcX^k \text{ with } i,j \in \bZ, j \geq i; \quad
Z^k_{i}  \in \mcZ^k \text{ with } i \in \bZ.
\]

\begin{figure}
\begin{center}
\begin{tikzpicture}[scale=0.2,thick]

\newcommand{\sst}[1]{\scalebox{0.5}{$#1$}}

  \draw[fill,blue!30] (10,-7) -- (13,-10) -- (16, -7) -- (13, -4);
  \draw[fill,blue!30] (13.7,-3.3) -- (16.7,-6.3) -- (17.3,-5.7) -- (14.3,-2.7); 
  \draw[fill,blue!30] (15,-2) -- (18,1) -- (21,-2) -- (18,-5);
  \draw[fill] (10,-7) node [left =-0.1]  {$\scriptstyle X$}      circle (0.2);
  \draw[fill] (21, -2) node [right=-0.05] {$\scriptstyle \sS X$} circle (0.2);

  \draw[fill,red!30] (25.7,-4.7) -- (26.3,-5.3) -- (31.3,-0.3) -- (30.7,0.3);
  \draw[fill,red!30] (27,-6) -- (31,-10) -- (36,-5) -- (32,-1); 
  \draw[fill] (26,-5) node [left =-0.1] {$\scriptstyle Z$} circle (0.2);

  \node at (4,-9)  {\sst{\mcX^0}};  \node at (16,0)  {\sst{\mcX^1}};  \node at (27,-9)  {\sst{\mcX^2}}; \node at (38,0)  {\sst{\mcX^3}}; \node at (49,-9)  {\sst{\mcX^4}}; \node at (4,0)  {\sst{\mcX^{r-1}}};
  \node at (-0.7,-10) {\sst{\mcZ^{r-1}}};  \node at (10,1) {\sst{\mcZ^0}};  \node at (21,-10) {\sst{\mcZ^1}}; \node at (32,1) {\sst{\mcZ^2}}; \node at (43,-10) {\sst{\mcZ^3}}; \node at (54,1) {\sst{\mcZ^4}};

 \draw[dashed] (-1,-8) -- (8,1);
 \draw (-1,1) -- (8,1);
  \draw (0,-9) -- (9, 0); 
  \draw[dashed] (1,-10) -- (10, -1) -- (19,-10);
  \draw         (1,-10) -- (19,-10);
   \draw (11, 0) -- (20,-9);
  \draw (12,1) -- (30,1);
  \draw[dashed] (12,1) -- (21,-8) -- (30,1);
  \draw (22,-9) -- (31,0);
  \draw[dashed] (23,-10) -- (32, -1) -- (41,-10);
  \draw         (23,-10) -- (41,-10);
  \draw (33,0) -- (42,-9);
  \draw[dashed] (34, 1) -- (43,-8) -- (52, 1);
  \draw         (34, 1) -- (52, 1);
  \draw (44,-9) -- (53,0);
  \draw[dashed] (45,-10) -- (54, -1) -- (63,-10);
  \draw         (45,-10) -- (63,-10);
  \draw (55,0) -- (64,-9);
  \draw[dashed] (56,1) -- (65,-8);
  \draw (56,1) -- (65,1);
\end{tikzpicture}
\end{center}
\caption{The structure of $\KprojL$ when $\gldim \Lambda = \infty$. Notation for components as in Figure~\ref{fig:finite}.}\label{fig:left-infinite}
\end{figure}

The AR quiver of $\strings{\Lambda}$ has $3r$ components. The $r$ components $\mcX^k$ of $\Db(\mod{\Lambda})$ of type $\bZ A_\infty$ remain unchanged. The $r$ components $\mcZ^k$ of type $A^{\infty}_{\infty}$ now form `beams' of $r$ `ladder-type' components $\mctZ^k$:
\[
\xymatrix@!=7pt{
             &                        &                        &                             &                     & \\
             &                        &                        & X^k_{i+1,\infty} \ar@{..}[ur] \ar[dr] &                     & \\
             &                        & X^k_{i,\infty} \ar[ur] \ar[dr] &                             & Z^k_{i+1} \ar@{..}[ur] & \\  
             & X^k_{i-1,\infty} \ar[ur] \ar[dr] &                        & Z^k_i \ar[ur]              &                     & \\
\ar@{..}[ur] &                        & Z^k_{i-1} \ar[ur]         &                             &                     & \\
             & \ar@{..}[ur]           &                        &                             &                     &
}
\]
where the objects $Z^k_i$ lie on the right hand `beam' of the ladder and the notation for the objects $X^k_{i,\infty}$ is explained by Lemma~\ref{lem:homotopy-colimits} below.  The remaining $r$ components, denoted $\mcZ^k_\infty$, are of type $A_1$ as above, where the unique indecomposable complex in each such component is denoted $Z^k_\infty$. The structure of $\strings{\Lambda}$ for $\Lambda$ of infinite global dimension is sketched in Figure~\ref{fig:infinite}.

For $a \in \bZ$, $b \in \bZ \cup \{\infty\}$ and $0 \leq k < r -1$, suspension acts as follows:
\[
\Sigma X^k_{a,b} = X^{k+1}_{a,b},  \quad \Sigma X^{r-1}_{a,b} = X^0_{a+r+m,b+r+m}, \quad 
\Sigma Z^k_b = Z^{k+1}_b , \quad  \Sigma Z^{r-1}_b = Z^0_{b+r+m}.
\]

\begin{remark}\label{rem:correspondence}
We have the following correspondences:
\begin{eqnarray*}
\ind{\mcX}                                        & \bij & \{\text{perfect string complexes}\}; \\
\{X^k_{i,\infty} \mid 0 \leq k \leq r, i \in \bZ\} & \bij & \{\text{left-infinite string complexes}\}; \\
\{Z^k_i \mid 0 \leq k \leq r, i \in \bZ\}         & \bij &  \{\text{right-infinite string complexes}\}; \\
\ind{\mcZ_\infty}                                  & \bij & \{\text{two-sided string complexes}\}.
\end{eqnarray*}
Moreover, in light of Proposition~\ref{compact}, when $\gldim \Lambda = \infty$ the compact indecomposable complexes in $\sK$ are those lying in $\mcX$ components and the `righthand beam' of each of the $\mctZ$ `ladder' components, i.e. the objects $Z^k_i$ for $i \in \bZ$ and $0 \leq k < r$, in $\strings{\Lambda}$.
\end{remark}

\begin{figure}
\begin{center}
\begin{tikzpicture}[scale=0.2,thick]

\newcommand{\sst}[1]{\scalebox{0.6}{$#1$}}

\draw[fill,blue!30] (4.7,-3.7) -- (5.3,-4.3) -- (9.3,-0.3) -- (8.7, 0.3) -- cycle;
\fill[blue!30] (10,1) circle (0.7);
\draw[fill,blue!30] (6,-5) -- (11,-10) -- (15,-6) -- (10,-1) -- cycle;
\draw[fill,blue!30] (10.7,-0.3) -- (11.3,0.3) -- (16.3,-4.7) -- (15.7,-5.3) -- cycle;
\fill (5,-4) circle (0.3);

\fill[red!30] (22,-9) circle (0.7);
\draw[fill,red!30] (22.7,-7.7) -- (24.3,-9.3) -- (33.3,-0.3) -- (31.7,1.3) -- cycle;
\draw[fill,red!30] (25,-10) -- (34, -1) -- (43,-10) -- cycle;

\draw[fill,green!30] (38.7,-4.3) -- (40.3,-2.7) -- (45.3,-7.7) -- (43.7,-9.3) -- cycle;
\draw[fill,green!30] (41,-2) -- (45,2) -- (50,-3) -- (46,-7) -- cycle;
\fill (39,-4) circle (0.3);

\node at (5,-9)  {\sst{\mcX^0}};  \node at (17,1)  {\sst{\mcX^1}};  \node at (29,-9)  {\sst{\mcX^2}}; \node at (41,1)  {\sst{\mcX^3}}; \node at (53,-9)  {\sst{\mcX^4}}; \node at (3,1)  {\sst{\mcX^{r-1}}};

  \draw[dashed] (-2,-7) -- (7,2);
  \draw (-2,2) -- (7,2);

   \draw[gray] (0,-7) -- (1,-8);
   \draw[gray] (1,-6) -- (2,-7);
   \draw[gray] (2,-5) -- (3,-6);
   \draw[gray] (3,-4) -- (4,-5);
   \draw[gray] (4,-3) -- (5,-4); 
   \draw[gray] (5,-2) -- (6,-3);
   \draw[gray] (6,-1) -- (7,-2);
   \draw[gray] (7,0) -- (8,-1);
   
  \draw (-1,-8) -- (8,1);
   \draw (0,-9) -- (9,0);
   \fill (10,1) circle (0.3);
   \fill (-2,-9) circle (0.3);

  \draw[dashed] (1,-10) -- (10, -1) -- (19,-10);
  \draw         (1,-10) -- (19,-10);

   \draw[gray] (12,-1) -- (13,0);
   \draw[gray] (13,-2) -- (14,-1);
   \draw[gray] (14,-3) -- (15,-2);
   \draw[gray] (15,-4) -- (16,-3);
   \draw[gray] (16,-5) -- (17,-4); 
   \draw[gray] (17,-6) -- (18,-5);
   \draw[gray] (18,-7) -- (19,-6);
   \draw[gray] (19,-8) -- (20,-7);

   \draw (11, 0) -- (20,-9);
   \draw (12,1) -- (21,-8);
   \fill (22,-9) circle (0.3);

  \draw (13,2) -- (31,2);
  \draw[dashed] (13,2) -- (22,-7) -- (31,2);

   \draw[gray] (24,-7) -- (25,-8);
   \draw[gray] (25,-6) -- (26,-7);
   \draw[gray] (26,-5) -- (27,-6);
   \draw[gray] (27,-4) -- (28,-5);
   \draw[gray] (28,-3) -- (29,-4); 
   \draw[gray] (29,-2) -- (30,-3);
   \draw[gray] (30,-1) -- (31,-2);
   \draw[gray] (31,0) -- (32,-1);
   
  \draw (23,-8) -- (32,1);
   \draw (24,-9) -- (33,0);
   \fill (34,1) circle (0.3);

  \draw[dashed] (25,-10) -- (34, -1) -- (43,-10);
  \draw         (25,-10) -- (43,-10);

   \draw[gray] (36,-1) -- (37,0);
   \draw[gray] (37,-2) -- (38,-1);
   \draw[gray] (38,-3) -- (39,-2);
   \draw[gray] (39,-4) -- (40,-3);
   \draw[gray] (40,-5) -- (41,-4); 
   \draw[gray] (41,-6) -- (42,-5);
   \draw[gray] (42,-7) -- (43,-6);
   \draw[gray] (43,-8) -- (44,-7);

   \draw (35, 0) -- (44,-9);
   \draw (36,1) -- (45,-8);
   \fill (46,-9) circle (0.3);

  \draw[dashed] (37, 2) -- (46,-7) -- (55, 2);
  \draw         (37, 2) -- (55, 2);

   \draw[gray] (48,-7) -- (49,-8);
   \draw[gray] (49,-6) -- (50,-7);
   \draw[gray] (50,-5) -- (51,-6);
   \draw[gray] (51,-4) -- (52,-5);
   \draw[gray] (52,-3) -- (53,-4); 
   \draw[gray] (53,-2) -- (54,-3);
   \draw[gray] (54,-1) -- (55,-2);
   \draw[gray] (55,0) -- (56,-1);

  \draw (47,-8) -- (56,1);
   \draw (48,-9) -- (57,0);
   \fill (58,1) circle (0.3);

  \draw[dashed] (49,-10) -- (58, -1) -- (67,-10);
  \draw         (49,-10) -- (67,-10);

   \draw[gray] (60,-1) -- (61,0);
   \draw[gray] (61,-2) -- (62,-1);
   \draw[gray] (62,-3) -- (63,-2);
   \draw[gray] (63,-4) -- (64,-3);
   \draw[gray] (64,-5) -- (65,-4); 
   \draw[gray] (65,-6) -- (66,-5);
   \draw[gray] (66,-7) -- (67,-6);
   \draw[gray] (67,-8) -- (68,-7);

   \draw (59, 0) -- (68,-9);
   \draw (60,1) -- (69,-8);
   \fill (70,-9) circle (0.3);

  \draw[dashed] (61,2) -- (70,-7);
  \draw (61,2) -- (70,2);
\end{tikzpicture}
\end{center}
\caption{The structure of $\strings{\Lambda}$ when $\gldim \Lambda = \infty$. Notation for components is as in Figure~\ref{fig:finite}.}\label{fig:infinite}
\end{figure}

\subsection{Notation for Hom-hammocks} \label{hammocks-notation}

Before describing the structure of the Hom-hammocks in $\strings{\Lambda}$ we first set up some notation.
Let $A \in \ind{\KprojL}$, then we define the Hom-hammocks
\begin{align*}
\homfrom{A} & \coloneqq {} \{ X \in \ind{\KprojL} \mid \Hom_\sK(A,X) \neq 0 \} \\
\homto{A} & \coloneqq {} \{ X \in \ind{\KprojL} \mid \Hom_\sK(X,A) \neq 0 \}.
\end{align*}
For $A \in \strings{\Lambda}$, we define the Hom-hammocks
\begin{align*}
\infhomfrom{A} & \coloneqq {} \{ X \in \strings{\Lambda} \mid \Hom_\sK(A,X) \neq 0 \} \quad \text{the } \textit{forward Hom-hammock} \text{ of } A; \\
\infhomto{A} & \coloneqq {} \{ X \in \strings{\Lambda} \mid \Hom_\sK(X,A) \neq 0 \} \quad \text{the } \textit{backward Hom-hammock} \text{ of } A.
\end{align*}

\subsection{Hom-hammocks in $\strings{\Lambda}$ for $\Lambda$ of finite global dimension}

 We introduce the following notation to encode the action of the suspension on indecomposable objects.

\begin{notation} \label{notation}
For $a \in \bZ$  we set
\[
a' = \left\{
\begin{array}{ll}
a +r + m & \text{if } k = r-1; \\
a        & \text{ otherwise,}
\end{array}
\right. 
\quad 
a'' = \left\{
\begin{array}{ll}
a +r - n & \text{if } k = r-1; \\
a       & \text{ otherwise,}
\end{array}
\right. 
\]
\end{notation}

The following proposition will be more easily understood by looking at Figure~\ref{fig:finite} and natural extensions of the figures in \cite[\S 2]{BPP1}.

\begin{proposition} \label{prop:fin-hammocks}
Suppose $\Lambda$ is derived-discrete of finite global dimension. Let $a,b \in \bZ$ and $0 \leq k < r$. The forward Hom-hammocks of objects of $\strings{\Lambda}$ are given by:
\begin{align*}
\infhomfrom{X^k_{a,b}} = {} & \homfrom{X^k_{a,b}} \cup \{ X^k_{i,\infty} \mid a \leq i \leq b \} \cup  \{ X^{k+1}_{-\infty,j} \mid a' - 1 \leq j \leq b' - 1\}; \\
\infhomfrom{Y^k_{a,b}} = {} & \homfrom{Y^k_{a,b}} \cup \{ Y^k_{\infty,j} \mid b \leq j \leq a \} \cup \{ Y^{k+1}_{i,-\infty} \mid b''-1 \leq i \leq a'' - 1\}; \\
\begin{split}
\infhomfrom{Z^k_{a,b}} = {} & \homfrom{Z^k_{a,b}} \cup \{ X^{k+1}_{-\infty,j} \mid j \geq a' - 1\} \cup \{X^{k+1}_{i,\infty} \mid i \leq a'-1 \}\\
                                        & \cup \{ Z^{k+1}_\infty \} \cup \{ Y^{k+1}_{i,-\infty} \mid i \geq b''-1 \} \cup \{ Y^{k+1}_{\infty,j} \mid j \leq b''-1 \};
\end{split} \\
\begin{split}
\infhomfrom{X^k_{a,\infty}} = {} & \{ X^k_{i,\infty} \mid i \geq a \} \cup \{ Z^k_{i,j} \mid  i \geq a \text{ and } j \in \bZ\} \cup \{ X^{k+1}_{-\infty,j} \mid j \geq a' - 1\} \\
                                        & \cup \{ X^{k+1}_{i,j} \mid i \leq a'-1 \text{ and } j \geq a'-1 \};
\end{split} \\
\begin{split}
\infhomfrom{X^k_{-\infty,b}} = {} & \{ X^k_{-\infty,j} \mid j \geq b \} \cup \{ X^k_{i,j} \mid i \leq b \text{ and } j \geq b \}  \cup \{ X^k_{i,\infty} \mid i \leq b \} \\
                                        & \cup \{ Y^k_{\infty,j} \mid j \in \bZ \} \cup \{ Z^k_{i,j} \mid  i \leq b \text{ and } j \in \bZ\} \cup \{ Z^k_\infty \};
\end{split} \\
\begin{split}
\infhomfrom{Y^k_{\infty,b}} = {} & \{ Y^k_{\infty,j} \mid j \geq b \} \cup \{ Z^k_{i,j} \mid  i \in \bZ \text{ and } j \geq b \} \cup \{ Y^{k+1}_{i,-\infty} \mid i \geq b'' - 1\} \\
                                        & \cup \{ Y^{k+1}_{i,j} \mid i \geq b''-1 \text{ and } j \leq b''-1 \};
\end{split} \\
\begin{split}
\infhomfrom{Y^k_{a,-\infty}} = {} & \{ Y^k_{i,-\infty} \mid i \geq a \} \cup \{ Y^k_{i,j} \mid i \geq a \text{ and } j \leq a \}  \cup \{ Y^k_{\infty,j} \mid j \leq a \} \\
                                        & \cup \{ X^k_{i,\infty} \mid i \in \bZ \} \cup \{ Z^k_{i,j} \mid  i \in \bZ \text{ and } j \leq a \} \cup \{ Z^k_\infty \};
\end{split} \\
\infhomfrom{Z^k_\infty} = {} & \{X^k_{i,\infty} \mid i \in \bZ \} \cup \{ Y^k_{\infty,j} \mid j \in \bZ \} \cup \{ Z^k_{i,j} \mid i, j \in \bZ \}.
\end{align*}
\end{proposition}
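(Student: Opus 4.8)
The plan is to derive all eight formulas from the combinatorial description of morphisms between (possibly infinite) string complexes, that is, from \cite[Thm.~3.15]{ALP} together with the extension to infinite strings set up in \S\ref{structure}; this is the same input that proves Proposition~\ref{dimensions}. By that proposition the relevant $\Hom$-spaces are at most one-dimensional whenever a nonperfect complex is involved (and at most two-dimensional otherwise), so for the purpose of computing a Hom-hammock only the \emph{existence} of a nonzero morphism matters. Hence $\infhomfrom{A}$ is precisely the set of $X \in \strings{\Lambda}$ for which the strings $w_A$ and $w_X$ admit one of the configurations classified in \cite[Thm.~3.15]{ALP} — a graph map, single map, or double map. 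The perfect-to-perfect part of each formula is exactly the known hammock $\homfrom{-}$ of \cite[Thm.~5.1]{BPP1} (equivalently \cite[Thm.~7.4]{ALP}), so the remaining task is to identify the additional targets lying in the one- and two-sided components $\mcX^k_{\pm\infty}$, $\mcY^k_{\pm\infty}$ and $\mcZ^k_\infty$.

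For this I would argue componentwise in the source. Each string complex is, by Lemma~\ref{lem:strings}, a shift of a subword of one of the canonical strings $w_\ell$, $w_\infty$, ${}_\infty w$, ${}_\infty w_\infty$, all built by concatenating copies of the single period $v$; consequently the possible overlaps of $w_A$ with an arbitrary $w_X$ are periodic in nature and can be enumerated directly from the shape of the quiver of $\LLambda$. Fixing the component containing $A$ (one of the eight displayed cases), one lists the overlaps with each type of target, reads off from \cite[Thm.~3.15]{ALP} whether they yield a nonzero morphism, and finally translates each surviving overlap into an inequality in the coordinates of the AR quiver using the labelling and suspension rules of \S\ref{sec:finite}, with Notation~\ref{notation} (the primes $a',a'',b',b''$) absorbing the wrap-around that occurs when $k=r-1$. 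Carrying this out for $X^k_{a,b}$, $Y^k_{a,b}$, $Z^k_{a,b}$ and then for the five infinite-source cases produces the eight formulas. As consistency checks one verifies compatibility with the action of $\Sigma$ (which permutes the components as in \S\ref{sec:finite}) and with Figure~\ref{fig:finite}.

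The step I expect to be the main obstacle is the analysis of the infinite sources, specifically making precise which \emph{infinite} overlaps of $w_A$ with $w_X$ give a genuine, non-null-homotopic morphism. A one-sided infinite string complex such as $X^k_{a,\infty}$ is not merely a ``limit'' of the perfect $X^k_{a,b}$: besides the morphisms visible on every finite truncation there are extra maps, arising from an eventually-constant infinite overlap, that land in the $\mcZ^k$ diamond or wrap around into $\mcX^{k+1}$, and one must show both that these exhaust the new morphisms and that they are captured exactly by the stated one-sided bounds. Dually, a two-sided infinite overlap occurs only for very particular pairs of strings, which is why, for instance, $\infhomfrom{X^k_{-\infty,b}}$ contains the \emph{entire} families $\{Y^k_{\infty,j}\}$ and $\{Z^k_\infty\}$ while the $X$- and $Z$-families appear only up to the bound $j\geq b$ or $i\leq b$. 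Handling these boundary and wrap-around effects carefully — rather than any single hard estimate — is where the work lies; once the overlap bookkeeping is organised, each of the eight formulas follows by inspection.
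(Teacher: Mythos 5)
Your route is viable but it is genuinely different from the paper's. You propose to compute every hammock directly from the combinatorial classification of morphisms in \cite[Thm.~3.15]{ALP}, enumerating all overlaps of the homotopy string of the source with that of an arbitrary target (including the eventually-periodic infinite overlaps) and translating the surviving configurations into coordinate inequalities. The paper instead keeps the string combinatorics to a minimum: it quotes the perfect-to-perfect hammocks from \cite[\S 2, \S 5]{BPP1}, uses combinatorics only to establish the extended rays and corays of Lemma~\ref{rays-and-corays} (plus one check per $\mcX_{\pm\infty}$, $\mcY_{\pm\infty}$ component in the nonperfect-to-nonperfect case), and then propagates both the vanishing and non-vanishing statements homologically, by applying $\Hom$-functors to the distinguished triangles of Lemma~\ref{triangles} and of \cite[Lem.~2.2]{BPP1} and reading off long exact sequences, with ad hoc factorisation arguments for the wrap-around subtleties when $r=1$ (and $m=0$). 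What your approach buys is uniformity — existence and non-existence of morphisms come out of one enumeration, and the delicate $r=1$ boundary cases are settled automatically by the basis description rather than by separate arguments — but it leans on the extension of the basis theorem of \cite{ALP} to one- and two-sided infinite string complexes at every single step, and the ``overlap bookkeeping'' you defer is exactly where the content lies; the paper's triangle-based induction needs that extension only through Proposition~\ref{dimensions} and Lemma~\ref{rays-and-corays}, and lets the triangulated structure do the rest. Since the paper itself invokes that extension (proof of Proposition~\ref{dimensions}), your plan is acceptable at the paper's own level of detail, but to be complete you would need to spell out, for each of the eight source types, why an infinite overlap yields a non-null-homotopic map exactly within the stated one-sided bounds and never outside them.
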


The Hom-hammocks involving only perfect string complexes were established in \cite[\S2 \& \S 5]{BPP1}; we extend these to left- and right-infinite and two-sided string complexes here. We start by extending the `extended rays' and `extended corays' of \cite[Properties 1.2(5)]{BPP1}.

\begin{lemma} \label{rays-and-corays}
There are sequences of objects and morphisms in $\strings{\Lambda}$:
\[ 
\xymatrix@C=0.7em@R=1ex{
   X^k_{ii}    \ar[r] & X^k_{i,i+1} \ar[r] & \ccdots \ar[r] &
   X^k_{i,\infty} \ar[r] & \ccdots \ar[r] & Z^k_{i,i} \ar[r] & \ccdots \ar[r] &
   \Sigma X^k_{-\infty,i-1} \ar[r] & \ccdots \ar[r] & \Sigma X^k_{i-2,i-1} \ar[r] & \Sigma X^k_{i-1,i-1},
\\
   Y^k_{ii} \ar[r] & Y^k_{i+1,i} \ar[r] & \ccdots    \ar[r] &
   Y^k_{\infty,i} \ar[r] & \ccdots \ar[r] & Z^k_{i,i} \ar[r] & \ccdots \ar[r] &
   \Sigma Y^k_{i-1,-\infty} \ar[r] & \ccdots \ar[r] & \Sigma Y^k_{i-1,i-2} \ar[r] & \Sigma Y^k_{i-1,i-1},
}
\]
\vspace{-1mm} 
\[
\xymatrix@C=0.7em@R=1ex{
   \ccdots \ar[r] & X^k_{-\infty,i-1} \ar[r] & X^k_{-\infty,i} \ar[r] &
   X^k_{-\infty,i+1} \ar[r] & \ccdots \ar[r] & Z^k_\infty \ar[r] & \ccdots \ar[r] &
   Y^k_{i-1,\infty} \ar[r] & Y^k_{i,\infty} \ar[r] & Y^k_{i+1,\infty} \ar[r] & \ccdots,
\\
\ccdots \ar[r] & Y^k_{i-1,-\infty} \ar[r] & Y^k_{i,-\infty} \ar[r] &
   Y^k_{i+1,-\infty} \ar[r] & \ccdots \ar[r] & Z^k_\infty \ar[r] & \ccdots \ar[r] &
   X^k_{\infty,i-1} \ar[r] & X^k_{\infty,i} \ar[r] & X^k_{\infty,i+1} \ar[r] & \ccdots,   
} 
\]
such that any composition is nonzero.
\end{lemma}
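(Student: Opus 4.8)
The plan is to realise every arrow in the displayed sequences as a graph map in the sense of \cite[\S 3]{ALP}, and then to deduce the non-vanishing of compositions from the fact, recorded in Proposition~\ref{dimensions}, that $\dim\Hom_\sK(A,B)\le 1$ whenever one of $A,B$ is nonperfect. Along the segments that lie inside a single $\mcX^k$, $\mcY^k$ or $\mcZ^k$ component, the arrows and the non-vanishing of all their composites are exactly the extended rays and corays of \cite[Properties~1.2(5)]{BPP1}, so there is nothing new to do there; the work lies in the \emph{transition} arrows, namely $X^k_{i,j}\to X^k_{i,\infty}$, the arrow leaving $X^k_{i,\infty}$ and entering the $\mcZ^k$-diamond, the arrow $Z^k_{i,i}\to\Sigma X^k_{-\infty,i-1}$, and their analogues in the $\mcY$- and $\mcZ^k_\infty$-strands.

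First I would write down these transition arrows. Reading off the homotopy strings from Lemma~\ref{lem:strings}, each such arrow corresponds to an overlap in which the string of one end-object is an initial (or terminal) segment of the string of the other; the associated graph map is the desired arrow, and it is nonzero by \cite[Thm.~3.15]{ALP}. Equivalently, the infinite string complexes $X^k_{i,\infty}$, $X^k_{-\infty,j}$, $Y^k_{\infty,j}$, $Y^k_{j,-\infty}$, $Z^k_\infty$ occurring are homotopy colimits (respectively homotopy limits) of the perfect string complexes forming the component rays and corays that approach them, cf.\ Lemma~\ref{lem:homotopy-colimits}, and the transition arrows are the canonical structure maps.

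Next I would check that an arbitrary composite $\gamma\colon A\to B$ of consecutive arrows in one of the sequences is nonzero. If $B$ is perfect then so is every object between $A$ and $B$, and this is \cite{BPP1}. If $B$ is nonperfect then $\dim\Hom_\sK(A,B)\le 1$ by Proposition~\ref{dimensions}, so it suffices to verify $\gamma\ne 0$. When $A$ is perfect, hence compact by Proposition~\ref{compact}, and $B$ is one of the infinite objects presented as a homotopy (co)limit of perfect complexes $B_j$, the appropriate $\varinjlim$- (resp.\ $\varprojlim$-) description of $\Hom_\sK(A,-)$ reduces this to the non-vanishing of the perfect-ray composites $A\to B_j$, which is \cite{BPP1}. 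When $A$ is itself nonperfect, $\Hom_\sK(A,-)$ is at most one-dimensional on every object occurring, each arrow spans the corresponding Hom-space, and $\gamma$ is the graph map associated to the concatenation of the overlaps carried by the intermediate arrows; using the combinatorics of graph maps developed in \cite{ALP} together with the explicit strings of Lemma~\ref{lem:strings} one checks that these overlaps concatenate to a single admissible overlap, so that $\gamma$ is again a nonzero graph map.

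I expect the last verification to be the main obstacle: in a triangulated category a composite of nonzero morphisms can vanish, so one genuinely has to follow the overlaps, one arrow at a time, as a ray or coray crosses from an $\mcX^k$-component through an $\mcX^k_\infty$- (resp.\ $\mcX^k_{-\infty}$-) component into the adjacent $\mcZ^k$-diamond and on into $\Sigma\mcX^k$, and check that no cancellation occurs. This is where the finite-global-dimension hypothesis, the explicit shape of the components from Section~\ref{sec:finite}, and the one-dimensionality of the Hom-spaces are used together; arranging the overlap bookkeeping so that a single argument covers all four ray-type and all four coray-type strands is the main technical burden.
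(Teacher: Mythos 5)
Your proposal is correct and follows essentially the same route as the paper, whose proof is precisely a (much terser) appeal to homotopy string combinatorics via Bobi\'nski's algorithm, the graph-map description of morphisms in \cite[Thm.~3.15]{ALP}, and the perfect-complex ray/coray statements of \cite{BPP1}; your overlap-concatenation argument is exactly the ``exercise'' the paper leaves implicit. The homotopy (co)limit reformulation you mention is an optional extra rather than a divergence, so no further comparison is needed.
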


The sequences in Lemma~\ref{rays-and-corays} whose leftmost parts consist of objects of $\mcX \cup \mcX_{-\infty}$ will be called \emph{extended rays} in $\strings{\Lambda}$ and those whose leftmost parts consist of objects of $\mcY \cup \mcY_{-\infty}$ will be called \emph{extended corays}. 

\begin{proof}
This is an exercise in homotopy string combinatorics using Bobi\'nski's algorithm from \cite{Bo} or \cite[\S 6]{ALP} and the description of morphisms between string complexes given in \cite[Thm.~3.15]{ALP}, cf. \cite[\S 7]{ALP} for a more concrete description in this setting, and \cite[Appendix B]{BPP1} using classical string combinatorics.
\end{proof}

We now identify some distinguished triangles in $\sK$ involving objects of $\strings{\Lambda}$ analogous to those in \cite[Properties 1.2(4)]{BPP1}.

\begin{lemma} \label{triangles}
Let $a, b \in \bZ$. The following are distinguished triangles in $\sK$:
\begin{align*}
\tri{X^k_{a,a}}{X^k_{a,\infty}}{X^k_{a+1,\infty}} \quad & \text{and} \quad \tri{X^k_{a,a}}{X^{k+1}_{-\infty,a-1}}{X^{k+1}_{-\infty,a}} \\
\tri{Y^k_{a,a}}{Y^k_{\infty,a}}{Y^k_{\infty,a+1}} \quad & \text{and} \quad \tri{Y^k_{a,a}}{Y^{k+1}_{a-1,-\infty}}{Y^{k+1}_{a,-\infty}} \\
\tri{X^k_{-\infty,b}}{Z^k_\infty}{X^k_{b+1,\infty}} \quad & \text{and} \quad \tri{Y^k_{a,-\infty}}{Z^k_{\infty}}{Y^k_{\infty,a+1}}.
\end{align*}
\end{lemma}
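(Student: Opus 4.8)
The plan is to obtain each of the six families of triangles from its analogue for \emph{perfect} string complexes --- the distinguished triangles recorded in \cite[Properties~1.2(4)]{BPP1} (equivalently, those produced by iterating the Auslander--Reiten triangles of the relevant components) --- by passing to a homotopy colimit along the extended rays and corays of Lemma~\ref{rays-and-corays}. The ambient category $\sK$ is compactly generated, hence has countable coproducts, so homotopy colimits of sequences exist; and, by \cite{Neeman-book}, if $\{X_i\to X_{i+1}\}$, $\{Y_i\to Y_{i+1}\}$, $\{Z_i\to Z_{i+1}\}$ fit into a ladder whose rows $\tri{X_i}{Y_i}{Z_i}$ are distinguished triangles, then $\tri{\hocolim X_i}{\hocolim Y_i}{\hocolim Z_i}$ is again a distinguished triangle. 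The identification of the relevant homotopy colimits with the named one- and two-sided string complexes is exactly the content of the forthcoming Lemma~\ref{lem:homotopy-colimits}, which I would use freely (and which should therefore logically precede, or be merged with, this lemma).

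First I would treat $\tri{X^k_{a,a}}{X^k_{a,\infty}}{X^k_{a+1,\infty}}$. For each $b\geq a$ there is a distinguished triangle $\tri{X^k_{a,a}}{X^k_{a,b}}{X^k_{a+1,b}}$ of perfect complexes, and as $b$ varies these assemble into a ladder using the irreducible ray maps $X^k_{a,b}\to X^k_{a,b+1}$ and $X^k_{a+1,b}\to X^k_{a+1,b+1}$ of Lemma~\ref{rays-and-corays}, together with the constant sequence on $X^k_{a,a}$. The squares of this ladder commute because, by Proposition~\ref{dimensions}, the Hom-groups involved are at most one-dimensional, so any nonzero induced map on cofibres is forced to be the ray map. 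Taking $\hocolim_b$ and invoking Lemma~\ref{lem:homotopy-colimits} to evaluate $\hocolim_b X^k_{a,b}=X^k_{a,\infty}$ and $\hocolim_b X^k_{a+1,b}=X^k_{a+1,\infty}$ gives the triangle. The second family $\tri{X^k_{a,a}}{X^{k+1}_{-\infty,a-1}}{X^{k+1}_{-\infty,a}}$ is obtained identically, now taking the homotopy colimit, along the extended \emph{coray} of Lemma~\ref{rays-and-corays}, of the perfect triangles $\tri{X^k_{a,a}}{X^{k+1}_{c,a-1}}{X^{k+1}_{c,a}}$ as $c\to-\infty$; the two $\mcY$-families are the mirror image of these, obtained by interchanging the roles of rays and corays (equivalently, of the $\mcX^k$- and $\mcY^k$-components).

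For the last two families I would use the triangles just established rather than the perfect ones. Reindexing the second family (replacing $k$ by $k-1$ and $a$ by $j+1$, with the usual Notation~\ref{notation} shift absorbed whenever a component boundary is crossed) gives $\tri{X^{k-1}_{j+1,j+1}}{X^k_{-\infty,j}}{X^k_{-\infty,j+1}}$, so the cofibre of each map $X^k_{-\infty,j}\to X^k_{-\infty,j+1}$ in the extended ray is a mouth object; iterating via the octahedral axiom, the cofibre of $X^k_{-\infty,b}\to X^k_{-\infty,j}$ for $j>b$ is $\Sigma X^{k-1}_{b+1,j}$. Since $Z^k_\infty=\hocolim_j X^k_{-\infty,j}$ by Lemma~\ref{rays-and-corays} and Lemma~\ref{lem:homotopy-colimits}, and the cofibre of the canonical map of a term of a sequence into its homotopy colimit is the homotopy colimit of the cofibres, the cofibre of $X^k_{-\infty,b}\to Z^k_\infty$ is $\hocolim_j\Sigma X^{k-1}_{b+1,j}=\Sigma X^{k-1}_{b+1,\infty}=X^k_{b+1,\infty}$ (using that $\Sigma$ commutes with homotopy colimits and the suspension rules on components). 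This yields $\tri{X^k_{-\infty,b}}{Z^k_\infty}{X^k_{b+1,\infty}}$, and the sixth family follows from the fourth by the same mirror symmetry.

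I expect the main obstacle to be the index bookkeeping: one must track carefully the shifts produced by the suspension as it crosses the $k=r-1\to k=0$ boundary (Notation~\ref{notation}), and confirm that the homotopy colimits of the ray/coray sequences really are the named string complexes, which rests squarely on Lemma~\ref{lem:homotopy-colimits}. The coherence needed to apply the homotopy-colimit-of-triangles theorem is, by contrast, essentially automatic from Proposition~\ref{dimensions}. A more hands-on alternative, in the spirit of the proof of Lemma~\ref{rays-and-corays}, is to write down for each of the six maps the explicit chain map between the corresponding complexes of projective modules, compute its mapping cone, cancel the contractible summands, and read off that what remains is precisely the claimed string complex; this is a direct, if lengthy, exercise in homotopy string combinatorics via Bobi\'nski's algorithm \cite{Bo} (see also \cite[\S6]{ALP}).
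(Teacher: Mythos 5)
Your primary (homotopy colimit) route works for the first and third families but has genuine gaps in the others. For the second family $X^k_{a,a}\to X^{k+1}_{-\infty,a-1}\to X^{k+1}_{-\infty,a}$ (and its mirror, the fourth) there is no direct system to take a homotopy colimit of: along a coray of an $\mcX^{k+1}$-component the irreducible morphisms increase the first index, i.e.\ they run \emph{towards} the mouth, so the objects $X^{k+1}_{c,a-1}$ with $c\to-\infty$ form an inverse system, and $X^{k+1}_{-\infty,a-1}$ maps \emph{into} them (this is visible in the extended corays of Lemma~\ref{rays-and-corays}); it is not their homotopy colimit, and Lemma~\ref{lem:homotopy-colimits} instead realises it as $\hocolim Z^k_{a,j}$ along a ray of $\mcZ^k$. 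So "taking the homotopy colimit as $c\to-\infty$" is not available; a repair would need a different direct system (e.g.\ triangles $X^k_{a,a}\to Z^k_{a,j}\to Z^k_{a+1,j}$, whose distinguishedness would itself have to be established) or a homotopy limit argument that the paper nowhere develops. For the fifth and sixth families you use $Z^k_\infty\cong\hocolim X^k_{-\infty,j}$, attributed to Lemmas~\ref{rays-and-corays} and~\ref{lem:homotopy-colimits}; neither lemma contains this statement, and proving it is essentially the same homotopy-string computation you are trying to avoid. A smaller point: the claimed ladder commutativity is not automatic from Proposition~\ref{dimensions} when $r=1$, since Hom-spaces between perfect string complexes can then be $2$-dimensional.

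Note also that your argument leans on Lemma~\ref{lem:homotopy-colimits}, which appears later in the paper, whose proof is left to the reader, and which the paper explicitly says is not needed elsewhere; so even where the hocolim route is correct it inverts the logical order and does not save work, because identifying those homotopy colimits requires the same string combinatorics as computing cones directly. Indeed, the paper's actual proof is precisely your closing "hands-on alternative": the maps in question are those of Lemma~\ref{rays-and-corays}, and their mapping cones are computed directly from the explicit form of the homotopy strings in Lemma~\ref{lem:strings}. That alternative is the argument to run; the colimit machinery, as set up here, does not cover all six families.
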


\begin{proof}
From the form of the homotopy strings in Lemma~\ref{lem:strings}, and therefore the straightforward form of the corresponding string complexes, the mapping cones of the morphisms in Lemma~\ref{rays-and-corays} is a standard computation.
\end{proof}

We are now ready to prove Proposition~\ref{prop:fin-hammocks}. The arguments are in the spirit of \cite[\S 2]{BPP1} so we just give a sketch.

\begin{proof}[Proof of Proposition~\ref{prop:fin-hammocks}]
Let $A$ and $B$ be string complexes. We split the argument up into different cases.

\Case{$A \in \ind{\mcX \cup \mcY}$ and $B \in \strings{\Lambda}$} 
In fact, we consider only the case $A \in \ind{\mcX}$; the case $A \in \ind{\mcY}$ is analogous. Suppose $A = X^k_{a,b}$. We first deal with the non-vanishing statements. If $B$ is a perfect string complex then this is \cite[Prop.~2.4]{BPP1}, so we assume that $B \in \ind{\mcX^k_\infty \cup \mcX^{k+1}_{-\infty}}$. An induction on the height of $A = X^k_{a,b}$, $h(A) = b - a$, then gives the desired conclusion. Use Lemma~\ref{rays-and-corays} for the base step. For the inductive step, apply $\Hom_{\sK}(-,X^k_{i,\infty})$ and $\Hom_{\sK}(-,X^{k+1}_{-\infty,i})$ to the triangles from \cite[Lem.~2.2]{BPP1}
\[
{}_0 A \to A \to A'' \to \Sigma ({}_0 A) 
\quad \text{and} \quad
A' \to A \to A_0 \to \Sigma A',
\]
where ${}_0 A = X^k_{a,a}$, $A' = X^k_{a,b-1}$, $A_0 = X^k_{b,b}$ and $A'' = X^k_{a+1,b}$, and read off the Hom-spaces from the resulting long exact sequences. The only subtlety is when $r = 1$ and $\Sigma B = \Sigma X_{i,\infty} = X_{b,\infty}$ (the superscripts are dropped because $r=1$). In this case $i = b-1-m$ and we cannot infer that $\Hom_{\sK}(\Sigma^{-1} A_0,B) = \Hom_{\sK}(\Sigma^{-1} X_{b,b},X_{b-1-m,\infty}) =0$. However, by Lemma~\ref{rays-and-corays} the composition $\Sigma^{-1} X_{b,b} \to A' = X_{a,b-1} \to X_{b-1-m,\infty}$ factors as $\Sigma^{-1} X_{b,b} \to X_{b-1-m,\infty} \to X_{a,b-1} \to X_{b-1-m,\infty}$. Since $\dim \Hom_{\sK}(X_{b-1-m,\infty},X_{b-1-m,\infty}) = 1$, we have that the composition must be zero, for otherwise $X_{b-1-m,\infty}$ would be a summand of $X_{a,b-1}$; a contradiction. This now gives the required non-vanishing statement.
The vanishing statements are obtained similarly by induction.

\Case{$A \in \ind{\mcZ}$ and $B \in \strings{\Lambda}$}
Let $A = Z^k_{a,b}$. We start with $B \in \ind{\mcX^{k+1}_{-\infty}}$. Recalling Notation~\ref{notation}, Lemma~\ref{triangles} says that for each $n \in \bN$ there is a distinguished triangle, 
\begin{equation} \label{triangle-Xinf}
\trilabels{X^{k+1}_{-\infty,a'-1 +n}}{X^{k+1}_{-\infty,a+n}}{X^{k+1}_{a'-1+n,a'-1+n}}{}{}{-\Sigma f}.
\end{equation}
By Lemma~\ref{rays-and-corays} $\Hom_{\sK}(Z^k_{a,b},X^{k+1}_{-\infty,a'-1}) \neq 0$. Applying $\Hom_{\sK}(Z^k_{a,b},-)$ to \eqref{triangle-Xinf} allows one to read off $\Hom_{\sK}(Z^k_{a,b},X^{k+1}_{-\infty,j}) \neq 0$ inductively for all $j \geq a' - 1$. The only subtlety is when $r = 1$ and $m= 0$, i.e. when $\mcX$ is a `$0$-spherical' component. In this case we get
\[
(Z_{a,b},\Sigma^{-1} X_{a+1,a+1}) \rightlabel{(Z_{a,b},f)} (Z_{a,b}, X_{-\infty,a}) \too (Z_{a,b}, X_{-\infty,a+1}) \too (Z_{a,b},X_{a+1,a+1}),
\]
where we have dropped the superscript $k$ because $r = 1$.
By Lemma~\ref{rays-and-corays}, $f$ decomposes as $\Sigma^{-1} X_{a+1,a+1} = X_{a,a} \rightlabel{f_1} Z_{a,b} \rightlabel{f_2} X_{-\infty,a}$. Now, $(Z_{a,b},f)(g) = fg = f_2 f_1 g$. By \cite[\S 2 \& \S5]{BPP1}, we have $f_1 g \colon Z_{a,b} \to Z_{a,b}$ is an isomorphism or zero. If it were an isomorphism then $Z_{a,b}$ would be a summand of $X_{a,a}$; a contradiction. Therefore $fg = 0$ and $(Z_{a,b},f) =0$.

For the vanishing statements in the $\mcX^{k+1}_{-\infty}$ components, use \eqref{triangle-Xinf} with $n \in \bZ_{\leq 0}$, using the one-dimensionality of the nonzero Hom-spaces to start the induction. The same argument applied to the $\mcX^l_{-\infty}$ components for $l \neq k+1 \modulo r$ shows $\Hom_{\sK}(Z^k_{a,b},-)=0$ on those components. Similar arguments can be used for the $\mcY^l_{-\infty}$ components.

Next we consider $B \in \ind{\mcX^{k+1}_\infty}$. By Lemma~\ref{triangles}, for each $i \in \bZ$ we have
\[
\Sigma^{-1} Z^{k+1}_{i,b''} \rightlabel{f} Y^{k+1}_{b''-1,-\infty} \too X^{k+1}_{i,\infty} \too Z^{k+1}_{i,b''}.
\]
Applying the functor $\Hom_{\sK}(Z^k_{a,b},-)$ to this triangle gives the long exact sequence,
\[
(Z^k_{a,b}, \Sigma^{-1} Z^{k+1}_{i,b''}) \rightlabel{(Z^k_{a,b},f)} (Z^k_{a,b}, Y^{k+1}_{b''-1,-\infty}) \too (Z^k_{a,b}, X^{k+1}_{i,\infty}) \too (Z^k_{a,b}, Z^{k+1}_{i,b''}).
\]
Now by \cite[\S2]{BPP1}, $\Hom_{\sK}(Z^k_{a,b}, Z^{k+1}_{i,b''}) = 0$ for all $i \in \bZ$. For $i < a'$, \cite[\S2]{BPP1} implies that $\Hom_{\sK}(Z^k_{a,b}, \Sigma^{-1} Z^{k+1}_{i,b''}) = 0$ giving the non-vanishing statement. For $i \geq a'$, one-dimensionality and Lemma~\ref{rays-and-corays} give that the map $(Z^k_{a,b},f)$ is a surjection, whence we get the vanishing statements for the component $\mcX^{k+1}_\infty$. The vanishing statements for the $\mcX^l_\infty$ components for $l \neq k+1$ follow from the vanishing statements for the corresponding $\mcY^l_{-\infty}$ components in an analogous manner. Similarly for the $\mcY^l_\infty$ components.

Finally, the statements for $Z^l_\infty$ can be deduced by applying the functor $\Hom_{\sK}(Z^k_{a,b},-)$ to the triangles $\tri{X^k_{-\infty,a'-1}}{Z^k_\infty}{X^k_{a',\infty}}$ from  Lemma~\ref{triangles}.

\Case{$A$ is a nonperfect string complex and $B \in \strings{\Lambda}$}
If $B$ is perfect, then this case is the dual of the two cases above, so we may assume that $B$ is also nonperfect.
The non-vanishing statements are contained in Lemma~\ref{rays-and-corays}.
For the vanishing statements, one needs to argue once from string combinatorics and \cite[Thm.~3.15]{ALP} for each $\mcX_{\pm \infty}$ and $\mcY_{\pm \infty}$ component and then use the triangles from Lemma~\ref{triangles} for an induction.
\end{proof}

\subsection{Hom-hammocks in $\strings{\Lambda}$ for $\Lambda$ of infinite global dimension}

We now state the analogue of Proposition~\ref{prop:fin-hammocks} for $\Lambda$ of infinite global dimension. The proof is analogous to that for Proposition~\ref{prop:fin-hammocks} and is therefore omitted. The statement is more easily understood with reference to a figure, the relevant illustrations are Figures~\ref{fig:left-infinite} and \ref{fig:infinite}. Recall Notation~\ref{notation}.

\begin{proposition} \label{prop:inf-hammocks}
Suppose $\Lambda$ is derived-discrete of infinite global dimension. Let $a \leq b \in \bZ$ and $0 \leq k < r$. The forward Hom-hammocks of objects of $\strings{\Lambda}$ are given by:
\begin{align*}
\infhomfrom{X^k_{a,b}} = {} & \homfrom{X^k_{a,b}} \cup \{ X^k_{i,\infty} \mid a \leq i \leq b \}; \\
\infhomfrom{Z^k_a} = {} & \homfrom{Z^k_a} \cup \{ Z^{k+1}_\infty \} \cup \{ X^{k+1}_{i,\infty} \mid i \leq a'-1 \}; \\
\infhomfrom{X^k_{a,\infty}} = {} & \homfrom{Z^k_a} \cup \{ Z^{k+1}_\infty \} \cup \{ X^k_{i,\infty} \mid i \geq a \}. 
\end{align*}
\end{proposition}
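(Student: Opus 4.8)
The plan is to run the proof of Proposition~\ref{prop:fin-hammocks} in the structurally simpler infinite global dimension setting. By Figure~\ref{fig:infinite} the components of $\strings{\Lambda}$ are now only the perfect components $\mcX^k$ of type $\bZ A_\infty$, the $r$ ladder components $\mctZ^k$ (with the non-compact left-infinite complexes $X^k_{i,\infty}$ on the left beam and the compact right-infinite complexes $Z^k_i$ on the right beam), and the $A_1$ components $\mcZ^k_\infty=\{Z^k_\infty\}$. The Hom-hammocks $\homfrom{X^k_{a,b}}$ and $\homfrom{Z^k_a}$ inside $\KprojL\simeq\Db(\mod{\Lambda})$ are already known from \cite[\S 2, \S 5]{BPP1}, so it suffices to compute the Hom-spaces out of each string complex into the left beams $\{X^l_{i,\infty}\}$ and into the $\mcZ^l_\infty$ components, and to check that no others are nonzero.

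First I would record the infinite-global-dimension analogues of Lemmas~\ref{rays-and-corays} and~\ref{triangles}. Since $X^k_{a,\infty}$ is the homotopy colimit of the chain $X^k_{a,a}\to X^k_{a,a+1}\to\cdots$ (Lemma~\ref{lem:homotopy-colimits}), applying the homotopy colimit functor to the triangles ${}_0 A\to A\to A''\to\Sigma({}_0 A)$ of \cite[Lem.~2.2]{BPP1} with $A=X^k_{a,j}$ and letting $j\to\infty$ produces the distinguished triangle $\tri{X^k_{a,a}}{X^k_{a,\infty}}{X^k_{a+1,\infty}}$. The irreducible maps $X^k_{j,\infty}\to X^k_{j+1,\infty}$, $Z^k_j\to Z^k_{j+1}$ and the rungs $X^k_{j,\infty}\to Z^k_j$ of the ladder, as well as the extended ray
\[
X^k_{i,i} \too X^k_{i,i+1} \too \ccdots \too X^k_{i,\infty} \too Z^k_i \too \ccdots \too Z^{k+1}_\infty \too \ccdots
\]
with all compositions nonzero, are read off from Bobi\'nski's algorithm \cite{Bo} (equivalently \cite[\S 6]{ALP}) and the morphism classification \cite[Thm.~3.15]{ALP} applied to the explicit strings of Lemma~\ref{lem:strings}. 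At the same time one records triangles of the shapes $\tri{Z^k_a}{Z^{k+1}_\infty}{X^{k+1}_{a'-1,\infty}}$ and $\tri{X^k_{b,\infty}}{Z^k_\infty}{Z^k_{b'}}$ (the analogues of the last triangles of Lemma~\ref{triangles}), computed as the evident mapping cones, the index shifts being dictated by the suspension formulas of Section~\ref{sec:infinite} and Notation~\ref{notation}.

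I would then dispose of the two cases with compact source, $A=X^k_{a,b}$ and $A=Z^k_a$. Here $\Hom_\sK(A,-)$ commutes with the coproducts entering the homotopy-colimit presentations of Lemma~\ref{lem:homotopy-colimits}, so $\Hom_\sK(A,X^l_{i,\infty})=\varinjlim_c\Hom_\sK(A,X^l_{i,c})$, which by the known perfect hammocks of \cite{BPP1} is nonzero precisely on the asserted sets; the vanishing $\Hom_\sK(A,Z^l_\infty)=0$ then follows by applying $\Hom_\sK(A,-)$ to $\tri{X^l_{b,\infty}}{Z^l_\infty}{Z^l_{b'}}$ for $b$ large enough that both outer terms lie outside the hammock of $A$. (More in the spirit of the proof of Proposition~\ref{prop:fin-hammocks}, one may instead induct on the height $h(X^k_{a,b})=b-a$: the base case $a=b$ comes from the extended ray and $\tri{X^k_{a,a}}{X^k_{a,\infty}}{X^k_{a+1,\infty}}$, and the inductive step from applying $\Hom_\sK(-,X^l_{i,\infty})$, $\Hom_\sK(-,Z^l_i)$, $\Hom_\sK(-,Z^l_\infty)$ to the triangles ${}_0 A\to A\to A''\to\Sigma({}_0 A)$ and $A'\to A\to A_0\to\Sigma A'$ of \cite[Lem.~2.2]{BPP1}, with one-dimensionality, Proposition~\ref{dimensions}, pinning down the connecting maps; the ``$0$-spherical'' case $r=1$, $m=0$ is handled by the same factorisation trick as in the proof of Proposition~\ref{prop:fin-hammocks}.) The case $A=Z^k_a$ runs identically, with the induction started from $\Hom_\sK(Z^k_a,X^{k+1}_{a'-1,\infty})\neq 0$ and $\Hom_\sK(Z^k_a,Z^{k+1}_\infty)\neq 0$ supplied by the extended ray.

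I expect the case $A=X^k_{a,\infty}$ to be the main obstacle, because this object is \emph{not} compact, so $\Hom_\sK(X^k_{a,\infty},-)$ need not commute with the coproducts in the homotopy colimits, and because the ladder $\mctZ^k$ is not a standard $\bZ A_\infty^\infty$ component. Here I would apply $\Hom_\sK(-,B)$ to the triangle $\tri{X^k_{a,a}}{X^k_{a,\infty}}{X^k_{a+1,\infty}}$ and to the cone triangle of the rung morphism $X^k_{a,\infty}\to Z^k_a$, and combine the resulting long exact sequences with the hammock of $Z^k_a$ obtained above; one-dimensionality (Proposition~\ref{dimensions}) forces the connecting maps and yields $\infhomfrom{X^k_{a,\infty}}=\homfrom{Z^k_a}\cup\{Z^{k+1}_\infty\}\cup\{X^k_{i,\infty}\mid i\geq a\}$. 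Should that prove delicate, the fallback is the homotopy-colimit presentation of $X^k_{a,\infty}$ from Lemma~\ref{lem:homotopy-colimits}: the associated Milnor sequence identifies $\Hom_\sK(X^k_{a,\infty},B)$ with $\varprojlim_c\Hom_\sK(X^k_{a,c},B)$ once one checks, using Proposition~\ref{dimensions}, that the relevant inverse systems are eventually constant so that the $\varprojlim^1$-term vanishes, reducing everything to the case $A=X^k_{a,b}$. Finally the remaining vanishing statements --- $\Hom_\sK(X^k_{a,\infty},X^l_{i,\infty})=0$ for $l\neq k$ or $i<a$, and $\Hom_\sK(X^k_{a,\infty},Z^l_\infty)=0$ for $l\neq k+1$ --- are obtained by the string-combinatorial argument via \cite[Thm.~3.15]{ALP}, exactly as in the corresponding case of the proof of Proposition~\ref{prop:fin-hammocks}.
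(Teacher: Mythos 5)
The paper does not actually write out a proof here: it declares the argument analogous to that of Proposition~\ref{prop:fin-hammocks} and omits it, so your task was to carry out that analogy, and your plan does essentially that --- the analogues of Lemma~\ref{rays-and-corays} and Lemma~\ref{triangles} via Bobi\'nski's algorithm and \cite[Thm.~3.15]{ALP}, the induction using the triangles of \cite[Lem.~2.2]{BPP1} with the $0$-spherical factorisation trick, and the string-combinatorial vanishing checks are exactly the intended route. Your two shortcuts are legitimate and arguably cleaner than a literal transcription: for the compact sources $X^k_{a,b}$ and $Z^k_a$, compactness gives $\Hom_\sK(A,X^l_{i,\infty})\cong\varinjlim_j\Hom_\sK(A,X^l_{i,j})$, and for the noncompact source the Milnor sequence for $X^k_{a,\infty}\cong\hocolim X^k_{a,j}$ has vanishing $\varprojlim^1$ automatically, because by Proposition~\ref{dimensions} the inverse system consists of finite-dimensional spaces and is therefore Mittag-Leffler; so the reduction to the compact case is sound.

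Two points need repair, though neither affects the overall structure. First, the triangle you write as $\tri{X^k_{b,\infty}}{Z^k_\infty}{Z^k_{b'}}$ cannot be correct as stated for $r>1$: by the very proposition being proved, $\Hom_\sK(X^k_{b,\infty},Z^k_\infty)=0$ unless $r=1$, so such a triangle would split, contradicting indecomposability of $Z^k_{b'}$. The correct analogue of the last triangles of Lemma~\ref{triangles} steps the component index, i.e.\ it has the shape $X^k_{c,\infty}\to Z^{k+1}_\infty\to Z^{k+1}_{j}$ (left-infinite $\to$ two-sided $\to$ right-infinite), with the indices fixed by the conventions of Section~\ref{sec:infinite}. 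Second, your vanishing argument for $\Hom_\sK(A,Z^l_\infty)$ asks to choose the truncation ``large enough that both outer terms lie outside the hammock of $A$''; but the two outer terms of these truncation triangles move in tandem as the cut point moves, and the hammock of $A=Z^k_a$ meets the right-hand beam $\{Z^k_j\}$ in an unbounded set, so for small $r$ (notably $r=2$, and the boundary cases when $l=k\pm1$) it can happen that no truncation removes both outer terms simultaneously. In those cases one cannot avoid analysing the connecting maps in the long exact sequence, using one-dimensionality and the factorisation trick exactly as in the proof of Proposition~\ref{prop:fin-hammocks}; since you already invoke that trick in your parenthetical fallback for compact sources, the fix is available, but it should be said explicitly rather than subsumed under ``choose $b$ large''.
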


In this case there is a little asymmetry in the definitions of the Hom-hammock regions $\infhomto{X^k_{a,\infty}}$ and $\infhomto{Z^k_a}$, so we make an explicit note of them here for $0 \leq k < r$: 
\[
\infhomto{X^k_{a,\infty}} = \infhomfrom{Z^{k-1}_{\bar{a}}}
\text{ and }
\infhomto{Z^k_a} = \homto{Z^k_a} \cup \{ Z^k_\infty \} \cup \{ X^k_{i,\infty} \mid i \leq a \},
\]
where
$\bar{a} = \left\{
\begin{array}{ll}
a - r - m & \text{if } k = 0; \\
a         & \text{otherwise.}
\end{array}
\right.$

\subsection{Factorisation properties}

Finally, in order to determine how the simple functors isolate indecomposable pure-injective objects in the next section, we need to understand how morphisms factor in $\strings{\Lambda}$.

\begin{proposition} \label{prop:factorisations}
Let $A, B, C \in \strings{\Lambda}$. If $B \in \infhomfrom{A}$ and $C \in \infhomfrom{A} \cap \infhomfrom{B}$ then any map $f \colon A \to C$ factors as $A \to B \to C$.
\end{proposition}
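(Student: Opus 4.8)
The plan is to deduce the factorisation from a long exact sequence. Given $A,B,C$ as in the statement, choose a nonzero morphism $g\colon A\to B$ (possible since $B\in\infhomfrom{A}$) and complete it to a distinguished triangle $\tri{L}{A}{B}$ in $\sK$; applying the contravariant functor $\Hom_\sK(-,C)$ gives an exact sequence
\[
\Hom_\sK(B,C)\xrightarrow{\;g^{*}\;}\Hom_\sK(A,C)\too\Hom_\sK(L,C),
\]
where $g^{*}$ is precomposition with $g$. So if $\Hom_\sK(L,C)=0$ then $g^{*}$ is surjective, which is precisely the assertion that every $f\colon A\to C$ factors as $A\xrightarrow{g}B\to C$. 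The proposition therefore reduces to: for some nonzero $g\colon A\to B$ the fibre $L=\Sigma^{-1}\mathrm{cone}(g)$ satisfies $C\notin\infhomfrom{L}$.

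The remaining work is to identify a good $g$ and its fibre. By Proposition~\ref{dimensions}, $\dim_\kk\Hom_\sK(A,B)\leq 2$, with equality only when $r=1$ and $A,B$ are both perfect; outside that case $g$ is unique up to a scalar. The idea is to take $g$ as ``short'' as possible: by \cite{ALP} (together with Lemma~\ref{lem:strings}) a nonzero $g$ is, up to scalar, a graph map, and --- as in the derivation of the Hom-hammocks in Propositions~\ref{prop:fin-hammocks} and \ref{prop:inf-hammocks} --- it factors through the extended rays and corays of Lemma~\ref{rays-and-corays} as a composite of irreducible maps and of the maps occurring in the triangles of Lemma~\ref{triangles}. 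Computing the fibre of each such elementary map from Lemma~\ref{triangles} and \cite[Lem.~2.2]{BPP1} (or, for maps between perfect complexes, from the Auslander--Reiten meshes) and splicing the resulting triangles by means of the octahedral axiom realises $L$ as a finite iterated extension of explicit ``short'' string complexes --- shifts of mouth complexes $X^k_{a,a}$ or $Y^k_{a,a}$, two-sided complexes $Z^k_\infty$, or neighbours inside the $\mcZ$-components. Applying $\Hom_\sK(-,C)$ to these octahedral triangles then reduces $\Hom_\sK(L,C)=0$ to the statement that $C\notin\infhomfrom{L_0}$ for each such short summand $L_0$, granted that $C\in\infhomfrom{A}\cap\infhomfrom{B}$; and this is a direct comparison of the hammock formulas of Propositions~\ref{prop:fin-hammocks} and \ref{prop:inf-hammocks}, keeping track of the suspension shifts recorded in Notation~\ref{notation}. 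When $B$ or $C$ is one- or two-sided, the long composites are replaced by the corresponding triangles of Lemma~\ref{triangles} directly, which already exhibit these objects with short fibres.

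The main obstacle is organisational rather than conceptual. First, one must arrange the reduction to a short $g$ so that the fibre $L$ stays controlled while $C$ remains outside $\infhomfrom{L}$: stepping naively through immediate successors of $A$ can leave the backward hammock $\infhomto{C}$, so one has to use the ray-then-coray factorisation of the graph map (which has nonzero composite) and, for infinite $B$ or $C$, the single triangles of Lemma~\ref{triangles}. Second, the exceptional case $r=1$ with $A,B$ perfect, where $\Hom_\sK(A,B)$ may be $2$-dimensional (cf.\ \cite[Prop.~5.2]{BPP1}), needs separate treatment: from the explicit graph maps of \cite{BPP1,ALP} one checks that some nonzero combination of the two basic morphisms still has $C\notin\infhomfrom{L}$, after which the argument of the first paragraph applies. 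Once these points are settled, the proof comes down to a finite --- if lengthy --- list of hammock comparisons over the component families of Figures~\ref{fig:finite}, \ref{fig:left-infinite} and \ref{fig:infinite}, each of which is routine.
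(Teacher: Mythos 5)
Your overall framework is sound as far as it goes: completing a chosen $g\colon A\to B$ to a triangle $L\to A\xrightarrow{g}B\to\Sigma L$ and applying $\Hom_{\sK}(-,C)$ does show that surjectivity of $g^*$ follows from $\Hom_{\sK}(L,C)=0$, and this triangle-plus-hammock mechanism is in fact exactly what the paper does when $A$ is nonperfect (its Cases 3 and 4 use the triangles $\tri{X^k_{i,i}}{X^k_{i,\infty}}{X^k_{i+1,\infty}}$ etc.\ from Lemma~\ref{triangles} together with the vanishing $\Hom_{\sK}(X^k_{i,i},C)=0$). The difference is that the paper only ever needs vanishing at mouth-type objects, and for perfect $A$ it does not run this argument at all: it quotes the factorisation property for perfect complexes from \cite{BPP1,BK} and splices through intermediate objects $B'$, $C'$ on the extended rays and corays of Lemma~\ref{rays-and-corays}. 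Your plan instead asks for control of the entire fibre $L$ of a possibly long map $g$, and this is where the gap lies.

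The gap is that your central reduction replaces the actual content of the proposition (the connecting map $\Hom_{\sK}(A,C)\to\Hom_{\sK}(L,C)$ is zero) by the strictly stronger claim $\Hom_{\sK}(L,C)=0$, i.e.\ $C\notin\infhomfrom{L_0}$ for every piece $L_0$ of the fibre, and you assert this can be settled by routine hammock comparisons. The paper's own computations show this is not safe: in the proof of Proposition~\ref{prop:fin-hammocks}, precisely this kind of naive vanishing on the third term of a triangle fails in the boundary cases $r=1$ (and $r=1$, $m=0$), where for instance $\Hom_{\sK}(\Sigma^{-1}A_0,X_{b-1-m,\infty})\neq 0$, and the argument has to be completed by showing that a specific composite is zero via an indecomposability/direct-summand argument. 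Shifted hammocks genuinely overlap the unshifted ones when $r=1$, so for triples $A,B,C$ near these boundaries one must expect configurations with $C\in\infhomfrom{A}\cap\infhomfrom{B}$ but $\Hom_{\sK}(L,C)\neq 0$, where the factorisation holds only because the relevant induced map vanishes. Your proposal anticipates a special argument only for the two-dimensional Hom-space case, not for this phenomenon, and it also glosses over the fact that the perfect-to-perfect data ($\homfrom{-}$ and the perfect factorisation statements) are not contained in Propositions~\ref{prop:fin-hammocks} and \ref{prop:inf-hammocks} but must be imported from \cite{BPP1,BK} (as the paper does). A further small inaccuracy: a nonzero morphism between string complexes need not be a graph map (the basis in \cite{ALP} also contains singleton single and double maps), which affects the claimed description of the fibre. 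So as written the proof does not close; either supply the extra ``composite is zero'' arguments in the $r=1$ boundary cases, or restructure along the paper's lines, citing the perfect case and splicing through extended rays and corays so that only mouth-object vanishing is ever needed.
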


\begin{proof}
If $A,B, C \in \ind{\KprojL}$ then this statement can be deduced from \cite[\S 2]{BPP1} or \cite[\S 4]{BK}. If $A$, $B$ and $C$ lie on the same extended ray or coray, then this is Lemma~\ref{rays-and-corays}. The rest of the proof splits up into a case analysis building up from these rays and corays.

\Case{1} \emph{$A \in \ind{\mcX \cup \mcY}$.} We assume $A \in \ind{\mcX^k}$ for some $0 \leq k < r$; the case $A \in \ind{\mcY^k}$ is analogous. Suppose $C \in \ind{\mcZ^k \cup \mcX^{k+1}}$. We verify the factorisation for $B \in \mcX^k_\infty$; the check for $B \in \mcX^{k+1}_{-\infty}$ is analogous. There exist $B' \in \ind{\mcX^k} \cap \infhomfrom{A} \cap \infhomto{C}$ and $C' \in \ind{\mcZ^k \cup \mcX^{k+1}} \cap \infhomfrom{A} \cap \infhomto{C}$ such that $B'$, $B$ and $C'$ lie on the same extended ray or coray. The map $A \to C$ thus factors as $A \to B' \to C' \to C$ by the factorisation statements for maps between indecomposable perfect complexes. However, by Lemma~\ref{rays-and-corays}, the map $B' \to C'$ factors as $B' \to B \to C'$ giving the desired statement. If $B, C \in \ind{\mcX^k_\infty}$ (or  $B, C \in \ind{\mcX^{k+1}_{-\infty}}$) then this is the dual of Case 3 below.

\Case{2} \emph{$A \in \ind{\mcZ}$.} If $C \in \ind{\mcX \cup \mcY}$ then this is the dual to Case 1. Assume $A \in \ind{\mcZ^k}$. We only need to check the case that $C \in \ind{\mcZ^{k+1}}$ (the other cases are covered by Cases 3 and 4 and their duals). Suppose $B \in \ind{\mcX^{k+1}_{-\infty}}$. Then there exists $B' \in \ind{\mcX^{k+1}} \cap \infhomfrom{A} \cap \infhomfrom{B} \cap \infhomto{C}$. In particular, the map $A \to C$ factorises as $A \to B' \to C$ and by above the map $A \to B'$ factorises as $A \to B \to B'$, as required. 

\Case{3} \emph{$A \in \ind{\mcX_{\pm \infty} \cup \mcY_{\pm \infty}}$.} We treat the case $A \in \mcX^k_\infty$; the other cases are analogous. For $B$ and $C$ lying on the same extended ray or coray, this is again Lemma~\ref{rays-and-corays}, so assume $B$ and $C$ lie on a different extended ray or coray from $A$. We first cover the case that $B \in \ind{\mcX^k_\infty}$. Assume $A = X^k_{a,\infty}$ for some $a \in \bZ$ and $B = X^k_{b,\infty}$ for some $b > a$. Inductively using triangles of the form
\[
\tri{X^k_{i,i}}{X^k_{i,\infty}}{X^k_{i+1,\infty}}
\]
with $a \leq i < b$ and using the fact that if $C \in \infhomfrom{B}$ then by Proposition~\ref{prop:fin-hammocks} and its dual, we have $\Hom_{\sK}(X^k_{i,i},C) = 0$, giving the desired factorisation. A similar argument holds if $B \in \ind{\mcX^{k+1}_{-\infty}}$ using the analogous triangles. For $B$ elsewhere, apply the same argument, but one should use the triangles from \cite[Properties 1.1(4)]{BPP1} or \cite[Lem.~2.2]{BPP1}.

\Case{4} \emph{$A = Z^k_\infty$.} Apply the same argument as in Case 3 using the triangles $\tri{X^k_{-\infty,b}}{Z^k_\infty}{X^k_{b+1,\infty}}$ or  $\tri{Y^k_{a,-\infty}}{Z^k_\infty}{X^k_{\infty,a+1}}$ for appropriate choices of $a$ and $b$ to get factorisation through the objects of $\ind{\mcX^k_\infty \cup \mcY^k_\infty}$. For $B \in \ind{\mcZ^k}$ one can get a factorisation $A \to B' \to C$ for an appropriate choice of $B' \in \ind{\mcX^k_\infty \cup \mcY^k_\infty}$, which further factorises as $B' \to B \to C$, giving the desired factorisation.
\end{proof}

\subsection{Homotopy colimits}

It is interesting to identify the indecomposable objects lying on the $\mcX^k_{\pm\infty}$ and $\mcY^k_{\pm\infty}$ as homotopy colimits; see \cite{Neeman-book} for the definition. The proofs of the following statements are an exercise in homotopy string combinatorics; since the statements are not needed elsewhere in the paper, we leave the proofs to the reader.

\begin{lemma} \label{lem:homotopy-colimits}
Suppose $\Lambda$ is derived-discrete of finite global dimension. Then:
\begin{enumerate}
\item For each chain of objects and morphisms along a ray in an $\mcX$ component,  $X^k_{a,a} \to X^k_{a,a+1} \to X^k_{a,a+2} \to \cdots$, we have $X^k_{a,\infty} \cong \hocolim X^k_{a,j}$.
\item For each chain of objects and morphisms along a coray in a $\mcY$ component,  $Y^k_{b,b} \to Y^k_{b+1,b} \to Y^k_{b+2,b} \to \cdots$, we have $Y^k_{\infty,b} \cong \hocolim Y^k_{i,b}$.
\item For each chain of objects and morphisms along a ray in a $\mcZ$ component,  $Z^k_{a,j} \to Z^k_{a,j+1} \to Z^k_{a,j+2} \to \cdots$, we have $X^{k+1}_{-\infty,a-1} \cong \hocolim Z^k_{a,j}$.
\item For each chain of objects and morphisms along a coray in a $\mcZ$ component,  $Z^k_{i,b} \to Z^k_{i+1,b} \to Z^k_{i+2,b} \to \cdots$, we have $Y^{k+1}_{b-1,-\infty} \cong \hocolim Z^k_{i,b}$.
\end{enumerate}
Note that statement (1) also holds in the case that $\Lambda$ is derived-discrete of infinite global dimension.
\end{lemma}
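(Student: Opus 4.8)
\emph{Proof strategy.} The plan is to compute each of these homotopy colimits as an \emph{ordinary} colimit of complexes of projective modules, and then to identify that colimit by a finite combinatorial calculation. Recall that the homotopy colimit of a tower $P_0 \rightlabel{f_0} P_1 \rightlabel{f_1} P_2 \rightlabel{f_2} \cdots$ in $\sK$ sits in a triangle $\bigoplus_i P_i \rightlabel{1-\mathrm{shift}} \bigoplus_i P_i \to \hocolim_i P_i \to \Sigma\bigoplus_i P_i$. If every $f_i$ is a monomorphism that is split in each cohomological degree, then $\colim_i P_i$ is again a complex of projectives (in each degree it is the colimit of a chain of split monomorphisms of projective modules, each summand appearing being a direct summand of a projective, hence projective), so the short exact sequence of complexes $0 \to \bigoplus_i P_i \rightlabel{1-\mathrm{shift}} \bigoplus_i P_i \to \colim_i P_i \to 0$ is split in each degree and therefore represents a triangle in $\sK$; comparing this with the defining triangle above gives $\hocolim_i P_i \cong \colim_i P_i$ in $\sK$. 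So it suffices, for each of the four towers, first to realise its connecting morphisms by degreewise-split inclusions of complexes of projectives, and then to identify the colimit complex.

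For the first point I would use Bobi\'nski's algorithm \cite{Bo} (see also \cite[\S6]{ALP}) to write down the string complexes $P_{w_j}$ of the homotopy strings $w_j$ underlying the objects in each tower, together with the description of morphisms between string complexes from \cite[Thm.~3.15]{ALP} (cf.\ \cite[Appendix~B]{BPP1} and Lemma~\ref{rays-and-corays}) to write down the irreducible maps along the relevant ray or coray. The combinatorial fact to verify is that $w_{j+1}$ is obtained from $w_j$ by appending letters at one end in such a way that the irreducible map $P_{w_j} \to P_{w_{j+1}}$ is the evident inclusion: it only adjoins indecomposable projective summands in new degrees, so it is split in each degree, and minimality of string complexes rules out any homotopy correction. (If one prefers not to track this directly, one may instead replace the tower by a homotopy-equivalent one of degreewise-split monomorphisms via the mapping telescope, and afterwards cancel the resulting contractible direct summands; the identification below is unaffected.)

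It then remains to identify $\colim_j P_{w_j}$. Along nested homotopy strings $w_0 \subset w_1 \subset \cdots$ the colimit is the string complex $P_{w_\infty}$ of the one-sided infinite homotopy string $w_\infty = \bigcup_j w_j$; being a string complex, $P_{w_\infty}$ is automatically indecomposable and pure-injective by Corollaries~\ref{strings-are-indecomposable} and~\ref{cor:pure-injective}, so nothing further need be checked once $w_\infty$ is determined. In cases (1) and (2) the string $w_\infty$ is, in the coordinates of \cite{BGS} (see the remark following \S\ref{sec:finite}, and Remark~\ref{rem:correspondence} in the case $\gldim\Lambda=\infty$ of part~(1)), precisely the infinite homotopy string defining $X^k_{a,\infty}$, respectively $Y^k_{\infty,b}$; in cases (3) and (4) it is the one defining $X^{k+1}_{-\infty,a-1}$, respectively $Y^{k+1}_{b-1,-\infty}$. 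The main obstacle, and the reason the authors call the proof ``an exercise in homotopy string combinatorics'', is exactly this last step: one must carry out the matching carefully, tracking the index shifts built into the suspension action and Notation~\ref{notation}, checking which way the irreducible maps in the chosen chains actually point (so that the colimit is the asserted string complex rather than, say, zero), and handling the degenerate components that appear when $r=1$ or $m=0$. Since all of this is a finite check in the combinatorics of the quiver of $\LLambda$, we leave the details to the reader as stated.
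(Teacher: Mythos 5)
The paper contains no proof of this lemma to compare against: the authors state explicitly that the proofs are ``an exercise in homotopy string combinatorics'' and, since the statements are not used elsewhere, leave them to the reader. Your outline is a sensible way to organise that exercise, and its homological backbone is correct: for a tower of degreewise split monomorphisms between complexes of projectives, $1-\mathrm{shift}$ on $\bigoplus_i P_i$ is injective with cokernel $\colim_i P_i$, which is degreewise projective (in each degree a direct sum of the successive complements), so the short exact sequence of complexes is degreewise split, hence a triangle in $\sK$, and comparing the two cones on $1-\mathrm{shift}$ gives $\hocolim_i P_i \cong \colim_i P_i$; the colimit of a nested system of string complexes along subword inclusions is the string complex of the union word, which is then matched against Definition~\ref{string-complexes} and the correspondences of Section~\ref{sec:finite}. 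This is presumably close to what the authors had in mind, with the $\hocolim\cong\colim$ reduction making the intended combinatorial exercise precise.

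Be clear, though, that as written your proposal carries exactly the gap the authors leave: everything rests on the unverified claims that the irreducible maps along the chosen rays and corays are realised by graph maps that are subword inclusions (degreewise split, with no homotopy correction and pointing the right way), and that the union word is the one defining $X^k_{a,\infty}$, $Y^k_{\infty,b}$, $X^{k+1}_{-\infty,a-1}$, $Y^{k+1}_{b-1,-\infty}$ respectively --- including the suspension/index shift that places the limits in cases (3) and (4) in the $(k+1)$-st components and the boundary cases governed by Notation~\ref{notation} (e.g.\ $k=r-1$, $r=1$, $m=0$). You name these checks, consistently with Lemma~\ref{rays-and-corays}, but do not perform them, so the proposal establishes nothing beyond what the paper already asserts. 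One further caution: the mapping-telescope fallback is weaker than you suggest, since after replacing the tower by cylinder inclusions one must still identify the homotopy type of the infinite telescope, and splitting contractible direct summands off an unbounded complex of projectives is not a formality; on the primary route (direct verification that the connecting maps are inclusions) this issue does not arise, so either drop the fallback or justify that step.
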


\section{Krull-Gabriel dimension and simple functors} \label{KGdim-simples}

The Krull-Gabriel dimension of $\sD(\Mod{\Lambda})$, that is, of $\Coh{\sD(\Mod{\Lambda})}$, was computed in \cite{BK}. In this section we follow the approach of \cite{BK} to compute the Krull-Gabriel dimension of $\Coh{\sK}$ and to identify the simple functors in $\Coh{\sK}/\Coh{\sK}_n$ for each $n \in \bN$.   
The main result of this section is the following, where the first statement is Theorem~\ref{intro:zgrank} and the second statement is part of Theorem~\ref{intro:indecomposable}.
Note the contrast with $\KG(\sD(\Mod{\Lambda})) = 1$ in the infinite global dimension case in \cite{BK}.

\begin{theorem} \label{thm:KG-dim}
Let $\Lambda$ be a derived-discrete algebra. Then 
\begin{enumerate}
\item $\KG(\KProjL) = \CB(\sK) = 2$.
\item The objects of $\strings{\Lambda}$ form a complete list of indecomposable, pure-injective complexes in $\sK$.
\end{enumerate}
\end{theorem}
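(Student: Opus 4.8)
The plan is to reduce both statements to one Cantor--Bendixson/Krull--Gabriel computation carried out on $\Coh{\sK}\simeq(\mod{\Tc})\op$. First I would use Corollary~\ref{cor:functors-explicit}: every object of $\Coh{\sK}$ is of the form $F_f$ for a map $f\colon C\to D$ in $\Tc$, so it is enough to control the subobject lattices $L_{\Coh{\sK}}(F_f)$. Invoking the explicit description of $\Tc$ from Proposition~\ref{compact}, together with the Hom-hammocks of Propositions~\ref{prop:fin-hammocks} and~\ref{prop:inf-hammocks} and the factorisation property of Proposition~\ref{prop:factorisations}, I would show, following the method of \cite{BK}, that no such lattice contains a densely ordered chain. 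By Remark~\ref{KGdimmdim}(2) this already shows that $\KG(\sK)$ is defined; then Lemma~\ref{lem:isolation} gives the isolation condition and Corollary~\ref{cor:CB-rank} gives $\KG(\sK)=\CB\Zg(\sK)$. So only the common value is left to compute.

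For the value I would run the Krull--Gabriel filtration $\Coh{\sK}=\Coh{\sK}_{-1}\supseteq\dots$ explicitly, treating the finite and infinite global dimension cases separately. At level $0$ the simple functors are the ones coming from minimal right almost split maps in $\Tc$; by Proposition~\ref{prop:KG-CB}(2) each relatively isolates a single compact string complex (so $\CB$-rank $0$ points are certain perfect complexes, and in the infinite global dimension case also the right-infinite ones). Localising to $\Coh{\sK}/\Coh{\sK}_0=\Coh{\sK}_{\cl{X}_1}$ and reading off the next layer of simple functors from Figures~\ref{fig:finite}, \ref{fig:left-infinite} and~\ref{fig:infinite}, these turn out to relatively isolate the one-sided (left- and right-infinite) string complexes, and localising once more the level-$2$ simple functors relatively isolate the two-sided string complexes $Z^k_\infty$. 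Finally I would check $\Coh{\sK}/\Coh{\sK}_2=0$, i.e.\ that every finitely presented functor has finite length after two localisations; since the $Z^k_\infty$ are genuinely non-isolated in $\cl{X}_2$, this yields $\KG(\sK)=\CB\Zg(\sK)=2$, proving (1).

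Statement (2) is then almost a corollary of the same computation. Corollary~\ref{cor:pure-injective} already gives that every object of $\strings{\Lambda}$ is an indecomposable pure-injective, hence a point of $\Zg(\sK)$, and the analysis above exhibits each string complex as the point relatively isolated by a simple functor at level $0$, $1$ or $2$. Since $\CB\Zg(\sK)=2$, Proposition~\ref{prop:KG-CB}(2) shows these are \emph{all} the points --- \emph{provided} the simple functors enumerated above are all the simple functors at each level, equivalently provided no indecomposable pure-injective of $\sK$ lies outside $\strings{\Lambda}$. Establishing this is precisely what the remaining sections do: one argues by a Cantor--Bendixson induction on the $\CB$-rank of a putative point $N$ that $N$ must in fact be a string complex.

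The main obstacle is exactly this last step --- ruling out ``exotic'' indecomposable pure-injectives arising as homotopy limits or colimits of string complexes that are not themselves of string type. It is handled differently in the infinite global dimension case (Section~\ref{ind-infinite}, simplified by the presence of unbounded, $\CB$-rank $1$, compact objects) and the finite global dimension case (Section~\ref{ind-finite}), where a detailed analysis of the indecomposables of $\CB$-rank $1$ is needed to close the argument. The other delicate point, internal to the present section, is verifying that the filtration really stops at $2$: this is where the combinatorics of the Hom-hammocks and the distinction between finite and infinite global dimension genuinely enter.
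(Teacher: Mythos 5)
Your outline of part (1) follows the paper's route (Lemma~\ref{lem:KG-defined} via Hom-hammocks and Remark~\ref{KGdimmdim}(2), then Lemma~\ref{lem:isolation} and Corollary~\ref{cor:CB-rank}, then a level-by-level identification of simple functors), but there is a genuine gap at exactly the point you flag and then wave away. Knowing that the $1$-simple morphisms and the functors $(Z^k_j,-)$ give \emph{some} simples in $\Coh{\sK}_{\cl{X}_0}$ and $\Coh{\sK}_{\cl{X}_1}$ only bounds the CB-rank below; to get $\KG(\sK)=\CB\Zg(\sK)=2$ and statement (2) you must show these are \emph{all} the simples at each level (equivalently, that $\cl{X}_1$ and $\cl{X}_2$ contain no points beyond the one-sided/ladder and two-sided string complexes). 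You defer this to Sections~\ref{ind-infinite} and \ref{ind-finite}, but those sections prove a different statement --- that every indecomposable \emph{object} of $\sK$ is pure-injective --- and their proofs explicitly invoke Theorem~\ref{thm:KG-dim}(2), so routing the completeness of the pure-injective list through them is circular. In the paper the completeness is established inside Section~\ref{KGdim-simples}: Lemma~\ref{lem:indecdom} reduces to indecomposable domains, and Proposition~\ref{prop: 1-simples} shows every simple $q_0(F_f)$ comes from a $1$-simple morphism by ruling out the exceptional factorisation patterns via the density of isolated points (which needs $\KG$ defined) together with the fact that finite-endolength points are closed, so that otherwise $F_f$ would have finite length; a parallel argument (via Remark~\ref{rem: finite points}) identifies the level-$2$ simples as $q_1((Z^k_j,-))$. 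None of this appears in your proposal, and it is the mathematical core of the theorem.

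Two smaller inaccuracies in your stratification should also be fixed. In the infinite global dimension case the compact right-infinite string complexes $Z^k_i$ are \emph{not} isolated: only $\ind{\mcX}$ has CB-rank $0$ (Corollary~\ref{cor:isol}), and the $Z^k_i$, together with the $X^k_{i,\infty}$, have CB-rank $1$ (Proposition~\ref{prop:CB1pureinj}). And your closing claim that the $Z^k_\infty$ are ``genuinely non-isolated in $\cl{X}_2$'' is backwards: they are non-isolated in $\cl{X}_1$ (hence of rank $\geq 2$) but isolated in $\cl{X}_2$ by the functors $(Z^k_j,-)$, which is precisely why $\cl{X}_3=\emptyset$ and the rank is exactly $2$.
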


Using the same argument as \cite[Lem. 10.2.2]{PreNBK}, we have the following useful characterisation of coherent subfunctors of $F_f$.

\begin{lemma} \label{lem:coherent-subfunctors}
Let $\T$ be a compactly generated triangulated category and let $F_f$ be a coherent functor.  Then, all coherent subfunctors of $F_f$ have the form $\image(h,-)/\image(f,-)$ for some factorisation $f = gh$ of $f$ in $\Tc$.
\end{lemma}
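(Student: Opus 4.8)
The plan is to pull subfunctors of $F_f$ back along the canonical epimorphism and then to recognise the relevant subfunctors of a representable functor as cyclic. Recall that $F_f$ sits in an exact sequence $(D,-)\xrightarrow{(f,-)}(C,-)\xrightarrow{\pi}F_f\to 0$, so $\ker\pi=\image(f,-)$. Given a coherent subfunctor $G\subseteq F_f$, set $H:=\pi^{-1}(G)$; then $\image(f,-)\subseteq H\subseteq(C,-)$, $H/\image(f,-)\cong G$, and $(C,-)/H\cong F_f/G$. The whole lemma reduces to the claim that $H=\image\bigl((h,-)\colon(E,-)\to(C,-)\bigr)$ for some $h\colon C\to E$ in $\Tc$: granting this, $f$ lies in $\image(f,-)(D)\subseteq H(D)=\image(h,-)(D)=\{\psi h\mid\psi\in\Hom_\T(E,D)\}$, which exhibits a factorisation $f=gh$ in $\Tc$, and then $G=H/\image(f,-)=\image(h,-)/\image(f,-)$. (Conversely, any factorisation $f=gh$ gives $(f,-)=(h,-)\circ(g,-)$, hence $\image(f,-)\subseteq\image(h,-)$, and $\image(h,-)/\image(f,-)$ is a coherent subfunctor of $F_f$; so the two descriptions coincide.)

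The first main step is to show $H$ is finitely generated. The category $\Coh{\T}$ is abelian --- by Lemma~\ref{lem:functors} it is anti-equivalent to $\mod{\Tc}$, which is abelian since the skeletally small triangulated category $\Tc$ has weak kernels --- so the quotient $F_f/G$ of the coherent functor $F_f$ is again coherent, i.e.\ finitely presented; and $(C,-)$ is representable, hence finitely generated projective. A Schanuel-type argument applied to the short exact sequence $0\to H\to(C,-)\to F_f/G\to 0$, an epimorphism from a finitely generated projective onto a finitely presented functor, then shows that $H$ is finitely generated; this is exactly the reasoning of \cite[Lem.~10.2.2]{PreNBK}.

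The second main step upgrades ``finitely generated'' to ``cyclic''. If $H$ is generated by morphisms $h_i\colon C\to E_i$ for $1\le i\le n$, put $E:=\bigoplus_{i=1}^n E_i$, a finite coproduct in $\Tc$, and $h:=(h_i)_i\colon C\to E$. Then each $h_i=\mathrm{pr}_i\circ h$ lies in the subfunctor of $(C,-)$ generated by $h$, while $h=\sum_i\iota_i h_i$ lies in the subfunctor generated by $h_1,\dots,h_n$; hence $H=\image(h,-)$, and together with the reduction above this proves the lemma.

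I expect the first step to be the real obstacle --- not because the argument is deep, but because one has to be careful that the functor-category notions (``finitely presented quotient'', ``finitely generated kernel'', ``projective'') behave exactly as in the classical module setting. The cleanest way to secure this is to transport the situation into $\mod{\Tc}$ via Lemma~\ref{lem:functors}, where the statement becomes literally the module-theoretic \cite[Lem.~10.2.2]{PreNBK}; the second step is then purely formal.
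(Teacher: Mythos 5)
Your proposal is correct and follows essentially the same route as the paper, whose entire proof is the remark that the argument of \cite[Lem.~10.2.2]{PreNBK} carries over: pull the subfunctor back to a subfunctor $H$ of the representable $(C,-)$ containing $\image(f,-)$, identify $H$ as $\image(h,-)$ for some $h\colon C\to E$ with $E\in\Tc$, and read off the factorisation $f=gh$. Your Schanuel-plus-cyclicity expansion (with the generators landing at compact objects, so that $E$ is indeed compact) is exactly a spelled-out version of that transported module-theoretic argument, so there is nothing genuinely different to compare.
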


\begin{lemma} \label{lem:KG-defined}
Let $\Lambda$ be a derived-discrete algebra. Then $\KG(\sK)$ is defined.
\end{lemma}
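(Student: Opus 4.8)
## Proof Proposal for Lemma~\ref{lem:KG-defined}

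The plan is to invoke Remark~\ref{KGdimmdim}(2): it suffices to show that for every $F \in \Coh{\sK}$ (equivalently, for every $F_f$ with $f\colon C\to D$ in $\sK^c$) no subset of the lattice $L_{\Coh{\sK}}(F_f)$ of coherent subfunctors forms a densely ordered chain. By Lemma~\ref{lem:coherent-subfunctors}, the coherent subfunctors of $F_f$ are exactly the $\image(h,-)/\image(f,-)$ for factorisations $f = gh$ in $\sK^c$, ordered by refinement of factorisations. So I would reduce to showing that there is no densely ordered chain of such factorisations of a given compact morphism $f$.

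First I would reduce to the case where $C$ and $D$ are indecomposable compact objects; since $\sK^c$ is Krull--Schmidt and $f$ decomposes into finitely many components, a densely ordered chain of factorisations of $f$ would yield one for some component $f_{ij}\colon C_i \to D_j$ between indecomposables, so we may assume $C,D \in \ind{\sK^c}$ (these are the right-infinite string complexes, by Proposition~\ref{compact}). Next, using Proposition~\ref{dimensions}, $\dim\Hom_{\sK}(C,D) \leq 2$, and in fact $\leq 1$ once a nonperfect string complex is involved. The key structural input is then the factorisation property Proposition~\ref{prop:factorisations} together with the Hom-hammock descriptions (Propositions~\ref{prop:fin-hammocks} and \ref{prop:inf-hammocks}): a factorisation $f = gh$ with $h\colon C \to B$, $g\colon B \to D$ forces $B$ to lie in $\infhomfrom{C} \cap \infhomto{D}$, and conversely (by Proposition~\ref{prop:factorisations}) any $B$ in that intersection through which $f$ genuinely factors realises such a factorisation. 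One then analyses the poset of these intermediate objects $B$, ordered by "$B_1$ precedes $B_2$ if $h_2$ factors through $h_1$ and $g_1$ factors through $g_2$". From the AR-quiver pictures (Figures~\ref{fig:finite} and \ref{fig:infinite}) this intersection region $\infhomfrom{C}\cap\infhomto{D}$ is organised along finitely many rays and corays (Lemma~\ref{rays-and-corays}), and factorisation through it is governed by the discrete $\bZ$-indexed (or shorter) chains of Lemma~\ref{rays-and-corays}. Hence the poset of nontrivial factorisations of $f$, modulo the equivalence identifying factorisations giving the same subfunctor, embeds into a finite union of discretely ordered chains, so no densely ordered subset can occur.

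More concretely, I would argue that for fixed $f$ the lattice $L_{\Coh{\sK}}(F_f)$ has m-dimension at most $2$: the "$\homfrom{}\cap\homto{}$ restricted to compacts" part already has finite length by the $\Db(\mod\Lambda)$ analysis in \cite{BPP1,BK}, and the contributions from the one-sided and two-sided string complexes only add the discretely ordered tails coming from the extended rays and corays of Lemma~\ref{rays-and-corays}; each such tail is order-isomorphic to a subset of $\bZ$, hence discrete, and there are only finitely many of them since $C, D$ meet only finitely many components. Since a finite union of discretely ordered chains (together with a finite-length piece) contains no densely ordered chain, Remark~\ref{KGdimmdim}(2) gives that $\KG(\sK)$ is defined.

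The main obstacle I anticipate is the bookkeeping in the second step: one must verify that distinct refinements of a factorisation of $f$ really do give distinct (or comparable) subfunctors $\image(h,-)/\image(f,-)$, and that the partial order on factorisations matches the order on the intermediate objects along the rays and corays in the expected way, including the subtleties (the $r=1$, two-dimensional Hom-space cases, and the $m=0$ "spherical" cases) that already required care in the proof of Proposition~\ref{prop:fin-hammocks}. Handling the two-sided string complexes $Z^k_\infty$ — which can sit "between" infinitely many objects — is the delicate point, but Proposition~\ref{prop:factorisations} and the triangles of Lemma~\ref{triangles} show that any factorisation through $Z^k_\infty$ is subsumed into one of the discrete ray/coray chains, so no density is introduced there either.
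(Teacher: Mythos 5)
Your proposal is correct and follows essentially the same route as the paper: the paper's proof also combines Lemma~\ref{lem:coherent-subfunctors} (coherent subfunctors of $F_f$ correspond to factorisations of $f$) with the Hom-hammock descriptions in Propositions~\ref{prop:fin-hammocks} and \ref{prop:inf-hammocks} to conclude there is no densely ordered chain of subfunctors, and then invokes Remark~\ref{KGdimmdim}. Your write-up simply fills in the reduction to indecomposable compacts and the ray/coray bookkeeping that the paper leaves as ``it is clear''.
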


\begin{proof}
Let $F \in \Coh{\sK}$ and consider the lattice $L(F)$ of coherent subfunctors of $F$.  By Lemma~\ref{lem:coherent-subfunctors} and the descriptions of the Hom-hammocks in Propositions~\ref{prop:fin-hammocks} and \ref{prop:inf-hammocks}, it is clear that $L(F)$ has no densely ordered subset. By Remark~\ref{KGdimmdim}, it follows that $\KG(\sK)$ is defined.
\end{proof}

By Lemma \ref{lem:isolation} and Proposition \ref{prop:KG-CB}, we have 
\[ 
\Coh{\sK}/\Coh{\sK}_n = \Coh{\sK}_{\cl{X}_n}
\] 
for all $n \geq 0$, where $\cl{X}_n$ is the closed subset of $\Zg(\sK)$ consisting of points with CB-rank greater than or equal to $n$.  Let $q_n \colon\Coh{\sK} \to \Coh{\sK}_{\cl{X}_n}$ denote the corresponding localisation functor. The following is immediate from Lemma~\ref{lem:isolation} and Proposition~\ref{prop:KG-CB}.

\begin{corollary}\label{cor:simples-isol}
The CB-rank of $\Zg(\sK)$ is defined and there is a natural bijection
\[ \{ M \in \Zg(\sK) \mid \CB(M) = n \} \bij \{ F \in \Coh{\sK}_{\cl{X}_n} \mid F \text{ is simple}\}. \]
\end{corollary}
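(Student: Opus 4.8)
The plan is to obtain this as a formal consequence of the results already in place, with essentially no further computation. The one substantive ingredient is Lemma~\ref{lem:KG-defined}, which tells us that $\KG(\sK)$ is defined; this rests in turn on the Hom-hammock descriptions of Propositions~\ref{prop:fin-hammocks} and~\ref{prop:inf-hammocks} together with the classification of coherent subfunctors in Lemma~\ref{lem:coherent-subfunctors}, so everything beyond this point is bookkeeping.

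First I would feed ``$\KG(\sK)$ is defined'' into Lemma~\ref{lem:isolation} to conclude that $\Zg(\sK)$ satisfies the isolation condition; this is precisely the hypothesis under which the remaining apparatus applies. In particular, Corollary~\ref{cor:CB-rank} then yields $\CB\Zg(\sK) = \KG(\sK)$, so the Cantor--Bendixson rank of $\Zg(\sK)$ is defined.

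The asserted bijection is then exactly part~(2) of Proposition~\ref{prop:KG-CB}, specialised to $i = n$: under the isolation condition the points $M \in \Zg(\sK)$ of CB-rank $n$ correspond bijectively to the simple objects $F$ of $\Coh{\sK}_{\cl{X}_n}$, the correspondence sending $M$ to $q_n F'$ for any $F' \in \Coh{\sK}$ with $(F') \cap \cl{X}_n = \{M\}$; here the identification $\Coh{\sK}_{\cl{X}_n} = \Coh{\sK}/\Coh{\sK}_n$ is part~(1) of the same proposition. Naturality of the bijection is built into this description.

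I do not expect any real obstacle: the content has been discharged in Lemma~\ref{lem:KG-defined}, and the corollary merely chains together Lemmas~\ref{lem:KG-defined} and~\ref{lem:isolation}, Corollary~\ref{cor:CB-rank}, and Proposition~\ref{prop:KG-CB}. The only point meriting a moment's care is that the cited results of Ziegler and Prest are phrased for modules over a ring, whereas here we work with functors on the skeletally small category $\Kc$; but, as already noted after Proposition~\ref{prop:KG-CB}, the treatment in~\cite{PreNBK} is carried out at exactly this level of generality, so the transfer is automatic.
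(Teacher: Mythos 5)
Your proposal is correct and follows exactly the paper's route: the paper derives this corollary as an immediate consequence of Lemma~\ref{lem:KG-defined} (KG-dimension defined), Lemma~\ref{lem:isolation} (hence the isolation condition holds), and Proposition~\ref{prop:KG-CB} (with Corollary~\ref{cor:CB-rank} giving $\CB\Zg(\sK)=\KG(\sK)$), which is precisely the chain you spell out. Your remark about transferring the module-over-a-ring results to functors on $\Kc$ matches the comment the paper makes after Proposition~\ref{prop:KG-CB}.
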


\subsection{Cantor-Bendixson rank $0$}  It is well-known that the simple functors in $\Coh{\sK}$ correspond to the Auslander-Reiten triangles comprised of compact objects in ${\sK}$; see \cite[\S 2]{Auslander}. 

\begin{proposition}
The simple objects in $\Coh{\sK}$ are exactly those of the form $F_f$ where $X \overset{f}{\rightarrow} Y \overset{g}{\rightarrow} Z \rightarrow \Sigma X$ is an Auslander-Reiten triangle.  The point of $\Zg(\sK)$ isolated by $F_f\cong (X,-)/{\rm im}(f,-)$ is $X$.
\end{proposition}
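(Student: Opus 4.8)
The plan is to transfer Auslander's functorial description of almost split maps to the compactly generated triangulated setting, following \cite[\S 2]{Auslander}. The starting point is that, since the restricted Yoneda functor is full on $\sK^c$, every object of $\Coh{\sK}$ has the form $F_f=(X,-)/\image((f,-))$ for some morphism $f\colon X\to Y$ in $\sK^c$, and $F_f$ depends only on the subfunctor $\image((f,-))$ of the representable $(X,-)$. So it suffices to decide which proper subfunctors of a representable produce a simple quotient.

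First I would treat the direction ``AR triangle $\Rightarrow$ simple''. Given an Auslander--Reiten triangle $\trilabels{X}{Y}{Z}{f}{g}{w}$ in $\sK^c$, the object $X$ is indecomposable and $f$ is left almost split. By Lemma~\ref{lem:coherent-subfunctors}, any coherent subfunctor of $F_f$ has the form $\image((h,-))/\image((f,-))$ for a factorisation $f=g'h$ with $h\colon X\to E$ in $\sK^c$. If $h$ is a section then $\image((h,-))=(X,-)$ and the subfunctor is all of $F_f$; if $h$ is not a section then, $X$ being indecomposable with local endomorphism ring, $h$ factors through the left almost split map $f$, whence $\image((h,-))\subseteq\image((f,-))$ and the subfunctor is zero. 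Thus $F_f$ has no proper nonzero subobject in $\Coh{\sK}$, i.e.\ it is simple, and $F_f\cong(X,-)/\image((f,-))$ is its defining presentation.

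For the converse, suppose $F\in\Coh{\sK}$ is simple. Then there is an epimorphism $(X,-)\twoheadrightarrow F$ with $X\in\sK^c$; using that $\sK^c$ is Krull--Schmidt (the indecomposable compacts have local endomorphism rings by Proposition~\ref{dimensions} and Corollary~\ref{strings-are-indecomposable}) I would pass to an indecomposable summand and assume $X$ indecomposable. Writing $F=F_f=(X,-)/\image((f,-))$, the subfunctor $\image((f,-))$ is then a maximal coherent subfunctor of $(X,-)$. Analysing the coherent subfunctors of $(X,-)$ itself (again via Lemma~\ref{lem:coherent-subfunctors}, with the local-endomorphism-ring property in the guise that a sum of non-sections out of $X$ is again a non-section), one sees that the proper coherent subfunctors of $(X,-)$ form a directed set whose union is the radical subfunctor $\operatorname{rad}_{\sK^c}(X,-)$, which sends a compact $M$ to the non-sections $X\to M$. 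Hence a \emph{maximal} coherent subfunctor can exist only if $\operatorname{rad}_{\sK^c}(X,-)$ is itself coherent, and this is exactly the condition that $X$ admit a left almost split map $f_0\colon X\to Y$ with $Y$ compact; completing $f_0$ to a triangle produces an Auslander--Reiten triangle $\trilabels{X}{Y}{Z}{f_0}{g}{w}$ in $\sK^c$, and since $\image((f_0,-))=\operatorname{rad}_{\sK^c}(X,-)=\image((f,-))$ we obtain $F\cong F_{f_0}$.

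Finally, the isolated point is read off from the support of $F_f$: the left almost split property gives, for indecomposable $M$, that $F_f(M)=\Hom_{\sK}(X,M)/(\text{maps factoring through }f)$ is nonzero precisely when $X$ is a direct summand of $M$, i.e.\ $M\cong X$, while $F_f(X)=\End_{\sK}(X)/\operatorname{rad}\,\End_{\sK}(X)\neq0$. Since $X$ is compact it is a right-infinite string complex by Proposition~\ref{compact}, hence pure-injective by Corollary~\ref{cor:pure-injective}, so $X\in\Zg(\sK)$ and $(F_f)\cap\Zg(\sK)=\{X\}$, identifying $X$ as the point isolated by $F_f$. The step I expect to be the main obstacle is the faithful transfer of Auslander's correspondence ``$\operatorname{rad}_{\sK^c}(X,-)$ finitely presented $\Leftrightarrow$ $X$ sits at the source of an AR triangle'' into this setting: one must keep track of the fact that $\sK^c$ here contains unbounded complexes and, in the infinite global dimension case, need not possess AR triangles ending at every object, so that the asserted bijection is genuinely with those AR triangles realised inside $\sK^c$.
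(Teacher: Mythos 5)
Your two main steps are essentially the classical Auslander correspondence that the paper itself does not reprove but simply cites (\cite[\S 2]{Auslander}): the argument that $F_f$ is simple when $f$ is the left almost split map of an AR triangle in $\Kc$ (via Lemma~\ref{lem:coherent-subfunctors} and locality of $\End(X)$), and the converse reduction of a simple coherent functor to $F_{f_0}$ with $f_0$ left almost split out of an indecomposable compact, are both correct. Two small omissions there: you should record that $F_f\neq 0$ (because $f$ is not a section), and when you ``complete $f_0$ to a triangle'' you must first replace $f_0$ by a \emph{left minimal} left almost split map (possible since $\Kc$ is Krull--Schmidt), since otherwise the cone need not be indecomposable and the completed triangle need not be Auslander--Reiten.

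The genuine gap is in your final paragraph. The AR triangle is a triangle in $\Kc$, so $f$ is left almost split only with respect to \emph{compact} test objects; yet you invoke ``every non-section $X\to M$ factors through $f$'' for arbitrary indecomposable pure-injective $M\in\Zg(\sK)$, most of which (the complexes $X^k_{i,\infty}$, $Y^k_{\infty,j}$, $Z^k_\infty$, etc.) are not compact. That $F_f(M)=0$ for these $M$ is precisely what has to be proved, and it does not follow formally from almost-splitness inside $\Kc$. The standard repair uses the functor category: by Lemma~\ref{lem:functors} and Corollary~\ref{cor:functors-explicit}, $F_f=G^\vee$ with $G=\ker(-,f)\subseteq(-,X)$, and simplicity of $F_f$ makes $G$ simple in $\mod{\Kc}$, hence (by local coherence) in $\Mod{\Kc}$; since $X$ is pure-injective (Proposition~\ref{compact} and Corollary~\ref{cor:pure-injective}), $(-,X)$ is an indecomposable injective, so $G$ is its essential simple socle and $E(G)\cong(-,X)$. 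Then for any indecomposable pure-injective $N$, $F_f(N)=(G,(-,N))\neq 0$ forces $(-,N)\cong E(G)\cong(-,X)$, i.e.\ $N\cong X$, which together with $F_f(X)\neq 0$ gives $(F_f)\cap\Zg(\sK)=\{X\}$. (Equivalently, this argument shows a posteriori that $f$ \emph{is} left almost split in all of $\sK$: if $(-,u)|_G\neq 0$ for some $u\colon X\to M$ then $\ker(-,u)$ misses the essential socle, so $u$ is a pure monomorphism and splits by pure-injectivity of $X$.) Some such argument, or a citation to Krause's result identifying AR triangles in $\Tc$ with pure-injective first term as AR triangles in $\T$, is needed to close your last step.
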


\begin{corollary}\label{cor:isol} 
Let $\Lambda$ be a derived-discrete algebra
\begin{enumerate}
\item If $\gldim \Lambda = \infty$ then $\ind{\mcX}$ is the set of isolated points in $\Zg(\sK)$.
\item If $\gldim \Lambda < \infty$ then $\ind{\mcX \cup \mcY \cup \mcZ}$ is the set of isolated points in $\Zg(\sK)$.
\end{enumerate}
\end{corollary}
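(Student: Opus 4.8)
The plan is to reduce the statement to a description of which compact objects are the source of an Auslander--Reiten triangle, and then to read off the answer from the Auslander--Reiten quiver of $\Kc$ recalled in Section~\ref{structure}. By the preceding Proposition together with Corollary~\ref{cor:simples-isol} applied with $n = 0$ (so that $\cl{X}_0 = \Zg(\sK)$ and hence $\Coh{\sK}_{\cl{X}_0} = \Coh{\sK}$), a point of $\Zg(\sK)$ is isolated precisely when it is the leftmost term $X$ of an Auslander--Reiten triangle $X \to Y \to Z \to \Sigma X$ in $\Kc$; equivalently, when it is an indecomposable object of $\Kc$ admitting a source map (a minimal left almost split morphism) in $\Kc$. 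So it remains to decide which of the indecomposable compact complexes identified by Proposition~\ref{compact} admit such a source map.

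If $\gldim \Lambda < \infty$, then $\ind{\Kc} = \ind{\mcX \cup \mcY \cup \mcZ}$ by Proposition~\ref{compact} and the remarks of Section~\ref{sec:finite}; by Section~\ref{sec:finite} these components are of type $\bZ A_\infty$ or $\bZ A_\infty^\infty$, each carrying a bijective Auslander--Reiten translate, so every indecomposable object of $\Kc$ is the leftmost term of an Auslander--Reiten triangle whose three terms again lie in $\Kc$. (Equivalently, since $\gldim \Lambda < \infty$ the category $\Db(\mod{\Lambda}) \simeq \Kc$ has a Serre functor, so Auslander--Reiten triangles exist throughout it.) Hence every indecomposable compact object is isolated, giving part~(2).

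If $\gldim \Lambda = \infty$, then $\ind{\Kc} = \ind{\mcX} \cup \{ Z^k_i \mid 0 \leq k < r,\ i \in \bZ \}$ by Proposition~\ref{compact} and Remark~\ref{rem:correspondence}, so that $\Kc = \KprojL \simeq \Db(\mod{\Lambda})$; by Section~\ref{sec:infinite} its Auslander--Reiten quiver consists of the $\bZ A_\infty$ components $\mcX^0, \ldots, \mcX^{r-1}$ together with the components $\mcZ^0, \ldots, \mcZ^{r-1}$, of type $A_\infty^\infty$, carrying the objects $Z^k_i$. On each $\mcX^k$ the translate is again bijective, so every $X \in \ind{\mcX}$ is the leftmost term of an Auslander--Reiten triangle in $\Kc$ and is therefore isolated; an $\mcZ^k$ component, by contrast, is merely a chain $\cdots \to Z^k_{i-1} \to Z^k_i \to Z^k_{i+1} \to \cdots$ of irreducible morphisms of $\Kc$ carrying no Auslander--Reiten translate, so no $Z^k_i$ admits a source map in $\Kc$ and no $Z^k_i$ is isolated. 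Thus the isolated points are exactly $\ind{\mcX}$, giving part~(1).

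The step that needs care is this last one: the complex $Z^k_i$ \emph{does} sit in an Auslander--Reiten triangle of the ambient category $\sK$ (it lies on the right-hand beam of one of the `ladder' components $\mctZ^k$ of $\strings{\Lambda}$, whose remaining terms are the \emph{non-compact} left-infinite complexes $X^k_{j,\infty}$), so one must argue entirely inside the triangulated subcategory $\Kc$, where that triangle is unavailable, and check that passing from $\sK$ to $\Kc$ creates no new source map at $Z^k_i$. This is precisely what the description of the Auslander--Reiten quiver of $\Db(\mod{\Lambda}) \simeq \KprojL$ from \cite{BGS} provides, its $\mcZ^k$-components being of type $A_\infty^\infty$ and hence carrying no translate; everything else is routine bookkeeping with the material recalled in Section~\ref{structure}.
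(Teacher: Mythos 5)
Your proof is correct and takes essentially the same route as the paper: the paper's own argument is just the combination of Corollary~\ref{cor:simples-isol} (via the preceding proposition identifying simple functors in $\Coh{\sK}$ with Auslander--Reiten triangles of compact objects, isolating the leftmost term) with the description of the AR quiver of $\Db(\mod{\Lambda})\simeq\Kc$ from \cite{BGS}, from which it follows that exactly the indecomposable perfect complexes begin AR triangles among compacts. Your write-up merely makes this explicit, in particular the point that when $\gldim\Lambda=\infty$ the compact but non-perfect objects $Z^k_i$ lie in $A_\infty^\infty$ components of $\Kc$ and so admit no source map there, hence are not isolated.
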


\begin{proof}
This now follows from Corollary~\ref{cor:simples-isol} and the description of Auslander-Reiten triangles in \cite{BGS} from which it follows that it is the indecomposable perfect complexes which begin Auslander-Reiten sequences.
\end{proof}

\subsection{Cantor-Bendixson rank $1$}  We begin by identifying certain morphisms that give rise to simple functors in $\Coh{\sK}_{\cl{X}_0}$.  

\begin{definition} \label{def:1-simple}
Let $\Lambda$ be a derived-discrete algebra and recall Notation~\ref{notation}.
\begin{enumerate}
\item Suppose $\gldim \Lambda < \infty$.  For $0 \leq k < r$ the morphisms 
\begin{align*}
& h \colon X^k_{i,j} \to X^k_{i+1, j} \oplus Z^k_{i,t}   & \quad &  h \colon Y^k_{i,j} \to Y^k_{i,j+1} \oplus Z^k_{t, j} \\ 
& h \colon Z^k_{i,j} \to Z^k_{i+1, j} \oplus X^{k+1}_{t,i'-1} & \quad & h \colon Z^k_{i,j} \to Z^k_{i, j+1} \oplus Y^{k+1}_{j''-1,t}
\end{align*} 
where $h = \left( \begin{smallmatrix} h_1 \\ h_2 \end{smallmatrix} \right)$ with $h_1 \neq 0$ and $h_2 \neq 0$ will be called \emph{$1$-simple morphisms}.
\item Suppose $\gldim \Lambda = \infty$.  For $0 \leq k < r$ the morphisms
\begin{align*}
&h \colon X^k_{i,j} \to X^k_{i+1, j} \oplus Z^k_i & \quad & h \colon Z^k_j \to Z^k_{j+1} \oplus X^{k+1}_{i, j'-1}
\end{align*}
where $h = \left( \begin{smallmatrix} h_1 \\ h_2 \end{smallmatrix} \right)$ with $h_1 \neq 0$ and $h_2 \neq 0$ will be called \emph{$1$-simple morphisms}.
\end{enumerate}
\end{definition}  

From Lemma~\ref{lem:coherent-subfunctors}, which correlates subfunctors of $F_h$ with factorisations of $h$, and our description of morphisms, it is clear that $q_0(F_h)$ is a simple functor if $h$ is a $1$-simple morphism.   Since these functors are simple, it follows from Corollary~\ref{cor:simples-isol} that there are corresponding indecomposable pure-injective objects with CB-rank equal to $1$.

\begin{proposition}\label{prop:CB1pureinj}  
Let $\Lambda$ be a derived-discrete algebra.
\begin{enumerate}
\item If $\gldim \Lambda = \infty$ then the set $\ind{\mctZ}$  is a complete list of indecomposable, pure-injective complexes of CB rank $1$.
\item If $\gldim \Lambda < \infty$ then the set $\ind{\mcX_\infty \cup \mcX_{-\infty} \cup \mcY_\infty \cup \mcY_{-\infty}}$ is a complete list of indecomposable, pure-injective complexes of CB-rank $1$.
\end{enumerate}
\end{proposition}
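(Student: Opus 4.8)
The plan is to push the Cantor-Bendixson analysis one step past Corollary~\ref{cor:isol}. Every object occurring in the statement is a string complex, hence by Corollary~\ref{strings-are-indecomposable} and Corollary~\ref{cor:pure-injective} an indecomposable pure-injective object of $\sK$, and by Corollary~\ref{cor:isol} it is not isolated in $\Zg(\sK)$, so each such object has Cantor-Bendixson rank at least $1$. Write $\cl{X}_1$ for the closed subset of non-isolated points of $\Zg(\sK)$ and $q_0\colon\Coh{\sK}\to\Coh{\sK}/\Coh{\sK}_0=\Coh{\sK}_{\cl{X}_1}$ for the associated localisation. By Corollary~\ref{cor:simples-isol} the proposition is equivalent to the conjunction of: (i) each object in the list is isolated in $\cl{X}_1$; and (ii) no other point of $\cl{X}_1$ is isolated in $\cl{X}_1$. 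Recall from the discussion after Definition~\ref{def:1-simple} that, via Lemma~\ref{lem:coherent-subfunctors}, $q_0(F_h)$ is a simple object of $\Coh{\sK}_{\cl{X}_1}$ whenever $h$ is a $1$-simple morphism; so the content of (i) and (ii) is to identify exactly which point of $\cl{X}_1$ each such $q_0(F_h)$ isolates and to show that these exhaust the simple objects of $\Coh{\sK}_{\cl{X}_1}$.

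For (i): given a listed object $N$ I would name a $1$-simple morphism $h$ from the appropriate family of Definition~\ref{def:1-simple}, with compact indecomposable source $A$, and verify that $F_h(N)\neq0$. Since $q_0(F_h)$ is already known to be simple, its open support meets $\cl{X}_1$ in a single point; as $N$ lies in that support and belongs to $\cl{X}_1$, it follows that $q_0(F_h)$ isolates $N$ and hence $\CB(N)=1$. Concretely, in the infinite global dimension case $X^k_{i,\infty}$ is handled by a morphism $X^k_{i,j}\to X^k_{i+1,j}\oplus Z^k_i$ (for suitable $j$) and $Z^k_i$ by a morphism $Z^k_j\to Z^k_{j+1}\oplus X^{k+1}_{i,j'-1}$; the finite global dimension case uses all four families of Definition~\ref{def:1-simple} to reach the whole of $\ind{\mcX_\infty\cup\mcX_{-\infty}\cup\mcY_\infty\cup\mcY_{-\infty}}$, and is most transparent with Figures~\ref{fig:finite}, \ref{fig:left-infinite} and~\ref{fig:infinite} in view. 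The required non-vanishing $F_h(N)\neq0$ comes from a segment of an extended ray or coray (Lemma~\ref{rays-and-corays}) realising a nonzero map $A\to N$ which, by the Hom-hammock descriptions of Propositions~\ref{prop:fin-hammocks} and~\ref{prop:inf-hammocks}, does not factor through either component of $h$; the subtleties attached to the two-dimensional Hom-spaces when $r=1$ (Proposition~\ref{dimensions}) are dealt with exactly as in the proof of Proposition~\ref{prop:fin-hammocks}, using one-dimensionality of the relevant endomorphism rings.

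For (ii): a simple object of $\Coh{\sK}_{\cl{X}_1}$ has the form $q_0(F_f)$ for some morphism $f\colon C\to D$ between compact objects, and a standard reduction allows us to take $C$ indecomposable. By Lemma~\ref{lem:coherent-subfunctors} the coherent subfunctors of $F_f$ are parametrised by the factorisations $f=gh$ of $f$ through compact objects, and $q_0(F_f)$ being simple means precisely that this lattice has only two classes modulo the Serre subcategory $\Coh{\sK}_0$ of finite-length functors: equivalently, $F_f\notin\Coh{\sK}_0$ and $F_f$ has no proper nonzero subfunctor outside $\Coh{\sK}_0$ (Remark~\ref{KGdimmdim} rules out the densely ordered possibility in advance). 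Using the Hom-hammock structure of Section~\ref{structure} and Proposition~\ref{prop:factorisations} to enumerate all factorisations of $f$, one checks that any $f$ not already of $1$-simple type either has $F_f\in\Coh{\sK}_0$ or admits a factorisation producing a proper nonzero subfunctor of $F_f$ that is not of finite length; hence $f$ must coincide, modulo the equivalence induced by $\Coh{\sK}_0$, with a $1$-simple morphism, and then $q_0(F_f)$ isolates the listed object identified for it in (i). This establishes (ii).

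I expect (ii) to be the main obstacle: translating ``$q_0(F_f)$ simple'' into a combinatorial condition on $f$ via Lemma~\ref{lem:coherent-subfunctors}, and then eliminating, case by case, every morphism between compact string complexes that is not $1$-simple, is delicate and splits along the component types of $\strings{\Lambda}$. Part (i), by contrast, is a finite though lengthy verification organised along the families of Definition~\ref{def:1-simple}; in both parts the inductions are driven by the distinguished triangles among string complexes recorded in Lemma~\ref{triangles}.
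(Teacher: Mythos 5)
Your overall architecture is the paper's: part (i) is proved there exactly as you outline (for each listed $N$ one names a $1$-simple morphism $h$ with $F_h(N)\neq 0$ and invokes simplicity of $q_0(F_h)$ together with Proposition~\ref{prop:KG-CB} and the Hom-hammocks), and completeness is likewise reduced, via your ``standard reduction'' (Lemma~\ref{lem:indecdom}), to showing that every simple object of the localised category is $q_0(F_h)$ for some $1$-simple $h$ (Proposition~\ref{prop: 1-simples}). The gap is in your step (ii), in two respects. First, your dichotomy is logically insufficient: producing a proper nonzero subfunctor $G$ of $F_f$ which is not of finite length does not contradict simplicity of $q_0(F_f)$, since it may well be that $F_f/G$ has finite length and $q_0(G)\cong q_0(F_f)$ --- indeed that is precisely the configuration one wants to reach, because if $f=gh$ with $h$ $1$-simple then $F_h=F_f/G$ with $G=\image(h,-)/\image(f,-)$, and the correct mechanism is that $F_h$ is a quotient of $F_f$ with $q_0(F_h)\neq 0$, which forces $q_0(F_f)\cong q_0(F_h)$. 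So what must be shown is that $f$ factors through a $1$-simple morphism, not merely that some subfunctor survives localisation.

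Second, and more seriously, ``enumerate all factorisations and check'' passes over the genuine obstruction: there are morphisms $f\colon A\to\bigoplus_i B_i$ with $A$ indecomposable which factor through no $1$-simple morphism at all, namely those where every $B_i$ lies on the ray or coray through $A$ inside its own component ($A=X^k_{i,j}$ with targets $X^k_{i,j+t}$; $A=Y^k_{i,j}$ with targets $Y^k_{i+t,j}$; $A=Z^k_{i,j}$ with targets $Z^k_{i+t,j}$, $Z^k_{i,j+s}$). Excluding these is not a combinatorial enumeration of factorisations: the paper observes that in these cases $F_f$ is nonzero on only finitely many indecomposable compact objects, so $(F_f)$ contains only finitely many isolated points of $\Zg(\sK)$; since $\KG(\sK)$ is defined (Lemma~\ref{lem:KG-defined}) the isolated points are dense, and being of finite endolength they are closed, so $(F_f)$ contains no further points at all; hence $F_f$ has finite length and $q_0(F_f)=0$, contradicting simplicity. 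Note that ``$F_f$ vanishes on all but finitely many compacts'' does not by itself give finite length --- the density-plus-closedness (finite endolength) argument, or something equivalent, is genuinely needed, and nothing in your sketch supplies it. Part (i), the preliminary reductions, and the final passage from ``every simple comes from a $1$-simple morphism'' to completeness agree with the paper and are fine.
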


\begin{lemma}
In the set-up of Proposition \ref{prop:CB1pureinj}, the complexes listed in each case are indecomposable, pure-injective and of CB-rank $1$.
\end{lemma}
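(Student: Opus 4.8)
The plan is to treat the three assertions separately; the first two are essentially free. \emph{Indecomposability} holds because every object in the two lists is a string complex, so it is Corollary~\ref{strings-are-indecomposable}; \emph{pure-injectivity} is then Corollary~\ref{cor:pure-injective}, since string complexes are endofinite. The content is therefore entirely in the claim that each of these complexes has Cantor--Bendixson rank exactly $1$, and for this I would prove a lower and an upper bound.

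For the lower bound, $\CB(M)\geq 1$, I would appeal to Corollary~\ref{cor:isol}: the isolated points of $\Zg(\sK)$ are precisely the indecomposable perfect string complexes. None of the complexes under consideration is perfect --- when $\gldim\Lambda=\infty$ the complexes in $\ind{\mctZ}$ are the left-infinite string complexes $X^k_{i,\infty}$ and the right-infinite string complexes $Z^k_i$ (the latter compact, but not perfect), and when $\gldim\Lambda<\infty$ the complexes in $\ind{\mcX_\infty\cup\mcX_{-\infty}\cup\mcY_\infty\cup\mcY_{-\infty}}$ are the one-sided string complexes. Hence each $M$ is non-isolated, i.e.\ lies in the first Cantor--Bendixson derivative $\cl{X}_1$ of $\Zg(\sK)$.

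For the upper bound, recall from the discussion of $1$-simple morphisms preceding this lemma that any $1$-simple morphism $h$ --- a morphism in $\sK^c$, since all objects occurring in Definition~\ref{def:1-simple} are compact --- yields a functor $F_h\in\Coh{\sK}$ with $q_0(F_h)$ simple; by Corollary~\ref{cor:simples-isol} this functor isolates a single point of CB-rank $1$, namely the unique $N\in\cl{X}_1$ with $F_h(N)\neq 0$. It thus suffices, for each $M$ in the two lists, to produce a $1$-simple morphism $h$ with $F_h(M)\neq 0$: then $M$ is that point $N$, so $\CB(M)=1$. I would take $h$ to be the $1$-simple morphism built from the ray or coray accumulating at $M$ in the sense of Lemmas~\ref{rays-and-corays} and~\ref{lem:homotopy-colimits}. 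For instance, for $M=X^k_{a,\infty}\cong\hocolim X^k_{a,j}$ I would use $h\colon X^k_{a,j}\to X^k_{a+1,j}\oplus C$, with $C=Z^k_{a,t}$ in the finite case and $C=Z^k_a$ in the infinite case, for suitably chosen indices. Propositions~\ref{prop:fin-hammocks} and~\ref{prop:inf-hammocks} then give $X^k_{a,\infty}\notin\infhomfrom{X^k_{a+1,j}}$ and, for $r>1$, $X^k_{a,\infty}\notin\infhomfrom{C}$, so that by the non-vanishing statements of Lemma~\ref{rays-and-corays} the canonical ray map $X^k_{a,j}\to X^k_{a,\infty}$ does not factor through $h$, whence $F_h(X^k_{a,\infty})\neq 0$. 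That $q_0(F_h)$ is simple is, as already noted, immediate from Lemma~\ref{lem:coherent-subfunctors} (coherent subfunctors of $F_h$ correspond to factorisations of $h$ in $\sK^c$) together with the Hom-hammock and factorisation descriptions (Proposition~\ref{prop:factorisations}). The remaining cases of Definition~\ref{def:1-simple} are handled by the same recipe, invoking duality where it shortens matters.

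I expect the main obstacle to be the exceptional case $r=1$, where $\Hom$-spaces between string complexes may be two-dimensional (Proposition~\ref{dimensions}) and the non-perfect summand $C$ of $h$ need no longer satisfy $M\notin\infhomfrom{C}$. As with the $r=1$ subtleties resolved in the proofs of Propositions~\ref{prop:fin-hammocks} and~\ref{prop:factorisations}, one must then verify directly that the composition $X^k_{a,j}\to C\to M$ through such a summand vanishes: were it nonzero, one-dimensionality of $\End_{\sK}(M)=\kk$ and the factorisation through $M$ from Lemma~\ref{rays-and-corays} would exhibit $M$ as a direct summand of the compact complex $C$, contradicting that $M$ is not perfect. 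So the image of $\Hom_{\sK}(X^k_{a+1,j}\oplus C,M)$ remains a proper subspace of $\Hom_{\sK}(X^k_{a,j},M)$, giving $F_h(M)\neq 0$ as required. Apart from this point, the verification is a routine, if somewhat lengthy, pass through the morphisms listed in Definition~\ref{def:1-simple}.
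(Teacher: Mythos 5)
Your proposal is correct and follows essentially the same route as the paper: indecomposability and pure-injectivity come from the string-complex results, non-isolation from Corollary~\ref{cor:isol}, and the CB-rank~$1$ statement from exhibiting, for each listed complex $M$, a $1$-simple morphism $h$ whose functor $F_h$ is nonzero at $M$, combined with the simplicity of $q_0(F_h)$ and Proposition~\ref{prop:KG-CB}/Corollary~\ref{cor:simples-isol}; the paper phrases this as checking via the Hom-hammocks that $(F_h)$ meets the non-isolated points only in $M$, which is the same computation you package slightly differently (and your $r=1$ splitting trick is exactly the one the paper uses inside the proof of Proposition~\ref{prop:fin-hammocks}). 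The only cosmetic slip is that for $M=Z^k_j$ in the infinite global dimension case the relevant non-factoring map is $\id_{Z^k_j}$ rather than a ray map into a homotopy colimit, and the contradiction there is with compactness/indecomposability rather than perfectness, but this does not affect the argument.
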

\begin{proof}
For each of these complexes there is a $1$-simple morphism $f$ such that the open set corresponding to the functor $(X,-)/\image{(f,-)}$, where $X$ is the domain of $f$, contains just that complex and complexes of CB-rank $0$.  So this is immediate from Proposition~\ref{prop:KG-CB} together with the description of the Hom-hammocks in Propositions~\ref{prop:fin-hammocks} and \ref{prop:inf-hammocks}.
\end{proof}

To prove that the list is complete, we first need a definition and some preliminary results.

\begin{definition}
If $h \colon A \to B_1 \oplus B_2$ and $h'\colon C \to D_1\oplus D_2$ are $1$-simple morphisms, then we say that $h \sim h'$ if and only if $A$ and $C$ are contained the same ray and $B_2$ and $D_2$ are contained in the same coray of the AR quiver.  Note: when $\Lambda$ has infinite global dimension, objects on the righthand beam of the $\mctZ$ components are on the same ray or coray if and only if they are equal.
This is clearly an equivalence relation on the set of $1$-simple morphisms.
\end{definition}

From the description of the Hom-hammocks in Propositions~\ref{prop:fin-hammocks} and \ref{prop:inf-hammocks}, we see that $(F_h)$ and $(F_g)$ contain the same pure-injective with CB-rank $1$ exactly when $h \sim g$.  So the following corollary is immediate from Corollary~\ref{cor:simples-isol} and inspection of the Hom-hammocks.

\begin{corollary}
Let $h$ and $g$ be $1$-simple morphisms.  Then, $q_0(F_{h}) = q_0(F_g)$ if and only if $h \sim g$.
\end{corollary}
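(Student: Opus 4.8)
The plan is to reduce the claimed equivalence to the geometric fact already extracted from the Hom-hammocks: that $(F_h)$ and $(F_g)$ contain the same pure-injective of CB-rank $1$ exactly when $h\sim g$.

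First I would invoke that $q_0(F_h)$ and $q_0(F_g)$ are \emph{simple} objects of $\Coh{\sK}/\Coh{\sK}_0$; this was observed right after Definition~\ref{def:1-simple}, using Lemma~\ref{lem:coherent-subfunctors} to match the coherent subfunctors of $F_h$ with the factorisations of the $1$-simple morphism $h$, together with the description of morphisms between string complexes. By Corollary~\ref{cor:simples-isol} the simple objects of this quotient are in bijection with the points of $\Zg(\sK)$ of CB-rank $1$, and by Proposition~\ref{prop:KG-CB}(2), applied to $F_h$ itself as a lift of $q_0(F_h)$, the point matched with $q_0(F_h)$ is the unique pure-injective of CB-rank $1$ lying in the open set $(F_h)$. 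Since this correspondence is a bijection, $q_0(F_h)=q_0(F_g)$ holds if and only if these two points of CB-rank $1$ coincide, that is, if and only if $(F_h)$ and $(F_g)$ contain the same pure-injective of CB-rank $1$. It therefore remains to show that this last condition is equivalent to $h\sim g$.

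For that I would first describe $(F_h)$ on the CB-rank $1$ stratum. Since $F_h=\coker\bigl((h,-)\colon(B_1\oplus B_2,-)\to(A,-)\bigr)$, a point $N$ lies in $(F_h)$ precisely when some morphism $A\to N$ fails to factor through $h\colon A\to B_1\oplus B_2$. If $N$ has CB-rank $1$ then $N$ is a non-perfect string complex, so by Proposition~\ref{dimensions} each of $\Hom_\sK(A,N)$, $\Hom_\sK(B_1,N)$, $\Hom_\sK(B_2,N)$ is at most one-dimensional; combining this with the factorisation Proposition~\ref{prop:factorisations} (and treating the $r=1$ exception exactly as in the proof of Proposition~\ref{prop:fin-hammocks}) yields, for such $N$,
\[
N\in(F_h)\iff N\in\infhomfrom{A}\setminus\bigl(\infhomfrom{B_1}\cup\infhomfrom{B_2}\bigr).
\]
Next I would substitute the explicit hammock formulas of Propositions~\ref{prop:fin-hammocks} and \ref{prop:inf-hammocks} into the right-hand side, working case by case over the $1$-simple morphisms listed in Definition~\ref{def:1-simple} and over the finite- and infinite-global-dimension regimes, and using Proposition~\ref{prop:CB1pureinj} to enumerate the CB-rank $1$ indecomposables. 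In each case the intersection of a cone-shaped hammock region with the complement of two smaller such regions collapses to a single CB-rank $1$ string complex, and inspection of the formulas shows that this complex depends on $h$ only through its $\sim$-equivalence class; concretely, it is governed by the ray of the AR quiver containing $A$, which for a $1$-simple morphism also determines the coray containing $B_2$. Hence $(F_h)$ and $(F_g)$ contain the same pure-injective of CB-rank $1$ exactly when $h\sim g$, completing the proof.

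The reduction in the second paragraph is formal and immediate from Corollary~\ref{cor:simples-isol}. The genuine work, and the main obstacle, lies in the computation of the last paragraph: one must push the hammock formulas through the several cases, keep careful track of the index shifts of Notation~\ref{notation} and of the low-rank exceptions $r=1$ and $m=0$, and verify uniformly that the outcome is a single CB-rank $1$ complex depending only on the $\sim$-class of the morphism.
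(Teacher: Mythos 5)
Your overall route is the same as the paper's: reduce, via the simplicity of $q_0(F_h)$ and the bijection of Corollary~\ref{cor:simples-isol}/Proposition~\ref{prop:KG-CB}(2), to the statement that $(F_h)$ and $(F_g)$ meet the CB-rank~$1$ stratum in the same point exactly when $h\sim g$, and then read this off from Propositions~\ref{prop:fin-hammocks} and \ref{prop:inf-hammocks}; this is precisely what the paper means by ``immediate from Corollary~\ref{cor:simples-isol} and inspection of the Hom-hammocks''. The reduction in your second paragraph and the description of $(F_h)$ on CB-rank~$1$ points as $\infhomfrom{A}\setminus(\infhomfrom{B_1}\cup\infhomfrom{B_2})$ (via Proposition~\ref{prop:factorisations} and one-dimensionality) are fine.

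The gap is in the last step, and it sits exactly where the real content of the corollary lies. Your justification of the ``only if'' direction rests on the claim that the isolated complex ``is governed by the ray containing $A$, which for a $1$-simple morphism also determines the coray containing $B_2$''. Neither half of this is correct uniformly over the four shapes in Definition~\ref{def:1-simple}. For $h\colon Y^k_{i,j}\to Y^k_{i,j+1}\oplus Z^k_{t,j}$ and $h\colon Z^k_{i,j}\to Z^k_{i,j+1}\oplus Y^{k+1}_{j''-1,t}$ it is the coray (the second index) through the domain, not the ray, that controls the isolated point; and for the shapes whose second target lies in a $\mcZ$-component, $B_2=Z^k_{i,t}$ (resp.\ $Z^k_{t,j}$) carries a free parameter $t$, so the ray through $A$ does not determine the coray through $B_2$ at all. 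Concretely, your own computation applied to $h\colon X^k_{i,j}\to X^k_{i+1,j}\oplus Z^k_{i,t}$ gives that the unique CB-rank~$1$ point of $(F_h)$ is $X^k_{i,\infty}$ for every admissible $t$ (all the $X^{k+1}_{-\infty,b}$ with $i'-1\le b\le j'-1$ lie in $\infhomfrom{Z^k_{i,t}}$ independently of $t$, hence are killed), while with the ray/coray conventions fixed by Lemma~\ref{lem:homotopy-colimits} the objects $Z^k_{i,t}$ and $Z^k_{i,t'}$ lie on different corays for $t\neq t'$. So from ``same isolated point'' you cannot conclude ``same coray for $B_2$'' by the argument you give; you still owe an account of how the coray of $B_2$ is recovered from $q_0(F_h)$ (equivalently, why $1$-simple morphisms with the same ray through the domain but second targets on different corays are not identified), or else an argument that the equivalence relation is to be read through exactly the ray/coray data that the isolated point records. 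Until the four shapes are treated separately with both pieces of data tracked, the ``only if'' direction is not established.
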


Next we will prove that all simple functors in $  \Coh{\sK}_{\cl{X}_0}$ arise from $1$-simple morphisms. 

\begin{lemma}\label{lem:indecdom}
Let $\T$ be a compactly generated triangulated category and let $f \colon A \to B$ be a morphism in $\Tc$.  If $q \colon \Coh{\T} \to \Coh{\T}_{\cl{X}}$ is a localisation functor and $q(F_f)$ is simple, then there exists some $g \colon C \to D$ in $\Tc$ such that $q(F_f) = q(F_g)$ and $C$ is indecomposable.
\end{lemma}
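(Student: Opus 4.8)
The plan is to decompose the compact object $A$ into indecomposable summands and then to cut $F_f$ down to a coherent functor that is generated by a single such summand, exploiting the fact that the localisation functor $q$ is exact and hence preserves the simple quotient.

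First I would write $A = \bigoplus_{i=1}^{n} A_i$ with each $A_i$ indecomposable; this is possible since $A$ is compact (and, in the situations to which this lemma is applied, $\Tc$ is Krull--Schmidt). By Yoneda, $(A,-) = \bigoplus_{i=1}^{n} (A_i,-)$, and composing the summand inclusions with the canonical epimorphism $(A,-) \twoheadrightarrow F_f$ yields morphisms $\phi_i \colon (A_i,-) \to F_f$ whose images sum to $F_f$. Applying the exact functor $q$, the images of the $q(\phi_i)$ sum to $q(F_f)$; since $q(F_f)$ is simple, some $q(\phi_i)$ must therefore be an epimorphism. Fix such an index $i$ and put $C \coloneqq A_i$ and $\phi \coloneqq \phi_i$.

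Next I would replace $F_f$ by its coherent subfunctor $G \coloneqq \image(\phi) \subseteq F_f$. As $q$ is exact, $q(G) = \image(q(\phi)) = q(F_f)$, and $(C,-)$ maps onto $G$. It then remains to check that a coherent functor admitting an epimorphism from a representable $(C,-)$ has the form $F_g$: writing $K \coloneqq \ker\big((C,-) \twoheadrightarrow G\big)$, which is again coherent (a kernel in the abelian category $\Coh{\T}$), we may choose an epimorphism $(D,-) \twoheadrightarrow K$ from a representable with $D \in \Tc$; since the Yoneda functor is full, the composite $(D,-) \twoheadrightarrow K \hookrightarrow (C,-)$ equals $(g,-)$ for some $g \colon C \to D$ in $\Tc$, and $\image(g,-) = K$. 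Hence $F_g = (C,-)/\image(g,-) = (C,-)/K \cong G$, so $q(F_g) = q(G) = q(F_f)$ with $C$ indecomposable, as required.

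The one point needing care is the passage from the abstract simplicity of $q(F_f)$ back to a concrete presentation: the right move is to form the image $G$ of $\phi$ \emph{inside} $F_f$, so that one stays within $\Coh{\T}$ and may invoke exactness of $q$, rather than trying to lift the simple object of the quotient category directly; once $G$ is in hand, expressing it as $F_g$ is routine, using only that kernels of morphisms between coherent functors are coherent and that the Yoneda functor is full. The other hypothesis worth flagging is that $A$, being compact, is a finite direct sum of indecomposables.
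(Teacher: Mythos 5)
Your proof is correct and takes essentially the same route as the paper: decompose $A$ into indecomposable summands (the paper does this two at a time with a Krull--Schmidt induction, you all at once), use simplicity of $q(F_f)$ to find one summand whose image $G\subseteq F_f$ already satisfies $q(G)=q(F_f)$, and then present $G$ as $F_g$ with indecomposable domain. The only cosmetic difference is that the paper invokes Lemma~\ref{lem:coherent-subfunctors} to realise that subfunctor as $\image(g,-)$, whereas you re-derive this directly from coherence of the kernel and fullness of the Yoneda embedding.
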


\begin{proof}
Suppose $A = A' \oplus A''$ where $A', A'' \in \Tc$ are nonzero.  Since $(A, -) \cong (A', -) \oplus (A'',-)$, we have $F_f \cong \big((A',-) \oplus (A'',-)\big) / \image(f,-)$ where we identify $\image(f,-)$ with its image in $(A',-)\oplus (A'',-)$.  Consider the subfunctors of $F_f$ 
\[ {^{(\image(f,-)+A)}/_{\image(f,-)}} \cong {^{(A',-)}/_{\image(f,-) \cap (A',-)}} =H' \] and \[
{^{(\image(f,-)+A)}/_{\image(f,-)}} \cong {^{(A'',-)}/_{\image(f,-) \cap (A'',-)}  =H''}. \]  
The sum of these subfunctors is $F_f$ so, since $q(F_f)$ is simple, the image of at least one of them under $q$ equals $q(F_f)$.  Hence either $q(F_f) \cong q(H)$ or $q(F_f) \cong q(H')$.

Now, $\image(f,-) \cap (A',-) \subseteq (A',-)$ is a finitely generated subfunctor so, by Lemma~\ref{lem:coherent-subfunctors}, there exists some $g \colon A' \to B'$ in $\Tc$ such that $\image(g,-) \cong \image(f,-) \cap (A',-)$ and similarly with $A''$ in place of $A$.  As $\Tc$ is Krull-Schmidt, the result follows.
\end{proof}

\begin{proposition}\label{prop: 1-simples}
Suppose $q_0(F_f)$ is simple in $\Coh{\sK}_{\cl{X}_0}$, then there exists a $1$-simple morphism $h$ such that $q_0(F_f) = q_0(F_h)$. 
\end{proposition}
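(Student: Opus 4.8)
The plan is to normalise the presentation of $F_f$, use the isolation machinery to pin down the unique indecomposable pure-injective of CB-rank $1$ in the support of $F_f$, read off its location in the Auslander--Reiten quiver from the Hom-hammocks, and then exhibit a $1$-simple morphism with the same image under $q_0$. First I would apply Lemma~\ref{lem:indecdom} to assume $C$ is indecomposable, so that $C$ is an indecomposable compact string complex: $C\in\ind{\mcX\cup\mcY\cup\mcZ}$ if $\gldim\Lambda<\infty$, and $C\in\ind{\mcX}$ or $C=Z^k_i$ if $\gldim\Lambda=\infty$ (Proposition~\ref{compact}). Writing $D=\bigoplus_t D_t$ as a direct sum of indecomposable compact string complexes and discarding the summands on which $f$ vanishes (which changes neither $\image(f,-)$ nor $F_f$), we may assume each component $f_t\colon C\to D_t$ is nonzero and not invertible; in particular $D_t\in\infhomfrom{C}$ for every $t$.

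Since $q_0(F_f)$ is simple, Corollary~\ref{cor:simples-isol} and Proposition~\ref{prop:KG-CB} produce an indecomposable pure-injective $N_0$ of CB-rank $1$ such that $(F_f)$ contains $N_0$ but no other point of CB-rank $\geq 1$. By Corollary~\ref{cor:isol} this $N_0$ is not isolated, hence it is a one-sided or two-sided string complex (including, when $\gldim\Lambda=\infty$, the compact right-infinite complexes $Z^k_i$ on the beams of the ladder components). Because $N_0$ is nonperfect we have $\dim\Hom_{\sK}(C,N_0)\leq 1$ (Proposition~\ref{dimensions}), and Proposition~\ref{prop:factorisations} shows that the essentially unique nonzero map $C\to N_0$ factors through $f$ precisely when $N_0\in\infhomfrom{D_t}$ for some $t$; hence for every nonperfect string complex $N$,
\[
F_f(N)\neq 0 \iff N\in\infhomfrom{C}\setminus\textstyle\bigcup_t\infhomfrom{D_t}.
\]
Using the explicit forward Hom-hammocks (Propositions~\ref{prop:fin-hammocks} and \ref{prop:inf-hammocks}) I would rule out $N_0$ being two-sided: whenever a two-sided complex $Z^l_\infty$ lies in $\infhomfrom{C}$, the hammock $\infhomfrom{C}$ contains an entire ``down-ray'' of one-sided complexes $X^l_{i,\infty}$, and the only compact objects whose forward hammock covers such a down-ray also contain $Z^l_\infty$ in their forward hammock; so if $Z^l_\infty\in(F_f)$ then infinitely many CB-rank $1$ points $X^l_{i,\infty}$ would also lie in $(F_f)$, a contradiction. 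Therefore $N_0$ is one-sided (or a beam object), that is, one of the complexes appearing in the statement of Proposition~\ref{prop:CB1pureinj}, and the displayed identity determines $N_0$ explicitly in terms of the indices of $C$ and of the $D_t$ lying on the ray or coray through $C$ carrying $N_0$.

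Since $N_0$ is one of the complexes in the list of Proposition~\ref{prop:CB1pureinj}, the lemma immediately following that proposition furnishes a $1$-simple morphism $h$ for which $(F_h)$ contains $N_0$ and no other point of CB-rank $\geq 1$; thus $q_0(F_h)$ is simple (as recorded for $1$-simple morphisms after Definition~\ref{def:1-simple}) and isolates $N_0$ in the same sense as $q_0(F_f)$ does. As distinct simple functors in $\Coh{\sK}_{\cl{X}_0}$ isolate distinct pure-injectives by Corollary~\ref{cor:simples-isol}, and $q_0(F_f)$ and $q_0(F_h)$ are simple functors both isolating $N_0$, we conclude $q_0(F_f)\cong q_0(F_h)$, which is the assertion.

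I expect the main obstacle to be the Hom-hammock bookkeeping in the second step, where one must verify, component by component, that ``$F_f$ has exactly one point of CB-rank $\geq 1$ in its support'' indeed forces $N_0$ to be one-sided and fixes its position, and that the $1$-simple morphism produced in the last step reproduces exactly $N_0$. This is a finite but delicate case analysis in the style of \cite[\S2]{BPP1}, needing particular care in the exceptional case $r=1$, where $\dim\Hom_{\sK}$ between perfect complexes can be $2$ and the cyclic shift reindexes objects (compare the arguments in the proof of Proposition~\ref{prop:fin-hammocks}).
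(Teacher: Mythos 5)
There is a genuine gap, and it is a circularity. After producing the unique CB-rank~$1$ point $N_0$ in $(F_f)$, you assert ``by Corollary~\ref{cor:isol} this $N_0$ is not isolated, hence it is a one-sided or two-sided string complex''. Corollary~\ref{cor:isol} only identifies the isolated points; it does not say that every non-isolated point of $\Zg(\sK)$ lies in $\strings{\Lambda}$. At this stage of the paper the available facts are: the string complexes are points (Corollary~\ref{cor:pure-injective}), the isolated points are exactly the indecomposable perfect complexes (Corollary~\ref{cor:isol}), and the complexes listed in Proposition~\ref{prop:CB1pureinj} do have CB-rank $1$ (the lemma following it). What is \emph{not} yet known is that these exhaust the CB-rank~$1$ points: the paper derives that completeness statement from Proposition~\ref{prop: 1-simples} itself, and the full classification of points is Theorem~\ref{thm:KG-dim}(2), whose proof runs through this proposition. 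So a priori $N_0$ could be an as-yet-unidentified indecomposable pure-injective; your Hom-hammock computations and the displayed support criterion only apply to string complexes and say nothing about such a point, and there is then no $1$-simple morphism known to isolate it, so the final matching step (two simples isolating the same point must coincide) has nothing to match against. This is precisely the question being begged. A secondary, smaller issue: your criterion $F_f(N)\neq 0 \iff N\in\infhomfrom{C}\setminus\bigcup_t\infhomfrom{D_t}$ needs every map $C\to N$ to factor through the \emph{specific} component $f_t$, which Proposition~\ref{prop:factorisations} alone does not give when $\dim\Hom_{\sK}(C,D_t)=2$ (the $r=1$, perfect-perfect case); and the exclusion of the two-sided complexes is in any case automatic, since a simple object of $\Coh{\sK}_{\cl{X}_0}$ corresponds to a point isolated in $\cl{X}_0$, i.e.\ of CB-rank exactly $1$.

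The paper's proof is structured so as never to need any knowledge of the unknown part of the spectrum. It shares your first reduction (Lemma~\ref{lem:indecdom}, indecomposable domain), but then uses that $F_h$ is a quotient of $F_f$ whenever $f=gh$, so by simplicity it suffices to factor $f$ through a $1$-simple morphism $h$ (for which $q_0(F_h)\neq 0$ is already known). The Hom-hammocks together with Proposition~\ref{prop:factorisations} produce such a factorisation except in a few degenerate configurations where all targets lie on the ray/coray through the source; in those cases $F_f$ is nonzero on only finitely many compacts, and since the isolated points are dense (KG-dimension is defined, Lemma~\ref{lem:KG-defined}) and each isolated point, being of finite endolength, is closed, $F_f$ has finite length, so $q_0(F_f)=0$, a contradiction. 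If you want to keep your ``identify the point, then match simples'' strategy, you would have to first prove that the unique non-isolated point in $(F_f)$ is a string complex, and that is essentially the hard content the paper's factorisation argument is designed to avoid.
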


The proof for the finite global dimension case can be found in \cite{BK}; our proof applies to both infinite and finite global dimension. 

\begin{proof}
If $f = gh$, then $F_h $ is a factor of $ F_f$.  Thus, for any such $h$, we have $q_0(F_h) \cong q_0(F_f)$ if and only if $q_0(F_h) \neq 0$.  We have already observed that if $h$ is a $1$-simple morphism, then $q_0(F_h) \neq 0$ so it remains to show that we always have a factorisation $f = gh$ where $h$ is $1$-simple.  By Lemma~\ref{lem:indecdom} we may assume that $f \colon A \to \bigoplus_{i=1}^n B_i$ where $A, B_1, \dots , B_n$ are indecomposable objects in $\Tc$.  

We observe that the Hom-hammock structure, combined with Proposition~\ref{prop:factorisations}, implies that there is a $1$-simple morphism through which $f$ factors, except in the following cases:
\begin{enumerate}
\item $A = X^k_{i,j}$ for some $0 \leq k < r$ and $i \leq j$ and $B_l = X^k_{i, j+t}$ for some $t \geq 0$ and $1\leq l \leq n$.
\item $A = Y^k_{i,j}$ for some $0 \leq k < r$ and $j \leq i$ and $B_l = Y^k_{i+t, j}$ for some $t \geq 0$ and $1\leq l \leq n$.
\item $A = Z^k_{i,j}$ for some $0 \leq k < r$ and $i, j \in \mathbb{Z}$ and $B_p = Z^k_{i+t,j}$, $B_q = Z^k_{i, j+s}$ for some $t,s \geq 0$ and $1 \leq p,q \leq n$.
\end{enumerate}

We argue, by contradiction, that none of these cases arise. In each of these cases, by inspection of the Hom-hammocks, the set of indecomposable compact objects $C$ for which $F_f(C) \neq 0$ is finite.  By Corollary~\ref{cor:isol}, the open set $(F_f)$ contains only finitely many isolated points of $\Zg(\sK)$.  It follows from Lemma~\ref{lem:KG-defined}, that the CB-rank of $\Zg(\sK)$ is defined and so the isolated points are dense (see, for example, \cite[Lem.~5.3.36]{PreNBK}).  But each isolated point, being of finite endolength, is closed (see \cite[Thm.~5.1.12]{PreNBK}) so there are no other points in $(F_f)$.  Since these are the only points on which $F_f$ is nonzero and since their direct sum is of finite endolength, it follows that $F_f$ is finite length and $q_0(F_f) = 0$ which is a contradiction.
\end{proof}

The completeness statement in Proposition~\ref{prop:CB1pureinj} now follows from the corollary below.

\begin{corollary} 
The simple objects in $\Coh{\sK}_{\cl{X}_0}$ are in natural one-to-one correspondence with the $\sim$-equivalence classes of $1$-simple morphisms. 
\end{corollary}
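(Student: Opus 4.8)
The plan is to realise the correspondence explicitly by the assignment $[h] \mapsto q_0(F_h)$, where $h$ ranges over the $1$-simple morphisms, $[h]$ denotes its $\sim$-equivalence class, and then to check that this is a well-defined bijection onto the isomorphism classes of simple objects of $\Coh{\sK}_{\cl{X}_0}$ by assembling the results just established. First I would verify that the target is correct, i.e. that $q_0(F_h)$ is simple for $1$-simple $h$: this is the observation recorded immediately after Definition~\ref{def:1-simple}, obtained by pairing Lemma~\ref{lem:coherent-subfunctors} (which identifies the coherent subfunctors of $F_h$ with the factorisations of $h$ in $\sK^c$) with the Hom-hammock descriptions of Propositions~\ref{prop:fin-hammocks} and~\ref{prop:inf-hammocks}; these constrain the factorisations of a $1$-simple morphism enough that every proper nonzero subfunctor of $F_h$ has finite length, hence lies in $\Coh{\sK}_0$ and is killed by $q_0$.

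Next I would handle well-definedness and injectivity at once: both are exactly the content of the corollary immediately preceding the statement, namely that $q_0(F_h) = q_0(F_g)$ if and only if $h \sim g$. For surjectivity I would use that the Serre quotient $\Coh{\sK}_{\cl{X}_0} = \Coh{\sK}/\Coh{\sK}_0$ has the same objects as $\Coh{\sK}$, and that every object of $\Coh{\sK}$ has the form $F_f$ for some $f \colon C \to D$ in $\sK^c$; thus a simple object of $\Coh{\sK}_{\cl{X}_0}$ is $q_0(F_f)$ with $q_0(F_f)$ simple, and Proposition~\ref{prop: 1-simples} produces a $1$-simple $h$ with $q_0(F_f) = q_0(F_h)$. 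Together these give the bijection.

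Finally, for the word \emph{natural} I would spell out that the bijection is compatible with the identification of the simple objects of $\Coh{\sK}_{\cl{X}_0}$ with the CB-rank-$1$ points of $\Zg(\sK)$ furnished by Corollary~\ref{cor:simples-isol}: a $1$-simple morphism $h \colon X \to B_1 \oplus B_2$ determines the basic open set $(F_h)$, and the Hom-hammock analysis of Propositions~\ref{prop:fin-hammocks} and~\ref{prop:inf-hammocks} shows that $(F_h)$ meets $\cl{X}_0$ in exactly the single point relatively isolated by $q_0(F_h)$, a point depending only on the ray containing $X$ and the coray containing $B_2$, that is, only on $[h]$. Reading off which point this is — an object of $\ind{\mctZ}$ when $\gldim \Lambda = \infty$, of $\ind{\mcX_\infty \cup \mcX_{-\infty} \cup \mcY_\infty \cup \mcY_{-\infty}}$ when $\gldim \Lambda < \infty$ — is precisely the bridge to the completeness assertion of Proposition~\ref{prop:CB1pureinj}. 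I do not anticipate a genuine obstacle here: all the substantive work is already packaged into Proposition~\ref{prop: 1-simples} and the preceding corollary, and what remains is the bookkeeping of keeping a $\sim$-class recorded as a ray-plus-coray in step with the two-parameter families of rank-$1$ indecomposables.
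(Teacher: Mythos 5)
Your proposal is correct and follows exactly the route the paper intends: the corollary is left as an immediate consequence of the observation after Definition~\ref{def:1-simple} (simplicity of $q_0(F_h)$ for $1$-simple $h$), the preceding corollary ($q_0(F_h)=q_0(F_g)$ iff $h\sim g$, giving well-definedness and injectivity), and Proposition~\ref{prop: 1-simples} (surjectivity), which is precisely how you assemble it. Your additional remarks tying the correspondence to the CB-rank-$1$ points via Corollary~\ref{cor:simples-isol} match the paper's use of the corollary to deduce the completeness statement in Proposition~\ref{prop:CB1pureinj}.
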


\subsection{Cantor-Bendixson rank $2$}  

We obtain ${\Coh{\sK}_{\cl{X}_1}}$ by localising $\Coh{\sK}$ at the Serre subcategory consisting of the functors $F$ such that $q_0(F)$ has finite length.  

\begin{remark}\label{rem: finite points}
Since the isolation condition holds and using Corollary~\ref{cor:pure-injective}, we may apply a similar argument to the one contained in the proof of Proposition~\ref{prop: 1-simples} to obtain that an object $q_0(F)$ in $ \Coh{\sK}_{\cl{X}_0}$ is finite length if and only if $(F)$ contains finitely many points of CB-rank $1$.   
\end{remark}

\begin{proposition}
Let $\Lambda$ be a derived-discrete algebra of either finite or infinite global dimension.
An object $q_1(F)$ in $\Coh{\sK}_{\cl{X}_1}$ is simple if and only if $q_1(F) = q_1((Z_j^k,-))$ for some $j,k$.
\end{proposition}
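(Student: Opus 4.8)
The plan is to prove both implications using Lemma~\ref{lem:coherent-subfunctors} (coherent subfunctors of $F_f$ correspond to factorisations of $f$), together with the Hom-hammock descriptions of Propositions~\ref{prop:fin-hammocks} and~\ref{prop:inf-hammocks}, the factorisation property of Proposition~\ref{prop:factorisations}, and the dimension bound of Proposition~\ref{dimensions}. The basic reformulation we use is that, by the description of $\Coh{\sK}_1$ and Remark~\ref{rem: finite points}, a coherent functor $G$ satisfies $q_1(G) = 0$ exactly when $(G)$ contains only finitely many points of CB-rank~$1$; and since the points of CB-rank~$0$ and~$1$ have already been completely listed (Corollary~\ref{cor:isol}, Proposition~\ref{prop:CB1pureinj}), for any compact $A$ the CB-rank~$1$ points of $(A,-)$ are precisely the CB-rank~$1$ string complexes in $\infhomfrom{A}$, which are read off from the hammock formulae. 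Here $q_1((Z^k_j,-))$ means $q_1$ applied to the functor represented by a compact complex in the $\mcZ^k$ component --- an object on the right-hand beam of a ladder component when $\gldim\Lambda = \infty$, a compact $Z$-complex when $\gldim\Lambda < \infty$ --- the choice being irrelevant up to isomorphism in $\Coh{\sK}_{\cl{X}_1}$.

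For the \emph{if} direction, put $F = (Z^k_j,-)$. The hammock $\infhomfrom{Z^k_j}$ contains an unbounded family of left-infinite string complexes $X^{k+1}_{i,\infty}$, all of CB-rank~$1$, so $F \notin \Coh{\sK}_1$ and $q_1(F) \neq 0$; it remains to show every proper coherent subfunctor $H \subseteq F$ satisfies $q_1(H) = 0$ or $q_1(F/H) = 0$. By Lemma~\ref{lem:coherent-subfunctors}, $H = \image((h,-))$ for some $h = (h_l)_l \colon Z^k_j \to \bigoplus_l D_l$ with the $D_l$ indecomposable compact; Propositions~\ref{prop:factorisations} and~\ref{dimensions} (with a short extra argument when $r = 1$, as in the proof of Proposition~\ref{prop:fin-hammocks}) identify the CB-rank~$1$ points of $(H)$ with those lying in $\infhomfrom{Z^k_j} \cap \bigcup_l \infhomfrom{D_l}$ and those of $(F/H)$ with the complement. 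The decisive point, read from Propositions~\ref{prop:fin-hammocks} and~\ref{prop:inf-hammocks}, is that for an indecomposable compact $D_l$ the set $\infhomfrom{D_l}$ meets each of the finitely many unbounded rays and corays of CB-rank~$1$ points inside $\infhomfrom{Z^k_j}$ either in a bounded set or cofinitely in all of them simultaneously (the latter only for $\mcZ$-type compacts). Hence $\bigcup_l \infhomfrom{D_l}$ meets the CB-rank~$1$ part of $\infhomfrom{Z^k_j}$ either finitely, so $q_1(H) = 0$, or cofinitely, so $(F/H)$ meets it finitely and $q_1(F/H) = 0$. Therefore $q_1(F)$ is simple.

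For the \emph{only if} direction, let $q_1(F)$ be simple. Writing $F = F_f$ and applying Lemma~\ref{lem:indecdom} with $q = q_1$, we may assume $F = (A,-)/\image((f,-))$ with $A$ indecomposable compact, hence a compact string complex by Proposition~\ref{compact}. Since $q_1(F)$ is a nonzero quotient of $q_1((A,-))$, we have $q_1((A,-)) \neq 0$, so $\infhomfrom{A}$ contains infinitely many CB-rank~$1$ points; inspection of the hammock formulae shows this forces $A$ to be a $\mcZ$-type compact, i.e.\ $A = Z^k_j$ for some $j,k$ (for $A$ in an $\mcX$- or $\mcY$-component, $\infhomfrom{A}$ contains only finitely many nonperfect objects). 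By the \emph{if} direction $q_1((A,-)) = q_1((Z^k_j,-))$ is simple, and a simple object has no proper nonzero quotient, so $q_1(F) = q_1((A,-)) = q_1((Z^k_j,-))$. (Combined with Proposition~\ref{prop:KG-CB}, this also shows the CB-rank~$2$ points of $\Zg(\sK)$ are exactly the two-sided string complexes $Z^0_\infty,\dots,Z^{r-1}_\infty$.)

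The main obstacle is the combinatorial core of the \emph{if} direction: verifying, from the Hom-hammock and factorisation data, that no coherent subfunctor of $(Z^k_j,-)$ can break the CB-rank~$1$ part of its support into two infinite pieces --- equivalently, that passing to a proper subfunctor either discards all but finitely many of these points or retains all but finitely many. Making this precise requires careful bookkeeping of which compact complexes contain a given $\mcX^l_{\pm\infty}$- or $\mcY^l_{\pm\infty}$-ray cofinitely in their forward hammock; once this is settled the rest is formal.
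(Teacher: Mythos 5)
Your argument is, in skeleton, the same as the paper's: reduce to an indecomposable compact domain via Lemma~\ref{lem:indecdom}, use Remark~\ref{rem: finite points} together with the hammock formulae to see that $q_1$ kills $(A,-)$ for $A$ in an $\mcX$- or $\mcY$-component but not for $A$ in a $\mcZ$-component, and then test simplicity of $q_1((Z^k_j,-))$ on coherent subfunctors via Lemma~\ref{lem:coherent-subfunctors}. The differences are in the execution of the subfunctor dichotomy and in scope. The paper (which only treats $\gldim\Lambda=\infty$, citing \cite{BK} for the finite case) handles the ``subobject is zero'' branch structurally: if every nonzero component of $h$ lands in $\mcX^{k+1}$, then $\image(h,-)$ is an epimorphic image of $\bigoplus_l(D_l,-)$, each killed by $q_1$; you instead count CB-rank~$1$ points in the support, which via Remark~\ref{rem: finite points} is equivalent. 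The ``subobject is everything'' branch (some component hits a $\mcZ$-type compact, forcing cofinite CB-rank~$1$ support of $H$ and hence finite support of $F/H$) is the same in both. What your write-up buys is a uniform treatment of the finite global dimension case rather than an appeal to \cite{BK}; your bounded/cofinite dichotomy does check out against Propositions~\ref{prop:fin-hammocks} and~\ref{prop:inf-hammocks}, and the one-dimensionality from Proposition~\ref{dimensions} correctly lets you identify the CB-rank~$1$ supports of $H$ and $F/H$ as complementary. The only thin spot is exactly the one you flag: when $\gldim\Lambda<\infty$ and $r=1$, both $Z^k_{i,j}$ and $D_l$ are perfect, $\Hom_{\sK}(Z^k_{i,j},D_l)$ may be two-dimensional, and then the identification of the CB-rank~$1$ support of $\image(h_l,-)$ with $\infhomfrom{Z^k_{i,j}}\cap\infhomfrom{D_l}$ is not automatic --- a priori the composite of the given component $h_l$ with the generator of $\Hom_{\sK}(D_l,N)$ could vanish along part of a ray, which is precisely what would threaten the ``cofinite'' branch. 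That factorisation-through-$h_l$ statement needs an argument in the style of the $r=1$ cases in the proof of Proposition~\ref{prop:fin-hammocks} (or the analysis in \cite{BK}); you assert it as ``a short extra argument'' but do not supply it, and you should also restrict the union $\bigcup_l\infhomfrom{D_l}$ to those $l$ with $h_l\neq 0$. Modulo that local point, the proof is correct.
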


\begin{proof}
We give an argument for the case where $\Lambda$ has infinite global dimension; the case where $\Lambda$ has finite global dimension can be found in \cite{BK}.

Note that, by Remark~\ref{rem: finite points} and the description of Hom-hammocks in Proposition~\ref{prop:inf-hammocks}, the objects ${q_1((X_{i,j}^k, -)) = 0}$ and $q_1((Z_{j}^k, -)) \neq 0$ for all $i, j \in \mathbb{Z}$ and $0 \leq k < r$.  It follows from this and Lemma~\ref{lem:indecdom} that any simple object will be of the form $q_1(F_f)$ where $f \colon Z_j^k \to B$ for some compact object $B$, $j \in \mathbb{Z}$ and $0 \leq k < r$.  It remains to show that $q_1((Z_j^k, -))$ is simple.  Any coherent subobject of $q_1((Z_j^k, -))$ will (by Lemma~\ref{lem:coherent-subfunctors}) be the image under $q_1$ of some $\image(f, -) \subseteq (Z_j^k, - )$ where $f \colon Z_j^k \to \bigoplus_{i=1}^n B_i$ with $B_1,\dots, B_n$ indecomposable.  Note that $\image(f,-)$ is the sum of the $\image(\pi_if,-)$ where $\pi_i$ is the projection to $B_i$.  If $B_i = Z_t^k$ for any $1 \leq i \leq n$ and $t \geq j$, then Remark~\ref{rem: finite points} and Proposition~\ref{prop:inf-hammocks} gives us that $q_1(F_f) = 0$ and so $q_1((Z_j^k, - )) = q_1(\image(f,-))$.  So consider the case where $B_i \in \mcX^{k+1}$ for each $1\leq i\leq n$.  Then there is an epimorphism $(\bigoplus_{i=1}^n B_i, -) \to \image(f,-)$ and so $q_1(\image(f,-)) = 0$.  
\end{proof}

\begin{corollary} \label{cor:CB2pureinj}
Let $\Lambda$ be a derived-discrete algebra of either finite or infinite global dimension.
The set $\mcZ_\infty$ is a complete list of all indecomposable, pure-injective complexes of CB-rank $2$.
\end{corollary}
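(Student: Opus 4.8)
The plan is to run, one Cantor--Bendixson layer higher, exactly the argument that produced Proposition~\ref{prop:CB1pureinj}: pin down the simple functors in the relevant localisation, invoke the bijection between simple functors and points of a fixed CB-rank, and then read off from the Hom-hammocks which indecomposable complex each simple functor isolates.

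First I would apply Corollary~\ref{cor:simples-isol} (which rests on Lemma~\ref{lem:KG-defined}, Lemma~\ref{lem:isolation} and Proposition~\ref{prop:KG-CB}) to the localisation $\Coh{\sK}_{\cl{X}_1}$: the points of $\Zg(\sK)$ of CB-rank $2$ are in natural bijection with the simple objects of $\Coh{\sK}_{\cl{X}_1}$, a simple functor $q_1(F')$ corresponding to the unique point of $\cl{X}_2$ lying in the basic open set $(F')$. By the preceding proposition the simple objects of $\Coh{\sK}_{\cl{X}_1}$ are precisely the $q_1((Z^k_j,-))$ for $j \in \bZ$ and $0 \le k < r$ (read as $q_1((Z^k_{i,j},-))$ in the finite global dimension case). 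Hence every CB-rank-$2$ point of $\Zg(\sK)$ is the unique point of $\cl{X}_2$ contained in some basic open set $\bigl((Z^k_j,-)\bigr)$.

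Next I would identify that point using Proposition~\ref{prop:inf-hammocks} in the infinite global dimension case and Proposition~\ref{prop:fin-hammocks} in the finite global dimension case (see also Figures~\ref{fig:infinite} and~\ref{fig:finite}). In both cases $\infhomfrom{Z^k_j}$ is a union of perfect complexes, of a family of one-sided string complexes, and of the single two-sided string complex $Z^{k+1}_\infty$. By Corollary~\ref{cor:isol} the perfect complexes occurring there are isolated points, and by Proposition~\ref{prop:CB1pureinj} the one-sided string complexes occurring there have CB-rank $1$; since $Z^{k+1}_\infty$ is neither, it is the unique point of $\bigl((Z^k_j,-)\bigr)$ lying in $\cl{X}_2$, hence it is the point isolated by $q_1((Z^k_j,-))$. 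Letting $j$ and $k$ vary produces every two-sided string complex, so the CB-rank-$2$ points of $\Zg(\sK)$ are exactly the objects of $\mcZ_\infty$; these are pure-injective by Corollary~\ref{cor:pure-injective}, which establishes the corollary.

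The one genuinely load-bearing step is concluding that $Z^{k+1}_\infty$ has CB-rank \emph{exactly} $2$ rather than merely $\ge 2$: here the simplicity of $q_1((Z^k_j,-))$ from the preceding proposition is indispensable, since via Proposition~\ref{prop:KG-CB} it forces $\bigl((Z^k_j,-)\bigr) \cap \cl{X}_2$ to consist of a single point, placing $Z^{k+1}_\infty$ on the second and not a higher Cantor--Bendixson layer. The remainder is bookkeeping against Figures~\ref{fig:finite} and~\ref{fig:infinite}, using the CB-rank $0$ and CB-rank $1$ points already identified in Corollary~\ref{cor:isol} and Proposition~\ref{prop:CB1pureinj}.
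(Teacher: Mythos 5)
Your proposal is correct and follows essentially the same route as the paper: the paper's own (very brief) proof likewise combines the preceding proposition identifying the simple objects of $\Coh{\sK}_{\cl{X}_1}$ as the $q_1((Z^k_j,-))$ with the bijection of Corollary~\ref{cor:simples-isol}/Proposition~\ref{prop:KG-CB} and the Hom-hammocks to see that each such functor isolates exactly $Z^{k+1}_\infty$ in $\cl{X}_2$. You have merely written out the steps (including the key point that simplicity forces $\bigl((Z^k_j,-)\bigr)\cap\cl{X}_2$ to be a single point) that the paper leaves implicit.
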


\begin{proof}
We have already seen that the objects $Z_\infty^k$ are indecomposable and pure-injective and it is clear from the Hom-hammocks that the Hom-functors $(Z_j^k, - )$ isolate these points in $\cl{X}_2$.
\end{proof}

This now completes the proof of Theorem~\ref{thm:KG-dim}.
Although we do not do it here explicitly, using these methods and results, it is possible to obtain a complete description of the topology of the Ziegler spectrum of $\sK$.

\section{Indecomposable complexes with compact support} \label{ind-compact-supp}

Before showing that all indecomposable objects of $\sK$ are pure-injective, we establish some preliminary results which hold for $\Lambda$ of both finite and infinite global dimension. 
For an object $M$ of a compactly generated triangulated category $\T$, its \emph{support} is the Ziegler-closed subset $\supp(M) \coloneqq \Def{M} \cap \Zg(\T)$, where $\Def{M}$ is the definable subcategory generated by $M$; see Definition~\ref{def:definable}.
Recall the definition of localisation at a definable subcategory from Section~\ref{sec:def-localisation}.

\begin{lemma} \label{lem:direct-summand}
Let $M$ be an object in a compactly generated triangulated category $\T$.  If $\supp(M)$ contains a compact object $C$ such that $\{ C \} = (F) \cap \supp(M)$ for some functor $F \in \Coh{\T}$ whose image is simple in $\Coh{\T}_{\supp(M)}$, then $C$ is a direct summand of $M$.
\end{lemma}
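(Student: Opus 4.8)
The plan is to transport the problem into the localised functor category, where purity becomes injectivity, split off a copy of $(-,C)$ there, and then descend to $\T$. Write $\mcD = \Def{M}$ and $\cl X = \supp(M) = \mcD \cap \Zg(\T)$, and let $q$ denote the localisation functor of Section~\ref{sec:def-localisation}, either $q\colon\Coh{\T}\to\Coh{\T}_{\cl X}$ or, on the functor side, $q\colon\Mod{\Tc}\to(\Mod{\Tc})_{\cl X}$. The first point I would record is that $F(M)\neq 0$: by Definition~\ref{def:definable}, $\mcD$ is the class of objects annihilated by every coherent functor vanishing on $M$, so if $F(M)=0$ then $F$ would vanish on all of $\mcD$, in particular at $C$, contradicting $C\in(F)$. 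Since evaluation at an object of $\mcD$ kills the kernel of $q$, evaluation at $M$ and at the points of $\cl X$ factors through $q$; hence $q(F)(M)=F(M)\neq 0$ and, for $N\in\cl X$, $q(F)(N)\neq 0$ if and only if $N\cong C$.

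Next I would translate the hypotheses through the equivalences of Section~\ref{background}. As $C$ is a compact, indecomposable pure-injective object of $\mcD$, the functor $(-,C)$ is a representable — hence finitely generated projective — injective object of $\Mod{\Tc}$, torsionfree for the torsion theory attached to $\mcD$, so $q((-,C))$ is an indecomposable injective in $(\Mod{\Tc})_{\cl X}$ with $iq((-,C))\cong(-,C)$. Writing $F\cong A^{\vee}$ with $A\in\mod{\Tc}$ (Lemma~\ref{lem:functors}), simplicity of $q(F)$ means $q(A)$ is simple in $(\Mod{\Tc})_{\cl X}$; combining $F(N)=(A,(-,N))$ with the adjunction $q\dashv i$ and torsionfree-injectivity of $(-,N)$ for pure-injective $N\in\mcD$ identifies $q((-,C))$ with the injective hull $E(q(A))$. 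Moreover $(-,M)$ is absolutely pure — being a filtered colimit of representables (Proposition~\ref{prop:abseqflat}) — hence lies in $\mcD$ and is torsionfree, so $(-,M)\into iq((-,M))$ and therefore $F(M)=(A,(-,M))\into\Hom_{(\Mod{\Tc})_{\cl X}}(q(A),q((-,M)))$; as $F(M)\neq 0$ and $q(A)$ is simple, this yields a monomorphism $q(A)\into q((-,M))$.

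I would then promote this to a splitting and descend. Relative isolation of $C$ in $\cl X$ by the simple functor $q(F)$ forces $E(q(A))=q((-,C))$ to have finite length, so the essential embedding $q(A)\into q((-,C))$ has finitely presented cokernel; hence the absolutely pure object $q((-,M))$ is injective relative to it, and the monomorphism $q(A)\into q((-,M))$ extends along $q(A)\into q((-,C))$ to a map $q((-,C))\to q((-,M))$ which, being monic on the essential subobject $q(A)$, is itself monic and, $q((-,C))$ being injective, splits. Applying $i$, using $iq((-,C))\cong(-,C)$ and the fact that $(-,M)$ is pure and essential inside $iq((-,M))$, I would conclude that $(-,C)$ is a direct summand of $(-,M)$ in $\Mod{\Tc}$, and finally invoke the equivalence $\Pinj{\T}\simeq\Inj{\Mod{\Tc}}$ of Section~\ref{background} to get that $C$ is a direct summand of $M$.

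The hard part will be this last descent — transferring the splitting of $q((-,C))$ off $q((-,M))$ back to a splitting of $(-,C)$ off $(-,M)$ — since $M$, unlike $C$, is not pure-injective, so $iq((-,M))$ and $(-,M)$ differ in general and one must check that the copy of the injective $(-,C)$ produced inside $iq((-,M))$ really lies in the pure, essential subobject $(-,M)$; this is where the compactness of $C$ (making $(-,C)$ representable), the finite-type nature of the torsion theory, and the relative isolation of $C$ all get used. As a sanity check, in the simplest case — $F$ the functor of an Auslander--Reiten triangle $C\to D\to E\to\Sigma C$ of compacts, so that $F(X)\neq 0$ exactly when $C$ is a summand of $X$, for every $X\in\T$ — the conclusion is immediate from $F(M)\neq 0$, and the argument above is the general replacement for that observation.
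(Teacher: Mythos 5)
Your argument follows the paper's route for most of its length (localise at $\supp(M)$, write $F=A^{\vee}$ with $q(A)$ simple, embed $q(A)$ as the socle of the indecomposable injective $q(-,C)$ and also into $q(-,M)$, then extend along the embedding with finitely presented cokernel using fp-injectivity to get a monomorphism $q(-,C)\into q(-,M)$). The genuine gap is exactly the step you flag and leave open, and your diagnosis of it is wrong: you say ``$iq(-,M)$ and $(-,M)$ differ in general'' because $M$ is not pure-injective, but for the torsion theory in play they do \emph{not} differ. The localisation is at the definable subcategory $\mcD=\Def{M}$, which contains $M$, so $(-,M)$ is torsionfree; it is also absolutely pure (fp-injective), since restricted Yoneda images always are; and the torsion theory is hereditary of finite type, so every torsion object is a direct limit of finitely presented torsion objects, whence $\Hom(T,(-,M))=0=\Ext^1(T,(-,M))$ for every torsion $T$ and $(-,M)$ is already closed, i.e.\ $(-,M)\cong iq(-,M)$. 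With this observation there is nothing to descend: the monomorphism $q(-,C)\into q(-,M)$ \emph{is} a monomorphism $(-,C)\into(-,M)$ in $\Mod{\Tc}$; by Yoneda and compactness of $C$ it equals $(-,h')$ for some $h'\colon C\to M$, which is therefore a pure monomorphism, and pure-injectivity of $C$ (Proposition~\ref{prop: pure injective}) splits it, giving $C$ as a summand of $M$. This is how the paper concludes. Note also that your proposed final appeal to $\Pinj{\T}\simeq\Inj{\Mod{\Tc}}$ cannot transfer a splitting of $(-,C)$ off $(-,M)$ back to $\T$, because $(-,M)$ is not an injective object of $\Mod{\Tc}$; the Yoneda-plus-pure-injectivity argument just described is what is actually needed, so without the closedness of $(-,M)$ your proof does not reach the conclusion.

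A secondary flaw: you justify the finitely presented cokernel of $q(A)\into q(-,C)$ by claiming that relative isolation forces $q(-,C)=E(q(A))$ to have finite length. That claim is unsubstantiated (and not a consequence of the hypotheses in this generality), but it is also unnecessary: $q(A)$ and $q(-,C)$ are both finitely presented objects of $(\Mod{\Tc})_{\supp(M)}$ -- the finitely presented objects of the localisation form the Serre quotient of $\mod{\Tc}$ by $\sS(\mcD)$, as recalled in Section~\ref{sec:def-localisation} -- so the cokernel of any map between them is finitely presented, which is all that the fp-injectivity of $q(-,M)$ requires.
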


\begin{proof}
Let $q \colon \Coh{\T} \to \Coh{\T}_{\supp(M)}$ be the localisation functor, which is left adjoint to the canonical embedding $i \colon \Coh{\T}_{\supp(M)} \into \Coh{\T}$. Recall from Section~\ref{sec:def-localisation}, that there is a parallel and compatible localisation $(\Mod{\Tc})_{\supp(M)}$, whose localisation functor and canonical embedding we denote by $q$ and $i$ again. 

Let $F$ be as in the statement of the lemma.  Since $F$ is nonzero in $\Coh{\T}_{\supp(M)}$, $F(M)\neq 0$ and, since $C\in (F)$, $F(C)\neq 0$. By Lemma~\ref{lem:functors}, $F = G^\vee$ for some functor $G \in \mod{\Tc}$ and $q(G)$ must also be simple in $(\Mod{\Tc})_{\supp(M)}$.  Also by that result, $(G, (-,M)) \neq 0$ and $(G, (-,C)) \neq 0$.

Since $(-,M)$ and $(-,C)$ are torsionfree it follows that $(q(G), q(-,M)) \neq 0$ and $(q(G), (-,C)) \neq 0$, so there are embeddings $k \colon q(G) \into q(-,M)$ and $j \colon q(G) \into q(-,C)$.  Since $C$ is an indecomposable pure-injective, by \cite[Prop~11.1.31]{PreNBK} $q(-,C)$ is an indecomposable injective object of $(\Mod{\Tc})_{\supp(M)}$, thus $q(-,C)$ is the injective hull of $q(G)$ and so $j$ must embed $q(G)$ as the simple socle of $q(-,C)$.  The cokernel of $j$ is finitely presented so, since $q(-,M)$ is fp-injective, there exists some morphism $h \colon q(-,C) \to q(-,M)$ such that $k = hj$.  As $q(-,C)$ has simple essential socle, $h$ must be a monomorphism.

Identifying $h$ with its image under $i$, we can regard $h$ as a monomorphism from $(-,C)$($\cong iq(-,C)$) to $(-,M)$ in $\Mod{\Tc}$.  Since $C$ is compact, Yoneda's lemma says that $h$ must be induced by some $h' \colon C \to M$ in $\T$ and by definition this must be a pure monomorphism.  But $C$ is pure-injective so we must conclude that $h'$ splits and $C$ is a direct summand of $M$.
\end{proof}

\begin{corollary} \label{cor:isolation}
Let $M$ be an indecomposable object of $\sK$ and suppose $C \in \supp(M)$ is a compact object which is isolated in $\supp(M)$. Then $C = M$.
\end{corollary}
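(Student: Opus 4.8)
The plan is to deduce this immediately from Lemma~\ref{lem:direct-summand}, once the isolation condition has been put in place. First I would note that $\supp(M) = \Def{M} \cap \Zg(\sK)$ is a closed subset of $\Zg(\sK)$, since $\Def{M}$ is by construction a definable subcategory and, by the discussion following Definition~\ref{def:definable}, the closed subsets of the Ziegler spectrum are precisely the intersections of definable subcategories with $\Zg(\sK)$. Next, Lemma~\ref{lem:KG-defined} tells us that $\KG(\sK)$ is defined, so Lemma~\ref{lem:isolation} applies and $\Zg(\sK)$ satisfies the isolation condition of Definition~\ref{def:isolation-condition}.

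Applying the isolation condition to the closed set $\cl{X} = \supp(M)$ and to the point $C$, which by hypothesis is isolated in the relative topology on $\supp(M)$, produces a coherent functor $F \in \Coh{\sK}$ with $(F) \cap \supp(M) = \{C\}$ and with the image of $F$ simple in the localisation $\Coh{\sK}_{\supp(M)}$. Since $C$ lies in $\supp(M) \subseteq \Zg(\sK)$ it is an indecomposable pure-injective object, and it is compact by assumption (concretely, a right-infinite string complex, cf.\ Proposition~\ref{compact} and Corollary~\ref{cor:pure-injective}); thus $F$ and $C$ satisfy exactly the hypotheses of Lemma~\ref{lem:direct-summand}. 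That lemma then gives that $C$ is a direct summand of $M$.

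Finally, $C$ is a point of $\Zg(\sK)$ and hence nonzero, while $M$ is indecomposable, so the splitting $M \cong C \oplus M'$ forces $M' = 0$ and therefore $C \cong M$, as claimed.

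There is no substantial obstacle here: the real content has already been carried out in the earlier lemmas, and what remains is bookkeeping—checking that the relevant closed subset is $\supp(M)$ with isolated point $C$ (rather than $M$ itself), and that the localisation $\Coh{\sK}_{\supp(M)}$ appearing in the isolation condition is the same one used in the statement of Lemma~\ref{lem:direct-summand}. Both points are immediate from the definitions recalled in Section~\ref{sec:def-localisation}.
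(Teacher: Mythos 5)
Your argument is correct and is essentially identical to the paper's own proof: both deduce the isolation condition from Lemma~\ref{lem:KG-defined} via Lemma~\ref{lem:isolation}, apply it to the closed set $\supp(M)$ with isolated point $C$ to obtain the coherent functor $F$, and then invoke Lemma~\ref{lem:direct-summand} to split $C$ off $M$, concluding $C = M$ by indecomposability. The extra bookkeeping you include (closedness of $\supp(M)$, $C \neq 0$) is fine but not a departure from the paper's route.
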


\begin{proof} 
By Lemma~\ref{lem:KG-defined},  $\KG(\sK) < \infty$ whence the isolation condition holds for $\Zg(\sK)$ by Lemma~\ref{lem:isolation}.  Therefore there exists a functor $F \in \Coh{\sK}$ such that $\{ C \} = (F) \cap \supp(M)$ and $q(F) \in (\Coh{\sK})_{\supp(M)}$ is simple.  Hence, by Lemma~\ref{lem:direct-summand}, $C$ is a direct summand of $M$ and hence $C = M$. 
\end{proof}

\begin{corollary} \label{cor:support-compact}
Let $M$ be an indecomposable object of $\sK$.  If $C \in \supp(M)$ is compact then $M$ is compact.
\end{corollary}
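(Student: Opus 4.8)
The plan is to reduce the statement to Corollary~\ref{cor:isolation}: it is enough to show that $C$ is isolated in $\supp(M)$, since then that corollary gives $C = M$, and in particular $M$ is compact. I would in fact aim to prove the stronger assertion that $\supp(M) = \{C\}$.

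The first point is that $C$ is endofinite. Since $C$ is an indecomposable pure-injective which is compact, it is a string complex by Proposition~\ref{compact}; hence, by Proposition~\ref{dimensions}, $\Hom_{\sK}(C',C)$ is finite-dimensional over $\kk$ for every compact $C'$, and $\End_{\sK}(C)$ is a finite-dimensional $\kk$-algebra, so $\Hom_{\sK}(C',C)$ has finite length over $\End_{\sK}(C)$. It follows that $\{C\}$ is a closed subset of $\Zg(\sK)$ by \cite[Thm.~5.1.12]{PreNBK}. The second point, which uses the work done so far, is that \emph{every} point of $\Zg(\sK)$ is endofinite: by Theorem~\ref{thm:KG-dim}(2) it is a string complex, and string complexes are endofinite by Corollary~\ref{cor:pure-injective}. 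So every point of $\Zg(\sK)$ is a closed point, i.e.\ $\Zg(\sK)$, and hence its closed subspace $\supp(M)$, is $T_{1}$.

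Next I would pass to a pure-injective hull $h\colon M \to \overline{M}$. Since definable subcategories are closed under pure subobjects and under pure-injective hulls, $\Def(M) = \Def(\overline{M})$ and so $\supp(M) = \supp(\overline{M})$. As $\KG(\sK)$ is finite by Lemma~\ref{lem:KG-defined}, there are no superdecomposable pure-injective objects in $\sK$, so $\overline{M}$ is, up to pure-injective hull, a direct sum of indecomposable pure-injectives $N_{i}$; by the previous paragraph each $N_{i}$ is a closed point, so $\supp(M)$ is the closure of $\{N_{i}\}$. If more than one distinct $N_{i}$ occurred, then one of them, say $N$, would be an isolated point of $\supp(M)$ — using that $N$ is a closed point lying outside the closure of the remaining summands — and then Corollary~\ref{cor:isolation} would give $N = M$, whence $\overline{M} = \overline{N} = N$, contradicting that $\overline{M}$ properly contains $N$. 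Therefore $\overline{M} = N$ is indecomposable and, being a closed point, $\supp(M) = \overline{\{N\}} = \{N\}$. Since $C \in \supp(M)$ we get $C = N = \overline{M}$; then $M$ embeds purely in $C$, so $\Hom_{\sK}(C', M)$ is a subspace of the finite-dimensional space $\Hom_{\sK}(C', C)$ for every compact $C'$, whence $M$ is endofinite, hence pure-injective, hence $M = \overline{M} = C$ is compact.

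The main obstacle is the third paragraph, namely pinning down that $\supp(M)$ is irreducible — equivalently, that the pure-injective hull of an indecomposable object of $\sK$ is indecomposable. The delicate case is when $\overline{M}$ a priori has infinitely many indecomposable summands: one then has to check that a compact (hence endofinite, hence closed) point of $\supp(M)$ cannot arise as a limit of the non-compact points of $\supp(M)$, which follows by inspecting the Hom-hammocks described in Propositions~\ref{prop:fin-hammocks} and \ref{prop:inf-hammocks}.
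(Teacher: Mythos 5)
There is a genuine gap, and it sits exactly where you locate "the main obstacle", but it is worse than you suggest. Your contradiction step applies Corollary~\ref{cor:isolation} to an indecomposable summand $N$ of the pure-injective hull $\overline{M}$, but that corollary (through Lemma~\ref{lem:direct-summand}) has \emph{compactness} of the isolated point as an essential hypothesis: its proof turns a monomorphism of functors $(-,N)\to(-,M)$ into an actual morphism $N\to M$ in $\sK$ via Yoneda, which is only available when $N$ is compact. The summands $N_i$ of $\overline{M}$ are a priori arbitrary indecomposable pure-injectives and will typically be non-compact (points such as $X^k_{i,\infty}$, $Y^k_{\infty,j}$ or $Z^k_\infty$), so "Corollary~\ref{cor:isolation} would give $N=M$" is not licensed; producing a splitting $N\to M$ for a non-compact point $N$ isolated in $\supp(M)$ is precisely the hard content of Sections~\ref{ind-infinite} and~\ref{ind-finite} (see the explicit cochain-level construction in the proof of Theorem~\ref{thm:finite-indecomposable}), so it cannot be invoked here. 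Relatedly, your claim that if $\overline{M}$ had several summands then one of them would lie outside the closure of the rest does not follow from the summands being closed points: with infinitely many summands a non-isolated point can lie in the closure of the others (e.g.\ a CB-rank~$1$ point in the closure of infinitely many CB-rank~$0$ points), and your closing remark that compact points are not limits of non-compact points does not repair this, since the summands in question need not be compact. Your reduction "it suffices to show the hull of an indecomposable is indecomposable" is therefore not a reduction to something easier but essentially to the main theorem of the paper.

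None of this machinery is needed. The paper's argument is a two-line case split on the Cantor-Bendixson rank of $C$: if $C$ is perfect it is isolated in $\Zg(\sK)$ (Corollary~\ref{cor:isol}), hence isolated in $\supp(M)$; otherwise $C$ is compact non-perfect, so $\gldim\Lambda=\infty$ and $C$ has CB-rank~$1$ by Proposition~\ref{prop:CB1pureinj}, and one may assume $\supp(M)\subseteq\Zg(\sK)'$ (otherwise $\supp(M)$ contains an isolated, hence perfect compact, point and the first case applies), so $C$ is again isolated in $\supp(M)$. Then Corollary~\ref{cor:isolation} is applied to $C$ itself, which \emph{is} compact, giving $C=M$. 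Your first two paragraphs (endofiniteness of the points, the $T_1$ property, $\supp(M)=\supp(\overline{M})$) are correct but superfluous for this statement; the decomposition of $\overline{M}$ and the superdecomposability input should be dropped entirely.
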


\begin{proof}
 If $C \in \KbprojL$, then $C$ is isolated in $\Zg(\sK)$ and so is also isolated in $\supp(M)$, so Lemma~\ref{lem:isolation} applies.  Otherwise, without loss of generality, assume that $\supp(M) \subseteq \Zg(\sK)'$, i.e.\  $C$ is a compact but nonperfect object, in particular, $\sK$ must have infinite global dimension.  By Proposition~\ref{prop:CB1pureinj} $C$ has CB-rank 1, hence is isolated in $\Zg(\sK)'$ and hence in $\supp(M)$; now apply Corollary~\ref{cor:isolation}.
 \end{proof}
 
 \section{Indecomposable complexes for infinite global dimension derived-discrete algebras} \label{ind-infinite}
 
Recall notation and the structure of $\add{\strings{\Lambda}}$ from Section~\ref{sec:infinite} in the case $\gldim \Lambda = \infty$. 

\begin{setup} \label{set:infinite}
Let $\Lambda$ be a derived-discrete algebra with $\gldim \Lambda = \infty$. Consider the noncompact, right-infinite string complexes $X^k_{i,\infty}$ forming the lefthand beams of the ladder type AR components $\tilde{\mcZ}^k$. 
For each $0\leq k < r$, there is a sequence of irreducible morphisms
\[ 
\xymatrix{ \cdots \ar[r] & X_{i-1,\infty}^k \ar[r]^-{t_{i-1}^k} & X_{i,\infty}^k \ar[r]^-{t_{i}^k} &X_{i+1,\infty}^k \ar[r] & \cdots
}
\]
We define $X_\infty^k := \bigoplus_{i \in \bZ} X_{i,\infty}^k$ and write $\widetilde{X} \coloneqq \bigoplus_{k = 0}^{r-1} X_\infty^k$.
\end{setup}

In Theorem~\ref{thm:inf-indecomposable} we obtain the pure-injectivity of each indecomposable complex of $\sK$ as a corollary of the fact that $\widetilde{X}$ is $\Sigma$-pure-injective -- a strengthening of the definition of pure-injectivity.

\begin{definition} \label{def:sigma-pure-injective}
An object of $\T$ is \emph{$\Sigma$-pure-injective} if the coproduct $N^{(I)}$ is pure-injective for any (possibly infinite) set $I$. Equivalently, $N$ is $\Sigma$-pure-injective if and only if for each $C \in \Tc$, $\Hom_{\T}(C,N)$ satisfies the descending chain condition on $\End_{\T}(N)$-submodules.
\end{definition}

\begin{remark} \label{sigpirmk}
There are convenient references for the equivalence above and related results in the context of module and functor categories, hence which apply directly to $\Mod{\Tc}$. 
In view of the equivalence $\Pinj{\T} \simeq \Inj{\Mod{\Tc}}$ on page~\pageref{pinj-inj-equivalence} between objects and, Lemma~\ref{lem:functors}, between concepts involving definability, these references thus apply equally to compactly generated triangulated categories.  We will use the facts that any object of finite endolength is $\Sigma$-pure-injective (see \cite[Cor.~4.4.24]{PreNBK}), that a direct sum of finitely many $\Sigma$-pure-injective objects is $\Sigma$-pure-injective (see \cite[Lem.~4.4.26]{PreNBK}), and that if $M$ is $\Sigma$-pure-injective then every object in the definable subcategory generated by $M$ is $\Sigma$-pure-injective (see \cite[Prop.~4.4.27]{PreNBK}).  
\end{remark}

\begin{lemma}\label{lem:dcc-perfect}
Suppose we are in the situation of Setup~\ref{set:infinite}. If $C$ is a perfect string complex, then $\Hom_{\sK}(C, X_\infty^k)$ satisfies the descending chain condition on $\End_{\sK}(X_\infty^k)$-submodules.
\end{lemma}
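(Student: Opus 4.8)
The statement concerns a fixed $k$, so I drop the superscript and write $X_\infty = X_\infty^k = \bigoplus_{i\in\bZ} X_{i,\infty}$, with irreducible maps $t_i \colon X_{i,\infty} \to X_{i+1,\infty}$ along the lefthand beam of the ladder component $\widetilde{\mcZ}^k$. Fix a perfect string complex $C$. The key observation is that, by Proposition~\ref{prop:inf-hammocks} and its dual, $C$ has nonzero maps to only finitely many of the $X_{i,\infty}$: indeed $X_{i,\infty}\in\infhomfrom{C}$ forces $i$ to lie in a bounded interval (reading off $\infhomfrom{X^k_{a,b}}$, $\infhomfrom{Z^k_a}$, $\infhomfrom{X^k_{a,\infty}}$ shows the set of $i$ with $\Hom_{\sK}(C,X_{i,\infty})\neq 0$ is a finite interval, since $C$ is perfect, hence compact with bounded support). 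Therefore $\Hom_{\sK}(C,X_\infty)=\bigoplus_{i\in\bZ}\Hom_{\sK}(C,X_{i,\infty})$ is a \emph{finite-dimensional} $\kk$-vector space — each summand being at most $2$-dimensional by Proposition~\ref{dimensions}, and all but finitely many zero.

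First I would record this finiteness precisely: $\dim_\kk \Hom_{\sK}(C,X_\infty)<\infty$. A descending chain of $\End_{\sK}(X_\infty)$-submodules of $\Hom_{\sK}(C,X_\infty)$ is in particular a descending chain of $\kk$-subspaces of a finite-dimensional space, so it stabilises. That is the whole argument: the DCC on $\End_{\sK}(X_\infty)$-submodules is immediate once we know $\Hom_{\sK}(C,X_\infty)$ has finite $\kk$-dimension, because any module over any ring which is finite-dimensional over a central subfield satisfies both chain conditions on submodules. So the proof reduces to two cited inputs: Proposition~\ref{dimensions} (each $\Hom$-space between string complexes is at most $2$-dimensional), and the Hom-hammock description in Proposition~\ref{prop:inf-hammocks} together with its backward counterpart, which pins down that $\{i : \Hom_{\sK}(C,X_{i,\infty})\neq 0\}$ is finite.

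For the second input I would argue as follows. Since $C$ is a perfect complex it is compact, so $Y\mapsto \Hom_{\sK}(C,Y)$ commutes with coproducts; thus $\Hom_{\sK}(C,X_\infty)\cong\bigoplus_{i}\Hom_{\sK}(C,X_{i,\infty})$. Write $C$ as a shift of a string complex of a subword of some $w_\ell$; by Proposition~\ref{prop:inf-hammocks} the left-infinite string complexes $X_{i,\infty}$ lying in $\infhomfrom{C}$ are exactly those with $i$ in the bounded range dictated by the formulae there (for $C=X^k_{a,b}$ it is $a\le i\le b$; for $C=Z^k_a$ it is $i\le a'-1$ but intersected with the other constraints; in each case, combined with the \emph{backward} hammock $\infhomto{X^k_{i,\infty}}=\infhomfrom{Z^{k-1}_{\bar a}}$ noted after Proposition~\ref{prop:inf-hammocks}, one gets a bounded set of $i$). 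Hence only finitely many summands are nonzero, and the finiteness of $\dim_\kk\Hom_{\sK}(C,X_\infty)$ follows.

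**The main obstacle.** There is no serious obstacle; the content is entirely in the already-established Hom-hammock computations. The only point requiring a little care is the bookkeeping across the wrap-around of the index $k$ modulo $r$ (the primed/barred notation of Notation~\ref{notation}), to confirm that for \emph{every} perfect $C$ — including those whose forward hammock meets the beam $\{X_{i,\infty}\}_i$ "from the $\mcZ^{k-1}$ side" via $\infhomto{X^k_{i,\infty}}=\infhomfrom{Z^{k-1}_{\bar a}}$ — the resulting index set is bounded. Once that is checked the DCC is automatic from finite $\kk$-dimension.
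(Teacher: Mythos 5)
Your argument is correct and is essentially the paper's own proof: compactness of $C$ gives $\Hom_{\sK}(C,X^k_\infty)\cong\bigoplus_i\Hom_{\sK}(C,X^k_{i,\infty})$, the Hom-hammocks of Proposition~\ref{prop:inf-hammocks} show only finitely many summands are nonzero, Proposition~\ref{dimensions} gives finite dimension of each (in fact at most $1$ here, since the $X^k_{i,\infty}$ are nonperfect), and DCC follows from finite $\kk$-dimension. Nothing further is needed.
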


\begin{proof}
Since $C$ is compact,  $\Hom_{\sK}(C, X^k_\infty) \cong \bigoplus_{i\in \bZ} \Hom_{\sK}(C, X^k_{i,\infty})$. By Proposition~\ref{prop:inf-hammocks}, only finitely many of the $X^k_{i,\infty}$ admit nontrivial morphisms from $C$, making the right-hand side of the isomorphism above a finite direct sum. Moreover, by Proposition~\ref{dimensions}, each  $\Hom_{\sK}(C, X^k_{i,\infty})$ is a finite-dimensional $\kk$-vector space, whence $\Hom_{\sK}(C, X^k_\infty)$ is finite-dimensional.  It follows that $\Hom_{\sK}(C, X^k_\infty)$ satisfies the descending chain condition on $\End_{\sK}(X^k_\infty)$-submodules.
\end{proof}

We now turn to the case that $C$ is a nonperfect compact string complex, i.e. by Remark~\ref{rem:correspondence}, $C$ is a right-infinite string complex.

\begin{lemma}\label{lem:dcc-nonperfect}
Suppose we are in the situation of Setup~\ref{set:infinite}. Suppose $C = Z^\ell_j$ for some $0 \leq \ell < r$ and $j \in \bZ$. Then $\Hom_{\sK}(C, X_\infty^k)$ satisfies the descending chain condition on $\End_{\sK}(X_\infty^k)$-submodules.
\end{lemma}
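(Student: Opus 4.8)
The plan is to compute the $\End_{\sK}(X^k_\infty)$-module $M \coloneqq \Hom_{\sK}(C, X^k_\infty)$ with $C = Z^\ell_j$ explicitly enough to read off its lattice of submodules. First I would use that $C$ is compact (Proposition~\ref{compact}) to write $M \cong \bigoplus_{i \in \bZ} \Hom_{\sK}(C, X^k_{i,\infty})$, and then invoke the Hom-hammock description of $\infhomfrom{Z^\ell_j}$ in Proposition~\ref{prop:inf-hammocks}: this is zero unless $k \equiv \ell + 1 \pmod r$, so I may assume that congruence. In that case Proposition~\ref{prop:inf-hammocks} shows $\Hom_{\sK}(C, X^k_{i,\infty}) \neq 0$ exactly for $i \leq N$, where $N \coloneqq j' - 1$ in the sense of Notation~\ref{notation}, and Proposition~\ref{dimensions} (applicable since the right-infinite string complex $Z^\ell_j$ is nonperfect) makes each such Hom-space one-dimensional. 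Fixing a nonzero $m_i \in \Hom_{\sK}(C, X^k_{i,\infty})$ for $i \leq N$, we obtain $M = \bigoplus_{i \leq N} \kk\, m_i$, which is infinite-dimensional; so, in contrast to the perfect case of Lemma~\ref{lem:dcc-perfect}, the module structure must genuinely be used.

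Next I would record just enough of the $\End_{\sK}(X^k_\infty)$-action on $M$. The idempotent $\varepsilon_i \in \End_{\sK}(X^k_\infty)$ projecting onto the $i$-th summand acts by projection onto $\kk\, m_i$, so every $\End_{\sK}(X^k_\infty)$-submodule $N'$ of $M$ is a coordinate subspace $N' = \bigoplus_{i \in T} \kk\, m_i$ for some $T \subseteq \{\, i \leq N\,\}$. The irreducible morphisms $t^k_i \colon X^k_{i,\infty} \to X^k_{i+1,\infty}$ of Setup~\ref{set:infinite} also give elements of $\End_{\sK}(X^k_\infty)$, and the key claim is that $t^k_i \cdot m_i$ is a nonzero scalar multiple of $m_{i+1}$ whenever $i + 1 \leq N$. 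This is where Proposition~\ref{prop:factorisations} enters: $X^k_{i+1,\infty}$ lies in $\infhomfrom{Z^\ell_j} \cap \infhomfrom{X^k_{i,\infty}}$ by Proposition~\ref{prop:inf-hammocks}, so any morphism $Z^\ell_j \to X^k_{i+1,\infty}$ factors through $X^k_{i,\infty}$; since $\Hom_{\sK}(X^k_{i,\infty}, X^k_{i+1,\infty})$ is one-dimensional (Proposition~\ref{dimensions}) and spanned by $t^k_i$, and a nonzero morphism $Z^\ell_j \to X^k_{i+1,\infty}$ exists, this forces $t^k_i m_i \neq 0$. Hence $T$ is upward closed in $\{\, i \leq N\,\}$, and therefore $N' = 0$ or $N' = \bigoplus_{i_0 \leq i \leq N} \kk\, m_i$ with $i_0 = \min T$.

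Finally I would conclude: the poset of $\End_{\sK}(X^k_\infty)$-submodules of $M$ is contained in the chain
\[
0 \;\subsetneq\; \kk\, m_N \;\subsetneq\; \kk\, m_{N-1} \oplus \kk\, m_N \;\subsetneq\; \cdots \;\subsetneq\; M ,
\]
which is order-isomorphic to the ordinal $\omega + 1$ and so satisfies the descending chain condition; indeed a strictly descending chain of such submodules strictly below $M$ corresponds to a strictly increasing (hence finite) sequence of integers $i_0 \leq N$, and $M$ itself can occur at most as the first term. This gives the lemma. The step I expect to be the main obstacle is the nonvanishing $t^k_i m_i \neq 0$, since it is what ties the module structure of $M$ to the combinatorics of the Hom-hammocks; once it is in place the rest is bookkeeping, and I note that only the explicit endomorphisms $\varepsilon_i$ and $t^k_i$ are needed, so there is no need to identify $\End_{\sK}(X^k_\infty)$ in full.
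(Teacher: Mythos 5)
Your proof is correct and takes essentially the same route as the paper's: one-dimensionality of each $\Hom_{\sK}(C,X^k_{i,\infty})$, the bound $i\le N$ coming from Proposition~\ref{prop:inf-hammocks}, and Proposition~\ref{prop:factorisations} to show that the irreducible morphisms $t^k_i$ act nontrivially on the one-dimensional components, so that every proper submodule is finite-dimensional and DCC follows. The only (harmless) difference is that you spell out, via the idempotents $\varepsilon_i \in \End_{\sK}(X^k_\infty)$, why a submodule is spanned by the basis morphisms it contains, a step the paper leaves implicit.
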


\begin{proof}
By Proposition~\ref{dimensions}, we have $\dim \Hom_{\sK}(C,X^k_{i,\infty}) \leq 1$. 
By Proposition~\ref{prop:inf-hammocks} there exist $N \in \bZ$ such that $\Hom_{\sK}(C,X^k_{i,\infty}) = 0$ for all $i > N$. Starting with $i = N$, we can use Proposition~\ref{prop:factorisations} to define a family of morphisms $b_i \colon C \to X^k_{i,\infty}$ such that for each $j \geq 1$ we have $b_{i+j} = t^k_{i+j-1} \cdots t^k_i b_i$.
By compactness of $C$ and the at most one-dimensionality of $\Hom_{\sK}(C,X^k_{i,\infty})$, we have $\{b_i \mid b_i \neq 0, \, i \in \bZ\}$ is a basis for the Hom-space $\Hom_{\sK}(C,X^k_\infty)$.

Suppose $M \subseteq \Hom_{\sK}(C,X^k_\infty)$ is an $\End_{\sK}(X^k_\infty)$-submodule. If $b_i \in M$ then $b_{i+1} = t_i b_i \in M$ for each $i \in \bZ$. Therefore, if $M$ is a proper submodule of $\Hom_{\sK}(C,X^k_\infty)$ the set $\{i \mid b_i \in M\}$ has a minimal element, so the dimension of $M$ is finite (namely $N-i+1$) and the result follows.
\end{proof}

\begin{proposition} \label{prop:sigma-pure-injective}
Suppose we are in the situation of Setup~\ref{set:infinite}. Then the direct sum of all left-infinite string complexes,
$\widetilde{X} := \bigoplus_{k=0}^{r-1} X^k_\infty$,
is $\Sigma$-pure-injective.
\end{proposition}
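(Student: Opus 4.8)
The plan is to deduce this directly from the two preceding lemmas, together with the equivalent characterisation of $\Sigma$-pure-injectivity recorded in Definition~\ref{def:sigma-pure-injective} and the permanence properties collected in Remark~\ref{sigpirmk}.

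First I would record which objects need to be tested. By Remark~\ref{rem:correspondence} (equivalently, by Proposition~\ref{compact}), every indecomposable compact object of $\sK$ is either a perfect string complex or one of the right-infinite string complexes $Z^\ell_j$ with $0 \leq \ell < r$ and $j \in \bZ$. Hence Lemma~\ref{lem:dcc-perfect} and Lemma~\ref{lem:dcc-nonperfect} between them cover \emph{all} indecomposable compact objects: for any such $C$ and each $0 \leq k < r$, the group $\Hom_{\sK}(C, X^k_\infty)$ satisfies the descending chain condition on $\End_{\sK}(X^k_\infty)$-submodules.

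Next I would promote this from indecomposable compact objects to arbitrary $C \in \Tc$. Since the category of compact objects is Krull--Schmidt (as used in the proof of Lemma~\ref{lem:indecdom}), an arbitrary compact object is a finite direct sum $C = \bigoplus_{i=1}^n C_i$ of indecomposable compact objects, and then $\Hom_{\sK}(C, X^k_\infty) \cong \bigoplus_{i=1}^n \Hom_{\sK}(C_i, X^k_\infty)$ as $\End_{\sK}(X^k_\infty)$-modules; a finite direct sum of modules each having the descending chain condition on submodules again has it. Applying the equivalent formulation in Definition~\ref{def:sigma-pure-injective}, this shows that each $X^k_\infty$ is $\Sigma$-pure-injective.

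Finally, $\widetilde{X} = \bigoplus_{k=0}^{r-1} X^k_\infty$ is a direct sum of finitely many $\Sigma$-pure-injective objects, hence is $\Sigma$-pure-injective by \cite[Lem.~4.4.26]{PreNBK}, as recalled in Remark~\ref{sigpirmk}. There is no real obstacle here: the substantive work is already contained in Lemmas~\ref{lem:dcc-perfect} and~\ref{lem:dcc-nonperfect}, and the only point deserving any care is the passage from indecomposable compact objects to all of $\Tc$ and the compatibility of the $\End_{\sK}(X^k_\infty)$-module structures under that reduction.
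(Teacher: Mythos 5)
Your proposal is correct and follows essentially the same route as the paper: combine Lemmas~\ref{lem:dcc-perfect} and \ref{lem:dcc-nonperfect} to get the descending chain condition for all indecomposable compact objects, pass to arbitrary compact objects via finite direct sums, apply the equivalent formulation in Definition~\ref{def:sigma-pure-injective} to each $X^k_\infty$, and finish with the finite-direct-sum permanence property from Remark~\ref{sigpirmk}. Your extra care in invoking Remark~\ref{rem:correspondence} to confirm that the two lemmas exhaust the indecomposable compacts, and the Krull--Schmidt reduction, only make explicit what the paper's proof leaves implicit.
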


\begin{proof}
By Lemmas~\ref{lem:dcc-perfect} and \ref{lem:dcc-nonperfect}, $\Hom_{\sK}(C, X_\infty^k)$ satisfies the descending chain condition for each $0 \leq k < r$ and each indecomposable compact object $C$ of $\sK$. Since the functor $\Hom_{\sK}(-,X^k_\infty)$ commutes with finite direct sums, it follows that $\Hom_{\sK}(C,X_\infty^k)$ satisfies the descending chain condition for each compact object $C$ of $\sK$. By the equivalent formulation of $\Sigma$-pure-injectivity in Definition~\ref{def:sigma-pure-injective}, it follows that $X^k_\infty$ is $\Sigma$-pure-injective. By Remark~\ref{sigpirmk}, $\widetilde{X}$ is $\Sigma$-pure-injective.
\end{proof}

\begin{corollary} \label{cor:noncompact}
Let $\Lambda$ be a derived-discrete algebra with $\gldim \Lambda = \infty$.  If $M$ is an indecomposable object of $\sK$ such that $\supp(M)$ contains only noncompact objects, then $M$ is $\Sigma$-pure-injective.
\end{corollary}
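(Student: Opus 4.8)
The plan is to produce a single $\Sigma$-pure-injective object $P\in\sK$ for which $\supp(M)\subseteq\supp(P)$, and then to invoke the fact (Remark~\ref{sigpirmk}) that every object of a definable subcategory generated by a $\Sigma$-pure-injective object is itself $\Sigma$-pure-injective.

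First I would unravel the hypothesis on $\supp(M)$. By Theorem~\ref{thm:KG-dim}(2) we have $\Zg(\sK)=\strings{\Lambda}$, and by Proposition~\ref{compact} (see also Remark~\ref{rem:correspondence}) the compact indecomposables in $\sK$ are exactly the perfect and the right-infinite string complexes. Hence the noncompact points of $\Zg(\sK)$ are precisely the left-infinite string complexes $X^k_{i,\infty}$ and the two-sided string complexes $Z^k_\infty$, so the assumption reads $\supp(M)\subseteq\{X^k_{i,\infty}\mid 0\le k<r,\ i\in\bZ\}\cup\{Z^k_\infty\mid 0\le k<r\}$.

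Next I would set $P\coloneqq\widetilde{X}\oplus\bigoplus_{k=0}^{r-1}Z^k_\infty$ and check it is $\Sigma$-pure-injective: $\widetilde{X}$ is $\Sigma$-pure-injective by Proposition~\ref{prop:sigma-pure-injective}; each $Z^k_\infty$ is a string complex, hence endofinite by Corollary~\ref{cor:pure-injective}, hence $\Sigma$-pure-injective by Remark~\ref{sigpirmk}; and a finite direct sum of $\Sigma$-pure-injectives is again $\Sigma$-pure-injective by Remark~\ref{sigpirmk}. Every noncompact string complex is a direct summand of $P$: the $Z^k_\infty$ by construction, and each $X^k_{i,\infty}$ because it is a retract of the coproduct $X^k_\infty=\bigoplus_{i\in\bZ}X^k_{i,\infty}$, which is a summand of $\widetilde{X}$ and hence of $P$. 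Since definable subcategories are closed under direct summands, all of these objects lie in $\Def{P}$, hence in $\supp(P)=\Def{P}\cap\Zg(\sK)$. Combining with the previous paragraph gives $\supp(M)\subseteq\supp(P)$.

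Finally I would invoke the bijection between definable subcategories of $\sK$ and closed subsets of $\Zg(\sK)$ from \cite[Prop.~7.3]{KraStab}: since $\Def{M}$ is the smallest definable subcategory containing $M$, the inclusion $\supp(M)\subseteq\supp(P)$ forces $\Def{M}\subseteq\Def{P}$, so $M\in\Def{P}$; as $P$ is $\Sigma$-pure-injective, $M$ is $\Sigma$-pure-injective. The genuine content has already been isolated into Proposition~\ref{prop:sigma-pure-injective}, so there is no real obstacle here; the only steps that need care are the identification of the noncompact points of $\Zg(\sK)$ (resting on Theorem~\ref{thm:KG-dim}) and the implication $\supp(M)\subseteq\supp(P)\Rightarrow M\in\Def{P}$ via \cite{KraStab}.
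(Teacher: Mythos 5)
Your proposal is correct and follows essentially the same route as the paper: the paper's proof also forms $Z=\widetilde{X}\oplus\bigoplus_{k=0}^{r-1}Z^k_\infty$, notes it is $\Sigma$-pure-injective by Proposition~\ref{prop:sigma-pure-injective}, Corollary~\ref{cor:pure-injective} and Remark~\ref{sigpirmk}, and concludes $M\in\Def{Z}$ from the hypothesis on $\supp(M)$. The only difference is that you spell out the step $\supp(M)\subseteq\supp(Z)\Rightarrow\Def{M}\subseteq\Def{Z}$ via the Krause bijection, which the paper leaves implicit.
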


\begin{proof}
Let $Z \coloneqq \widetilde{X} \oplus \big( \bigoplus_{k = 0}^{r-1} Z_\infty^k \big)$ be the direct sum of all noncompact objects in the Ziegler spectrum $\Zg(\sK)$, see Remark~\ref{rem:correspondence}. Then $Z$ is $\Sigma$-pure-injective since it is a finite direct sum of $\Sigma$-pure-injective objects (by Proposition~\ref{prop:sigma-pure-injective} and \ref{cor:pure-injective}).  Then the definable subcategory $\Def{Z}$ contains only $\Sigma$-pure-injective objects.    But by assumption $\Def{M} \subseteq \Def{Z}$ and therefore $M$ is $\Sigma$-pure-injective. 
\end{proof}

Putting this together gives Theorem~\ref{intro:indecomposable} in the infinite global dimension case.

\begin{theorem} \label{thm:inf-indecomposable}
Suppose $\Lambda$ is a derived-discrete algebra with $\gldim \Lambda = \infty$. The objects of $\strings{\Lambda}$ form a complete list of indecomposable objects of $\sK$. In particular, each indecomposable object of $\sK$ is pure-injective.
\end{theorem}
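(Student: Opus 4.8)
The plan is to deduce Theorem~\ref{thm:inf-indecomposable} by combining the results of Sections~\ref{KGdim-simples}, \ref{ind-compact-supp} and \ref{ind-infinite}; no new technical work is required, so the proof amounts to correctly assembling the pieces around the dichotomy ``$\supp(M)$ contains a compact object, or it does not''. Let $M$ be an arbitrary indecomposable (hence non-zero) object of $\sK$; the goal is to show $M \in \strings{\Lambda}$. First I would record that $\supp(M) = \Def{M} \cap \Zg(\sK)$ is non-empty: the definable subcategory $\Def{M}$ is non-zero, and by the bijection between definable subcategories of $\sK$ and closed subsets of $\Zg(\sK)$ (\cite[Prop.~7.3]{KraStab}) a non-zero definable subcategory meets $\Zg(\sK)$.

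Now I would split into two cases. If $\supp(M)$ contains a compact object $C$, then Corollary~\ref{cor:support-compact} gives that $M$ itself is compact; since the indecomposable compact objects of $\sK$ are exactly the right-infinite string complexes (Proposition~\ref{compact}, cf.\ Remark~\ref{rem:correspondence}), we conclude $M \in \strings{\Lambda}$. If instead every object of $\supp(M)$ is noncompact, then Corollary~\ref{cor:noncompact} shows that $M$ is $\Sigma$-pure-injective, in particular pure-injective; being indecomposable, $M$ is therefore one of the indecomposable pure-injective objects of $\sK$, and these are exactly the string complexes by Theorem~\ref{thm:KG-dim}(2), so again $M \in \strings{\Lambda}$.

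In either case $M \in \strings{\Lambda}$. Conversely, every string complex is indecomposable by Corollary~\ref{strings-are-indecomposable}, so $\strings{\Lambda}$ is precisely the set of indecomposable objects of $\sK$; and each of these is pure-injective by Corollary~\ref{cor:pure-injective}, which gives the ``in particular'' clause.

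Because all the substantial arguments have already been made, the theorem has no serious obstacle of its own; the one point to check carefully is that the two cases are genuinely exhaustive and correctly matched to their corollaries --- in particular that indecomposability of $M$ is what upgrades ``$M$ has a compact direct summand'' to ``$M$ is compact'' (via Corollaries~\ref{cor:isolation} and \ref{cor:support-compact}), and that ``$\supp(M)$ contains only noncompact objects'' is exactly the complement of the first case, so that Corollary~\ref{cor:noncompact} applies. The genuine difficulty of the infinite global dimension case was concentrated earlier, in Proposition~\ref{prop:sigma-pure-injective} --- the $\Sigma$-pure-injectivity of $\widetilde{X}$ --- which is what makes this argument go through and which has no counterpart in the finite global dimension case of the next section, where one must instead analyse the indecomposables of CB-rank $1$ directly.
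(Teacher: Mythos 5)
Your proposal is correct and follows essentially the same route as the paper: the dichotomy on whether $\supp(M)$ contains a compact object, handled via Corollary~\ref{cor:support-compact} in the first case and Corollary~\ref{cor:noncompact} together with Theorem~\ref{thm:KG-dim}(2) in the second. The extra remarks (non-emptiness of $\supp(M)$, the converse via Corollaries~\ref{strings-are-indecomposable} and \ref{cor:pure-injective}) are harmless elaborations of what the paper leaves implicit.
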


\begin{proof}
Suppose $M$ is an indecomposable object of $\sK$. If $\supp(M)$ contains a compact pure-injective then $M$ itself is compact by Corollary~\ref{cor:support-compact} and therefore pure-injective; see Proposition~\ref{compact} and Remark~\ref{rem:correspondence}. So we may assume that $\supp(M)$ contains no compact pure-injective objects, whence Corollary~\ref{cor:noncompact} tells us that $M$ is in fact $\Sigma$-pure-injective. Now Theorem~\ref{thm:KG-dim} tells us that $\strings{\Lambda}$ is a complete list of indecomposable, pure-injective complexes in $\sK$.
\end{proof}

 \section{Indecomposable complexes for finite global dimension derived-discrete algebras} \label{ind-finite}

In this section, we complete the proof that all indecomposable complexes in $\sK$ for a derived-discrete algebra $\Lambda$ are pure-injective by treating the finite global dimension case. We refer to Section~\ref{sec:finite} for the structure of the AR quiver whose vertices are the indecomposable pure-injective complexes. We start by showing that any indecomposable complex whose support contains only pure-injective complexes of CB rank $2$ is already on our list.

\begin{lemma} \label{lem:CB-rank2}
Let $M$ be an indecomposable object in $\sK$.  If every indecomposable pure-injective in $\supp(M)$ has Cantor-Bendixson rank $2$, then $M$ is $\Sigma$-pure-injective.
\end{lemma}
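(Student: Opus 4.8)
The plan is to mirror the argument of Corollary~\ref{cor:noncompact}, with the CB-rank~$2$ points now playing the role the noncompact points play there. The whole point is that, by Corollary~\ref{cor:CB2pureinj}, the indecomposable pure-injective complexes of Cantor-Bendixson rank $2$ are exactly the $r$ two-sided string complexes $Z^0_\infty, \dots, Z^{r-1}_\infty$; so the hypothesis on $M$ amounts to saying that $\supp(M)$ is contained in the \emph{finite} set $\{ Z^k_\infty \mid 0 \leq k < r \}$.

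First I would produce a single $\Sigma$-pure-injective object whose definable subcategory captures all of $\supp(M)$. Let $Z \coloneqq \bigoplus_{k=0}^{r-1} Z^k_\infty$. Each summand $Z^k_\infty$ is a string complex, hence endofinite by Corollary~\ref{cor:pure-injective}, hence $\Sigma$-pure-injective since any object of finite endolength is (Remark~\ref{sigpirmk}); and a finite direct sum of $\Sigma$-pure-injective objects is $\Sigma$-pure-injective (again Remark~\ref{sigpirmk}), so $Z$ is $\Sigma$-pure-injective. Each $Z^k_\infty$ is a direct summand of $Z$ and so lies in $\Def{Z}$; therefore $\supp(M) \subseteq \Def{Z} \cap \Zg(\sK) = \supp(Z)$.

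Next I would upgrade this to $\Def{M} \subseteq \Def{Z}$ and conclude. Since the assignment $\mcD \mapsto \mcD \cap \Zg(\sK)$ is an inclusion-preserving bijection between definable subcategories of $\sK$ and closed subsets of $\Zg(\sK)$ (\cite[Prop.~7.3]{KraStab}), the inclusion $\supp(M) \subseteq \supp(Z)$ of closed sets forces $\Def{M} \subseteq \Def{Z}$. Alternatively, this containment is immediate from the definition of the definable subcategory generated by a set once one observes that a coherent functor commutes with coproducts (as compact objects do), so that $F(Z) = 0$ if and only if $F(Z^k_\infty) = 0$ for all $k$, combined with $\supp(M) \subseteq \{Z^k_\infty\}$. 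Because $Z$ is $\Sigma$-pure-injective, every object of $\Def{Z}$ is $\Sigma$-pure-injective (Remark~\ref{sigpirmk}), and in particular $M \in \Def{M} \subseteq \Def{Z}$ is $\Sigma$-pure-injective, as required.

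I do not expect a genuine obstacle here: this is essentially the same two-step argument --- $\Sigma$-pure-injectivity of a convenient direct sum, plus closure of its definable subcategory under $\Sigma$-pure-injectivity --- already used for Corollary~\ref{cor:noncompact}, and the only point needing a moment's thought is the passage from the support inclusion to the containment of definable subcategories, which is handled either by Krause's order isomorphism or by the coproduct-preservation of coherent functors noted above.
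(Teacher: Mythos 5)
Your proposal is correct and is essentially the paper's own argument: the paper likewise notes there are only finitely many CB-rank~$2$ points, each of finite endolength hence $\Sigma$-pure-injective, takes their direct sum (it uses the sum over $\supp(M)$ where you use all of $\bigoplus_k Z^k_\infty$, an immaterial difference), and concludes via the fact that the definable subcategory generated by a $\Sigma$-pure-injective object consists of $\Sigma$-pure-injective objects. Your explicit appeal to Krause's correspondence to pass from $\supp(M)\subseteq\supp(Z)$ to $\Def{M}\subseteq\Def{Z}$ just makes precise a step the paper leaves implicit.
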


\begin{proof}
By Corollary~\ref{cor:CB2pureinj}, there are only finitely many pure-injective objects with Cantor-Bendixson rank $2$. By Corollary~\ref{cor:pure-injective} each has finite endolength, hence is $\Sigma$-pure-injective.  Thus, $\supp(M)$ is a finite set in which each object is $\Sigma$-pure-injective. 

Let $N$ be the direct sum of one copy of each object of $\supp(M)$. Then $N$ is $\Sigma$-pure-injective, hence every object in the definable subcategory that $N$ generates, in particular $M$, is $\Sigma$-pure-injective by Remark~\ref{sigpirmk}.
\end{proof}

We shall invoke the following setup.

\begin{setup} \label{set:finite}
Let $\cl{X}_0 \subset \Zg(\sK)$ be the closed set of non-isolated points and consider the localisation adjoint pair from Section~\ref{sec:def-localisation}, 
\[
\xymatrix{
 \Mod{\Kc} \ar@<1ex>[rr]^-{q} & & (\Mod{\Kc})_{\cl{X}_0} \ar@{_{(}->}@<1ex>[ll]^-{i},
}
\]
where the localisation functor $q$ is left adjoint to the canonical inclusion $i$.

Let $M$ be an indecomposable complex in $\sK$. By Corollary~\ref{cor:support-compact} and Lemma~\ref{lem:CB-rank2}, we may assume that $\supp(M)$ contains no compact objects and at least one object of CB rank $1$. Let $N \in \supp(M)$ be such a complex of CB rank $1$. By Section~\ref{sec:finite} and Proposition~\ref{prop:CB1pureinj}, $N$ lies in one of the $A_\infty^\infty$ components of $\add{\strings{\Lambda}}$. If $N \in \mcX_\infty \cup \mcY_\infty$ there is a sequence of indecomposable complexes and irreducible morphisms
\begin{equation} \label{eq:chain}
N = N_1 \rightlabel{\alpha_1} N_2 \rightlabel{\alpha_2} N_3 \rightlabel{\alpha_3} N_4 \rightlabel{\alpha_4} \cdots.
\end{equation}
We shall write $\beta_n = \alpha_n \cdots \alpha_2 \alpha_1$ for the $n$-fold successive composition of the $\alpha_i$.
If $N \in \mcX_{-\infty} \cup \mcY_{-\infty}$ there is a sequences of indecomposable complexes and morphisms
\begin{equation} \label{eq:L-chain}
N \rightlabel{\gamma_0} L_1 \rightlabel{\alpha_1} L_2 \rightlabel{\alpha_2} L_3 \rightlabel{\alpha_3} L_4 \rightlabel{\alpha_4} \cdots.
\end{equation}
in which the $L_i$ lie the corresponding $\mcY_\infty$ or $\mcX_\infty$ component admitting nontrivial maps from $N$, and in which the $\alpha_i$ are irreducible morphisms. We shall write $\gamma_n = \alpha_n \cdots \alpha_1 \gamma_0$.

The sequence \eqref{eq:chain} induces the following sequence in $\Mod{\sK^c}$,
\begin{equation*}
(-,N) = (-,N_1) \rightlabel{(-,\alpha_1)} (-,N_2) \rightlabel{(-,\alpha_2)} (-,N_3) \rightlabel{(-,\alpha_3)} (-,N_4) \rightlabel{(-,\alpha_4)} \cdots.
\end{equation*}
Likewise, the sequence \eqref{eq:L-chain} induces the following sequence in $\Mod{\sK^c}$.
\begin{equation*}
(-,N) \rightlabel{(-,\gamma_0)} (-,L_1) \rightlabel{(-,\alpha_1)} (-,L_2) \rightlabel{(-,\alpha_2)} (-,L_3) \rightlabel{(-,\alpha_3)} \cdots.
\end{equation*}
\end{setup}

\begin{remark}\label{lem:M-is-torsionfree}
If $M \in \ind{\sK}$ is such that $\supp(M)$ does not contain any isolated points, then $(-,M) \cong i \circ q(-,M)$.  This follows from \cite[Cor.~12.3.3]{PreNBK}, though here the statement is simplified since, as described in Section \ref{sec:def-localisation}, we can work in $\Mod(\Tc)$ rather than in an associated functor category.
\end{remark}

\begin{lemma} \label{lem:subfunctors}
In the situation of Setup~\ref{set:finite}, we have the following sequence of subfunctors of $(-,N)$,
\[
\ker(-,\beta_1) \into \ker(-,\beta_2) \into  
\cdots
\quad \text{or} \quad
\ker(-,\gamma_0) \into \ker(-,\gamma_1) \into  
\cdots
\]
such that $\bigcup_{n\geq 1} \ker(-,\beta_n) = (-,N)$ or $\bigcup_{n\geq 0} \ker(-,\gamma_n) = (-,N)$.
\end{lemma}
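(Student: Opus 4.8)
The plan is to prove both displayed chains simultaneously; I shall describe the $\beta_n$ case, the $\gamma_n$ case being the same once the initial (non-irreducible) map $\gamma_0\colon N\to L_1$ is built into the start of the chain.

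The chain of subfunctors is formal: since $\beta_{n+1}=\alpha_{n+1}\beta_n$ in $\sK$, functoriality of the restricted Yoneda functor gives $(-,\beta_{n+1})=(-,\alpha_{n+1})\circ(-,\beta_n)$, so $\beta_nf=0$ forces $\beta_{n+1}f=0$ and hence $\ker(-,\beta_n)\subseteq\ker(-,\beta_{n+1})$ (for the $\gamma_n$, the chain begins with $\ker(-,\gamma_0)$, which need not vanish). For the exhaustion statement I would work in the functor category $\Mod{\sK^c}$: a subfunctor of $(-,N)$ is determined by its values at the compact objects and every compact object is a finite direct sum of indecomposables, so it suffices to show that for each indecomposable compact $C$ and each $f\in\Hom_{\sK}(C,N)$ there is an $n$ with $\beta_nf=0$. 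First I would reduce this to the cleaner claim that, for fixed $C$, one has $\Hom_{\sK}(C,N_n)=0$ for all $n\gg 0$ (respectively $\Hom_{\sK}(C,L_n)=0$ for all $n\gg 0$), which of course forces $\beta_nf=0$ (respectively $\gamma_nf=0$).

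To establish this vanishing I would combine the structure of the $A^\infty_\infty$ components from Section~\ref{sec:finite} with the Hom-hammock description of Proposition~\ref{prop:fin-hammocks}. By the former, when $N\in\mcX^k_\infty\cup\mcY^k_\infty$ the chain $(N_n)$ runs monotonically along that component with index tending to $+\infty$, i.e.\ $N_n=X^k_{a+n-1,\infty}$ or $N_n=Y^k_{\infty,b+n-1}$ (the direction being forced: from Proposition~\ref{prop:fin-hammocks} there is a nonzero map $X^k_{a,\infty}\to X^k_{a+1,\infty}$ but none $X^k_{a+1,\infty}\to X^k_{a,\infty}$); and when $N\in\mcX^k_{-\infty}\cup\mcY^k_{-\infty}$ the sequence $(L_n)$, after the jump $\gamma_0$, runs monotonically along the associated $\mcY^k_\infty$ (respectively $\mcX^k_\infty$) component, again with index tending to $+\infty$. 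So in every case the objects along the chain have the form $X^k_{i,\infty}$ or $Y^k_{\infty,i}$ with $i\to+\infty$. Reading off Proposition~\ref{prop:fin-hammocks}, for a fixed indecomposable compact $C$ one has $\Hom_{\sK}(C,X^k_{i,\infty})\neq 0$ (respectively $\Hom_{\sK}(C,Y^k_{\infty,i})\neq 0$) only when $C$ lies in $\ind{\mcX^k}$ (respectively $\ind{\mcY^k}$) or in $\ind{\mcZ^{k-1}}$ (superscripts mod $r$), and in each of those cases the admissible indices $i$ form a set bounded above. Hence the relevant Hom-space vanishes for $n\gg 0$, giving $f\in\ker(-,\beta_n)$ (respectively $\ker(-,\gamma_n)$), and therefore $\bigcup_n\ker(-,\beta_n)=(-,N)$ (respectively $\bigcup_n\ker(-,\gamma_n)=(-,N)$).

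The hard part — indeed the only point that is not bookkeeping — is recognising why the indirect chain must be used in the $\mcX_{-\infty}$ and $\mcY_{-\infty}$ cases. One cannot take the naive chain that walks forward inside $\mcX^k_{-\infty}$, because $\infhomfrom{Z^{k-1}_{c,d}}$ contains $\{X^k_{-\infty,j}\mid j\geq c'-1\}$, so $\Hom_{\sK}(Z^{k-1}_{c,d},X^k_{-\infty,j})\neq 0$ for all sufficiently large $j$; the kernels of the induced maps would then miss the nonzero elements of $\Hom_{\sK}(Z^{k-1}_{c,d},N)$ and the union would fail to be all of $(-,N)$. Routing through the $\mcY^k_\infty$ component via $\gamma_0$, exactly as arranged in Setup~\ref{set:finite}, restores the boundedness used in the previous paragraph, and once that observation is in place the remaining verifications are routine readings of Proposition~\ref{prop:fin-hammocks}.
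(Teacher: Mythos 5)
Your proposal is correct and takes essentially the same route as the paper: the paper's proof likewise reduces the exhaustion statement to a check on the indecomposable compact objects in $\infhomto{N}$ and then compares the relevant Hom-hammocks, with its Figure~\ref{kernel-seq} encoding exactly the boundedness-of-indices/eventual-vanishing of $\Hom_{\sK}(C,N_n)$ and $\Hom_{\sK}(C,L_n)$ that you read off from Proposition~\ref{prop:fin-hammocks}. Your closing remark on why the $\mcX_{-\infty}$ and $\mcY_{-\infty}$ cases must be routed through $\gamma_0$ correctly explains the role of the second chain in Setup~\ref{set:finite}.
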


\begin{proof}
It is enough to check the statement on compact objects lying in $\infhomto{N}$ (see Section~\ref{hammocks-notation}). There are essentially two cases: $N$ lies in an $\mcX_\infty$ or $\mcY_\infty$ component, or else $N$ lies in an $\mcX_{-\infty}$ or $\mcY_{-\infty}$ component. One can then check the statement by comparing the relevant Hom-hammocks, see Figure~\ref{kernel-seq} for the relevant diagrams, where the top diagram refers to the case that $N$ lies in an $\mcX_\infty$ or $\mcY_\infty$ component and the bottom diagram to the case $N$ lies in an $\mcX_{-\infty}$ or $\mcY_{-\infty}$ component.
\end{proof}

\begin{figure}
\begin{center}
\begin{tikzpicture}[scale=0.2,thick]

\newcommand{\sst}[1]{\scalebox{0.75}{$#1$}}

  \draw[fill,blue!10] (33,-4) -- (26,-11) -- (24,-9) -- (31,-2) -- cycle;
  \draw[fill,blue!10] (22,-7) -- (13, 2) -- (20, 9) -- (29,0) -- cycle;
  \draw[fill,blue!30] (31,-6) -- (26,-11) -- (24,-9) -- (29, -4) -- cycle;
  \draw[fill,blue!30] (22,-7) -- (13,2) -- (18,7) -- (27,-2) -- cycle;
  \draw[fill,blue!50] (29,-8) -- (26, -11) -- (24,-9) -- (27,-6) -- cycle;
  \draw[fill,blue!50] (22,-7) -- (13, 2) -- (16, 5) -- (25, -4) -- cycle;
  
\draw[blue,dashed] (36,-7) -- (32,-11) -- (27,-6);   
\draw[blue,dashed] (25,-4) -- (16,5);

\draw[blue, dotted] (38,-9) -- (36,-11) -- (29,-4);
\draw[blue,dotted] (27,-2) -- (18,7);

\draw[dashed] (-2,9) -- (7,0) -- (-2,-9);

\draw[gray!80] (8,1) -- (-1,10);
\draw[gray!80] (8,-1) -- (-1,-10); 

  \draw[dashed] (0, 11) -- (9,  2) -- (18, 11);
  \draw         (0, 11) -- (18, 11);

\draw[gray!80] (10, 1) -- (19,10);
\fill[gray!80] (9,0) circle (0.2);
\draw[gray!80] (10,-1) -- (19,-10);

  \draw[dashed] (0,-11) -- (9, -2) -- (18,-11);
  \draw         (0,-11) -- (18,-11);

  \draw[dashed] (11, 0) -- (20, 9) -- (29, 0) -- (20,-9) -- cycle;

  \draw[gray!80] (21, 10) -- (30,1);
  \draw[gray!80] (21,-10) -- (30,-1);
  
 \draw[dashed] (22, 11) -- (31,  2) -- (40, 11);
 \draw         (22, 11) -- (40, 11);

  \draw[dashed] (22,-11) -- (31, -2) -- (40,-11);
  \draw         (22,-11) -- (40,-11);

  \draw[gray!80] (32, 1) -- (41,10);
  \fill[gray!80] (31,0) circle (0.2);
  \draw[gray!80] (32,-1) -- (41,-10);
  \fill (34, -3) circle (0.2);
  \fill (37, -6) circle (0.2);
  \fill (39, -8) circle (0.2);
  
  \node at (36, -1) {\sst{N}};
  \node at (39, -4) {\sst{N_t}};
  \node at (41, -6) {\sst{N_s}};
   
  \draw[dashed] (42,-9) -- (33, 0) -- (42, 9);
  
\end{tikzpicture}
\end{center}
\vspace{5mm}
\begin{center}
\begin{tikzpicture}[scale=0.2,thick]

\newcommand{\sst}[1]{\scalebox{0.75}{$#1$}}

\fill[red!10] (26,3) -- (20,9) -- (11,0) -- (17,-6) -- cycle;
\fill[red!50] (15,-8) -- (12,-11) -- (6, -5) -- (9,-2) -- cycle;

\fill[red!50] (19,-4) -- (13,2) -- (11,0) -- (17,-6) -- cycle;
\fill[red!30] (23,0) -- (17,6) -- (13,2) -- (19,-4) -- cycle;

\draw[red,dashed] (33,-4) -- (26,-11) -- (24,-9);
\draw[red,dashed] (22,-7) -- (13,2);

\draw[red,dotted] (37,-8) -- (34,-11) -- (28,-5);
\draw[red,dotted] (26,-3) -- (17,6);

\draw[dashed] (-2,9) -- (7,0) -- (-2,-9);

\draw[gray!80] (8,1) -- (-1,10);
\draw[gray!80] (8,-1) -- (-1,-10); 

 \draw[dashed] (0, 11) -- (9,  2) -- (18, 11);
 \draw         (0, 11) -- (18, 11);

 \draw[gray!80] (10, 1) -- (19,10);
 \fill[gray!80] (9,0) circle (0.2);
 \draw[gray!80] (10,-1) -- (19,-10);

  \draw[dashed] (0,-11) -- (9, -2) -- (18,-11);
  \draw         (0,-11) -- (18,-11);

  \draw[dashed] (11, 0) -- (20, 9) -- (29, 0) -- (20,-9) -- cycle;

  \draw[gray!80] (21, 10) -- (30,1);
  \fill (27,4) circle (0.2);
  \node at (29,6) {\sst{N}};
  \draw[gray!80] (21,-10) -- (30,-1);
  
\draw[dashed] (22, 11) -- (31,  2) -- (40, 11);
\draw         (22, 11) -- (40, 11);

  \draw[dashed] (22,-11) -- (31, -2) -- (40,-11);
  \draw         (22,-11) -- (40,-11);

  \draw[gray!80] (32, 1) -- (41,10);
  \fill[gray!80] (31,0) circle (0.2);
  \draw[gray!80] (32,-1) -- (41,-10);
  \fill (34, -3) circle (0.2);
  \fill (38, -7) circle (0.2);
  
  \node at (36, -1) {\sst{L_1}};
  \node at (40, -5) {\sst{L_t}};

\draw[dashed] (42,-9) -- (33, 0) -- (42, 9);    
\end{tikzpicture}
\end{center}
\caption{\textit{Above:} the situation in \eqref{eq:chain}. The darkest shaded region shows the objects $C$ such that $(C,N) \neq 0$ but where there are nontrivial maps $f\colon C \to N$ such that $f\in \ker(C,\beta_{t+1})$. The intermediate shaded region together with the darkest shaded region shows those objects such that $(C,N) \neq 0$, and where there are nontrivial maps $f\colon C \to N$ such that $f \in \ker(C,\beta_{s+1})$. The three shaded regions show objects $C$ such that $(C,N) \neq 0$ The dashed and dotted regions show objects $C$ such that $(C,N_t) \neq 0$ and $(C,N_s) \neq 0$, respectively. 
\textit{Below:} the situation in \eqref{eq:L-chain}. The regions are shaded as in the sketch above with $\gamma$s playing the role of $\beta$s.} \label{kernel-seq}
\end{figure}

\begin{lemma} \label{lem:socle}
In the situation of Setup~\ref{set:finite}, the functor $q(-,N_i)$ is injective with simple socle for each $i \geq 1$. 
\end{lemma}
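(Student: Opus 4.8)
The plan is to show, for each $i\geq 1$, that $q(-,N_i)$ is the injective hull in $(\Mod{\Kc})_{\cl{X}_0}$ of a simple object; once that is established, injectivity and simplicity of the socle both come for free. Here $q$ denotes both the localisation $\Mod{\Kc}\to(\Mod{\Kc})_{\cl{X}_0}$ of Setup~\ref{set:finite} and, where convenient, the parallel localisation $q_0$ on $\Coh{\sK}$ (and on $\mod{\Kc}$). Nothing in the argument uses the detailed Hom-hammock descriptions; those enter only in the lemmas built on top of this one.

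First I would locate the relevant objects in the Ziegler spectrum. By construction the complexes $N_i$, and likewise the $L_i$ of Setup~\ref{set:finite}, lie in one of the $A_\infty^\infty$ components $\mcX_{\pm\infty}$, $\mcY_{\pm\infty}$, so by Proposition~\ref{prop:CB1pureinj} each of them is an indecomposable pure-injective object of $\sK$ of CB-rank $1$. In particular $N_i$ is a point of the closed set $\cl{X}_0$ of non-isolated points, and it is isolated in the relative topology on $\cl{X}_0$. Consequently $(-,N_i)$ is a torsionfree injective object of $\Mod{\Kc}$ for the hereditary torsion theory defining $q$ — torsionfree because $N_i\in\cl{X}_0$, injective because $N_i$ is pure-injective (Proposition~\ref{prop: pure injective}). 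By the properties of torsion-theoretic localisation recalled in Section~\ref{sec:tp-localisation}, $(-,N_i)\cong i\circ q(-,N_i)$; and by \cite[Prop.~11.1.31]{PreNBK}, exactly as in the proof of Lemma~\ref{lem:direct-summand}, $q(-,N_i)$ is an indecomposable injective object of $(\Mod{\Kc})_{\cl{X}_0}$, nonzero since $(-,N_i)\cong i\circ q(-,N_i)$ with $(-,N_i)\neq 0$.

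Next I would produce a simple subobject of $q(-,N_i)$ and then assemble the two halves. Since $\KG(\sK)$ is defined (Lemma~\ref{lem:KG-defined}), the isolation condition holds (Lemma~\ref{lem:isolation}); applied to the isolated point $N_i$ of $\cl{X}_0$ it provides a coherent functor $F_i$ with $(F_i)\cap\cl{X}_0=\{N_i\}$ whose image $q_0(F_i)$ is simple in $\Coh{\sK}_{\cl{X}_0}$. Writing $F_i=S_i^{\vee}$ via Lemma~\ref{lem:functors}, the object $S_i\in\mod{\Kc}$ has $q_0(S_i)$ simple in $(\mod{\Kc})_{\cl{X}_0}$ (the localised duality $\Coh{\sK}_{\cl{X}_0}\simeq((\mod{\Kc})_{\cl{X}_0})^{\op}$ of Section~\ref{sec:def-localisation} preserves simplicity), and $0\neq F_i(N_i)=(S_i,(-,N_i))\cong(S_i,i\circ q(-,N_i))\cong(q_0 S_i, q(-,N_i))$ by the localisation adjunction. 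Hence there is a monomorphism $q_0(S_i)\hookrightarrow q(-,N_i)$; this is precisely the argument in the first paragraph of the proof of Lemma~\ref{lem:direct-summand}, read with $N_i$ in place of $C$. Since $q(-,N_i)$ is injective, this monomorphism extends to a map $E(q_0 S_i)\to q(-,N_i)$ out of the injective hull, which is again a monomorphism because $q_0(S_i)$ is essential in $E(q_0 S_i)$, and which therefore splits off a nonzero injective direct summand; indecomposability of $q(-,N_i)$ forces $q(-,N_i)\cong E(q_0 S_i)$. As the socle of the injective hull of a simple object equals that simple object — any simple subobject of $E(q_0 S_i)$ meets the essential subobject $q_0(S_i)$ and hence, being simple, coincides with it — we conclude that $q(-,N_i)$ is injective with simple socle $q_0(S_i)$, as claimed.

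I do not anticipate a real obstacle: the argument is a reassembly of facts already established, and its only delicate aspect is the bookkeeping among the mutually (anti-)equivalent localised categories $(\Mod{\Kc})_{\cl{X}_0}$, $\Coh{\sK}_{\cl{X}_0}$, $(\mod{\Kc})_{\cl{X}_0}$ and their duals, together with the verification that the simple coherent functor supplied by the isolation condition dualises to a simple object genuinely embedding in $q(-,N_i)$ — which is exactly what the opening of the proof of Lemma~\ref{lem:direct-summand} establishes, so I would cite that rather than repeat it. The one conceptual point worth stressing is that an injective object of a Grothendieck category need not have nonzero socle in general, so the appeal to the isolatedness of $N_i$ in $\cl{X}_0$ (via the isolation condition) is essential rather than cosmetic.
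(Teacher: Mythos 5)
Your proof is correct and follows essentially the same route as the paper: both obtain, from the fact that $N_i$ has CB-rank $1$ (so is isolated in $\cl{X}_0$), a coherent functor whose localised image is simple and which is nonzero on $N_i$ — you via the isolation condition (Lemmas~\ref{lem:KG-defined} and \ref{lem:isolation}), the paper via Proposition~\ref{prop:KG-CB}, which rests on the same facts — and then both conclude exactly as in the proof of Lemma~\ref{lem:direct-summand}, using that $q(-,N_i)$ is an indecomposable injective containing the dualised simple, hence its injective hull with simple socle.
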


\begin{proof}
By Proposition~\ref{prop:CB1pureinj}, each $N_i$ has CB-rank $1$. Thus, by Proposition~\ref{prop:KG-CB}, there is a functor $S_i' \in \Coh{\sK}$ whose image is simple in $\Coh{\sK}/\Coh{\sK}_0 = \Coh{\sK}_{\cl{X}_0}$ and such that $N_i$ is the unique point of CB-rank 1 in the open set $(S_i')$.  Then we continue as in the proof of Lemma~\ref{lem:direct-summand}.
\end{proof}

\begin{lemma} \label{lem:kernel-quotients}
In the situation of Setup~\ref{set:finite}, for $n \geq 1$ we have
\[
\dfrac{q \ker(-,\beta_n)}{q \ker(-,\beta_{n-1})} \cong q \ker (-,\alpha_n)
\quad \text{or} \quad
\dfrac{q \ker(-,\gamma_n)}{q \ker(-,\gamma_{n-1})} \cong q \ker (-,\alpha_n)
\]
and $q \ker (-,\alpha_n)$ is a simple functor in $(\Mod{\Kc})_{\cl{X}_0}$.
\end{lemma}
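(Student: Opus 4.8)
The plan is to deduce both assertions from the factorisation $(-,\beta_n)=(-,\alpha_n)\circ(-,\beta_{n-1})$ (respectively $(-,\gamma_n)=(-,\alpha_n)\circ(-,\gamma_{n-1})$), the exactness of the localisation $q$, and the Hom-hammock calculus of Proposition~\ref{prop:fin-hammocks} together with the factorisation property of Proposition~\ref{prop:factorisations}. First I would record the elementary fact that for any composable pair $A\rightlabel{f}B\rightlabel{g}C$ in an abelian category the map $f$ induces a natural isomorphism $\ker(gf)/\ker(f)\rightiso\ker(g)\cap\image(f)$. Applying this in $\Mod{\Kc}$ with $f=(-,\beta_{n-1})$ and $g=(-,\alpha_n)$ (so $gf=(-,\beta_n)$) and then pushing through the exact functor $q$ yields $q\ker(-,\beta_n)/q\ker(-,\beta_{n-1})\cong q\ker(-,\alpha_n)\cap q\image(-,\beta_{n-1})$; thus it remains to show $q\ker(-,\alpha_n)\subseteq q\image(-,\beta_{n-1})$, equivalently that $q$ annihilates the cokernel of $\ker(-,\alpha_n)\cap\image(-,\beta_{n-1})\into\ker(-,\alpha_n)$. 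By the second isomorphism theorem this cokernel is a subfunctor of $\coker(-,\beta_{n-1})=(-,N_n)/\image(-,\beta_{n-1})$.

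To control it I would use the triangles of Lemma~\ref{triangles}: one of them has the form $X^k_{a,a}\rightlabel{u_n}N_n\rightlabel{\alpha_n}N_{n+1}\too\Sigma X^k_{a,a}$ with $X^k_{a,a}$ compact, whence $\ker(-,\alpha_n)=\image\big((-,X^k_{a,a})\rightlabel{(-,u_n)}(-,N_n)\big)$; iterating the triangles of Lemma~\ref{triangles} via the octahedral axiom realises $\mathrm{cone}(\beta_{n-1})$ as a perfect (hence compact) string complex $W$, giving a structure map $p\colon N_n\to W$ with $\coker(-,\beta_{n-1})\cong\image(-,p)\subseteq(-,W)$. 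The cokernel above is then $\image\big((-,X^k_{a,a})\rightlabel{(-,pu_n)}(-,W)\big)$, a coherent functor between two compact objects, so by Corollary~\ref{cor:functors-explicit} it is killed by $q$ exactly when its image under the duality vanishes on every point of $\cl{X}_0$. This is a finite inspection of the forward Hom-hammocks of $N_n$ and $W$ in Proposition~\ref{prop:fin-hammocks}: using the at-most-one-dimensionality of the Hom-spaces involved (Proposition~\ref{dimensions}) and Proposition~\ref{prop:factorisations} one checks that every non-isolated $E$ admitting a nonzero map from $W$ admits no nonzero map from $N_n$, so that $(pu_n)^\ast$ vanishes on $(-,E)$. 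The chain \eqref{eq:L-chain} is handled the same way after writing $\gamma_{n-1}=(\alpha_{n-1}\cdots\alpha_1)\gamma_0$ and invoking the $\beta$-case together with the triangles of Lemma~\ref{triangles} involving $Z^k_\infty$ for the initial step $\gamma_0$.

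For simplicity of $q\ker(-,\alpha_n)$ I would argue: since the triangle above does not split ($N_n$ is indecomposable) we have $u_n\neq 0$, and the same Hom-hammock/torsion computation shows $q\ker(-,\alpha_n)\neq 0$; being a nonzero subfunctor of $q(-,N_n)$, which by Lemma~\ref{lem:socle} is injective with simple essential socle $S_n$, it contains $S_n$, and the nonzero morphism $q(-,\alpha_n)\colon q(-,N_n)\to q(-,N_{n+1})$ must annihilate $S_n$ (its source and target have the non-isomorphic simple socles $S_n$ and $S_{n+1}$), consistently placing $S_n$ inside $\ker q(-,\alpha_n)=q\ker(-,\alpha_n)$. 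That there is nothing more follows from Lemma~\ref{lem:coherent-subfunctors}: a coherent subfunctor of $\ker(-,\alpha_n)=\image(-,u_n)$ corresponds to a factorisation of $u_n$ through a compact object, and by Proposition~\ref{prop:factorisations} and the Hom-hammocks every proper such subfunctor is supported only on isolated points, hence vanishes in $(\Mod{\Kc})_{\cl{X}_0}$; thus $\supp(\ker(-,\alpha_n))\cap\cl{X}_0=\{N_n\}$ and $\ker(-,\alpha_n)$ is finitely generated, which forces $q\ker(-,\alpha_n)=S_n$. I expect the main obstacle to be exactly the two Hom-hammock verifications — that $\image(-,pu_n)$, and that each proper coherent subfunctor of $\ker(-,\alpha_n)$, vanishes on all non-isolated points: because $N_n$ and the $A_\infty^\infty$ components are not compact one cannot argue directly with $(-,N_n)$ but must route every morphism through the compact cones $X^k_{a,a}$ and $W$ and then read off vanishing from Proposition~\ref{prop:fin-hammocks}, carefully tracking the wrap-around shifts of Notation~\ref{notation}.
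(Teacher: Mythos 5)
Your first isomorphism is argued along essentially the same lines as the paper: the canonical monomorphism induced by $\beta_{n-1}$ (your $\ker(gf)/\ker(f)\cong\ker(g)\cap\image(f)$ is exactly the paper's $\phi$), followed by showing that $q$ kills the error term, which you correctly identify with $\image(-,pu_n)$ inside $(-,\mathrm{cone}(\beta_{n-1}))$. However, your stated sufficient condition for torsionness --- that no non-isolated $E$ receives nonzero maps from both $W=\mathrm{cone}(\beta_{n-1})$ and $N_n$ --- is false in the wrap-around cases you yourself flag: already for $r=1$, $m=0$ one has $(W,N_n)\neq 0$, so $E=N_n$ violates it. The conclusion survives, but only with an extra argument (e.g.\ any $g\colon W\to E$ with $(N_n,E)$ one-dimensional has $gp=0$, since otherwise $N_n$ would be a summand of the compact $W$), or by the paper's more direct observation that the quotient is nonzero on only $n-1$ indecomposable compacts and hence is torsion. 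So this half is fixable but not complete as written.

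The genuine gap is in the simplicity argument. The functor $\ker(-,\alpha_n)$ is a subfunctor of $(-,N_n)$, and since $N_n\in\cl{X}_0$ the functor $(-,N_n)$ is one of the torsionfree injectives cogenerating the torsion theory defining $q$; subobjects of torsionfree objects are torsionfree, so \emph{no} nonzero subfunctor of $\ker(-,\alpha_n)$ --- proper or not --- can vanish in $(\Mod{\Kc})_{\cl{X}_0}$. Hence your key claim that ``every proper coherent subfunctor is supported only on isolated points, hence vanishes under $q$'' cannot hold, and the deduction $q\ker(-,\alpha_n)=S_n$ collapses. (Relatedly, Lemma~\ref{lem:coherent-subfunctors} classifies subfunctors of the \emph{covariant} $F_f$, equivalently \emph{quotients}, not subobjects, of the contravariant $\ker(-,f)$, via factorisations of $f$; and in any case $u_n$ is not a morphism in $\Kc$, so it does not apply as you use it.) What simplicity actually requires is the dual statement: for every nonzero finitely generated subfunctor $H\subseteq\ker(-,\alpha_n)$ the quotient $\ker(-,\alpha_n)/H$ is torsion, so that $q(H)=q\ker(-,\alpha_n)$; this is where the real Hom-hammock work lies, and it is precisely what your argument omits. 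The paper proceeds differently here, exhibiting for each compact $C$ an embedding $\ker(C,\alpha_n)\into\ker(C,h)$ for a family of equivalent $1$-simple morphisms $h$, thereby identifying $q\ker(-,\alpha_n)$ with the simple functor $q\ker(-,h)$. Your observation that the socle of $q(-,N_n)$ is contained in $q\ker(-,\alpha_n)$ is correct (and $q\ker(-,\alpha_n)\neq 0$ is automatic from torsionfreeness), but the reverse containment is exactly what remains unproved.
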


\begin{proof}
For the first claim, observe that there are maps
\[
\dfrac{\ker(-,\beta_n)}{\ker(-,\beta_{n-1})} \rightlabel{\phi} \ker(-,\alpha_n) 
\quad \text{or} \quad 
\dfrac{\ker(-,\gamma_n)}{\ker(-,\gamma_{n-1})} \rightlabel{\phi} \ker(-,\alpha_n).
\]
In each case, consider the inclusion $\image \phi \into \ker(-,\alpha_n)$.
Inspecting the Hom-hammocks, we see that $\ker(-,\alpha_n)/\image \phi$ is nonzero on only $n-1$ compact objects,
namely in the first case the points lying on the blue-dashed coray from the shaded region to the boundary of the AR component in the top diagram of Figure~\ref{kernel-seq}, and in the second case the points lying on a coray essentially similar to that of the red-dotted coray from its intersection with the red-dashed ray to the boundary of the AR component in the bottom diagram of Figure~\ref{kernel-seq}. 
It therefore follows that since $\ker(-,\alpha_n)/\image \phi$ is nonzero on only finitely many points that it is a finite length functor, cf. Remark~\ref{rem: finite points}. Hence $q \ker(-,\alpha_n)/\image \phi = 0$, giving the desired isomorphism.

For the second claim, note that in light of Lemma~\ref{lem:functors} and Corollary~\ref{cor:functors-explicit}, we need to exhibit a family $\mcH$ of equivalent $1$-simple morphisms (see Definition~\ref{def:1-simple}) such that for each compact object $C$ there exists $h \in \mcH$ and an embedding $\ker(C,\alpha_n) \rightarrow \ker(C,h)$.  Since $q \ker(-, \alpha_n) \neq 0$, it will follow that $q \ker(-, \alpha_n) = q \ker(-, h)$ and hence that $q \ker(-, \alpha_n)$ is simple.

There are four cases for $\mcH$ depending on which type of component $N$ sits in, where the reader should recall Notation~\ref{notation}:
\begin{itemize}
\item if $N = X^{k}_{a,\infty}$ take $\mcH = \{Z^{k+1}_{a,j} \to Z^{k+1}_{a+1,j} \oplus X^{k+1}_{t,a'-1} \mid j \in \bZ \text{ and } t \leq a'-1 \}$; 
\item if $N = Y^{k}_{\infty,b}$ take $\mcH = \{Z^{k+1}_{i,b} \to Z^{k+1}_{i,b+1} \oplus Y^{k+1}_{b''-1,t} \mid i \in \bZ \text{ and } t \geq b''-1 \}$; 
\item if $N = X^{k}_{-\infty,b}$  and $L_1 = Y^k_{\infty,d}$ take $\mcH = \{Z^{k}_{i,d} \to Z^{k}_{i,d+1} \oplus Y^{k+1}_{d''-1,t} \mid i \in \bZ \text{ and } t \geq d''-1 \}$;
\item if $N = Y^{k}_{a,-\infty}$  and $L_1 = X^k_{c,\infty}$ take $\mcH = \{Z^{k}_{c,j} \to Z^{k}_{c+1,j} \oplus X^{k+1}_{t,c'-1} \mid i \in \bZ \text{ and } t \leq c'-1 \}$. \qedhere
\end{itemize}
\end{proof}

We are now ready to prove the main theorem of this section, which is Theorem~\ref{intro:indecomposable} in the finite global dimension case.

\begin{theorem} \label{thm:finite-indecomposable}
Suppose $\Lambda$ is a derived-discrete algebra such that $\gldim \Lambda < \infty$. The objects of $\strings{\Lambda}$ form a complete list of indecomposable objects of $\sK$. In particular, each indecomposable object of $\sK$ is pure-injective.
\end{theorem}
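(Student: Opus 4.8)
The statement is equivalent to: every indecomposable $M\in\sK$ is pure-injective (Theorem~\ref{thm:KG-dim}(2) then forces $M\in\strings{\Lambda}$). I would run the same Cantor--Bendixson stratification of $\supp(M)$ as in the infinite global dimension case (Theorem~\ref{thm:inf-indecomposable}), but with a new argument for the CB-rank $1$ stratum. First, if $\supp(M)$ contains a compact object, then $M$ is compact by Corollary~\ref{cor:support-compact}, hence a string complex and pure-injective by Corollary~\ref{cor:pure-injective}. Second, if every point of $\supp(M)$ has CB-rank $2$, then $M$ is $\Sigma$-pure-injective by Lemma~\ref{lem:CB-rank2}. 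Since $\supp(M)=\Def{M}\cap\Zg(\sK)$ is a nonempty closed subset, the only remaining case is that $\supp(M)$ contains no compact object but does contain a point $N$ of CB-rank $1$; by the $\mcX$/$\mcY$ symmetry and the fact that Setup~\ref{set:finite} supplies a chain in both situations (the $\beta$-chain for $N$ in an $\mcX_\infty$ or $\mcY_\infty$ component, the $\gamma$-chain for an $\mcX_{-\infty}$ or $\mcY_{-\infty}$ component), it suffices to treat $N=N_1$ with chain $N_1\xrightarrow{\alpha_1}N_2\xrightarrow{\alpha_2}\cdots$.

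\textbf{Passing to the localisation.} In this remaining case $\supp(M)$ contains no isolated point of $\Zg(\sK)$, so by Remark~\ref{lem:M-is-torsionfree} we have $(-,M)\cong i\circ q(-,M)$; since $i$ is right adjoint to the exact functor $q$ it preserves injectives, and conversely $q$ carries a torsionfree injective to an injective, so $M$ is pure-injective if and only if $q(-,M)$ is injective in $(\Mod{\Kc})_{\cl{X}_0}$. This is the target. By Lemma~\ref{lem:socle} each $q(-,N_i)$ is an indecomposable injective with simple socle, and by Lemmas~\ref{lem:subfunctors} and~\ref{lem:kernel-quotients} the functor $q(-,N)=q(-,N_1)$ is the union of the ascending chain $q\ker(-,\beta_1)\hookrightarrow q\ker(-,\beta_2)\hookrightarrow\cdots$ of subfunctors whose consecutive quotients $q\ker(-,\beta_n)/q\ker(-,\beta_{n-1})\cong q\ker(-,\alpha_n)$ are simple, with $q\ker(-,\beta_1)=\operatorname{soc}q(-,N)$.

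\textbf{The core step.} I would show that $q(-,N)$ embeds into $q(-,M)$ and is therefore (being injective) a direct summand of it; since $M$, hence $q(-,M)$, is indecomposable, this yields $q(-,M)\cong q(-,N)$, so $q(-,M)$ is injective, $M$ is pure-injective, and $M\cong N\in\strings{\Lambda}$. For the base of the construction, $N$ is isolated in $\supp(M)$ (as $\supp(M)\subseteq\Zg(\sK)'$ and $N$ has CB-rank $1$), so the isolation condition produces a coherent functor, nonzero at $N$, whose image in $\Coh{\sK}_{\supp(M)}$ is simple; because any coherent functor vanishing on $M$ vanishes on all of $\Def{M}$ (Definition~\ref{def:definable}), this functor is nonzero at $M$, and via Lemma~\ref{lem:functors} and the adjunction $q\dashv i$ this gives an embedding $\operatorname{soc}q(-,N)\hookrightarrow q(-,M)$. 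One then propagates the embedding up the filtration: at stage $n$ the layer $q\ker(-,\alpha_n)$ is, by Lemma~\ref{lem:kernel-quotients}, a simple functor carried by compact objects (the preimage in $\Coh{\sK}$ of a $1$-simple functor), so the problem of extending the embedding across $q\ker(-,\beta_{n-1})\hookrightarrow q\ker(-,\beta_n)$ reduces --- using the simple-socle property of Lemma~\ref{lem:socle} together with the Lemma~\ref{lem:direct-summand} style argument applied to the compact objects on which $q\ker(-,\alpha_n)$ is supported --- to the compact setting, producing a compatible embedding $q\ker(-,\beta_n)\hookrightarrow q(-,M)$. Passing to the colimit gives $q(-,N)\hookrightarrow q(-,M)$, and the conclusion follows as above, completing Theorem~\ref{intro:indecomposable}.

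\textbf{The main obstacle.} The whole difficulty --- and the reason the finite global dimension case is harder than the infinite one --- is that the CB-rank $1$ point $N$ (and every $N_i$) is \emph{not} compact, so Lemma~\ref{lem:direct-summand} and Corollary~\ref{cor:isolation}, which would instantly realise $N$ as a summand of $M$, are not available: they need a compact witness, and there is none. Indeed the compact objects $Z^k_{a,b}$ have Hom-hammocks meeting infinitely many noncompact string complexes (by Proposition~\ref{prop:fin-hammocks}), so the cheap device of the infinite global dimension case --- deducing $\Sigma$-pure-injectivity of the sum of all noncompact string complexes from a finite-dimensionality count --- breaks down. The filtration of $q(-,N)$ by the $q\ker(-,\beta_n)$, all of whose layers are simple functors of compact support, is precisely the substitute for compactness: it lets one run the Lemma~\ref{lem:direct-summand} argument one finite stage at a time. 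Verifying that the extension across each simple layer genuinely exists (and is essentially unique, so that the stages glue) is the delicate point, and it is here that the explicit Hom-hammock and factorisation bookkeeping of Section~\ref{structure}, together with Lemmas~\ref{lem:socle}--\ref{lem:kernel-quotients}, does the real work.
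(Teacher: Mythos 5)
Your first half runs parallel to the paper: the reduction via Corollary~\ref{cor:support-compact} and Lemma~\ref{lem:CB-rank2} to the situation of Setup~\ref{set:finite}, the embedding of the simple socle $qS=q\ker(-,\beta_1)$ into both $q(-,N)$ and $q(-,M)$, and the lifting of this map up the filtration of Lemma~\ref{lem:subfunctors} using the finitely presented layers of Lemma~\ref{lem:kernel-quotients} and the fp-injectivity of $q(-,M)$, yielding in the colimit a map $q(-,N)\to q(-,M)$ (and hence, via Remark~\ref{lem:M-is-torsionfree}, a monomorphism $(-,N)\into(-,M)$ which splits because $(-,N)$ is injective). Up to this point your outline is essentially the paper's argument.

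The gap is in your endgame. You conclude by asserting that $q(-,M)$ is indecomposable ``since $M$ is'', so that the injective summand $q(-,N)$ must be all of $q(-,M)$. But indecomposability of $M$ in $\sK$ does not transfer to $(-,M)$ or $q(-,M)$: the restricted Yoneda functor is neither full nor faithful away from pure-injective objects (fullness of $(M,N)\to((-,M),(-,N))$ is precisely the characterisation of pure-injectivity of $N$ in Proposition~\ref{prop: pure injective}), so idempotents of $\End(q(-,M))$ need not lift to idempotents of $\End_{\sK}(M)$ and a decomposition of the functor need not come from a decomposition of $M$. Asserting indecomposability of $q(-,M)$ here essentially presupposes the kind of control over $M$ (pure-injectivity, idempotent lifting) that the theorem is trying to establish. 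The paper avoids this entirely: pure-injectivity of $N$ realises the retraction $(-,M)\to(-,N)$ as $(-,f)$ for an honest morphism $f\colon M\to N$, but the section $\tau\colon(-,N)\to(-,M)$ is \emph{not} known to be induced by a morphism in $\sK$, and the remaining (and hardest) task is to manufacture $h\colon N\to M$ with $fh=\id_N$. This is done by hand with homotopy string combinatorics: one writes the one-sided string complex $N$ as a colimit-like tower of compact string complexes $C_i$ with graph maps $\iota_i\colon C_i\to N$ along a ray, applies $\tau$ to get maps $\tau_i(\iota_i)\colon C_i\to M$, and assembles a well-defined cochain map $h$ degreewise using that $(\iota_i)^n$ is an isomorphism below $\length\iota_i$, finally checking $f^n h^n=\id_{N^n}$; then $N$ is a direct summand of $M$ and indecomposability of $M$ \emph{in $\sK$} (the actual hypothesis) gives $M=N$. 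Your proposal never produces any morphism between $N$ and $M$ in $\sK$, so this decisive descent step --- the genuine content of the finite global dimension case --- is missing.
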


\begin{proof}
Theorem~\ref{thm:KG-dim} tells us that $\strings{\Lambda}$ is a complete list of indecomposable, pure-injective objects of $\sK$. We need to check that there are no further indecomposable objects. 

Suppose, for a contradiction, $M \in \ind{\sK}$ is not a string complex. By Corollary~\ref{cor:support-compact} and Lemma~\ref{lem:CB-rank2}, we may assume that $M$ satisfies the hypotheses of Setup~\ref{set:finite}. By Lemma~\ref{lem:socle}, there is an embedding $q S \into q (-,N)$ where $qS$ is simple and, arguing as in the proof of Lemma~\ref{lem:direct-summand}, there is also an embedding $q S \into q(-,M)$. Indeed, we have $q S = q \ker(-,\alpha_1) = q \ker(-,\beta_1)$. Lemma~\ref{lem:kernel-quotients} now allows us to repeatedly use the fp injectivity of $q(-,M)$ to get that the map $q S \into q(-,M)$ lifts along the chain of subfunctors of Lemma~\ref{lem:subfunctors}:
\[
\xymatrix{
qS = q \ker(-,\beta_1) \ar@{^{(}->}[r] \ar@{^{(}->}[d] & q \ker(-,\beta_2) \ar@{^{(}->}[r] \ar@{_{(}-->}[dl]_{\exists} & q \ker(-,\beta_3) \ar@{^{(}->}[r] \ar@{_{(}-->}[dll]_{\exists} & \cdots ,\\
q(-,M)                                                                        &                                                                                           &                                                                                       &               }
\]  
giving rise to a map $q(-,N) \to q(-,M)$, since by Lemma~\ref{lem:subfunctors}, $\colim q \ker(-,\beta_i) = q(-,N)$. Now using Lemma~\ref{lem:M-is-torsionfree}, we can deduce that there is a nontrivial map $(-,N) \into (-,M)$. Pure-injectivity of $N$ now means that $(-,N) \into (-,M)$ is a split monomorphism.
Thus, we have a pure epimorphism  $f \colon M\rightarrow N$ such that the induced $(-,f) \colon (-,M)\rightarrow (-,N)$ is split by a morphism $\tau \colon (-,N) \rightarrow (-,M)$, that is, $(-,f)\tau =1_{(-,N)}$.  We want to find some $h:N\rightarrow M$ with $fh=1_N$.

The next part of the argument requires some homotopy string combinatorics; see Section~\ref{structure}.
Since $N$ lies in an $\mcX^k_{\pm \infty}$ or $\mcY^k_{\pm \infty}$ component, $N$ is a one-sided string complex (Definition~\ref{string-complexes}).
 For convenience of exposition, we assume that $N = P_w$ is a right-infinite string complex, where $w$ is a right-infinite subword of $w_\infty$. Moreover, we may assume without loss of generality that the projective $\Lambda$-module sitting in cohomological degree $n$, $N^n$, is zero for all $n <0$ and $N^0 \neq 0$. An analogous argument holds in the case that $N$ is a left-infinite string complex.

There is a sequence of finite subwords of $w$ corresponding to a sequence of perfect complexes $C_i = P_{v_i}$ in which $C_i^n = 0$ for each $n < 0$ and $C_i^0 \neq 0$, 
\begin{equation} \label{eq:subword-seq}
v_0 \subset v_1 \subset v_2 \subset \cdots \subset w 
\quad \longleftrightarrow \quad
C_0 \rightlabel{\alpha_0} C_1 \rightlabel{\alpha_1} C_2 \rightlabel{\alpha_2} \cdots \to N,
\end{equation}
together with a graph map $\iota_i \colon C_i \to N$, such that each $\alpha_i$ is a graph map and each morphism $\iota_i \colon C_i \to N$ factors as
\[
\xymatrix{
C_i \ar[r]^{\alpha_i} \ar[dr]_{\iota_i} & C_{i+1}. \ar[d]^{\iota_{i+1}} \\
                                     & N }
\] 
We remark that the sequence~\eqref{eq:subword-seq} is a subsequence of indecomposable complexes lying on an appriopriately chosen ray or coray in an $\mcX^k$, $\mcY^k$ or $\mcZ^k$ component.

Define $\length \iota_i = \max \{ n \mid (\iota_i)^n \neq 0 \}$. In particular, this means that $(\iota_i)^n = \id_{C^n} = \id_{N^n}$ for all $n \leq \length \iota_i - 1$ by \cite[\S 3]{ALP}.

Applying $(-,f)\colon (-,M) \to (-,N)$ to each $\alpha_i \colon C_i \to C_{i+1}$ gives a commutative tower in which $\tau_i \colon (C_i,N) \to (C_i,M)$ is the splitting map: 
\[
\xymatrix{
(C_0,M) \ar@/_/[r]_{(C_0,f)}                     & (C_0,N) \ar@/_/[l]_{\tau_0}                     & \backepsilon \iota_0        \\
(C_1,M) \ar@/_/[r]_{(C_1,f)} \ar[u]^{(\alpha_0,M)} & (C_1,N) \ar@/_/[l]_{\tau_1} \ar[u]_{(\alpha_0,N)} & \backepsilon \iota_1 \ar[u] \\
(C_2,M) \ar@/_/[r]_{(C_2,f)} \ar[u]^{(\alpha_1,M)} & (C_2,N) \ar@/_/[l]_{\tau_2} \ar[u]_{(\alpha_1,N)} & \backepsilon \iota_2 \ar[u] \\
\ar@{.}[u]                                     & \ar@{.}[u]                                       &
}
\]
From the commutativity of the squares in the tower above, we get
\[
\xymatrix{
C_i \ar[r]^{\alpha_i} \ar[dr]_{\tau_i(\iota_i)} & C_{i+1} \ar[d]^{\tau_i(\iota_{i+1})} \\ 
                                            & M 
}
\]

We define a map $h \colon N \to M$ componentwise $h^n \colon N^n \to M^n$ by
\[
h^n \coloneqq h_i^n \coloneqq (\tau_i(\iota_i))^n \circ ((\iota_i)^n)^{-1} \text{ for all } n < \length \iota_i ,
\] 
noting that since $\iota_i$ is a graph map that $(\iota_i)^n$ is an isomorphism for each $n < \length \iota_i$. We first need to check that this is well defined, i.e. if $j \geq i$ then $h_i^n = h_j^n$ for $n < \length \iota_i$. Writing $\alpha_{ij} \coloneqq \alpha_{j-1} \cdots \alpha_i$, observe that $\length \alpha_{ij} = \length \alpha_i = \length \iota_i$ and that for $n < \length \alpha_i$, the $n^{\mathrm{th}}$-component $(\alpha_{ij})^n$ is an isomorphism. Hence,
\begin{eqnarray*}
h_j^n & = &(\tau_j(\iota_j))^n \circ ((\iota_j)^n)^{-1} \\
      & = & \big( (\tau_j(\iota_j))^n \circ (\alpha_{ij})^n\big) \circ \big( ((\alpha_{ij})^n)^{-1} \circ ((\iota_j)^n)^{-1} \big) \\
      & = &(\tau_i(\iota_i))^n \circ ((\iota_i)^n)^{-1} = h_i^n.
\end{eqnarray*}

Next, we have to show that $h$ is a cochain map. By choosing $i$ sufficiently large, it is clear that $h^{n+1} d^n_N = d^n_M h^n$ for all $n < \length \iota_i - 1$, in particular, $h$ is a cochain map. 
Note that, in light of what follows, $h$ cannot be null-homotopic, for if it were, $N$ itself would be a null-homotopic giving a contradiction.

Finally, we show that $fh=\id_N$, for which it is enough to show that, for each $n$, we have $f^n \circ h^n= \id_{N^n}$.  By the splitting we have $(C_i,f)\big(\tau_i(\iota_i) \big) = \iota_i$, that is, $f \circ \tau_i(\iota_i) = \iota_i$ which, in the $n^{\mathrm{th}}$ component, becomes $f^n \circ (\tau_i(\iota_i))^n = (\iota_i)^n$, whence $f^n \circ (\tau_i(\iota_i))^n \circ ((\iota_i)^n)^{-1} = \id_{N^n}$, that is $f^n \circ h^n= \id_{N^n}$, as required. We have therefore found a split monomorphism $h \colon N \into M$, hence $M = N$.
\end{proof}

\begin{remark}
We finish by remarking that, although every indecomposable complex over a derived-discrete algebra is pure-injective, there are complexes which are neither pure-injective nor a direct sum of indecomposable objects.  One may see this directly or use \cite[Thm.~9.3]{Belig} or \cite[Thm.~2.10]{KraTel}.
\end{remark}

\bibliography{bibliography}{}
\bibliographystyle{plain}

\end{document}